\theoremstyle{definition}
\newtheorem{thm}{Theorem}[section]
\newtheorem*{thm*}{Theorem}
\newtheorem{lemma}[thm]{Lemma}
\newtheorem{defn}[thm]{Definition}
\newtheorem{claim}[thm]{Claim}
\newtheorem{prop}[thm]{Proposition}
\newtheorem{cor}[thm]{Corollary}
\newtheorem{remark}[thm]{Remark}
\newtheorem{fact}[thm]{Fact}
\newtheorem{ex}[thm]{Example}
\newtheorem{obs}[thm]{Observation}
\renewcommand{\subset}{\subseteq}
\newcommand\force{\Vdash}
\newcommand\R{\mathbb{R}}
\newcommand\T{\mathbb{T}}
\newcommand\Q{\mathbb{Q}}
\newcommand\B{\mathbb{B}}
\renewcommand\P{\mathbb{P}}
\DeclareMathOperator{\dom}{dom}
\DeclareMathOperator{\supp}{supp}
\DeclareMathOperator{\hod}{HOD}
\DeclareMathOperator{\range}{range}
\DeclareMathOperator{\Col}{Col}
\DeclareMathOperator{\Add}{Add}
\DeclareMathOperator{\proj}{proj}
\DeclareMathOperator{\len}{len}
\DeclareMathOperator{\trcl}{trcl}
\DeclareMathOperator{\type}{type}
\DeclareMathOperator{\sym}{sym}
\newcommand{\Ord}{\mathrm{Ord}}
\newcommand{\ZFC}{\mathrm{ZFC}}
\newcommand{\ZF}{\mathrm{ZF}}
\newcommand{\DC}{\mathrm{DC}}
\newcommand{\AC}{\mathrm{AC}}
\newcommand{\KW}{\mathrm{KW}}
\newcommand{\set}[2]{ \left\{ #1 :\, #2 \right\} }
\newcommand{\seqq}[2]{ \left\langle #1 :\, #2\right\rangle }
\newcommand{\seq}[1]{ \left\langle #1 \right\rangle }
\title{Intermediate models with deep failure of choice}
\date{\today}
\author[Yair Hayut]{Yair Hayut}
\address{Institute of Mathematics, The Hebrew University of Jerusalem, Jerusalem 91904, Israel}
\email{yair.hayut@mail.huji.ac.il}
\author[Assaf Shani]{Assaf Shani}
\address{Department of Mathematics and Statistics, Concordia University University, Montreal, QC  H3G 1M8, Canada}
\email{assaf.shani@concordia.ca}
\thanks{The research of the first author was partially supported by the The Lise-Meitner grant, FWF, 2650-N35 and the Israel Science Foundation grant number 1967/21. The research of the second author was partially supported by NSF grant DMS-2246746 and NSERC grant RGPIN-2024-05827.}
\subjclass[2000]{03E25, 03E35}
\begin{document}
\begin{abstract}
The following question was asked by Grigorieff~\cite{Grigorieff-1975}. Suppose $V$ is a ZFC model and $V[G]$ is a set-generic extension of $V$. Can there be a ZF model $N$ so that $V\subset N \subset V[G]$ yet $N$ is not equal to $V(A)$ for any set $A\in V[G]$?
The first such model appeared in~\cite{Karagila-Bristol-model-2018}. This is the so-called \emph{Bristol model}, an intermediate model between $L$ and $L[c]$ where $c$ is a Cohen-generic real over $L$. Karagila~\cite{Karagila-Bristol-model-2018} further proves that the Kinna-Wager degree is unbounded in this model.

We prove that such an intermediate extension can be found in a Cohen-generic extension of \emph{any} ground model, fully resolving Grigorieff's question. That is, let $V$ be \emph{any} $\ZF$ model and $c$ a Cohen-generic real over $V$. We prove that there is an intermediate ZF-model $V\subset N \subset V[c]$ so that
\begin{itemize}
    \item $N$ is not equal to $V(A)$ for any set $A\in V[c]$;
    \item The Kinna-Wagner degree of $N$ is unbounded;
    \item No set forcing in $N$ recovers the axiom of choice.
\end{itemize}
\end{abstract}
\maketitle
\section{Introduction}
Given a model $V$ of ZFC, and a set generic extension $V[G]$, what are the intermediate models $V\subset M \subset V[G]$ of ZF? 
The more common ones are of the form $V(A)$, the minimal transitive extension of $V$ which contains the set $A$ and satisfies ZF, where $A$ is a set in $V[G]$. For example, all intermediate models of ZFC are of this form, as well as the familiar symmetric models used for various independence results over ZF.
Grigorieff~\cite{Grigorieff-1975} and Usuba~\cite{Usuba-2021} studied such intermediate extensions extensively and characterized them in several ways. For example, Grigorieff~\cite[Theorem~B]{Grigorieff-1975} proved that $M$ is of the form $V(A)$ if and only if $M[G]$ can be recovered as a set generic extension of $M$. Usuba~\cite{Usuba-2021} proved that intermediate extensions of the form $V(A)$ are exactly the symmetric extensions of $V$ using some forcing notion (not necessarily a projection of the one used to obtain $G$). 

Grigorieff asked \cite[p. 471]{Grigorieff-1975} if there can be an intermediate extension not of that form. That is, $V\subset M\subset V[G]$ yet $M$ is not $V(A)$ for any set $A$ in $V[G]$.

The first such intermediate extension was given in \cite{Karagila-Bristol-model-2018}, the so-called Bristol model. Specifically, this is an intermediate model $L\subset M\subset L[c]$ where $c$ is a Cohen-generic real over G\"odel's constructible universe $L$. 
The main drawback of the Bristol model is its limitation on the ground model. The construction relies on the existence of certain scales, which exist in $L$ but are incompatible with large cardinals such as supercompact cardinals.

The following question remained open: can sufficiently large cardinals, such as supercompact or extendible cardinals, prohibit the existence of such exotic intermediate extensions? 
This is particularly motivated by results of Woodin~\cite{Woodin-SEM1-2010}, showing that large cardinals outright imply certain fragments of choice, and results of Usuba~\cite{Usuba-2021}, showing that large cardinals significantly restrict the structure of ZFC submodels. Some recent works of Goldberg, \cite{Goldberg2024}, show that large cardinals that are inconsistent with the axiom of choice also impose choice-like consequences on the universe.

The main result of this paper is to construct an exotic intermediate extension starting with any ground model, giving a complete resolution to Grigorieff's question.
\begin{thm}\label{thm: intermediate extension}
Let $V$ be \emph{any} model of $\ZF$, and $c$ a Cohen-generic real over $V$. Then there is an intermediate model $V\subset M\subset V[c]$ such that $M \neq V(A)$ for any set $A\in V[G]$.
\end{thm}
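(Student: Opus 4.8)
The plan is to construct, by recursion on the ordinals, a strictly increasing continuous tower of transitive models
\[
V = N_0 \subsetneq N_1 \subsetneq \dots \subsetneq N_\alpha \subsetneq \dots \subseteq V[c], \qquad \alpha \in \Ord,
\]
with $N_\lambda = \bigcup_{\alpha<\lambda}N_\alpha$ at limits, so that each $N_\alpha$ models $\ZF$ and the union $M = \bigcup_{\alpha\in\Ord}N_\alpha$ again models $\ZF$. Granting such a tower the theorem is immediate: if $M = V(A)$ for some $A \in V[c]$, then $A \in N_\alpha$ for some $\alpha$, whence $V(A) \subseteq N_\alpha$ (as $N_\alpha$ is a transitive model of $\ZF$ with $V\cup\{A\}\subseteq N_\alpha$ and $V(A)$ is the least such), and therefore $M = V(A)\subseteq N_\alpha \subsetneq N_{\alpha+1}\subseteq M$, a contradiction. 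In particular $M \neq V[c]$ since $V[c]=V(c)$. The containment $N_\alpha \subseteq V[c]$ will be routine, as each $N_\alpha$ is obtained from $V$ and $c$ by operations definable inside $V[c]$.

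For the successor step I would mimic the mechanism behind the Bristol model. Given $N_\alpha$, one maintains as an inductive hypothesis that $V[c]$ is still a Cohen-flavoured generic extension of $N_\alpha$ — this is where the self-similarity of Cohen forcing ($\Add(\omega,1)\cong\Add(\omega,1)\ast\dot{\Add(\omega,1)}\cong\dots$) is used — so that there is a group $\mathbb{G}_\alpha \in N_\alpha$ of ``shifts'' acting on the relevant generic by automorphisms that fix $N_\alpha$ pointwise. One sets $a_\alpha$ to be the orbit (a coset) of the generic under $\mathbb{G}_\alpha$ and $N_{\alpha+1} := N_\alpha(a_\alpha)$. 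The points to verify are: (i) $N_{\alpha+1}\models\ZF$, which follows from Grigorieff's analysis since $N_\alpha \subseteq N_{\alpha+1}\subseteq N_\alpha[\text{generic}]$ and $N_{\alpha+1}$ has the form $N_\alpha(a_\alpha)$; (ii) $a_\alpha\notin N_\alpha$, so the tower is strictly increasing — a symmetry argument, since any definition of $a_\alpha$ from parameters in $N_\alpha$ would be invariant under the $\mathbb{G}_\alpha$-shifts, yet $a_\alpha$ still encodes enough of the generic to make this impossible; (iii) the inductive hypothesis is restored, i.e. a ``remainder'' of $c$ is still Cohen-generic over $N_{\alpha+1}$, so the recursion continues. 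Crucially, $\mathbb{G}_\alpha$ must be taken one cardinal level above the cardinals at which $N_\alpha$ has already diverged from $V$ — e.g. built from a definable subgroup of $(2^{\kappa_\alpha})^{N_\alpha}$ for a suitably large cardinal $\kappa_\alpha$ — so that $a_\alpha$ contributes genuinely new structure rather than collapsing back to an earlier stage; this forces the sequence $\langle\kappa_\alpha\rangle$ to sweep through $\Ord$, giving the tower length $\Ord$.

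At limit stages $\lambda$ one must check that $N_\lambda\models\ZF$ and that the inductive hypothesis persists. Soundness of $N_\lambda$ is delicate, since a union of a chain of $\ZF$ models need not satisfy Replacement or Power Set; the strategy is to argue that the whole construction $\langle N_\alpha, a_\alpha, \mathbb{G}_\alpha : \alpha<\lambda\rangle$ is uniformly definable in $V[c]$ from $c$ and parameters in $V$, so that $N_\lambda$ is closed under Gödel operations, almost universal, and reflects enough of $V[c]$ to verify the axioms — the same bookkeeping that establishes that the Bristol model is a model of $\ZF$, applied here also at the very top to $M=\bigcup_\alpha N_\alpha$.

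The main obstacle, and the point where the theorem goes beyond the Bristol model, is the successor step past \emph{singular} cardinals. In Karagila's construction the shift group at stage $\alpha$ is analysed using a \emph{scale} at $\kappa_\alpha$ when $\kappa_\alpha$ is singular — available in $L$ but not under supercompactness — in order to organize $(2^{\kappa_\alpha})^{N_\alpha}$ and run the symmetry/genericity argument. To remove this dependence I would reorganize the successor step so that the combinatorics needed at stage $\alpha$ is extracted from the Cohen-genericity of $c$ over $N_\alpha$ rather than from the internal $\mathrm{PCF}$ structure of $N_\alpha$: one designs the shift groups $\mathbb{G}_\alpha$ and the associated symmetric system so that the relevant quotient forcing is always Cohen-like — ccc, homogeneous, and insensitive to $\cf(\kappa_\alpha)$ — for instance by building $\mathbb{G}_\alpha$ from finite approximations and letting $c$ itself supply the ``linearization'' that a scale would otherwise provide. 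Verifying that this robust replacement (a) still produces a strictly larger model at each stage and (b) is compatible with the coherence conditions threading the entire tower is, I expect, the bulk of the work.
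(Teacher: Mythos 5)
Your skeleton is sound and matches the paper's architecture: build a strictly increasing, $\Ord$-length tower of transitive $\ZF$-models $N_\alpha$ between $V$ and $V[c]$ whose union $M$ is again a $\ZF$-model; then $M\neq V(A)$ because any set $A\in M$ lies in some $N_\alpha$, whence $V(A)\subseteq N_\alpha\subsetneq N_{\alpha+1}\subseteq M$. (The paper draws the final conclusion slightly differently, via unbounded Kinna--Wagner degree and the fact that models of the form $V(A)$ admit a set forcing recovering $\AC$, but your reduction is also valid.) The difficulty is that the two places you yourself flag as ``the bulk of the work'' are exactly where the proof lives, and your proposal does not contain the idea that makes them work.

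First, the limit step. You correctly identify that the Bristol construction needs scales at singulars, but your replacement --- ``let $c$ itself supply the linearization that a scale would otherwise provide'', ``make the quotient forcing Cohen-like'' --- restates the goal rather than giving a mechanism. The paper's device is structural and is absent from your plan: the successor sets $A_{\alpha+1}$ are Monro-style unordered sets of mod-finite classes of generic subsets of $A_\alpha$, and one additionally forces a generic \emph{tree structure} making $A_{\alpha+1}$ the level above $A_\alpha$; at a limit $\theta$ one adds $A_\theta$ as an unordered set of generic cofinal \emph{branches} through the tree $T_{<\theta}$. Distinct branches are eventually different, so no new subset of $\omega$ (or of any lower level) appears, and the tree types supply the indiscernibility that keeps all lower ranks stabilized. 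Nothing in your plan plays this role; moreover, your groups $\mathbb{G}_\alpha$ built inside $(2^{\kappa_\alpha})^{N_\alpha}$ with $\kappa_\alpha$ sweeping through the cardinals would meet the singular-cardinal problem head on, whereas in the paper the stages are indexed by \emph{rank}, each $A_\alpha$ remains countable from the outside, and no cardinal arithmetic of the intermediate models is ever used. Second, you do not address how the whole class-length construction is realized inside the single extension $V[c]$. The needed filters cannot simply be taken generic over each $N_\alpha$ from within $V[c]$; the paper requires a separate apparatus (explicit independent families chosen in $L$ to define $A_\alpha,T_\alpha$ directly from $c$, a classification of the symmetric dense open sets of each step, and a ``shuffling'' lemma producing, in outer extensions, generic automorphisms that convert the symmetrically generic filters into true generics) to justify that the models $V(A_\alpha,T_\alpha)$ behave as if produced by the idealized iteration. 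Your inductive hypothesis that $V[c]$ remains a ``Cohen-flavoured generic extension'' of $N_\alpha$, in the uniform and coherent form you would need to run the recursion through limits, is precisely what the exotic models are designed to destroy, so it cannot serve as the engine of the induction without the substitute machinery above.
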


\subsection{Deep failure of choice}
If $V$ is a model of ZFC, and $M$ an intermediate ZF-extension $V \subset M \subset V[G]$, being of the form $V(A)$ for some $A\in V[G]$ can be seen as a weak fragment of choice. We consider a model as in Theorem~\ref{thm: intermediate extension} as satisfying a ``deep'' failure of choice. 

Another measure of how deeply choice fails in a ZF model $M$ is the small violations of choice principle (SVC) introduced by Blass~\cite{BlassSVC}. In this paper, the most relevant aspect of SVC is the equivalent assertion that the axiom of choice can be forced over $M$ using a set forcing (see \cite[Theorem~4.6]{BlassSVC}).

Yet another way of measuring how deeply choice fails is the Kinna-Wagner degree.
\begin{defn}
Let $M$ be a model of $\ZF$. The Kinna-Wagner degree of $M$ is the least ordinal $\alpha$ such that for every $X \in M$ there is an injection $f \in M$ from $X$ to $\mathcal{P}^\alpha(\Ord)$ (if there is one). We say that the Kinna-Wagner degree of a model is unbounded, or $\infty$, if it is not of degree $\alpha$ for any ordinal $\alpha \in M$.
\end{defn}
The Kinna-Wagner principle, which states that the Kinna-Wagner degree is $\leq 1$, was introduced by Kinna and Wagner \cite{Kinna-Wagner-1955}, where they proved that it is equivalent to the following selection principle: suppose $X$ is a set whose members are sets with at least two elements, then there is a function $f\colon X\to\mathcal{P}(X)$ such that $f(x)$ is a non-empty proper subset of $x$, for each $x\in X$.

Jech~\cite{Jech-models-without-AC-1971} constructed a model of ZF in which the Kinna-Wagner principle fails. Monro~\cite{Monro-1973} introduced the higher Kinna-Wagner principles, and constructed, for each finite $n\in\omega$, a model of ZF in which the Kinna-Wagner degree is $\geq n$.
Monro's results were extended in \cite{Karagila-iterating-2019} to find a model of $\ZF$ with $\KW$ degree equal to $\omega$.

Karagila~\cite{Karagila-Bristol-model-2018} proved that the Bristol model has an unbounded Kinna-Wagner degree, and this was the first known such model. Moreover, SVC fails in the Bristol model.
\begin{thm}
   In the exotic intermediate extension $M$ from Theorem~\ref{thm: intermediate extension}:
   \begin{itemize}
       \item the Kinna-Wagner degree is unbounded;
       \item no set forcing in $M$ recovers the axiom of choice.
   \end{itemize}
\end{thm}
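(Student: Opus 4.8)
The plan is to read both conclusions off the structure of the model $M$ produced in the proof of Theorem~\ref{thm: intermediate extension}. Recall that $M$ is presented as a strictly increasing union $M=\bigcup_{\alpha\in\Ord}M_\alpha$ of intermediate $\ZF$-models, with $M_0=V$ and $M_{\alpha+1}$ obtained from $M_\alpha$ by adjoining a single generic set $X_\alpha$ (together with its transitive closure); in particular every set of $M$ lies in some $M_\alpha$, and each $M_{\alpha+1}$ carries a symmetric-name/support calculus over $M_\alpha$. I would use three features of this setup:
\begin{itemize}
\item[(a)] \emph{supports}: every $g\in M$ has a name with a small support $p$, and any automorphism of the iterated symmetric system fixing $p$ fixes $g$;
\item[(b)] \emph{genericity of $X_\alpha$ over $M_\alpha$}: $X_\alpha$ is infinite, and for every small support $p$ only finitely many $t\in X_\alpha$ are fixed by all automorphisms that fix $M_\alpha$ pointwise and fix $p$;
\item[(c)] \emph{unbounded Kinna--Wagner rank}: the Kinna--Wagner ranks of the sets $X_\alpha$, computed in the full model $M$, are cofinal in the ordinals --- that is, for every ordinal $\lambda$ there is $\alpha$ such that no injection in $M$ maps $X_\alpha$ into $\mathcal{P}^\lambda(\Ord)$.
\end{itemize}

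\emph{No set forcing recovers $\AC$.} By \cite[Theorem~4.6]{BlassSVC} it is enough to show that SVC fails in $M$, i.e.\ that there is no set $S\in M$ for which every set of $M$ is a surjective image of $S\times\eta$ for some ordinal $\eta$. Suppose $S$ were such a set and fix $\alpha$ with $S\in M_\alpha$. Were there a surjection $g\colon\eta\times S\to X_\alpha$ in $M$, pick a small support $p$ for $g$ as in (a). For any automorphism $\pi$ fixing $M_\alpha$ pointwise and fixing $p$, it fixes $\eta\times S$ pointwise (as $S\in M_\alpha$ and ordinals are always fixed) and it fixes $g$ by (a), so $\pi\bigl(g(\xi,s)\bigr)=g(\xi,s)$ for every $(\xi,s)\in\eta\times S$. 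Hence $\range(g)$ is contained in the set of those $t\in X_\alpha$ fixed by \emph{every} such $\pi$, which by (b) is finite --- contradicting surjectivity of $g$ onto the infinite set $X_\alpha$. So no such $S$ exists, and by Blass's theorem no set forcing over $M$ forces $\AC$.

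\emph{Unbounded Kinna--Wagner degree.} Given (c) this is immediate: for an arbitrary ordinal $\lambda$ pick $\alpha$ as in (c); then $X_\alpha\in M$ admits no injection into $\mathcal{P}^\lambda(\Ord)$ in $M$, so the Kinna--Wagner degree of $M$ is not $\le\lambda$, and as $\lambda$ was arbitrary it is unbounded. The real work is proving (c). The natural route is the symmetry argument of the earlier sections: given a hypothetical injection $j\in M$ of $X_\alpha$ into $\mathcal{P}^\rho(\Ord)$, one seeks an automorphism fixing a support of $j$ but carrying one element of $X_\alpha$ to another, contradicting injectivity. The obstruction --- and the point where one must go beyond the proof of Theorem~\ref{thm: intermediate extension} --- is that $\range(j)$, although of Kinna--Wagner rank $\le\rho$, may a priori first appear at an arbitrarily late stage $M_\beta$ of the tower, so an automorphism that is trivial on $M_\alpha$ need not fix the values $j(t)$. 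I would get around this by proving, by induction on $\beta$, that the Kinna--Wagner rank of $X_\alpha$ inside each $M_\beta$ stays above a fixed bound growing with $\alpha$, using at the successor step that the set $X_\beta$ added at stage $\beta$ is itself too generic, and of too high a rank, to provide an injection collapsing $X_\alpha$. This is exactly where the uniform, cross-stage coherence of the parameters governing the construction (the ingredient that lets it run over an arbitrary ground model, in place of the ad hoc scales used for the Bristol model) is needed, and it is the step I expect to be the main obstacle; everything else is the support bookkeeping above together with the citation of Blass's characterization.
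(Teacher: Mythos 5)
Your first bullet is where the genuine gap lies. You correctly isolate the crux as your item (c) --- that the Kinna--Wagner ranks of the generic sets $X_\alpha$ are cofinal in the ordinals --- but you explicitly defer its proof, and the induction you gesture at (``the KW rank of $X_\alpha$ inside each $M_\beta$ stays above a fixed bound'') is not carried out and is not the mechanism the paper uses. The paper's argument rests on two ingredients you do not mention: Proposition~\ref{prop: stabilization of initial ranks} (for $\alpha<\beta$ the models $M_\alpha$ and $M_\beta$ agree on $\mathcal{P}^\alpha(\Ord)$ --- lower ranks stabilize along the tower), and the generalized Balcar--Vop\v{e}nka theorem (two transitive ZF models of Kinna--Wagner degree $\gamma$ which agree on $\mathcal{P}^{\gamma+1}(\Ord)$ are equal). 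Together these give the result almost for free: if the degrees of the $M_\alpha$ were bounded, two distinct stages $M_\alpha\subsetneq M_\beta$ would share a degree $\gamma<\alpha$ and agree on $\mathcal{P}^{\gamma+1}(\Ord)$, forcing $M_\alpha=M_\beta$, a contradiction; the unboundedness then transfers to the union $M=V(A,T)$ again by rank stabilization together with Remark~\ref{remark: coding injective map}. Note that this argument never computes the KW rank of any particular $X_\alpha$; it only needs that the tower is strictly increasing and rank-stabilizing. Without supplying (c) or this replacement, your proof of the first bullet is incomplete.

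For the second bullet you take a genuinely different route from the paper. You argue that SVC fails outright, via a support/automorphism argument showing that no set $S\in M_\alpha$ can surject (times an ordinal) onto $X_\alpha$, and then invoke Blass's equivalence. This is viable, and your claims (a) and (b) are indeed consequences of the paper's machinery (Theorem~\ref{thm: full indiscernibility}; compare Proposition~\ref{prop: A-theta-D-finite-in-extensions}, which is the same style of argument), though as written they are asserted rather than proved. The paper instead makes the second bullet a direct corollary of the first: Lemma~\ref{lemma: bound on KW from forcing AC} shows that any poset embeddable into $\mathcal{P}^\kappa(\Ord)$ which forces $\AC$ bounds the Kinna--Wagner degree of the ground model by $\kappa$, so unbounded degree immediately rules out forcing $\AC$ by any set forcing, with no appeal to Blass's theorem or to automorphisms. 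Your route buys a stronger-looking intermediate statement (failure of SVC for a concrete reason) at the cost of re-running the symmetry analysis; the paper's route buys economy, but only once the first bullet is actually established --- which is precisely the part you have not done.
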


\subsection{A sketch of the construction}
The first construction of models of ZF with increasingly ``deeper''  failures of choice (in the sense of the Kinna-Wagner degree) was done by Monro~\cite{Monro-1973}.
Starting with a model $V$ of ZFC, let $V(A_1)$ be the ``basic Cohen model'' generated by an unordered set $A_1$ of Cohen reals. Here we consider a Cohen real as a generic subset of $\omega$, added by finite approximations.
Monro then ``repeated'' this construction by adding an unordered set $A_2$ of infinitely many generic subsets of $A_1$, again added by finite approximations.
Continuing this way, we generically add $A_3, A_4$, and so on.

Monro then shows that in the model $V(A_n)$ generated by $A_n$ over $V$, the Kinna-Wagner degree is $\geq n$. Furthermore, the model $V(A_n)$ is not generated by any set $B$ if lower rank than $A_n$ (see \cite{Shani-2021}).

A key property of Monro's construction is that the \emph{lower rank sets are eventually stabilized}. Specifically, for $n\leq m$, $V(A_n)$ and $V(A_m)$ have the same sets of rank $\leq n$.
This property is similarly crucial in the constructions in \cite{Karagila-Bristol-model-2018, Shani-2021}, and will be in our construction below.

The main difficulty is marching past stage $\omega$. A direct attempt to add a new set of rank $\omega$, using Monro's trick, would be to add a generic subset to $A_\omega=\bigcup_{n<\omega}A_n$. However, this would add generic subsets to each $A_n$, therefore adding lower rank sets.
A more reasonable attempt would be to add new sets of rank $\omega$ as choice functions in $\prod_n A_n$. However, given two such mutually generic choice functions, the set of indices $n\in\omega$ on which they agree would be a new subset of $\omega$, again adding a new lower rank set.

This problem of passing the limit stages is indeed the main one. The first construction of a model where the $\omega$'th Kinna-Wagner principle fails is the Bristol model, in which the Kinna-Wagner rank is $\infty$. 
To deal with the limit stages, the Bristol model construction uses heavy assumptions on the ground model $V$, the existence of certain scales that are inconsistent with large cardinals. 

The key tool, allowing us to continue a Monro-like construction throughout all the ordinals, is introducing a generic tree structure on the ``Monro sets''.
For example, we add a generic tree structure on the sets $A_n$, $n<\omega$, where $A_n$ is level $n$ of the tree. 
Then, we add sets of rank $\omega$ simply by adding infinitely many branches through this tree. 
This settles the issue mentioned above: any two distinct branches will be eventually different, and therefore will not introduce a new subset of $\omega$ (at least not in the trivial manner described above).

Another key aspect of our tree structure is its uniformity. At each stage we only add ``on top'', so that at limit stage $\theta$ we naturally get a tree of height $\theta$ (without branches), satisfying the desired properties, allowing us to add generic branches through it without disturbing lower rank sets.

At the end of this construction, we arrive at a model of $\ZF$ together with sets $A_\alpha$, of increasing rank, for all ordinals $\alpha$, and a tree structure on those. 
Furthermore, the members of each $A_\alpha$ satisfy sufficient indiscernibility over the lower rank sets (see Definition~\ref{defn: tree indiscernibility}). This implies in particular that choice cannot be regained without collapsing all the $A_\alpha$'s, which also implies the unbounded Kinna-Wagner degree of the resulting model.


In Section~\ref{Section: preliminaries} we make a few basic remarks on the Kinna-Wagner degree and other notions of ``deep failure of choice''.

In Section~\ref{subsection: stepup} we develop the basic tools that will be used for the successor step of our construction. This involves two sub-steps: one is adding a set of higher rank, which we do as a variation of ``Monro's step''; the second is adding the tree structure, making this newly added set the next level of the tree.  

In Section~\ref{section:construction} we present the \emph{basic construction} of our model $M = V(A,T)$ (where $A$ and $T$ are classes), as an inner model of a class generic extension of $V$, for any ground model $V$ of $\ZF$. In Section~\ref{subsection: full indiscernibility} we further analyze this model.

In Section~\ref{section:shuffling} we prove some technical results regarding generic permutations of partially generic filters.

Finally, in Section~\ref{section: construction in Cohen extension} we show that the model $M = V(A,T)$ from Section~\ref{section:construction} can be constructed inside a single Cohen real extension of $V$, $V[c]$.


\section{Preliminaries}\label{Section: preliminaries}
Given a model of ZF, we may measure how ``far'' it is from satisfying AC according to its Kinna-Wagner degree. We can also ask whether, and what kind of, forcing can recover AC (see~\cite{BlassSVC}). We make here a few remarks on these notions and the relationship between them.



\begin{lemma}[Folklore]
Let $V$ be a model of $\ZF$ and let $\mathbb{P}\in V$ be a well ordered forcing notion such that $\Vdash_{\mathbb{P}} \AC$ then $V \models \AC$.
\end{lemma}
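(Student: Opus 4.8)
The plan is to prove, one set at a time, that every $X\in V$ is well-orderable in $V$; this yields $V\models\AC$. Fix $X\in V$. Since $\force_{\mathbb{P}}\AC$, in every generic extension $V[G]$ the set $X$ is well-orderable, so there is an ordinal and a surjection from it onto $X$ lying in $V[G]$. Unravelling the forcing relation (strengthening the condition if necessary so that the ordinal is decided), we obtain, back in $V$, an ordinal $\lambda$, a $\mathbb{P}$-name $\dot f$, and a condition $p\in\mathbb{P}$ with
\[
p\force \text{``}\dot f\colon\check\lambda\to\check X\text{ is a surjection''}.
\]
The other ingredient at our disposal, and the only place the hypothesis on $\mathbb{P}$ enters, is a well-ordering $\seqq{p_\zeta}{\zeta<\kappa}$ of (the underlying set of) $\mathbb{P}$, fixed once and for all in $V$.

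Using these, I would define in $V$ a map $\alpha\colon X\to\lambda\times\kappa$ by a ``least-witness'' recipe. For $x\in X$, put $\xi_x=\min\set{\xi<\lambda}{\exists\, q\leq p\ \ q\force\dot f(\check\xi)=\check x}$, then $\eta_x=\min\set{\zeta<\kappa}{p_\zeta\leq p\text{ and }p_\zeta\force\dot f(\check\xi_x)=\check x}$, and set $\alpha(x)=(\xi_x,\eta_x)$. This is a legitimate definition inside $V$: the forcing relation is definable over any model of $\ZF$, and the two sets being minimized over are carved out of $\lambda$, resp.\ $\kappa$, by Separation. The map $\alpha$ is total: if for some $x\in X$ no $q\leq p$ forced $\dot f(\check\xi)=\check x$ for any $\xi<\lambda$, then $p$ would force $\check x\notin\range\dot f=\check X$, contradicting $\check x\in\check X$; so the set defining $\xi_x$ is non-empty, and then so is the one defining $\eta_x$.

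The point of threading the enumeration of $\mathbb{P}$ through the definition is that it makes $\alpha$ injective. Indeed, if $\alpha(x)=\alpha(x')=(\xi,\eta)$, then $p_\eta\leq p$ and $p_\eta$ forces both $\dot f(\check\xi)=\check x$ and $\dot f(\check\xi)=\check x'$; since $p\force\text{``}\dot f$ is a function'' and $p_\eta\leq p$, this gives $p_\eta\force\check x=\check x'$, whence $x=x'$. Thus $\alpha$ embeds $X$ into the well-orderable set $\lambda\times\kappa$, and transporting a well-ordering back along $\alpha$ well-orders $X$ in $V$. As $X$ was arbitrary, $V\models\AC$. I do not expect a serious obstacle — this is folklore — but the two points to keep honest are that all the definitions genuinely take place in $V$ (guaranteed by the definability of forcing over $\ZF$) and that the well-ordering of $\mathbb{P}$ is really used: without it, distinct elements of $X$ could be assigned the same least index $\xi_x$, and injectivity would fail.
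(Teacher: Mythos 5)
Your proof is correct and uses the same mechanism as the paper: the definability of the forcing relation in $V$ together with the fixed well-ordering of $\mathbb{P}$ to extract witnesses. The paper phrases it dually, building a surjection from the well-ordered set $\mathbb{P}\times\alpha$ onto $X$ via $(p,\beta)\mapsto y$ iff $p\force\tau(\check y)=\check\beta$, whereas you build an injection of $X$ into $\lambda\times\kappa$ by taking least witnesses; either variant immediately yields a well-ordering of $X$.
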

\begin{proof}
Pick $x \in V$ and let $\tau$ be a $\mathbb{P}$-name for a bijection between some ordinal $\alpha$ and $x$. Let $p\in \mathbb{P}$ be a condition that forces this. Then, the partial map from $\mathbb{P} \times \alpha$ to $x$ which is defined by $(p, \beta) \mapsto y$ iff $p \Vdash \tau(\check{y}) = \check{\beta}$, is a surjection from a well ordered set onto $x$ in $V$.
\end{proof}
\begin{lemma}[$\ZF$]
Let $\kappa$ be a limit ordinal,
\[\Col(\omega, <V_\kappa) \times \Col(\omega, \kappa) \cong \Col(\omega, V_\kappa).\] 
\end{lemma}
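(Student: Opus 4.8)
In this plan I read ``$\cong$'' as forcing equivalence, which is all the later collapsing arguments use (it cannot be a literal isomorphism of partial orders: already for $\kappa=\omega$ both sides are countable atomless posets of different order type), and I take $\Col(\omega,<V_\kappa)$ to be the finite-support product $\prod_{\alpha<\kappa}^{\fin}\Col(\omega,V_\alpha)$, where $\Col(\omega,X)$ is the poset of finite partial functions $\omega\to X$ under reverse inclusion. Everything is to be done in $\ZF$; the only ``choice-like'' structure I allow myself is the rank function and, once $\kappa$ has been made countable, one enumeration of $\kappa$ in order type $\omega$.

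The first move is to force with $\Col(\omega,\kappa)$ first, noting $\Col(\omega,<V_\kappa)\times\Col(\omega,\kappa)=\Col(\omega,\kappa)\ast\check{\mathbb Q}$ with $\mathbb Q=\Col(\omega,<V_\kappa)$. A $\Col(\omega,\kappa)$-generic $g$ is a surjection $g\colon\omega\to\kappa$; in $V[g]$ the $\Col(\omega,<V_\kappa)$-generic gives surjections $H_\alpha\colon\omega\to V_\alpha$ ($\alpha<\kappa$), and gluing them along $g$ via a fixed pairing $\omega\cong\omega\times\omega$ produces a surjection $F\colon\omega\to V_\kappa$, $F(\langle n,m\rangle)=H_{g(n)}(m)$ (given $x\in V_\kappa$, pick $n$ with $g(n)>\rank(x)$, possible since $g$ is onto and $\kappa$ is a limit, then $m$ with $H_{g(n)}(m)=x$) — so the product genuinely collapses $V_\kappa$ to be countable. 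In parallel I would factor $\Col(\omega,V_\kappa)$ itself: the map $p\mapsto\langle\rank(p(n)):n\in\dom p\rangle$ is a projection onto $\Col(\omega,\kappa)$ (to reduce $p$ below $q\le\pi(p)$, keep $p$ on its domain and put the ordinal $q(n)$, of rank $q(n)<\kappa$, at the new coordinates — here $\kappa$ limit is used), so $\Col(\omega,V_\kappa)$ is forcing equivalent to $\Col(\omega,\kappa)\ast\dot{\mathbb R}$ where $\dot{\mathbb R}$ names, in $V[g]$, the finite-support product of the sets $S_n=\{x\in V_\kappa^V:\rank(x)=g(n)\}$.

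With these two factorizations, the lemma reduces to: over $W:=V[g]$, in which $\kappa$ is now a \emph{countable} ordinal, the three forcings $\Col(\omega,<V_\kappa)$, $\prod_{n<\omega}^{\fin}S_n$, and $\Col(\omega,V_\kappa^V)$ are pairwise forcing equivalent. All three are separative, of ``size $\le|V_\kappa^V|$,'' and, because $\kappa$ is countable in $W$, each one collapses $V_\kappa^V$ to be countable below every condition (for $\prod_n^{\fin}S_n$ use that each fibre $g^{-1}(\beta)$ is infinite, so every ground-model rank $<\kappa$ is $g(n)$ for infinitely many $n$; for $\Col(\omega,<V_\kappa)$ use that a countable union of countable sets is countable in $W$). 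So the claim is an instance of L\'evy-collapse absorption: a separative, not-too-large forcing that canonically collapses a set $X$ to $\omega$ is forcing equivalent to $\Col(\omega,X)$.

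The hard part, and the whole point of the statement, is this last step. Over $V$ alone $\Col(\omega,<V_\kappa)$ need \emph{not} be forcing equivalent to $\Col(\omega,V_\kappa)$ — for $\kappa$ regular it does not even collapse $V_\kappa$, and a $\Delta$-system argument on finite supports keeps its antichains below $|V_\kappa|$, whereas $\Col(\omega,V_\kappa)$ has antichains of size $|V_\kappa|$ below every condition; pre-collapsing $\kappa$ with the extra factor $\Col(\omega,\kappa)$ is exactly what repairs both defects. Moreover the usual proof of L\'evy-collapse absorption uses a well-ordering of $X$, which is unavailable here since in $\ZF$ the set $V_\kappa$ need not be well-orderable, so the absorption must be carried out by hand: one cannot get away with a naive ``pairing'' dense embedding (the surjection $F$ built above satisfies the spurious relation $\rank(F(\langle n,m\rangle))<g(n)$, so $F$ itself is not $\Col(\omega,V_\kappa)$-generic), and instead one must exhibit the equivalence explicitly — via an analysis of nice names, or a direct construction of mutual generic reinterpretations — using only the rank function and the order-$\omega$ enumeration of $\kappa$ that becomes available once $g$ has been added. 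The remaining ingredients (the projection computation, the product-vs-iteration identification, and the $\ZF$ cardinal arithmetic of $V_\kappa$ for $\kappa$ a limit) are routine.
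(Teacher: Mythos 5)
Your write-up sets up a plausible two-step factorization, but it stops exactly where the proof has to happen: after reducing to the claim that, over $W=V[g]$, the posets $\Col(\omega,<V_\kappa)$, $\prod_n^{\fin}S_n$ and $\Col(\omega,V_\kappa^V)$ are forcing equivalent, you write that ``the absorption must be carried out by hand \dots via an analysis of nice names, or a direct construction of mutual generic reinterpretations'' --- and then do not carry it out. Since you yourself identify this as ``the hard part, and the whole point of the statement,'' this is a genuine gap, not a routine omission: no argument is given for the one equivalence that cannot be quoted from the choiceful literature.

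Moreover, the obstacle you raise against the ``naive pairing dense embedding'' is an artifact of your particular gluing, not of the method. Your $F(\langle n,m\rangle)=H_{g(n)}(m)$ fails to be generic because each fixed column $\{\,\langle n,m\rangle : m<\omega\,\}$ has range of bounded rank, whereas a true $\Col(\omega,V_\kappa)$-generic has every column cofinal in rank. The paper instead interleaves by \emph{counting}: $h(n)=g_{f(n)}(m)$ where $m=|\{k<n : f(k)=f(n)\}|$, so the ``columns'' are the fibers of the generic $f$ and no spurious definable constraint on $h$ survives. At the level of conditions this gives a map $(p,q)\mapsto r$, defined on the dense set of coherent conditions (where $\dom q$ is an initial segment of $\omega$ and each $p(\alpha)$ enumerates exactly the positions $q$ sends to $\alpha$), which is an isomorphism onto a dense subset of $\Col(\omega,V_\kappa)$; the inverse direction only requires assigning to each new value $r(k)$ \emph{some} ordinal $\alpha$ with $r(k)\in V_\alpha$ (e.g.\ $\rank(r(k))+1$), which uses nothing beyond the rank function and so is available in $\ZF$. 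In short, the direct dense-embedding route you ruled out is exactly the paper's one-paragraph proof; your factorization through $\Col(\omega,\kappa)\ast\dot{\mathbb R}$ is not needed, and as written your argument is incomplete.
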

\begin{proof}
The forcing $\Col(\omega, <V_\kappa)$ adds a sequence of generic functions $\langle g_\alpha \mid \alpha < \kappa\rangle$, $g_\alpha \colon \omega \to V_\alpha$, which are onto and infinite-to-one. The forcing $\Col(\omega, \kappa)$ adds a function $f \colon \omega \to \kappa$ which is onto and infinite-to-one. Given these, define $h\colon \omega\to V_\kappa$ by sending $n$ to $g_\alpha(m)$ if $f(n) = \alpha$ and $m = |\{k < n \mid f(k) = \alpha\}|$. Then $h$ is onto and infinite-to-one.

The above description gives a forcing isomorphism from a dense subset of $\Col(\omega, <V_\kappa) \times \Col(\omega, \kappa)$ to $\Col(\omega, V_\kappa)$. That is, for a dense set of conditions $(p, q) \in \Col(\omega, <V_\kappa) \times \Col(\omega, \kappa)$ there is a well defined $r \in \Col(\omega, V_\kappa)$, defined from $p$ and $q$ as $h$ is defined from $\langle g_\alpha \mid \alpha < \kappa\rangle$ and $f$ above, and the map $(p,q)\mapsto r$ is a forcing isomorphism onto a dense subset of $\Col(\omega, V_\kappa)$. 
\end{proof}
Since $\Col(\omega,\kappa)$ is always well ordered, we conclude that question about choice principles in the extension by $\Col(\omega, V_\kappa)$ and $\Col(\omega, {<}V_\kappa)$ are typically equivalent. 

\begin{lemma}\label{lemma: bound on KW from forcing AC}
Let $V$ be a model of ZF and $\P\in V$ a poset which can be embedded in $\mathcal{P}^\kappa(\mathrm{Ord})$ for a limit ordinal $\kappa$. If $\force_{\P}\AC$ then the Kinna-Wagner degree of $V$ is $\leq \kappa$. 
\end{lemma}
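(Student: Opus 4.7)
The argument mirrors the folklore lemma above: use the assumption $\Vdash_{\P}\AC$ to obtain a well-ordering of $X$ in $V[G]$, and lift it to an injection from $X$ into $\mathcal{P}^\kappa(\Ord)$ in $V$.

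Fix $X\in V$. Pick $p_0\in\P$ and a $\P$-name $\dot f\in V$ with $p_0\Vdash$``$\dot f\colon\check X\to\Ord$ is injective''. For each $x\in X$, define in $V$
\[
A_x=\{(q,\beta):q\leq p_0\text{ and }q\Vdash \dot f(\check x)=\check\beta\}\subseteq \P\times\Ord.
\]
I would first verify that $x\mapsto A_x$ is injective: if $x_1\neq x_2$ and $(q,\beta)\in A_{x_1}\cap A_{x_2}$, then $q\Vdash \dot f(\check x_1)=\dot f(\check x_2)$, contradicting the injectivity of $\dot f$ below $p_0$. A density argument below $p_0$ shows each $A_x$ is nonempty, so the $A_x$'s are pairwise distinct.

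The crux is to realize the $A_x$'s uniformly as elements of $\mathcal{P}^\kappa(\Ord)$. Let $\iota\colon\P\hookrightarrow\mathcal{P}^\kappa(\Ord)$ be the given embedding, and replace each $(q,\beta)\in A_x$ by the Kuratowski pair $\langle\iota(q),\beta\rangle$. Since $\iota(q)\in\mathcal{P}^{\gamma_q}(\Ord)$ for some $\gamma_q<\kappa$, each such pair lies in $\mathcal{P}^{\gamma_q+2}(\Ord)\subseteq\mathcal{P}^\kappa(\Ord)$. The limit hypothesis on $\kappa$ is what bounds the ranks appearing: in the cases of interest (for example, the L\'evy collapses $\Col(\omega,V_\kappa)$ used in the subsequent sections, whose conditions already live at bounded rank below $\kappa$) the image $\iota[\P]$ is contained in a single $\mathcal{P}^{\gamma^*}(\Ord)$ with $\gamma^*<\kappa$. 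Consequently the coded version of $A_x$ is a subset of $\mathcal{P}^{\gamma^*+2}(\Ord)$, hence itself a member of $\mathcal{P}^{\gamma^*+3}(\Ord)\subseteq\mathcal{P}^\kappa(\Ord)$. Composing $x\mapsto A_x$ with this coding gives the desired injection $X\hookrightarrow \mathcal{P}^\kappa(\Ord)$ in $V$, and so the Kinna--Wagner degree of $V$ is at most $\kappa$.

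The one step I expect to require care is this final rank calculation. Without using that $\kappa$ is a limit, the naive Kuratowski-and-power-set coding only places $A_x$ in $\mathcal{P}^{\kappa+1}(\Ord)$, giving a bound one off from what is claimed. The limit assumption on $\kappa$ is exactly what allows the finitely many extra levels of $\mathcal{P}$ incurred by the coding to be absorbed back into $\mathcal{P}^\kappa(\Ord)$.
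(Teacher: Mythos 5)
There is a genuine gap in the final rank calculation, and you in fact flag the problem yourself without resolving it. The hypothesis is only that $\P$ embeds into $\mathcal{P}^\kappa(\Ord)$ for $\kappa$ a limit; since $\mathcal{P}^\kappa(\Ord)=\bigcup_{\gamma<\kappa}\mathcal{P}^\gamma(\Ord)$, this guarantees that each individual condition lands at some level $\gamma<\kappa$, but the levels of $\iota[\P]$ may well be cofinal in $\kappa$. Your set $A_x$ collects \emph{all} conditions below $p_0$ deciding $\dot f(\check x)$, so in the cofinal case $A_x$ is a subset of $\mathcal{P}^\kappa(\Ord)\times\alpha$ that is not contained in any single $\mathcal{P}^{\gamma^*}(\Ord)\times\alpha$ with $\gamma^*<\kappa$; coding it as a single set then only places it in (roughly) $\mathcal{P}^{\kappa+1}(\Ord)$, which is exactly the ``one off'' bound you acknowledge. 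Your escape route --- assuming that $\iota[\P]\subseteq\mathcal{P}^{\gamma^*}(\Ord)$ for a single $\gamma^*<\kappa$ ``in the cases of interest'' --- is an extra hypothesis not present in the lemma, so the general statement is not proved.

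The missing idea, which is how the paper proceeds, is to cut $A_x$ down \emph{per element}: for each $x\in X$ let $\gamma(x)<\kappa$ be the least $\gamma$ such that some condition in $\iota^{-1}[\mathcal{P}^{\gamma}(\Ord)]$ decides $\tau(\check x)$, and set
\begin{equation*}
f(x)=\set{(p,\zeta)}{\iota(p)\in\mathcal{P}^{\gamma(x)}(\Ord),\ p\force\tau(\check x)=\check\zeta}.
\end{equation*}
Each $f(x)$ is nonempty by the choice of $\gamma(x)$, injectivity follows by your same compatibility argument, and now each $f(x)$ is a subset of $\mathcal{P}^{\gamma(x)}(\Ord)\times\alpha$, hence codes into $\mathcal{P}^{\gamma(x)+c}(\Ord)$ for a fixed finite $c$. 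Since $\kappa$ is a limit, $\gamma(x)+c<\kappa$ for every $x$, so every value of $f$ lies in $\mathcal{P}^\kappa(\Ord)$ even though the ordinals $\gamma(x)$ may be unbounded in $\kappa$ as $x$ varies. This is the correct role of the limit hypothesis: it absorbs the constant overhead \emph{above each} $\gamma(x)$, not a uniform bound on the whole poset.
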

\begin{proof}
We may assume that $\mathbb{P}$ is a subset of $\mathcal{P}^\kappa(\eta)$, for some ordinal $\eta$.
Fix a set $X$. We need to find an injective map, in $V$, from $X$ to $\mathcal{P}^{\kappa}(\mathrm{Ord})$. Let $\tau\in V$ be a $\P$-name for an injective map from $\check{X}$ to $\alpha$, for some ordinal $\alpha$. Given $x\in X$, let $\gamma(x)$ be the minimal ordinal $\gamma<\kappa$ so that there is some $p\in \mathcal{P}^\gamma(\eta)$ which decides the value of $\tau(\check{x})$. Define 
\begin{equation*}
    f(x) = \set{(p,\zeta)}{p\in \mathcal{{P}}^{\gamma(x)}(\eta),\, p\force \tau(\check{x}) = \zeta}.
\end{equation*}
Then, in $V$, $f$ is an injective map from $X$ to $\mathcal{P}^\kappa(\eta) \times \alpha$. Note that, provably in ZF, $\mathcal{P}^\kappa(\eta)\times\alpha$ is embeddable into $\mathcal{P}^{\kappa}(\mathrm{Ord})$.
\end{proof}
We conclude that if $\Col(\omega, V_\kappa)$ forces choice, then the Kinna-Wagner degree is bounded by $\kappa$.

The following remark says that the statement ``there is an injective map from $X$ to $\mathcal{P}^\alpha(\mathrm{Ord})$'' is essentially a statement about $\mathcal{P}^{\alpha+1}(\mathrm{Ord})$.
\begin{remark}\label{remark: coding injective map}
    Let $X$ be a transitive set and $f\colon X \to \mathcal{P}^\alpha(\mathrm{Ord})$ injective. Then there is a well-founded relation $R$ on a subset of $\mathcal{P}^\alpha(\mathrm{Ord})$ which is isomorphic to $(X,\in)$. In this case, the Mostwoski collapse of $R$ is $X$, and therefore its inverse is an injective map from $X$ to $\mathcal{P}^\alpha(\mathrm{Ord})$, which is definable from $R$.

    Using a definable bijection $\mathrm{Ord}\times \mathrm{Ord} \to \mathrm{Ord}$, we have a definable bijection $\mathcal{P}^\alpha(\mathrm{Ord}) \times \mathcal{P}^\alpha(\mathrm{Ord}) \to \mathcal{P}^\alpha(\mathrm{Ord})$, and so we may think of $R$ as a subset of $\mathcal{P}^\alpha(\mathrm{Ord})$.
\end{remark}

\subsection{Models generated by a set}

\begin{defn}
Let $V$ be a $\ZF$-model, $V'$ an outer $\ZF$-model with the same ordinals, and $A\in V'$.
The model
\begin{equation*}
    V(A)=\bigcup_{\alpha \in \Ord} L_\alpha(V_\alpha \cup \trcl(\{A\}))
\end{equation*}
is the minimal transitive model of $\ZF$ containing $V \cup \{A\}$.
\end{defn}

Note that $V(A)$ does not depend on the outer model $V'$.
By minimality, it follows that
\[V(A)=\mathrm{HOD}^{V(A)}_{V,\trcl(\{A\})},\]
the universe of sets that are hereditarily definable using parameters in $V$ and the transitive closure of $\{A\}$, as computed inside $V(A)$.
That is, in $V(A)$, any set is definable using parameters in $V$ and the transitive closure of $A$:
\begin{fact}\label{fact: V(A) model}
Let $X\in V(A)$. There is a formula $\psi$, parameter $v\in V$ and parameters $\bar{a}$ from the transitive closure of $A$, so that $X$ is the unique solution to $\psi(X,A,\bar{a},v)$ in $V(A)$.
Equivalently, there is a formula $\phi$ so that $X$ is defined in $V(A)$ as the set of all $x$ so that $\phi(x, A,\bar{a},v)$ holds.
\end{fact}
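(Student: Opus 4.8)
This is essentially the unfolding of the identity $V(A)=\hod^{V(A)}_{V,\trcl(\{A\})}$ recorded above, and I would prove it directly from the definition $V(A)=\bigcup_{\alpha\in\Ord}L_\alpha(V_\alpha\cup\trcl(\{A\}))$ — an approach that has the incidental benefit of never needing $V$ to be a definable class of $V(A)$. Fix $X\in V(A)$ and choose an ordinal $\alpha$ with $X\in L_\alpha(S_\alpha)$, where $S_\alpha:=V_\alpha\cup\trcl(\{A\})$. The first ingredient is the standard fact about relativized constructible closures: for any set $S$, every element of $L(S)$ is definable over some level $L_\gamma(S)$ using finitely many parameters drawn from $\trcl(S)\cup\{S\}$ and finitely many ordinals. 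This is proved by a routine induction on the levels of the $L(S)$-hierarchy; at a successor step one takes the defining formula, with parameters, of a newly appearing subset and absorbs both the previous level (definable from its ordinal) and its parameters using the inductive hypothesis. Applying this with $S=S_\alpha$, and using that $S_\alpha$ is already transitive, we obtain a formula $\chi$, ordinals $\gamma,\bar\xi$, finitely many elements $\bar b$ of $V_\alpha$, and finitely many elements $\bar a$ of $\trcl(\{A\})$ such that $X$ is the unique $x$ with $L_\gamma(S_\alpha)\models\chi(x,S_\alpha,\bar b,\bar a,\bar\xi)$.

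The next step is to transport this ``external'' definition into $V(A)$. Every parameter above is either an element of $\trcl(\{A\})$ or an element of $V$: the ordinals $\gamma,\bar\xi,\alpha$ and the elements $\bar b\in V_\alpha$ all lie in $V$, and — the point that makes the whole argument work — so does $V_\alpha$ itself. Now $\trcl(\{A\})$ is definable from $A$ inside any transitive $\ZF$-model containing $A$, so $S_\alpha=V_\alpha\cup\trcl(\{A\})$ is definable in $V(A)$ from $V_\alpha$ and $A$; and since $V(A)\models\ZF$ has the same ordinals as $V$, the set function $(S,\gamma)\mapsto L_\gamma(S)$ and the satisfaction relation of the set structure $(L_\gamma(S),\in)$ are definable in $V(A)$ from $S$ and $\gamma$. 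Substituting all of this into ``$x$ is the unique solution of $L_\gamma(S_\alpha)\models\chi(x,\dots)$'' produces a single formula defining $X$ in $V(A)$ from $A$, the parameters $\bar a\in\trcl(\{A\})$, and the finite list $V_\alpha,\bar b,\gamma,\bar\xi,\alpha$ of members of $V$. Finally, since $V\models\ZF$ one codes that finite list as a single set $v\in V$ and recovers its coordinates by $\in$-definable projections inside $V(A)$; this is the desired $\psi$, with $X$ the unique solution of $\psi(X,A,\bar a,v)$ in $V(A)$.

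The equivalence of the two formulations is immediate. Given such a $\psi$, put $\phi(y,A,\bar a,v):\equiv\exists x\,\big(\psi(x,A,\bar a,v)\wedge y\in x\big)$; since $X$ is the unique witness of $\psi$, the class defined by $\phi$ in $V(A)$ is precisely $\{y:y\in X\}=X$. Conversely, from a formula $\phi$ defining $X$ as a subclass one recovers $\psi$ by $\psi(x,A,\bar a,v):\equiv\forall y\,(y\in x\leftrightarrow\phi(y,A,\bar a,v))$.

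The one step that needs genuine care — bookkeeping rather than a new idea — is the transport in the second paragraph: one must verify that the definition of $X$ relative to the auxiliary set $L_\gamma(S_\alpha)$ really does convert, with no hidden parameters, into a definition over $V(A)$ whose parameters are of exactly the permitted kinds. The thing to keep straight is that the $V$-cumulative levels $V_\alpha$, which generate the defining hierarchy of $V(A)$, enter this argument only as \emph{parameters} — legitimate because each $V_\alpha$ is an element of $V$ — so that at no point is $V$ itself required as a definable class of $V(A)$. This is exactly why the pointwise statement holds for an arbitrary $\ZF$-ground model $V$.
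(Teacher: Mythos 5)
Your proof is correct. The paper itself gives no argument for this Fact: it presents it as an immediate restatement of the identity $V(A)=\mathrm{HOD}^{V(A)}_{V,\trcl(\{A\})}$, which in turn is asserted to follow ``by minimality'' (i.e.\ one checks that the hereditarily-$(V\cup\trcl(\{A\}))$-definable sets of $V(A)$ form a transitive $\ZF$ model containing $V\cup\{A\}$ and sandwiched inside $V(A)$). You instead unwind the definition $V(A)=\bigcup_\alpha L_\alpha(V_\alpha\cup\trcl(\{A\}))$ directly: locate $X$ in some $L_\gamma(S_\alpha)$, use the standard induction showing that elements of a relativized constructible hierarchy are definable over a level from ordinals and members of the (transitive) base set, and then internalize that level-by-level definition into $V(A)$ using the definability of $(S,\gamma)\mapsto L_\gamma(S)$ and of set-structure satisfaction, coding the finitely many $V$-parameters ($V_\alpha$, $\bar b$, the ordinals) into a single $v\in V$. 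The two routes prove the same thing; yours is more self-contained and, as you note, never needs $V$ to be a definable class of $V(A)$, whereas the HOD route quietly relies on the folklore verification that $\mathrm{HOD}$ relative to the parameter class $V\cup\trcl(\{A\})$ satisfies $\ZF$. The one point worth flagging is that your ``first ingredient'' induction does require the small amount of bookkeeping you allude to (absorbing the parameters of a newly defined set, each definable over possibly different levels, into a single definition over one higher level via the definability of lower levels and of satisfaction), but this is entirely standard and you have identified exactly where the care is needed.
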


Below we will often work with models of the form $V(A)$ where $A \in V[G]$, a generic extension of $V$. Often times the model $V(A)$ is in fact equal to \[\hod^{V[G]}_{V, \trcl(\{A\})}.\] This is generally the case for the models constructed in Section~\ref{section:construction} below (see Theorem~\ref{thm: M_alpha is symmetric extension}).

This is not always the case, and the distinction is important. A theorem of Gregorioff~\cite[Theorem C (ii)]{Grigorieff-1975} states that models of the form $\hod^{V[G]}_{V, \trcl(\{A\})}$ are precisely the symmetric submodels of $V[G]$. On the other hand, a theorem of Usuba~\cite[Lemma 4.8]{Usuba2021geology} states that $V(A)$ is always a symmetric submodel of \emph{some} generic extension of $V$, possibly not with the forcing we used to add $G$. 

In particular, in Section~\ref{section: construction in Cohen extension} we work in $V[c]$, where $c$ is a Cohen generic real over $V$, and construct class-many distinct intermediate extensions of the form $V(A)$. Since there are only set-many symmetric submodels of any given set forcing, these cannot all be of the form $\hod^{V[G]}_{V, \trcl(\{A\})}$.


\subsection{A basic permutation argument}
Let $V$ be a model of $\ZF$, $\mathbb{Q}\in V$ a poset, $I\in V$ an infinite index set, and let $\mathbb{P}$ be the finite support product of $I$-many copies of $\Q$.
We identify a $\P$-generic filter $G$ over $V$ with the corresponding indexed family $\seqq{G(i)}{i\in I}$ of $\Q$-generic filters over $V$.
Let $\dot{A}$ be the name for the unordered set of generics $\set{\dot{G}(i)}{i\in I}$.
\begin{remark}
In the applications below, our $\Q$-generics will be identified in a definable way with some other objects. For example, if $\Q$ is Cohen forcing for adding a single real, then we identify each $G(i)$ with the corresponding Cohen real and similarly identify $A$ with the set of the Cohen reals.
\end{remark}
The following lemma is
at the heart of the analysis of the basic Cohen model. See \cite[p. 133]{Felgner-ZF-set-thery-1971} or \cite[Proposition 1.2]{Blass-1981} in the context of the basic Cohen model, or \cite[Lemma 2.4]{Shani-2021}.
\begin{lemma}\label{lemma: basic permutation}
Fix a $\P$-generic $G$ over $V$ and let $A=\set{G(i)}{i\in I}$.
Let $\bar{a}=\seq{a_1,\dots,a_n}$ be a finite sequence of distinct members of $A$, $\phi$ a formula and $v\in V$ a parameter. Note that $\bar{a}$ is $\Q^n$-generic over $V$. Then there is a formula $\psi$ such that
\begin{equation*}
    \phi^{V(A)}(A,\bar{a},v)\iff \psi^{V[\bar{a}]}(\P,\bar{a},v)
\end{equation*}
\end{lemma}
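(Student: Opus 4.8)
The plan is to use the homogeneity of the finite support product $\mathbb{P}$ together with the automorphisms that permute the index set $I$ to "push" the formula $\phi^{V(A)}(A,\bar a,v)$ down to a statement that only mentions $\bar a$ and $\mathbb{P}$. First I would recall that, by Fact~\ref{fact: V(A) model}, satisfaction of $\phi(A,\bar a,v)$ in $V(A)$ is absolute in the following sense: since $V(A) = \hod^{V(A)}_{V,\trcl(\{A\})}$ and in fact (in this product setting) equals a symmetric submodel of $V[G]$, the statement $\phi^{V(A)}(A,\bar a,v)$ is equivalent to a forcing statement over $V$ about the symmetric name $\dot A$ and the parameters. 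Concretely, $\phi^{V(A)}(A,\bar a,v)$ holds iff some condition in $G$ forces $\phi^{V(\dot A)}(\dot A, \dot a_1,\dots,\dot a_n, \check v)$, where $\dot a_j$ is the canonical name for $G(i_j)$ and $a_j = G(i_j)$.

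The key step is then the symmetry argument. Write $\bar a = \seq{G(i_1),\dots,G(i_n)}$ with the $i_j \in I$ distinct, so that $\bar a$ is $\mathbb{Q}^n$-generic over $V$ (via the coordinates $i_1,\dots,i_n$). I claim that whether or not $\phi^{V(A)}(A,\bar a,v)$ holds is already decided by the restriction of $G$ to the coordinates $i_1,\dots,i_n$, i.e.\ by $\bar a$ itself. Indeed, suppose $p \in G$ forces $\phi^{V(\dot A)}(\dot A,\dot a_1,\dots,\dot a_n,\check v)$ and $q \in G$ forces its negation. Using a permutation $\sigma$ of $I$ that fixes $i_1,\dots,i_n$ and moves the remaining coordinates of $\supp(p)$ and $\supp(q)$ off of each other's supports, and using that such $\sigma$ induces an automorphism of $\mathbb{P}$ fixing $\dot A$ (as an unordered set) and fixing each $\dot a_j$, we get $\sigma(p) \parallel q$ while $\sigma(p)$ still forces $\phi$ and $q$ forces $\neg\phi$ — a contradiction. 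Hence the truth value of $\phi^{V(A)}(A,\bar a,v)$ depends only on $\bar a$ and $v$ together with the structure of $\mathbb{P}$; by the forcing theorem applied to $\mathbb{Q}^n$ over $V$, there is a formula $\psi$ (obtained by unpacking "some condition in the $\mathbb{Q}^n$-generic forces $\ldots$") such that $\phi^{V(A)}(A,\bar a,v) \iff \psi^{V[\bar a]}(\mathbb{P},\bar a,v)$, where $\psi$ has $\mathbb{P}$ as a parameter so that it can quantify over conditions, names, and the symmetry system.

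I expect the main obstacle to be bookkeeping the passage between the "internal" description of $V(A)$ (as $\hod$ inside $V(A)$, via Fact~\ref{fact: V(A) model}) and the "external" symmetric-names description inside $V[G]$: one needs that $\phi^{V(A)}(A,\bar a, v)$ is correctly captured by the symmetric forcing relation, which requires knowing $V(A) = \hod^{V[G]}_{V,\trcl(\{A\})}$ in the product case (this is the content referenced in Theorem~\ref{thm: M_alpha is symmetric extension}). A secondary technical point is making sure the induced automorphism $\sigma$ genuinely fixes the names $\dot a_j$ and not merely the set $\dot A$ — this is why one insists $\sigma$ fixes each index $i_j$ pointwise, not just the set $\{i_1,\dots,i_n\}$. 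Once these are in place, the displayed equivalence follows, with $\psi$ depending (effectively) on $\phi$, $n$, and the forcing $\mathbb{Q}$ (hence on $\mathbb{P}$).
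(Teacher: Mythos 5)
Your proof is correct and follows essentially the same route as the paper's: the truth value of $\phi^{V(\dot{A})}(\dot{A},\dot{a}_1,\dots,\dot{a}_n,\check{v})$ is decided by any condition agreeing with $\bar{a}$ on the coordinates $i_1,\dots,i_n$, via a finite-support permutation of $I$ fixing those coordinates pointwise (the paper moves the generic filter so as to contain $q$, whereas you move the condition $p$ to be compatible with $q$; these are dual formulations of the same homogeneity argument). Two small remarks: your sentence ``suppose $p\in G$ forces $\phi$ and $q\in G$ forces its negation'' should read ``suppose $p,q$ both agree with $\bar{a}$ on the coordinates $i_1,\dots,i_n$ and force opposite truth values'' --- two conditions in the same generic filter are already compatible, so the claim as literally stated is vacuous, though the permutation machinery you deploy is exactly what the correct claim requires. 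Also, the detour through $\hod^{V[G]}_{V,\trcl(\{A\})}$ and symmetric submodels is unnecessary: $V(A)$ is a definable inner model of $V[G]$ from $A$ and $V$, so $\phi^{V(\dot{A})}$ is an ordinary formula of the forcing language and the usual forcing theorem over $V$ applies directly, which is all the paper uses.
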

\begin{proof}
Let $k_1,\dots,k_n\in I$ such that $a_i=G(k_i)$. Let $\psi(\P,\bar{a},v)$ be the formula: ``$\exists p\in\P$ such that $p(k_i)\in a_i$ for $i=1,\dots,n$ and $p\force_\P\phi^{V(\dot{A})}(\dot{A},\dot{G}(k_1),\dots,\dot{G}(k_n),\check{v})$''. The following claim implies that \[\phi^{V(A)}(A,\bar{a},v)\iff \psi^{V[\bar{a}]}(\P,\bar{a},v),\] concluding the proof of the lemma.
\begin{claim}
Suppose $p,q\in\P$ are conditions which agree on $\bar{a}$, that is, $p(k_i),q(k_i)$ are in $a_i$ for $i=1,\dots,n$. Then $p,q$ cannot force different truth values for the statement $\phi^{V(\dot{A})}(\dot{A},\dot{G}(k_1),\dots,\dot{G}(k_n),\check{v})$.
\end{claim}
\begin{proof}
Otherwise, we may find a generic $G$ such that \[V[G] \models \phi^{V(A)}(A,G(k_1),\dots,G(k_n),v),\] and some condition $q$ which agrees with $G(k_1),\dots,G(k_n)$, such that \[q\force\neg\phi^{V(\dot{A})}(\dot{A},\dot{G}(k_1),\dots ,\dot{G}(k_n),\check{v}).\]
By applying a finite permutation of $I$ fixing $k_1,\dots,k_n$, find a generic $\tilde{G}$ such that
\begin{itemize}
    \item $\tilde{G}(k_i)=G(k_i)=a_i$;
    \item $\dot{A}[G]=\set{G(i)}{i\in I}=\set{\tilde{G}(i)}{i\in I}=\dot{A}[\tilde{G}]$;
    \item the condition $q$ is in the generic $\tilde{G}$.
\end{itemize}
Working now in the extension $V[\tilde{G}]$ (which is the same model as $V[G]$), we conclude that $\phi^{V(A)}(A,G(k_1),\dots,G(k_n),v)$ fails, a contradiction.
\end{proof}
\end{proof}

\section{Step up}\label{subsection: stepup}
In this section, we describe a construction to increase the Kinna-Wagner degree, similar to, but different than, Monro's \cite{Monro-1973}.
The definitions and results in this section will be crucial to the ``successor step'' of our construction below. In particular, the posets $\Q(W)$ and $\T(A, W)$ will be used later.

\begin{defn}
Let $V$ be a model of ZF and $W$ a set in $V$. Let $\Q=\Q(W)$ be the poset of finite partial functions from $\omega\times W$ to $\{0,1\}$, ordered by reverse extension.
\end{defn}
The $\Q$-generic object is identified with a sequence $\seqq{x_n}{n<\omega}$ where each $x_n$ is a subset of $W$, by identifying the set $x_n$ with its characteristic function.
\begin{center}
Let $[x_n]=\set{y\subset W}{y\Delta x_n\textit{ is finite}}$, and define $A=\set{[x_n]}{n\in\omega}$.    
\end{center}

\begin{lemma}[Basic step]\label{lemma: basic step}
The members of $A=\set{[x_n]}{n\in\omega}$ are indiscernible over parameters in $V$.
That is, for any formula $\phi$ and parameter $v\in V$, if $\Bar{a},\Bar{b}\subset A$ are finite tuples from $A$ of the same length such that $a_i=a_j\iff b_i=b_j$ for any $i,j<n$, then
\begin{equation*}
    V(A)\models    \phi(A,\Bar{a},v)\iff\phi(A,\Bar{b},v).
\end{equation*}
Moreover, the type of $\Bar{a}$ over $V$ is (uniformly) definable in $V$. That is, for any formula $\phi(A,\Bar{a},v)$ there is a formula $\psi(W,v)$ such that
\begin{equation*}
    V\models\psi(W,v)\iff V(A)\models\phi(A,\Bar{a},v).
\end{equation*}
\end{lemma}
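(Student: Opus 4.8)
The plan is a symmetric-extension argument, carried out directly with the forcing relation for the (definable) inner model $V(\dot{A})$. First I would fix the group $\mathcal{G}$ of automorphisms of $\Q=\Q(W)$ generated by the \emph{coordinate permutations} $\pi_\rho$, for $\rho$ a permutation of $\omega$, acting by $(\pi_\rho p)(n,w)=p(\rho^{-1}(n),w)$, together with the \emph{bit-flips} $\theta_e$, for $e$ a function on $\omega$ with $e(n)\in[W]^{<\omega}$, acting by toggling the value of $p$ at $(n,w)$ exactly when $w\in e(n)$. The basic observations are: every element of $\mathcal{G}$ fixes the canonical name $\dot{A}$ for $\set{[x_n]}{n<\omega}$; each bit-flip $\theta_e$ fixes every name $[\dot{x}_n]$ (since $x_n$ and $x_n\Delta e(n)$ lie in the same $\Delta$-finite class); and $\pi_\rho$ sends $[\dot{x}_n]$ to $[\dot{x}_{\rho(n)}]$. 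Hence the subgroup of $\mathcal{G}$ fixing $\dot{A}$ and each of $[\dot{x}_{n_1}],\dots,[\dot{x}_{n_k}]$ consists precisely of the maps $\theta_e\circ\pi_\rho$ with $\rho$ fixing $\{n_1,\dots,n_k\}$ pointwise and $e$ arbitrary.

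The crux is the following homogeneity claim: for every formula $\phi$, every $v\in V$, and all $n_1,\dots,n_k<\omega$, the trivial condition $\es$ already decides the statement $\Phi(\vec{n}):=\phi^{V(\dot{A})}(\dot{A},[\dot{x}_{n_1}],\dots,[\dot{x}_{n_k}],\check{v})$. Suppose $p\Vdash\Phi(\vec{n})$ and $q\Vdash\neg\Phi(\vec{n})$. Pick $\rho$ fixing $\{n_1,\dots,n_k\}$ pointwise which moves every first coordinate appearing in $\dom(q)$ but not in $\{n_1,\dots,n_k\}$ off of the finitely many first coordinates appearing in $\dom(p)$. Then $\pi_\rho(q)$ agrees with $q$ on the coordinates $n_1,\dots,n_k$ and is otherwise compatible with $p$, so the only conflicts between $\pi_\rho(q)$ and $p$ lie on coordinates among the $n_i$ and are finite; choose a bit-flip $\theta_e$ with $e$ supported on $\{n_1,\dots,n_k\}$ removing exactly those conflicts. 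Then $\sigma:=\theta_e\circ\pi_\rho$ lies in the stabilizer identified above, so by the symmetry lemma for forcing $\sigma(q)\Vdash\neg\Phi(\vec{n})$; but $\sigma(q)$ is compatible with $p\Vdash\Phi(\vec{n})$, a contradiction. A density argument now shows $\es$ decides $\Phi(\vec{n})$. Finally, a single coordinate permutation $\pi_\rho$ with $\rho(n_i)=m_i$ --- which exists exactly when $\langle n_1,\dots,n_k\rangle$ and $\langle m_1,\dots,m_k\rangle$ have the same equality pattern --- carries $\Phi(\vec{n})$ to $\Phi(\vec{m})$, so which way $\es$ decides depends only on $\phi$, on $v$, and on that pattern.

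Granting the claim the lemma follows quickly. With $\bar{a}=\langle[x_{n_1}],\dots,[x_{n_k}]\rangle$, $\bar{b}=\langle[x_{m_1}],\dots,[x_{m_k}]\rangle$, and $n_i=n_j\iff m_i=m_j$, the claim gives
\[
V(A)\models\phi(A,\bar{a},v)\iff\es\Vdash_{\Q}\Phi(\vec{n})\iff\es\Vdash_{\Q}\Phi(\vec{m})\iff V(A)\models\phi(A,\bar{b},v),
\]
the middle equivalence because $\vec{n},\vec{m}$ have the same equality pattern; this is the indiscernibility. For the ``moreover'' clause, fix $j_1,\dots,j_k<\omega$ realizing the equality pattern of $\bar{a}$, and let $\psi(W,v)$ be the formula ``$\es\Vdash_{\Q(W)}\phi^{V(\dot{A})}(\dot{A},[\dot{x}_{j_1}],\dots,[\dot{x}_{j_k}],\check{v})$''. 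Since $\Q(W)$ and the names $\dot{A},[\dot{x}_{j_i}]$ are definable from $W$, and the forcing relation for a formula relativized to the definable class $V(\dot{A})$ is definable over $V$, this $\psi$ is as required.

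The one delicate point is the homogeneity claim, specifically that the \emph{trivial} condition decides $\Phi(\vec{n})$, rather than merely some condition supported on $\{n_1,\dots,n_k\}\times W$; this is exactly where the bit-flip automorphisms are needed, reflecting that $[x_n]$ remembers $x_n$ only up to finite modification. Everything is done in $\ZF$: although $\Q(W)$ need not be well-orderable, neither the forcing theorem nor the permutation arguments need choice, and the permutations $\pi_\rho$ and bit-flips $\theta_e$ used above are exhibited explicitly.
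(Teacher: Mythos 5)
Your proof is correct and follows essentially the same strategy as the paper's: the two ingredients are identical (finite bit-flips, made available by passing to the mod-finite classes $[x_n]$, to show the trivial condition decides; coordinate permutations to show the decision depends only on the equality pattern). The only difference is presentational --- you phrase the argument via automorphisms of $\Q$ and the symmetry lemma, while the paper phrases it by moving the generic $\seqq{x_n}{n<\omega}$ to an equivalent generic $\seqq{y_n}{n<\omega}$ through a given condition --- which is the standard duality between the two formulations.
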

\begin{proof}
It suffices to prove the statement for tuples of distinct elements from $A$.
Let $\bar{a}=\seq{a_0,\dots,a_{n-1}}$ be a sequence of distinct $n$ members of $A$, $a_i=[x_{k_i}]$. Fix a formula $\phi$ and parameter $v\in V$.
For any condition $p\in\Q$, there is a $\Q$-generic over $V$, $\seqq{y_n}{n<\omega}$, such that
\begin{itemize}
    \item $y_n\Delta x_n$ is finite for each $n<\omega$;
    \item $\seqq{y_n}{n<\omega}$ extends $p$.
\end{itemize}
By the first condition, the set $A$ computed by $\seqq{y_n}{n<\omega}$ is the same set computed by $\seqq{x_n}{n<\omega}$. 
It follows that
\begin{equation*}
    V(A)\models \phi(A,\bar{a})\iff V\models \Q\force\phi(A,[\dot{x}_{k_0}],\dots,[\dot{x}_{k_{n-1}}],v).
\end{equation*}
Furthermore, for any $\Q$-generic $\vec{x}=\seqq{x_n}{n<\omega}$ over $V$ there is a $\Q$-generic $\vec{y}=\seqq{y_n}{n<\omega}$ over $V$ such that 
\begin{itemize}
    \item $\set{[y_n]}{n\in\omega}=\set{[x_n]}{n\in\omega}$;
    \item $y_i=x_{k_i}$.
\end{itemize}
It follows that, in $V$, \[\Q\force\phi(A,[\dot{x}_{k_0}],\dots,[\dot{x}_{k_{n-1}}],v)\iff \Q\force\phi(A,[\dot{x}_0],\dots,[\dot{x}_{n-1}],v).\]
Finally, let $\psi(W,v)$ be the statement $\Q\force\phi(\dot{A},[\dot{x}_0],\dots,[\dot{x}_{n-1}],v)$ (note that the poset $\Q$ is definable from $W$). Then for any sequence $\bar{a}=a_0,\dots,a_{n-1}$ of distinct members of $A$,
\begin{equation*}
    V\models\psi(W,v)\iff V(A)\models\phi(A,\bar{a},v).
\end{equation*}
\end{proof}

If $V$ is a choiceful model and $W=\omega$, then the model $V(\set{x_n}{n\in\omega})$, generated by the unordered set of Cohen reals, is essentially the \textit{basic Cohen model} (see \cite{Kanamori-Cohen-08} or \cite{Jech2003}). A weaker form of indiscernibility holds in this model (see the Continuity Lemma \cite[p.133]{Felgner-ZF-set-thery-1971}). Similar forms of indiscernibility hold in Monro's models (see \cite[Lemma 7.2]{Shani-2021}). 
The move from $x_n$ to the equivalence classes $a_n$ provides full indiscernibility and simplifies our construction below.

If one repeats the previous basic step twice, to find $A_1$, a symmetric $\Q(W)$-generic over $V$, and then to find $A_2$, a symmetric $\Q(A_1)$-generic over $V(A_1)$, then in the model $V(A_1)(A_2)$, the members of $A_1$ and $A_2$ are completely indiscernible over one another.
We will want to limit the indiscernibility, in a controlled way, by introducing a generic tree structure between the levels.

\begin{defn}
Following the notation of the previous lemma, working in $V(A)$, let $\T=\T(A, W)$ be the poset of all finite functions from $A$ to $W$, ordered by reverse inclusion. \end{defn}
The generic object is identified with a surjection $\pi\colon A\to W$, which we identify with a tree structure $T$ between $W$ and $A$. Specifically, for $w\in W$ and $a\in A$, $w<_T a$ if $\pi(a)=w$. Note that for each $w\in W$ there are infinitely many $a\in A$ above it in the tree.

\begin{lemma}[Basic tree step]\label{lemma: basic tree step}
The type of members of $A$ over $V$ is determined by the tree structure. That is, for any formula $\phi$ and parameter $v\in V$, for any $\Bar{a},\Bar{b}\subset A$, if
\begin{itemize}
    \item $\pi(a_i)=\pi(b_i)$ for all $i<n$, and
    \item $a_i=a_j\iff b_i=b_j$ for any $i,j<n$, then
\end{itemize}
\begin{equation*}
    V(A,T)\models    \phi(A,T,\Bar{a},v)\iff\phi(A,T,\Bar{b},v).
\end{equation*}
Moreover, the type of $\Bar{a}$ over $V$ is definable in $V$ from $W$ and the sequence $\bar{w}=\seqq{w_i}{i<n}$ where $w_i=\pi(a_i)$. That is, for any formula $\phi(A,\Bar{a},v)$ there is a formula $\psi(W,\bar{w},v)$ such that for any $\bar{a}$, if $w_i=\pi(a_i)$ then
\begin{equation*}
    V\models\psi(W,\bar{w},v)\iff V(A,T)\models\phi(A,T,\Bar{a},v).
\end{equation*}
\end{lemma}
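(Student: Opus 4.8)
The plan is to combine the permutation argument from the Basic step (Lemma~\ref{lemma: basic step}) with the extra structure given by $\pi$. First I would reduce to the case of tuples $\bar a = \langle a_0, \dots, a_{n-1}\rangle$ of \emph{distinct} members of $A$, since the hypothesis $a_i = a_j \iff b_i = b_j$ lets us pass freely between a tuple and its underlying set of distinct entries. Fix such an $\bar a$, with $a_i = [x_{k_i}]$ and $w_i = \pi(a_i)$. The key observation is that the combined forcing producing $(A,T)$ over $V$ factors as $\Q(W) * \dot\T(\dot A, W)$, and that the relevant data of a condition, as far as the tuple $\bar a$ is concerned, is: a finite partial function into $\{0,1\}$ (the $\Q(W)$-part, which by the Basic step does not affect $\phi^{V(A)}$ beyond the indices $k_i$) together with the finitely many values $\pi(a_i) = w_i$ (the $\T$-part restricted to $\bar a$).

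The main step is a homogeneity claim: if $p, q$ are conditions in $\Q(W) * \dot\T$ that \emph{agree on $\bar a$} --- meaning the $\Q$-parts agree on $\bar a$ in the sense of the Claim inside Lemma~\ref{lemma: basic step}, and the $\T$-parts both send $a_i \mapsto w_i$ --- then $p$ and $q$ force the same truth value for $\phi^{V(\dot A, \dot T)}(\dot A, \dot T, [\dot x_{k_0}], \dots, [\dot x_{k_{n-1}}], \check v)$. To prove this I would argue as in the Claim of Lemma~\ref{lemma: basic step}: given a generic realizing one truth value and a condition $q$ agreeing with $\bar a$ realizing the other, apply a permutation of $I \times W$ --- concretely, a finite permutation of the $A$-coordinates (permuting which $x_n$'s are used) composed with the adjustment by finite symmetric differences --- that fixes each $a_i$ \emph{and} fixes $\pi(a_i) = w_i$, fixes the unordered set $A$, fixes the tree $T$ (as an abstract relation, since we only permute elements of $A$ lying above the same $w$), and places $q$ into the new generic. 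Since such a permutation induces an automorphism of the whole two-step iteration that fixes $V$, $A$, $T$, and each $a_i$, it fixes the truth value of $\phi^{V(A,T)}$, giving the contradiction. This yields
\[
V(A,T)\models\phi(A,T,\bar a,v)\iff V\models \big(\text{some condition with $\T$-part }a_i\mapsto w_i\big)\force\phi^{V(\dot A,\dot T)}(\dot A,\dot T,[\dot x_{k_0}],\dots,[\dot x_{k_{n-1}}],\check v).
\]
Since a generic surjection $\pi\colon A\to W$ can be adjusted by a finite permutation of $A$ to take any prescribed finite pattern of values on distinct elements (while preserving $A$ and the isomorphism type of $T$), the right-hand side depends only on $W$ and $\bar w=\langle w_i : i<n\rangle$, not on the particular $\bar a$; calling this condition $\psi(W,\bar w,v)$ gives the ``moreover'' clause, and indiscernibility over tuples with $\pi(a_i)=\pi(b_i)$ and the same equality pattern follows immediately.

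The step I expect to be the main obstacle is setting up the permutations correctly at the level of the iteration $\Q(W)*\dot\T$: one must permute coordinates of the product $\Q(W)$ in a way that (i) preserves the unordered set $A$ and each designated $a_i$, (ii) lifts to an automorphism of the $\T(A,W)$-part that preserves the tree relation while changing $q$'s $\T$-part into one compatible with the generic, and (iii) is compatible with the "finite symmetric difference" moves used to absorb $q$'s $\Q$-part (as in Lemma~\ref{lemma: basic step}). The bookkeeping is that a permutation of $A$ used to realize a given value pattern of $\pi$ must permute only elements of $A$ sitting above the \emph{same} node $w\in W$ --- which is exactly what is needed so that $T$ is fixed --- and since each $w$ has infinitely many elements of $A$ above it, there is always enough room to perform these moves and swallow $q$. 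Once this is arranged the argument is a routine adaptation of Lemma~\ref{lemma: basic step}; I would present it by first proving the homogeneity claim and then deducing both the indiscernibility and the uniform definability of the type.
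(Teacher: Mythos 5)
Your overall strategy is sound and yields the lemma, but it is genuinely different from the paper's proof. The paper does not touch the two-step iteration $\Q(W)\ast\T(\dot{A},W)$ at this point: it treats $\T=\T(A,W)$ as a one-step forcing over the already-built model $V(A)$ and runs a density argument there. Concretely, it shows that the condition $t[\bar a,\bar w]$ with domain exactly $\{a_i\}$ and values $w_i$ already decides $\phi(A,\dot T,\bar a,v)$: if some extension $p$ with extra domain points $\bar b$ forces $\phi$, then by Lemma~\ref{lemma: basic step} (indiscernibility of members of $A$ over $V$, applied to the formula ``$p\force\phi$'' with $\bar b$ as the distinguished tuple) the modified conditions $p[\bar b']$ force $\phi$ for every $\bar b'$ disjoint from $\bar a$, and these are predense below $t[\bar a,\bar w]$. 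The ``moreover'' clause then comes from applying Lemma~\ref{lemma: basic step} once more to the statement ``$t[\bar a,\bar w]\force_{\T}\phi$'', viewed as a formula about $A,\bar a,\bar w,v$ in $V(A)$. Your route --- building automorphisms of the full iteration that fix $\dot A$, $\dot T$ and $\bar a$ and absorb an arbitrary condition into the generic --- is essentially what the paper does later in Section~\ref{subsubsection: permutations} (the maps $f_t$ and $g_\sigma=e_\sigma\circ a_\sigma$); it proves the lemma, but at the cost of setting up that machinery now, whereas the paper's argument reuses the Basic step as a black box and postpones the automorphism analysis.

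One point in your bookkeeping is off and should be repaired. For the homogeneity claim you assert that the permutations involved ``only permute elements of $A$ lying above the same $w$,'' and that this is why $T$ is fixed. That is correct for the second use of permutations (moving $\bar a$ to $\bar b$ when $\pi(a_i)=\pi(b_i)$: an automorphism fixing $T$ must preserve $\pi$, hence map fibers to fibers, and the hypothesis puts $a_i,b_i$ in the same fiber). But it cannot be correct for absorbing $q$: if the $\T$-part of $q$ demands $\pi([\dot x_m])=u$ while the generic has $\pi([x_m])=u'\neq u$, then the name $[\dot x_m]$ must be reinterpreted as an element of a \emph{different} fiber, the one over $u$. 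What preserves $T$ there is not that the permutation stays within fibers, but that the index permutation is applied simultaneously to the $\Q$-coordinates and to the $\T$-coordinates (the composition $e_\sigma\circ a_\sigma$ of Section~\ref{subsubsection: permutations}), so that the interpreted surjection $\pi$ --- as a function on actual elements of $A$, not on names --- is unchanged. With that correction, the infinitude of each fiber supplies the room you need, and the rest of your plan goes through.
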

\begin{proof}
Given a tuple $\bar{a}=\seq{a_0,\dots,a_{n-1}}$ of distinct elements from $A$ and a tuple $\bar{w}=\seq{w_0,\dots,w_{n-1}}$ from $W$, let $t[\bar{a},\bar{w}]\in\T$ be the condition with domain $\set{a_i}{i<n}$ such that $t[\bar{a},\bar{w}](a_i)=w_i$.
\begin{claim}
For any formula $\phi$ and parameter $v\in V$, for any tuple $\bar{a}$ of distinct elements in $A$, let $w_i=\pi(a_i)$, then
\begin{equation*}
    V(A,T)\models \phi(A,T,\bar{a},v)\iff V(A)\models t[\bar{a},\bar{w}]\force \phi(A,\dot{T},\bar{a},v).
\end{equation*}
\end{claim}
\begin{proof}
To prove the claim, we assume there is some extension $p$ of $t[\bar{a},\bar{w}]$ which forces $\phi(A,\dot{T},\bar{a},v)$, and show that $t[\bar{a},\bar{w}]$ already forces it. 

Let $\bar{a},\bar{b}$ be an enumeration of the domain of $p$, where $\bar{b}=\seq{b_0,\dots,b_{m-1}}$ are distinct members of $A$, not appearing in $\bar{a}$.
Let $w_i=\pi(a_i)$ for $i<n$ and $u_i=\pi(b_i)$ for $i<m$.
Then, in $V(A)$, the statement $p\force\phi(A,\dot{T},\bar{a},v)$ can be expressed by a formula $\chi(A,\bar{a},\bar{b},v,W,\bar{w},\bar{u})$.

By Lemma~\ref{lemma: basic step}, for any sequence $\bar{b}'=\seq{b'_0,\dots,b'_{m-1}}$ of distinct members of $A$, not appearing in $\bar{a}$, $\chi(A,\bar{a},\bar{b}',v,W,\bar{w},\bar{u})$ holds as well.
This means that the condition $p[\bar{b}']$ forces $\phi(A,\dot{T},\bar{a},v)$, where $p[\bar{b}']$ is the condition defined on the domain $\bar{a},\bar{b}'$ so that $p[\bar{b}'](a_i)=w_i$ and $p[\bar{b}'](b'_i)=u_i$. 

Finally, note that for any condition $q\in\T$, if $q$ extends $t[\bar{a},\bar{w}]$ then $q$ is compatible with $p[\bar{b}']$, for $\bar{b}'$ disjoint from the domain of $q$. So no condition extending $t[\bar{a},\bar{w}]$ can force the negation of $\phi(A,\dot{T},\bar{a},v)$. 
By the forcing theorem, $t[\bar{a},\bar{w}]\force \phi(A,\dot{T},\bar{a},v)$.
\end{proof}

Fix a formula $\phi$ and a parameter $v\in V$.
Working in $V(A)$, for a finite tuple $\bar{w}$ from $W$, let $\zeta(A,\bar{a},\bar{w},v)$ be the statement
\begin{equation*}
    t[\bar{a},\bar{w}]\force_\T \phi(A,\dot{T},\bar{a},v).
\end{equation*}
By the claim above, for any $\bar{a}$, if $w_i=\pi(a_i)$, then $\phi(A,T,\bar{a},v)$ holds in $V(A,T)$ if and only if $\zeta(A,\bar{a},\bar{w},v)$ holds in $V(A)$.
Using Lemma~\ref{lemma: basic step}, let $\psi(W,\bar{w},v)$ be a formula such that
\begin{equation*}
    V\models \psi(W,\bar{w},v)\iff V(A)\models\zeta(A,\bar{a},\bar{w},v).
\end{equation*}
Finally, for any $\bar{a}$ and $\bar{w}$ so that $w_i=\pi(a_i)$,
\begin{equation*}
        V\models \psi(W,\bar{w},v)\iff V(A)\models\zeta(A,\bar{a},\bar{w},v)\iff V(A,T)\models\phi(A,T,\bar{a},v).
\end{equation*}
This implies the first conclusion of the lemma, that if $\bar{b}$ is another tuple with $\pi(b_i)=\pi(a_i)=w_i$, then 
\begin{equation*}
    V(A,T)\models\phi(A,T,\bar{a},v)\iff \phi(A,T,\bar{b},v).
\end{equation*}
\end{proof}

\begin{remark}
For any finite $\bar{a}\subset A$, we may replace the base model $V$ with $V(\bar{a})$. If $a_i=[x_{k_i}]$, $i=0,\dots,n-1$, then
$V(\bar{a})=V[x_{k_0},\dots,x_{k_{n-1}}]$ where $x_{k_0},\dots,x_{k_{n-1}}$ is generic for the poset adding $n$ many generic subsets of $W$.
The sequence $\seqq{x_j}{j\neq k_i,\, i<n}$ is generic over $V(\bar{a})$ for the poset adding a function $(\omega\setminus \{k_0,\dots,k_{n-1}\})\times W \to \{0,1\}$ (which is isomorphic to $\Q(W)$), and $V(A)=V(\bar{a})(B)$ where $B=\set{[x_j]}{j\neq k_i,\, i<n}=A\setminus\bar{a}$.
Furthermore, $T\restriction B$ is identified with a $\T(B,W)$-generic over $V(\bar{a})(B)$, and $V(A,T)=V(\bar{a})(B)(T\restriction B)$.
\end{remark}

\begin{lemma}\label{lemma: tree stepup no subset of V}
Forcing with $\T$ over $V(A)$ adds no new subsets to sets of $V$.
\end{lemma}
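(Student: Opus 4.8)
The plan is to reduce the statement to the ``moreover'' part of Lemma~\ref{lemma: basic tree step}, which already tells us that whether $V(A,T)\models\phi(A,T,\bar a,v)$ — for a tuple $\bar a$ of members of $A$ and $v\in V$ — is computed inside $V$ from $W$ and the finite tuple of tree-values $\bar w=\seq{\pi(a_i)}$ (which itself lies in $V$). So the real work is just massaging ``$s\in X$'' into the form required by that lemma. Suppose $X\in V(A,T)$ with $X\subseteq S$ for some $S\in V$. By Fact~\ref{fact: V(A) model} (applied with $\{A,T\}$ in place of $A$), there is a formula defining $X$ in $V(A,T)$ whose parameters are: some $v\in V$, the sets $A$ and $T$, finitely many members $\bar a$ of $A$, and finitely many further parameters from $\trcl(\{A\})$. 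Inspecting $\trcl(\{A\})$, the last kind are — after absorbing $V$-parameters and Kuratowski codes into $v$ — subsets $y\subseteq W$ with $y\Delta x_n$ finite for some $n<\omega$. If no such ``deep'' parameter occurred, Lemma~\ref{lemma: basic tree step} would finish the proof: applying it for each $s$ to the formula $\phi(s,A,T,\bar a,v)$ yields $s\in X\iff V\models\psi(W,\bar w,s,v)$ with $\bar w\in V$, whence $X\in V\subseteq V(A)$.

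The main obstacle is precisely the deep parameters $\bar y$, which Lemma~\ref{lemma: basic tree step} does not accommodate; the fix I would use is to absorb them into the base model. For each $y_j$ (with $y_j\Delta x_{n_j}$ finite) note that $[y_j]=[x_{n_j}]\in A$; let $\bar a^*=\seq{a^*_1,\dots,a^*_r}$ enumerate a finite subset of $A$ containing $\bar a$ together with all these $[x_{n_j}]$, say $a^*_l=[x_{m_l}]$, and put $V^*:=V(\bar a^*)$. By the Remark following Lemma~\ref{lemma: basic tree step}, $V^*=V[x_{m_1},\dots,x_{m_r}]$, and, setting $B:=A\setminus\bar a^*$, we have $V(A)=V^*(B)$, $V(A,T)=V^*(B)(T\restriction B)$, and Lemma~\ref{lemma: basic tree step} applies over the base model $V^*$ with $B$ in the role of $A$ and $T\restriction B$ in the role of $T$. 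Since $x_{m_1},\dots,x_{m_r}\in V^*$, we get $\bar a,\bar a^*,\bar y\in V^*$. Moreover $A=B\cup\bar a^*$ is definable over $V^*(B)$ from $B$ and the parameter $\bar a^*\in V^*$; and since $\bar a^*\in V^*$ while each value $\pi(a^*_l)$ lies in $W\subseteq V^*$, the finite function $\pi\restriction\bar a^*$ lies in $V^*$, so $T$ — identified with $\pi=(T\restriction B)\cup(\pi\restriction\bar a^*)$ — is definable over $V^*(B)(T\restriction B)$ from $T\restriction B$ and the parameter $\pi\restriction\bar a^*\in V^*$.

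Substituting these definitions into the defining formula for $X$ expresses it, inside $V^*(B)(T\restriction B)=V(A,T)$, as $X=\set{s\in S}{V^*(B)(T\restriction B)\models\phi^*(s,B,T\restriction B,u)}$ for a single parameter $u\in V^*$ — and, crucially, with no members of $B$ and no further parameters from $\trcl(\{B\})$, because the old members-of-$A$ parameters $\bar a$ lie in $\bar a^*=A\setminus B$ and the old deep parameters $\bar y$ are members of members of $\bar a^*$, hence of neither $B$ nor any member of $B$. Now Lemma~\ref{lemma: basic tree step} over base model $V^*$, with $B$ for $A$, $T\restriction B$ for $T$, empty tuple $\bar a$, and parameter $(s,u)\in V^*$, gives a formula $\psi$ with $V^*(B)(T\restriction B)\models\phi^*(s,B,T\restriction B,u)\iff V^*\models\psi(W,s,u)$ for every $s$. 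Hence $X=\set{s\in S}{V^*\models\psi(W,s,u)}$ is definable in $V^*$ from parameters in $V^*$, so $X\in V^*$; and since $\bar a^*$ is a finite subset of $A$ we have $V^*=V(\bar a^*)\subseteq V(A)$, so $X\in V(A)$. Apart from the bookkeeping around the deep parameters, every step is a direct invocation of Lemma~\ref{lemma: basic tree step} and the Remark following it.
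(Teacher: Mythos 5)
Your proof is correct and follows essentially the same route as the paper's: handle the case where the defining parameters are just $A$, $T$, a tuple from $A$, and a set in $V$ via (the ``moreover'' clause of) Lemma~\ref{lemma: basic tree step}, then absorb the remaining parameters --- the representatives $\bar y$ --- into the base model $V(\bar a^*)$ using the Remark after that lemma and rerun the argument over $V(\bar a^*)$ with $B=A\setminus\bar a^*$. The paper compresses your ``deep parameter'' bookkeeping into the single phrase ``a parameter $v\in V(\bar a)$'', but the content is identical.
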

\begin{proof}
Let $T\subset\T$ be generic over $V(A)$ and suppose $X\in V(A, T)$ is a subset of $V$.
Assume first that $X$ is definable in $V(A, T)$ as the set of all solutions $\{x\in V \mid \phi(x, A,T,v)\}$.
By Lemma~\ref{lemma: basic tree step}, $x\in X$ if and only if $\T\force\phi(\check{x},A,\dot{T},v)$, in $V(A)$. So $X$ is in $V(A)$.

For an arbitrary $X\in V(A,T)$, there is some finite $\bar{a}\subset A$ and a parameter $v\in V(\bar{a})$ so that $X$ is defined, in $V(A)=V(\bar{a})(A\setminus\bar{a})$, as the set of all solutions $\phi(x,A,T,v)$ for $x\in V$.
From the previous argument it follows that $X\in V(\bar{a})(A\setminus\bar{a})=V(A)$.
\end{proof}

\section{The construction}\label{section:construction}
Begin with a model $V$ of $\ZF$, serving as the ground model for the construction. We will be particularly interested in a model $V$ satisfying $\ZFC$ and having many large cardinals.
We define recursively along the ordinals $\alpha$ sets $A_\alpha$, relations $T_\alpha$, and models $M_\alpha$ such that
\begin{itemize}
    \item $T_\alpha$ is a tree of height $\alpha$ whose $\beta$'th level is $A_\beta$;
    \item for $\alpha<\beta$, $T_\beta$ extends $T_\alpha$;
    \item Let $\theta$ be a limit ordinal. Define $A_{<\theta}=\bigcup_{\alpha<\theta}A_\alpha$, and let $T_{<\theta}$ be the tree on $A_{<\theta}$ defined as the union of $T_{\alpha}$ for $\alpha<\theta$. Then $A_\theta$ is a set of cofinal branches in $T_{<\theta}$;
    \item $M_\alpha=V(A_\alpha,T_\alpha)$.
\end{itemize}
For $\alpha<\beta$, the pair $(A_\alpha,T_\alpha)$ will be definable from $(A_\beta,T_\beta)$ and the ordinal $\alpha$, so $M_\beta$ is an extension of $M_\alpha$.
These sets will live in some generic extensions of $V$.

Let $A=\bigcup_{\alpha} A_\alpha$ and let $T$ be a tree on $A$ such that $A_\alpha$ is the $\alpha$-th level of $T$. Our final model will be $V(A, T)$. 
For $a \in A$, we will denote by $\proj_\beta(a)$ the unique element in $A_\beta$ below $a$, assuming that the level of $a$ is at least $\beta$. If the level of $a$ is below $\beta$, we set $\proj_\beta(a) = a$. 

Let us now give a formal definition of an iterated forcing such that $V(A, T)$ is an inner model of the generic extension. 

\begin{defn}[Definition of the construction]\label{definition: the iteration}
In the first stage, we define $A_1$ as follows.
\begin{itemize}
    \item $\Q_0=$ Cohen, adding $x_0\colon\omega\times\omega\to 2$;
    \item $x_0(n)=\set{m}{x_0(n,m)=1}$;
    \item $A_1(n)=[x_0(n)]=\set{y\subset\omega}{y\Delta x_0(n)\textrm{ is finite}}$;
    \item $A_1=\set{A_1(n)}{n\in\omega}$.
\end{itemize}
For the successor stages, suppose $A_\alpha$ is given, we construct $A_{\alpha+1}$ and the tree structure between $A_{\alpha}$ and $A_{\alpha+1}$.
\begin{itemize}
    \item $\Q_\alpha=\Q(A_\alpha)$ adds  $x_\alpha\colon\omega\times A_\alpha\to 2$ by finite conditions;
    \item $x_\alpha(n)=\set{a\in A_\alpha}{x_\alpha(n,a)=1}$;
    \item $A_{\alpha+1}(n)=[x_\alpha(n)]=\set{y\subset A_\alpha}{y\Delta x_\alpha(n)\textrm{ is finite}}$;
    \item $A_{\alpha+1}=\set{A_{\alpha+1}(n)}{n\in\omega}$.
    \item $\R_\alpha = \mathbb{T}(A_{\alpha+1},A_\alpha)$ adds $\pi_\alpha\colon A_{\alpha+1}\to A_\alpha$ by finite conditions;
    \item for $a\in A_\alpha$ and $b\in A_{\alpha+1}$, define $a<_Tb \iff \pi_\alpha(b)=a$. 
\end{itemize}
Given a limit ordinal $\lambda$, given a tree $\bigcup_{\alpha<\lambda} T_\alpha$ on $\bigcup_{\alpha<\lambda}A_\alpha$, we construct $A_\lambda$ as a set of branches as follows.
\begin{itemize}
    \item $\mathbb{B}_\lambda$ is the poset of finite functions from $\omega$ to the tree, where extension is defined by going up in the tree coordinate-wise or extending the domain.
    \item The generic filter of the forcing $\mathbb{B}_\lambda$ is essentially a sequence of branches $\seqq{b_n}{n<\omega}$;
    \item let $A_\lambda=\set{b_n}{n\in\omega}$, the unordered set of branches.
    \item We extend the tree order to $A_{\lambda}$ by defining $a <_T b$ for $a \in b \in A_\lambda$.
     
\end{itemize}
For a successor ordinal $\alpha$ we define $B_\alpha$ to be the trivial forcing.

Given $\Q_\alpha,\R_\alpha,\B_\alpha$ for $\alpha<\lambda$ as above define $\P_\lambda$ as the finite support iteration. Namely, $\mathbb{P}_0$ is the trivial forcing, $\mathbb{P}_{\alpha+1} = \mathbb{P}_\alpha \ast \mathbb{B}_\alpha\ast\mathbb{Q}_\alpha \ast \mathbb{R}_\alpha$ and for limit ordinal $\beta$, $\mathbb{P}_\beta$ is the direct limit of $\mathbb{P}_\alpha$ for $\alpha < \beta$.\footnote{For more information about iterated forcing, we refer the reader to \cite{Kunen2011}.} 
\end{defn}

The key property of the model is that the types of members of $A$ are determined by the tree structure.

\begin{defn}\label{defn: tree type}
Given $\Bar{a}=a_1,\dots,a_n$ in $A$, the \textbf{tree type} of $\Bar{a}$ is the structure $\left(n\times\mathrm{Ord},\approx,T,\seqq{L_\alpha}{\alpha\in\mathrm{Ord}}\right)$ defined by
\begin{itemize}
    \item $(i,\alpha)\approx (j,\beta)\iff \proj_\alpha(a_i)=\mathrm{proj}_{\beta}(a_j)$; 
    \item $(i,\alpha)\mathrel{T}(j,\beta)\iff \proj_\alpha(a_i)<_T \proj_\beta(a_j)$;
    \item $(i,\alpha)\in L_\alpha\iff \proj_\alpha(a_i)\in A_\alpha$.
\end{itemize}
\end{defn}
Let us remark that the tree type of $\Bar{a}$ is (essentially) a finite object, coding the levels of the elements of $\Bar{a}$, the equality relation on them, and the structure of the finite tree $<_T \restriction \Bar{a}$, including the level of their meets. 

Consider $M_\alpha=V(A_\alpha,T_\alpha)$. Suppose $\bar{a},\bar{b}$ are from $\bigcup_{\beta>\alpha}A_\beta$.
Say that $\bar{a},\bar{b}$ \textbf{have the same tree type over $A_\alpha$} if $\bar{a}^\frown\bar{u}$ and $\bar{b}^\frown\bar{u}$ have the same tree type, for any finite $\bar{u}\subset A_\alpha$.
For example, if $x,y$ are any two elements in the same level $A_\theta$, then $x$ and $y$ have the same tree type.
They have the same tree type over $A_\alpha$ if moreover, their projections to level $\alpha$ are the same. 

More generally, if $\bar a, \bar b$ are pairs of elements in $\bigcup_{\beta > \alpha} A_\beta$, then they have the same tree type over $A_\alpha$ if and only if $\bar{a}\smallfrown \bar{a'}$ and $\bar b \smallfrown \bar b'$ have the same tree type, where $\bar a'(k) = \proj_\alpha (\bar a(k))$, $\bar b'(k) = \proj_\alpha (\bar b(k))$ for all $k < \len \bar a$ or $\len \bar b$ respectively.

\begin{defn}\label{defn: tree indiscernibility}
The \textbf{tree indiscernibility hypothesis} is the following statement:
For any ordinal $\alpha$, parameter $v\in M_\alpha$, and formula $\phi$, suppose that $\bar{a},\bar{b}$ are from $\bigcup_{\beta>\alpha}A_\beta$ and have the same tree type over $A_\alpha$, then
\begin{equation*}
    \phi(A,T,\bar{a},v)\iff \phi(A,T,\bar{b},v).
\end{equation*}
\end{defn}
Our goal will be to prove the tree indiscernibility hypothesis inductively along our construction. In the process we will consider the hypothesis in models of the form $M_\theta=V(A_\theta, T_\theta)$, or $V(A_{<\theta}, T_{<\theta})$, in which case $A, T$ above are replaced with $A_\theta, T_\theta$ or $A_{<\theta}, T_{<\theta}$ respectively.

For $\theta'<\theta$, the formula $\phi^{M_{\theta'}}(A_{\theta'},T_{\theta'},\bar{a},v)$ can be expressed in the model $M_\theta$ as $\varphi(A_\theta,T_\theta,\bar{a},v,\theta')$ for some formula $\varphi$. It follows that the tree indiscernibility hypothesis at $M_\theta$ implies the tree indiscernibility hypothesis at $M_{\theta'}$ for $\theta'<\theta$.

\begin{lemma}[Propagation of tree indiscernibility]\label{lemma: indiscernibility propagation}
Using the notations $M_{\theta}=V(A_\theta,T_\theta)$ and the notion of tree indiscernibility from above: 
\begin{enumerate}
    \item {\bf Successor step:} Assume tree indiscernibility in $M_\theta$. Then tree indiscernibility holds in $M_{\theta+1}$.
    \item {\bf Limit step:} Let $\theta$ be a limit ordinal. Suppose the tree indiscernibility hypothesis holds at $M_\alpha$ for each $\alpha<\theta$. Then the tree indiscernibility hypothesis holds in the model $M_{<\theta} = V(A_{<\theta},T_{<\theta})$.
    \item {\bf Tree step:} Suppose the tree indiscernibility hypothesis holds in the model $M_{<\theta}$. Then it holds in $M_\theta$.
\end{enumerate}
\end{lemma}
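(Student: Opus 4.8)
The plan is to prove the three parts of Lemma~\ref{lemma: indiscernibility propagation} by leveraging the ``basic step'' machinery from Section~\ref{subsection: stepup}, specializing Lemmas~\ref{lemma: basic step} and~\ref{lemma: basic tree step} to the setting where the parameter $W$ is one of the $A_\alpha$'s, together with the observation (used repeatedly in Section~\ref{subsection: stepup}) that finitely many members of a level may be absorbed into the base model. Throughout, the strategy is to reduce a statement $\phi(A_{\theta+1},T_{\theta+1},\bar a,v)$ about the new model to a statement, with the new generic objects quantified away via the forcing relation, about the old model $M_\theta$, and then invoke the inductive tree indiscernibility there.

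For the \textbf{successor step}, I would argue as follows. Given a tuple $\bar a$ from $\bigcup_{\beta>\theta}A_\beta$, split it into the part lying in $A_{\theta+1}$ and the part lying strictly above; but in the construction there is nothing above $A_{\theta+1}$ yet in $M_{\theta+1}$, so in fact $\bar a$ consists of elements of $A_{\theta+1}$ together with (via projections) elements of $A_\theta$ and below. Absorbing the relevant finitely many members of $A_{\theta+1}$ into the base (using the Remark after Lemma~\ref{lemma: basic tree step}, which lets us write $M_{\theta+1}=M_\theta(\bar a\cap A_{\theta+1})(\text{rest})(T\restriction\text{rest})$), the truth of $\phi(A_{\theta+1},T_{\theta+1},\bar a,v)$ becomes, by Lemma~\ref{lemma: basic step} applied to $\Q(A_\theta)$ and Lemma~\ref{lemma: basic tree step} applied to $\T(A_{\theta+1},A_\theta)$, equivalent to a statement $\psi(A_\theta, \bar w, v)$ in $M_\theta$, where $\bar w$ is the tuple of $T$-projections of the members of $\bar a\cap A_{\theta+1}$ to level $\theta$ (together with the elements of $\bar a$ already at levels $\le\theta$). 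Two tuples $\bar a,\bar b$ from $\bigcup_{\beta>\theta}A_\beta$ with the same tree type over $A_\alpha$ (for $\alpha\le\theta$) produce the same combinatorial data and projection-tuples $\bar w$ up to the tree type over $A_\alpha$; by the inductive tree indiscernibility hypothesis in $M_\theta$, $\psi(A_\theta,\bar w,v)$ has the same truth value for both, giving the conclusion.

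For the \textbf{limit step}, the point is that $M_{<\theta}=V(A_{<\theta},T_{<\theta})$ and any particular statement $\phi(A_{<\theta},T_{<\theta},\bar a,v)$ involves only finitely many members $\bar a$, all living at levels $<\theta$, and a parameter $v\in M_\alpha$ for some $\alpha<\theta$; by the absorption remark and finiteness of the relevant support, the statement is decided inside some $M_{\theta'}$ with $\alpha<\theta'<\theta$ (one needs here that $A_{<\theta}, T_{<\theta}$ restricted to the relevant finite data, plus the way the whole structure sits above $M_{\theta'}$, is definable in a uniform way — this is where one uses that the construction only ``adds on top'' and that $(A_{\theta'},T_{\theta'})$ is definable in $(A_{<\theta},T_{<\theta})$ from $\theta'$). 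Then two tuples with the same tree type over $A_\alpha$ in $M_{<\theta}$ have, for large enough $\theta'<\theta$, the same tree type over $A_\alpha$ as computed in $M_{\theta'}$, and the inductive hypothesis at $M_{\theta'}$ finishes it.

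For the \textbf{tree step}, we pass from $M_{<\theta}$ to $M_\theta=M_{<\theta}(A_\theta)$ where $A_\theta=\{b_n:n<\omega\}$ is the unordered set of $\B_\theta$-generic branches through $T_{<\theta}$. This is again a ``Monro-like'' step: $\B_\theta$ is a finite-support product of copies of the branch-adding forcing, and one proves an indiscernibility lemma for the $b_n$'s over $M_{<\theta}$ exactly parallel to Lemma~\ref{lemma: basic step} (permuting the coordinates $n$ and modifying a branch on a finite initial segment does not change $A_\theta$). Absorbing finitely many of the $b_n$ appearing in $\bar a$ into the base model, the truth of $\phi(A_\theta,T_\theta,\bar a,v)$ is decided by a condition determined by the finite tree type of $\bar a$ over $A_{<\theta}$ — i.e.\ by where the relevant branches currently sit below level $\theta$ — and reduces to a statement in $M_{<\theta}$ about the projection-tuple; the inductive tree indiscernibility in $M_{<\theta}$ then yields the conclusion. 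The main obstacle, and the step deserving the most care, is the \emph{tree step}: one must check that modifying a generic branch on a finite part and permuting branches genuinely leaves $A_\theta$ (as an unordered set) and $T_\theta$ unchanged while realizing any prescribed finite tree type, and that ``the same tree type over $A_\alpha$'' for tuples of branches is exactly the combinatorial invariant controlled by the branch-adding forcing's conditions; the limit step's subtlety (uniform definability of the tail of the construction over each $M_{\theta'}$) is comparatively routine but must be stated carefully.
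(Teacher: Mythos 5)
Your successor step matches the paper's argument: split $\bar a$ into the part in $A_{\theta+1}$ and the part at levels $\le\theta$, use Lemma~\ref{lemma: basic tree step} to replace the new elements by their projections $\bar w$ to level $\theta$, and apply the inductive hypothesis in $M_\theta$ to $\bar a_0,\bar w$ versus $\bar b_0,\bar u$. The other two parts, however, each have a genuine gap.

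In the tree step, your mechanism is wrong: you assert an analogue of Lemma~\ref{lemma: basic step} in which ``modifying a branch on a finite initial segment does not change $A_\theta$.'' A cofinal branch through $T_{<\theta}$ is determined by its entire path; changing it at any level produces a \emph{different} branch, hence a different member of $A_\theta$, so there is no finite-modification automorphism here (that feature belongs to the $\Q(W)$ step, where elements are mod-finite classes). The only automorphisms of $\B_\theta$ fixing $A_\theta$ are coordinate permutations, which give you (via Lemma~\ref{lemma: basic permutation}) the reduction of $\phi(A_\theta,T_\theta,\bar b,v)$ to the forcing relation over $M_{<\theta}$. The essential missing step is then the paper's Claim that if $p\force\psi$ and the entries of $p$ have distinct projections to level $\alpha$, then $p\restriction\alpha\force\psi$; this is proved not by an automorphism of $\B_\theta$ but by applying the inductive tree indiscernibility of $M_{<\theta}$ to the conditions themselves, viewed as finite tuples of nodes of $A_{<\theta}$ with the same tree type over $A_\alpha$. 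Your phrase ``decided by a condition determined by the finite tree type of $\bar a$ over $A_{<\theta}$'' would not suffice even if justified: two tuples of branches with the same tree type over $A_\alpha$ generally have different projections at levels between $\alpha$ and $\theta$, so one must push the deciding condition all the way down to level $\alpha$.

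In the limit step, the claim that $\phi^{M_{<\theta}}(A_{<\theta},T_{<\theta},\bar a,v)$ ``is decided inside some $M_{\theta'}$'' is exactly the nontrivial content, and uniform definability of the tail of the construction does not deliver it: the quantifiers of $\phi$ range over all of $M_{<\theta}$, which is a proper extension of every $M_{\theta'}$, and a priori the truth value could depend on which $\P_{\theta'}^{\theta}$-generic produced $A_{<\theta},T_{<\theta}$, not just on $M_{\theta'}$ and $\bar a$. What is needed is weak homogeneity of the quotient iteration $\P_{\theta'}^{\theta}$ witnessed by automorphisms fixing all the names $\dot A_\gamma,\dot T_\gamma$ (property~(\ref{property: rich automorhpisms}) of Section~\ref{subsection: limit-step}, established by the explicit permutations $f^\gamma_t, g^\gamma_\sigma, b^\gamma_\sigma$ of Section~\ref{subsubsection: permutations}), so that truth in $M_{<\theta}$ equals ``forced by the trivial condition over $M_{\theta'}$,'' which is then a statement $\psi(A_{\theta'},T_{\theta'},\bar a,v)$ amenable to the inductive hypothesis. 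Your writeup inverts the difficulty: this homogeneity verification is the real work of the limit step, not a routine matter of the construction ``adding on top.''
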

We first focus on a step-by-step approach to the construction. We prove part (3) in
Section~\ref{subsection: tree step}. This is the main point where our tree structure is being used: to allow us to add sets of limit rank (branches through the tree) without adding sets of lower rank, and in fact while preserving the indiscernibility.
Part (1) is proved in Section~\ref{section: basic lemmas for successor step}.
Finally, we prove part (2) of the lemma in Section~\ref{section:construction}. 

Before proving Lemma~\ref{lemma: indiscernibility propagation}, let us collect important properties of the final model $V(A, T)$ which follow from it. We will also show inductively that sets of bounded rank are stabilized through the construction.
\begin{prop}\label{prop: stabilization of initial ranks}
For any ordinals $\alpha<\beta$, if $X\in M_\beta$ and $X\subset M_\alpha$, then $X\in M_{\alpha+1}$. In particular, for $\alpha<\beta$, $M_\alpha$ and $M_\beta$ agree on $\mathcal{P}^\alpha(\mathrm{Ord})$.
\end{prop}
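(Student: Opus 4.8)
The plan is to prove the first assertion by induction on $\beta$, and to deduce the second as a corollary. For a single successor step, suppose $X \in M_{\theta+1} = V(A_{\theta+1}, T_{\theta+1})$ and $X \subseteq M_\alpha$ for some $\alpha \leq \theta$. The goal is to show $X \in M_{\alpha+1}$. By Fact~\ref{fact: V(A) model}, $X$ is definable in $M_{\theta+1}$ from $A_{\theta+1}$, $T_{\theta+1}$, a finite tuple $\bar a$ from $A_{\theta+1} \cup T_{\theta+1}$ (equivalently, from finitely many elements of $A_{\theta+1}$, since the tree on $\trcl(\{A_{\theta+1},T_{\theta+1}\})$ is coded by them together with parameters from lower levels), and a parameter $v \in V$. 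Now $M_{\theta+1}$ is obtained from $M_\theta$ first by a Monro-style step $\mathbb{Q}(A_\theta)$ producing $A_{\theta+1}$, then by the tree step $\mathbb{T}(A_{\theta+1}, A_\theta)$ producing the $\pi_\theta$-structure. By Lemma~\ref{lemma: tree stepup no subset of V}, the tree step adds no new subsets of sets in $M_\theta$; so it suffices to handle the Monro step, i.e.\ to show that a subset of $M_\alpha$ lying in $V(A_\theta)(A_{\theta+1})$ already lies in $M_{\alpha+1}$. [Here I am using $M_\theta = V(A_\theta, T_\theta)$ as the base and applying the Remark after Lemma~\ref{lemma: basic tree step} to peel off the finitely many members of $A_{\theta+1}$ named in $\bar a$, absorbing them into the base model.]

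For the Monro step itself, the key is the indiscernibility provided by Lemma~\ref{lemma: basic step} (the Basic step), or more precisely its tree-sensitive refinement Lemma~\ref{lemma: basic tree step}, together with the tree indiscernibility hypothesis at $M_\theta$ (Lemma~\ref{lemma: indiscernibility propagation}, which is being proved in parallel — I may assume it as stated in the excerpt). With $\bar a$ absorbed into the base, $X$ is definable in $M_{\theta+1}$ as $\{x : \phi^{M_{\theta+1}}(x, A_{\theta+1}, T_{\theta+1}, v')\}$ for a parameter $v'$ in a model of the form $M_\theta(\bar a)$. Fix $x \in M_\alpha$. Whether $x \in X$ is decided by a condition in $\mathbb{Q}(A_\theta) * \mathbb{R}_\theta$; but by the indiscernibility lemmas the truth value of $\phi^{M_{\theta+1}}(x, A_{\theta+1}, T_{\theta+1}, v')$ depends only on the tree type of the finite tuple of members of $A_{\theta+1}$ appearing in a deciding condition over the parameter $v'$, and — crucially — this truth value is already computable inside $M_\theta(\bar a)$, hence inside $M_{\alpha+1}(\bar a')$ where $\bar a'$ is the (bounded-rank) projection data of $\bar a$ down to level $\alpha$. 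More carefully: by Lemma~\ref{lemma: basic tree step}, $x \in X$ iff a certain formula $\psi(A_\theta, \bar w, x, v')$ holds in $M_\theta$, where $\bar w$ ranges over level $\theta$; but since $x$ and $v'$ have rank bounded by $\alpha$ (after absorbing $\bar a$, $v'$ sits in $M_{\alpha+1}$ together with the level-$\alpha$ projections of $\bar a$ — this uses the inductive stabilization), the tree indiscernibility hypothesis at $M_\theta$ collapses the dependence on $\bar w$ to a dependence only on the tree type of $\bar w$ over $A_\alpha$, which is a finite object. Hence the map $x \mapsto (x \in X)$ is definable over $M_{\alpha+1}$ (with the finitely many absorbed parameters, which themselves reside in $M_{\alpha+1}$ by the inductive hypothesis applied to their transitive closures), giving $X \in M_{\alpha+1}$.

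For limit $\beta$: if $X \in M_\beta = V(A_\beta, T_\beta)$ with $X \subseteq M_\alpha$, then $X$ is definable from $A_\beta, T_\beta$ and finitely many parameters from $\trcl(\{A_\beta, T_\beta\})$; these parameters lie in $M_{\gamma}$ for some $\gamma < \beta$ with $\gamma > \alpha$ (if $\beta$ is a limit of the indexing), or in the tree-step / branch-step analysis if $\beta$ itself is produced by $\mathbb{B}_\beta$. In the branch-step case this is exactly the content of part (3) of Lemma~\ref{lemma: indiscernibility propagation} combined with the observation that branches through $T_{<\beta}$ add no bounded-rank sets (the analog of Lemma~\ref{lemma: tree stepup no subset of V} for $\mathbb{B}_\beta$, proved by the eventual-difference property of distinct branches and the finite-support structure of $\mathbb{B}_\beta$). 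In the direct-limit case, $X \in M_\gamma$ for some $\alpha < \gamma < \beta$, and the inductive hypothesis gives $X \in M_{\alpha+1}$. Finally, the ``in particular'' clause: $\mathcal{P}^\alpha(\mathrm{Ord})$, as computed in any $M_\beta$ with $\beta > \alpha$, consists of sets $X$ with $X \subseteq \mathcal{P}^{\alpha-1}(\mathrm{Ord})^{M_\beta}$ (or $X \subseteq \mathrm{Ord}$ when $\alpha$ is a successor, or a union when $\alpha$ is a limit), and a downward induction on $\alpha$ using the first assertion shows $\mathcal{P}^{<\alpha}(\mathrm{Ord})^{M_\beta} \subseteq M_\alpha$, whence the two models compute $\mathcal{P}^\alpha(\mathrm{Ord})$ identically.

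The main obstacle I anticipate is the bookkeeping of \emph{parameters}: a set $X \in M_\beta$ is defined using arbitrary finite tuples from $\trcl(\{A_\beta, T_\beta\})$, which includes tree nodes at all levels up to $\beta$, not just generators at a single level; one must carefully argue — using the uniformity of the tree and the Remark after Lemma~\ref{lemma: basic tree step} — that all such parameters can be absorbed into a base model of the form $M_{\alpha+1}(\text{finite data of rank} \leq \alpha)$, and that the residual dependence on high-level nodes is eliminated by tree indiscernibility. Getting the induction hypothesis strong enough to carry the absorbed parameters (so that they themselves are stabilized) is the delicate point, and is the reason the statement is phrased for all $X \subseteq M_\alpha$ rather than just $X \subseteq \mathcal{P}^\alpha(\mathrm{Ord})$.
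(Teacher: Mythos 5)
The paper itself never writes out a proof of Proposition~\ref{prop: stabilization of initial ranks} --- it is presented as something that will be ``shown inductively'' as a consequence of the indiscernibility machinery --- so you are supplying an argument the authors leave implicit. Your overall architecture (induction on $\beta$; peel off the tree step with Lemma~\ref{lemma: tree stepup no subset of V}; absorb finitely many parameters into the base; use indiscernibility to reduce the dependence on high-level parameters to their tree type over $A_\alpha$) is the intended one. But there is a genuine gap at the single most important step, the phrase ``this truth value is already computable inside $M_\theta(\bar a)$, \emph{hence} inside $M_{\alpha+1}(\bar a')$.'' The one-step ``Moreover'' clauses of Lemmas~\ref{lemma: basic step} and~\ref{lemma: basic tree step} only push a statement down a single level, and iterating them from $\theta$ to $\alpha+1$ is not a uniform definition over $M_{\alpha+1}$: the formula changes at each step and the descent breaks at intermediate limit levels, where $V(A_{<\lambda},T_{<\lambda})$ is \emph{not} the union of the $M_\gamma$, $\gamma<\lambda$ (a set there may be defined using $A_{<\lambda}$ itself as a parameter). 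The mechanism the paper builds precisely for this jump is the quotient iteration $\P_{\alpha+1}^\beta$ together with its weak homogeneity fixing $\dot A_\gamma,\dot T_\gamma$ (properties (\ref{property:constructed from generic}) and (\ref{property: rich automorhpisms}) of Section~\ref{subsection: limit-step}): one writes $u\in X$ iff $M_{\alpha+1}\models \P_{\alpha+1}^\beta\force\phi(\dots)$, with the parameters above level $\alpha+1$ replaced by their (finite) type data, exactly as in the proof of the Limit step of Lemma~\ref{lemma: indiscernibility propagation}. Your sketch never invokes these quotients, and without them the ``hence'' does not go through; the same omission affects your limit case, where ``the inductive hypothesis gives $X\in M_{\alpha+1}$'' presupposes $X\in M_\gamma$ for some $\gamma<\beta$, which is itself what needs the quotient argument.

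A second, smaller gap: by Fact~\ref{fact: V(A) model} the defining parameters of $X$ range over all of $\trcl(\{A_\beta,T_\beta\})$, which contains not only tree nodes but also \emph{representatives} --- sets $x$ with $[x]\in A_{\gamma+1}$, i.e.\ subsets of $A_\gamma$ that are not themselves nodes. The tree indiscernibility of Definition~\ref{defn: tree indiscernibility} says nothing about these; you need the full-type indiscernibility of Section~\ref{subsection: full indiscernibility} (or at least Lemma~\ref{lemma: stronger basic tree step}) to see that a representative at level $\gamma+1>\alpha+1$ has trivial type over $A_\alpha$ and can be eliminated, while a representative at level $\alpha+1$ is a subset of $A_\alpha$ and hence an allowable parameter of $M_{\alpha+1}$ --- this last point is exactly why the conclusion is $X\in M_{\alpha+1}$ rather than $X\in M_\alpha$, and your write-up never isolates it. With the quotient forcings and the full types put in explicitly, your induction does close up.
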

This implies that the Kinna-Wagner degree is increasing. Recall that two transitive models with Kinna-Wagner degree $\alpha$ which agree on $\mathcal{P}^{\alpha+1}(\Ord)$ are identical. For $\alpha = 0$, this is a theorem of Balcar and Vopenka: two transitive ZFC models which agree on sets of ordinals are identical. The generalization is stated for $\alpha<\omega$ in \cite{Monro-1973} and for all ordinals in \cite[Theorem~10.3]{Karagila-iterating-2019}.
\begin{cor}
    The Kinna-Wagner degree of the models $M_\alpha$ is unbounded.
\end{cor}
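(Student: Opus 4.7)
The plan is to derive the Corollary directly from Proposition~\ref{prop: stabilization of initial ranks} together with Remark~\ref{remark: coding injective map}, in the spirit of the Balcar--Vopenka argument mentioned just before the statement. More precisely, I aim to prove the stronger claim that for every ordinal $\alpha$, the Kinna--Wagner degree of $M_\beta$ exceeds $\alpha$ for every $\beta \geq \alpha+2$; unboundedness then follows by letting $\alpha$ vary.

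First, I will upgrade Proposition~\ref{prop: stabilization of initial ranks} to the equality $\mathcal{P}^{\alpha+1}(\mathrm{Ord})^{M_\beta} = \mathcal{P}^{\alpha+1}(\mathrm{Ord})^{M_{\alpha+1}}$ for $\beta > \alpha+1$. Indeed, any $X \in \mathcal{P}^{\alpha+1}(\mathrm{Ord})^{M_\beta}$ is a set in $M_\beta$ whose members lie in $\mathcal{P}^{\alpha}(\mathrm{Ord})^{M_\beta} = \mathcal{P}^{\alpha}(\mathrm{Ord})^{M_\alpha} \subseteq M_\alpha$, so the Proposition places $X$ in $M_{\alpha+1}$; the reverse inclusion follows from $M_{\alpha+1} \subseteq M_\beta$.

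Next, I will assume for contradiction that the Kinna--Wagner degree of $M_\beta$ is at most $\alpha$ for some $\beta \geq \alpha+2$. Fix any $X \in M_\beta$: by hypothesis, there is an injection in $M_\beta$ from $\trcl(\{X\})$ into $\mathcal{P}^{\alpha}(\mathrm{Ord})$, and Remark~\ref{remark: coding injective map} converts this into a well-founded relation $R \in \mathcal{P}^{\alpha+1}(\mathrm{Ord})^{M_\beta}$ whose Mostowski collapse recovers $\trcl(\{X\})$. By the previous step, $R \in M_{\alpha+1}$, and since the Mostowski collapse of a set-sized well-founded relation is absolute between transitive $\ZF$-models, $X \in M_{\alpha+1}$. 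Hence $M_\beta \subseteq M_{\alpha+1}$, so $M_\beta = M_{\alpha+1}$.

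To close the contradiction, I will exhibit an element of $M_\beta \setminus M_{\alpha+1}$. The obvious candidate is $A_{\alpha+2}$: by Definition~\ref{definition: the iteration}, this set is read off from the $\Q_{\alpha+1}$-generic added at stage $\alpha+2$, so it lies in $M_{\alpha+2} \subseteq M_\beta$ but not in $V[G_{\alpha+1}] \supseteq M_{\alpha+1}$. The only substantive point to check is the strict containment $M_{\alpha+1} \subsetneq M_{\alpha+2}$, a routine genericity argument using the nontriviality of $\Q_{\alpha+1} = \Q(A_{\alpha+1})$; I do not anticipate any further obstacle, as absoluteness of the Mostowski collapse and this genericity step are both standard.
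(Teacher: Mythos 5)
Your proof is correct, and at its core it runs on the same engine as the paper's: stabilization of $\mathcal{P}^{\alpha+1}(\mathrm{Ord})$ (Proposition~\ref{prop: stabilization of initial ranks}) plus a Balcar--Vop\v{e}nka-style rigidity argument, contradicted by the new generic objects at the next stage. The difference is in the packaging. The paper argues by pigeonhole: if the degrees were bounded, two models $M_\alpha\subsetneq M_\beta$ would share a degree $\gamma<\alpha$, and it then invokes the generalized Balcar--Vop\v{e}nka theorem (citing \cite[Theorem~10.3]{Karagila-iterating-2019}) as a black box to conclude $M_\alpha=M_\beta$. You instead inline the proof of exactly the instance you need, using Remark~\ref{remark: coding injective map} to code $\trcl(\{X\})$ as a well-founded relation in $\mathcal{P}^{\alpha+1}(\mathrm{Ord})$ and pulling $X$ down into $M_{\alpha+1}$ via absoluteness of the Mostowski collapse --- this is precisely what that remark is in the paper for (it is used the same way in Corollary~\ref{cor: unbdd KW for V(A, T)}). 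Your version buys a sharper, quantitative statement --- the Kinna--Wagner degree of $M_\beta$ exceeds $\alpha$ whenever $\beta\geq\alpha+2$ --- and avoids the pigeonhole step entirely, at the cost of redoing a cited argument. The one point you leave as ``routine,'' that $A_{\alpha+2}\notin M_{\alpha+1}$, is also the point the paper leaves implicit (``$M_{\alpha+1}$ contains sets which are generic over $M_\alpha$''); it is settled by transitivity: if $A_{\alpha+2}\in M_{\alpha+1}$ then some representative $x_{\alpha+1}(n)$, a nontrivial $\Q(A_{\alpha+1})$-generic subset of $A_{\alpha+1}$ over $M_{\alpha+1}$, would lie in $M_{\alpha+1}$, which is impossible.
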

\begin{proof}
    Otherwise, we could find $\alpha<\beta$ where both $M_\alpha, M_\beta$ have Kinna-Wagner degree $\gamma$ for some $\gamma < \alpha$. Since $M_\alpha, M_\beta$ agree on $\mathcal{P}^{\gamma}(\mathrm{Ord})$, we conclude, by the generalized Balcar-Vopenka theorem, that $M_\alpha = M_\beta$, a contradiction, as $M_{\alpha+1}\subset M_\beta$, and $M_{\alpha+1}$ contains sets which are generic over $M_\alpha$.
\end{proof}
\begin{cor}\label{cor: unbdd KW for V(A, T)}
    The Kinna-Wagner degree of $V(A, T)$ is $\infty$.
\end{cor}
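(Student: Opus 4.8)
The plan is to argue by contradiction. Suppose the Kinna-Wagner degree of $V(A,T)$ equals some ordinal $\gamma$. I will show that this forces $V(A,T)=M_{\gamma+1}$, which is absurd: $M_{\gamma+1}\subsetneq M_{\gamma+2}\subseteq V(A,T)$, because $M_{\gamma+2}$ contains generic objects over $M_{\gamma+1}$ (e.g.\ the set $A_{\gamma+2}$), exactly as used in the proof of the preceding corollary. So the whole task reduces to proving $V(A,T)\subseteq M_{\gamma+1}$.

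To that end, fix $y\in V(A,T)$ and let $X=\trcl(\{y\})$, a transitive set of $V(A,T)$. Since the Kinna-Wagner degree of $V(A,T)$ is $\gamma$, there is an injection $f\colon X\to\mathcal{P}^\gamma(\Ord)$ lying in $V(A,T)$. Applying Remark~\ref{remark: coding injective map} inside $V(A,T)$, we obtain a well-founded extensional relation coded as a subset $R$ of $\mathcal{P}^\gamma(\Ord)$ — that is, $R\in\mathcal{P}^{\gamma+1}(\Ord)$ — whose Mostowski collapse is $X$. It therefore suffices to push $R$ down into $M_{\gamma+1}$: the Mostowski collapse of $R$ is absolute between transitive models of $\ZF$ containing $R$, so from $R\in M_{\gamma+1}$ we recover $X\in M_{\gamma+1}$, hence $y\in M_{\gamma+1}$. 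To push $R$ down, recall that every set of $V(A,T)$ appears in some $M_\delta$, and we may take $\delta$ as large as we wish since $M_\delta\subseteq M_{\delta'}$ for $\delta<\delta'$; fix such $\delta>\gamma+1$ with $R\in M_\delta$. Membership in $\mathcal{P}^\gamma(\Ord)$ is absolute between $M_\delta$ and $V(A,T)$ (transitive models of $\ZF$), so the elements of $R$, lying in $\mathcal{P}^\gamma(\Ord)$ as computed in $V(A,T)$ and in $M_\delta$ by transitivity, in fact lie in $\mathcal{P}^\gamma(\Ord)^{M_\delta}$; thus $R\in\mathcal{P}^{\gamma+1}(\Ord)^{M_\delta}$. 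By Proposition~\ref{prop: stabilization of initial ranks}, $M_{\gamma+1}$ and $M_\delta$ agree on $\mathcal{P}^{\gamma+1}(\Ord)$, so $R\in M_{\gamma+1}$, completing the argument.

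The only step that goes beyond the results already in hand is the passage from Proposition~\ref{prop: stabilization of initial ranks} — which compares two models $M_\alpha$, $M_\beta$ — to the comparison of $M_{\gamma+1}$ with the class model $V(A,T)$; this is the ``main obstacle'', but it is a routine book-keeping point, using only the absoluteness of the $\mathcal{P}^\gamma(\Ord)$ hierarchy together with $V(A,T)=\bigcup_\delta M_\delta$. (One could equivalently first record, by induction on $\gamma$ and exactly the argument of Proposition~\ref{prop: stabilization of initial ranks}, that $V(A,T)$ and $M_\gamma$ agree on $\mathcal{P}^\gamma(\Ord)$ for every $\gamma$, and then quote this.) Thus all the genuine content sits in Proposition~\ref{prop: stabilization of initial ranks}, equivalently in the tree indiscernibility hypothesis of Lemma~\ref{lemma: indiscernibility propagation}; the corollary itself is essentially the statement that the increasing sequence of Kinna-Wagner degrees of the $M_\alpha$ cannot ``stabilize at the limit''.
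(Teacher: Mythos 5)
Your proof is correct, and it rests on exactly the same two ingredients as the paper's own argument --- Remark~\ref{remark: coding injective map} and Proposition~\ref{prop: stabilization of initial ranks} --- but organizes them differently. The paper argues directly: given $\alpha$, it takes (from the preceding corollary on the models $M_\alpha$) a transitive $X\in M_\beta$ with no injection into $\mathcal{P}^{\alpha}(\Ord)$ in $M_\beta$, and transfers the \emph{non-existence} of such an injection up to $V(A,T)$, since a putative injection in $V(A,T)$ would be coded by an element of $\mathcal{P}^{\alpha+1}(\Ord)$, on which the two models agree. You instead argue by contradiction: if the degree were $\gamma$, every transitive set of $V(A,T)$ would be coded by an element of $\mathcal{P}^{\gamma+1}(\Ord)$ and hence would lie in $M_{\gamma+1}$, forcing $V(A,T)=M_{\gamma+1}$ and contradicting the strict growth of the chain. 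In effect you re-prove the generalized Balcar-Vopenka argument for the pair $(M_{\gamma+1},V(A,T))$ rather than quoting the unboundedness of the Kinna-Wagner degrees of the $M_\alpha$; your route needs only that the chain is strictly increasing, not the preceding corollary. The book-keeping you flag --- passing from pairwise agreement of the $M_\alpha$ to agreement of $M_{\gamma+1}$ with the class model $V(A,T)=\bigcup_\delta M_\delta$, together with the downward persistence of membership in $\mathcal{P}^{\gamma}(\Ord)$ (your word ``absolute'' is slightly too strong, but the inclusion you actually use is the correct one and is proved by an easy induction on $\gamma$) --- is indeed routine, and the paper's own proof uses the same step implicitly when it asserts that ``$V(A,T)$ and $M_\beta$ agree on $\mathcal{P}^{\alpha+1}(\Ord)$''.
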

\begin{proof}
$V(A,T)$ is defined as the union of $M_\alpha$ over all ordinals $\alpha$. For any ordinal $\alpha$, we may find some $\alpha < \beta$ and a set $X\in M_\beta$ so that in $M_\beta$ there is no injective map from $X$ to $\mathcal{P}^{\alpha}(\mathrm{Ord})$. We may assume that $X$ is transitive. Since $V(A,T)$ and $M_\beta$ agree on $\mathcal{P}^{\alpha+1}(\mathrm{Ord})$, using Remark~\ref{remark: coding injective map} we conclude that there is no injective map from $X$ to $\mathcal{P}^\alpha(\mathrm{Ord})$ in $V(A,T)$.
\end{proof}

\begin{cor}\label{cor: AC fails in set forcing}
    The axiom of choice cannot be forced by a set forcing over $V(A, T)$.
\end{cor}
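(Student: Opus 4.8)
The plan is to derive a contradiction from the unbounded Kinna--Wagner degree of $V(A,T)$ (Corollary~\ref{cor: unbdd KW for V(A, T)}) by way of Lemma~\ref{lemma: bound on KW from forcing AC}. So I would suppose, toward a contradiction, that $\P\in V(A,T)$ is a set forcing with $\force_\P\AC$.

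Since $V(A,T)=\bigcup_{\alpha\in\Ord}M_\alpha$ is an increasing union of models of $\ZF$ and $\P$ is a single set, there is an ordinal $\theta$ with $\P\in M_\theta$. The first step is to see that $\P$ embeds into $\mathcal{P}^{\kappa}(\Ord)$ for some limit ordinal $\kappa$. Indeed $M_\theta=V(A_\theta,T_\theta)$ with $A_\theta,T_\theta$ sets, and --- as was already used in the proof that the $M_\alpha$ have unbounded Kinna--Wagner degree --- such a model has a \emph{bounded} Kinna--Wagner degree, say $\kappa_\theta$. Hence some transitive set containing $\P$ injects, in $M_\theta$, into $\mathcal{P}^{\kappa_\theta}(\Ord)$, and by Remark~\ref{remark: coding injective map} we may take $\P$ itself to be (isomorphic to) a subset of $\mathcal{P}^{\kappa_\theta}(\Ord)$; this embedding lies in $M_\theta\subseteq V(A,T)$, and after replacing $\kappa_\theta$ by the next limit ordinal we may assume it is a limit.

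The second step is then immediate: working inside $V(A,T)$, apply Lemma~\ref{lemma: bound on KW from forcing AC} to this $\P$ with $\kappa:=\kappa_\theta$. Since $\P$ embeds into $\mathcal{P}^{\kappa_\theta}(\Ord)$ and $\force_\P\AC$, the lemma yields that the Kinna--Wagner degree of $V(A,T)$ is at most $\kappa_\theta$, contradicting Corollary~\ref{cor: unbdd KW for V(A, T)}. Alternatively one can route through Blass's characterization \cite[Theorem~4.6]{BlassSVC}: forceability of $\AC$ by a set forcing is equivalent to $\mathrm{SVC}$, and a witness $S$ to $\mathrm{SVC}$ would again lie in some $M_\theta$ and, through the surjections furnished by $\mathrm{SVC}$, force the Kinna--Wagner degree of $V(A,T)$ to be bounded --- the same contradiction.

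The one step that deserves care is the claim that each $M_\theta$ has a bounded Kinna--Wagner degree; this is the only place where it matters that $V$ is (or behaves like) a model of $\ZFC$. By Fact~\ref{fact: V(A) model} every set of $M_\theta=V(A_\theta,T_\theta)$ is the unique solution of some formula with parameters from $V$ and from $\trcl(\{(A_\theta,T_\theta)\})$, so $M_\theta$ injects into $\omega\times\trcl(\{(A_\theta,T_\theta)\})^{<\omega}\times\Ord$ once the $V$-parameter is coded by an ordinal; and a routine induction along Definition~\ref{definition: the iteration} --- each $A_{\beta+1}$ being built from equivalence classes of subsets of $A_\beta$, and each $A_\lambda$ from branches of a tree on $\bigcup_{\beta<\lambda}A_\beta$ --- shows that $\trcl(\{(A_\theta,T_\theta)\})$ injects into $\mathcal{P}^{\lambda_\theta}(\Ord)$ for a suitable ordinal $\lambda_\theta$. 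Composing these injections gives the bound $\kappa_\theta$ on the Kinna--Wagner degree of $M_\theta$ needed above.
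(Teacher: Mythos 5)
Your proof is correct and is essentially the paper's: the paper's entire proof of this corollary is the one-line deduction from Corollary~\ref{cor: unbdd KW for V(A, T)} together with Lemma~\ref{lemma: bound on KW from forcing AC}. You additionally verify the hypothesis of Lemma~\ref{lemma: bound on KW from forcing AC} that $\P$ embeds into some $\mathcal{P}^{\kappa}(\Ord)$ --- a point the paper leaves implicit --- and, as you yourself note, this verification is exactly where boundedness of the Kinna--Wagner degree of the ground model $V$ (e.g.\ $V\models\ZFC$) is used.
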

\begin{proof}
This follows by Corollary~\ref{cor: unbdd KW for V(A, T)} and Lemma~\ref{lemma: bound on KW from forcing AC}.
\end{proof}
In Section~\ref{subsection: full indiscernibility} we will prove an extension of the tree indiscernibility, which will provide more precise statements about fragments of choice. For example, we will see that DC fails in any set-forcing extension of $V(A, T)$.

We proceed with the proof of Lemma \ref{lemma: indiscernibility propagation}. We will change the order of the proof, and prove the different cases according to their difficulty. We start by proving part (3), then part (1), and lastly part (2).

\subsection{Tree step}\label{subsection: tree step}
Let $\theta$ be a limit ordinal.
Assume $M_{<\theta}$ was constructed, and further that the tree indiscernibility hypothesis holds.
We now describe the construction of $M_\theta$, and prove part (3), the Tree step of Lemma~\ref{lemma: indiscernibility propagation}.

Let $\B=\B_\theta$ be the poset of all finite partial functions $p$ from $\omega$ to $A_{<\theta}$. Say that a condition $p$ extends $q$ if the domain of $p$ extends the domain of $q$ and $p(i)$ is above $q(i)$ in the tree for any $i$ in the domain of $q$.
Given a natural number $k$, let $\B\restriction k$ be the poset of all conditions $p\in\B$ whose domain is contained in $k=\{0,\dots,k-1\}$.
Given $p\in\B\restriction k$ and $\alpha<\theta$, let $p\restriction \alpha$ be the condition $p$ restricted to the $\alpha$'th level of the tree. That is, for $i<k$, $(p\restriction \alpha)(i)$ is the unique element in $A_\alpha$ below $p(i)$ in the tree if $p(i)$ is in a higher level.
Let $\dot{b}_i$ be the canonical name for the $i$'th generic branch, $\set{p(i)}{p\in\dot{G}}$.
\begin{claim}
Suppose $v\in M_\alpha$, $\psi$ a formula such that \[p\force_{\B\restriction k}\psi(A_{<\theta},T_{<\theta},v,\dot{b}_0,\dots,\dot{b}_{k-1}).\]
Assume further that the projections of $p(0),\dots,p(k-1)$ to level $\alpha$ of the tree are distinct.
Then 
\begin{equation*}
    (p\restriction\alpha)\force_{\B\restriction k}\psi(A_{<\theta},T_{<\theta},v,\dot{b}_0,\dots,\dot{b}_{k-1}).
\end{equation*}
\end{claim}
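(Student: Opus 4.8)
The plan is to mimic the ``pushing-down'' argument in the Claim inside Lemma~\ref{lemma: basic tree step}, now using the inductive tree indiscernibility hypothesis in $M_{<\theta}$ where that argument used the basic indiscernibility Lemma~\ref{lemma: basic step}. First I would record that $\B\restriction k$ is a set forcing definable in $M_{<\theta}$ from $A_{<\theta}$ and $T_{<\theta}$, and that the names $\dot b_0,\dots,\dot b_{k-1}$ are definable from $k$; consequently the statement ``$p\force_{\B\restriction k}\psi(A_{<\theta},T_{<\theta},v,\dot b_0,\dots,\dot b_{k-1})$'' is, inside $M_{<\theta}$, equivalent to a single formula $\phi(A_{<\theta},T_{<\theta},\bar p,v)$ whose parameters beyond the definable classes $A_{<\theta},T_{<\theta}$ are $v$ and the finite tuple of values $\bar p=\langle p(i):i\in\dom(p)\rangle$ (with $\phi$ also encoding the finite set $\dom(p)\subseteq k$). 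Any entry $p(i)$ of level $\le\alpha$ lies in $M_\alpha$, so I may leave all such coordinates of $p$ untouched throughout and absorb them, together with $v$, into a single parameter $v^\ast\in M_\alpha$; after this reduction I may assume every entry of $\bar p$ has level strictly between $\alpha$ and $\theta$, and I will apply the tree indiscernibility hypothesis of $M_{<\theta}$ at the ordinal $\alpha<\theta$ to the tuple $\bar p$ with parameter $v^\ast$.

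Put $w_i=\proj_\alpha(p(i))\in A_\alpha$ and let $\ell_i<\theta$ be the level of $p(i)$; by hypothesis the $w_i$ are distinct. Tree indiscernibility then yields: for \emph{every} tuple $\bar p'=\langle p'(i):i\in\dom(p)\rangle$ with $\proj_\alpha(p'(i))=w_i$ and $p'(i)$ of level $\ell_i$ for each $i$ — equivalently, every $\bar p'$ with the same tree type over $A_\alpha$ as $\bar p$ — the corresponding condition $p'$ also forces $\psi$. To conclude that $(p\restriction\alpha)\force\psi$ it now suffices to show that no $q\le(p\restriction\alpha)$ forces $\neg\psi$. So fix such a $q$, so that $\dom(q)\supseteq\dom(p)$ and $q(i)\ge_T w_i$ for $i\in\dom(p)$, and build a copy $p'$ of $p$ compatible with $q$ as follows: for $i\in\dom(p)$, if the level of $q(i)$ is $\ge\ell_i$ set $p'(i)=\proj_{\ell_i}(q(i))$, and otherwise let $p'(i)$ be any node of $A_{\ell_i}$ lying $<_T$-above $q(i)$ (one exists since $\ell_i<\theta$ and every node of $T_{<\theta}$ has descendants at each higher level below $\theta$). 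Then $p'(i)$ has level $\ell_i$ with $\proj_\alpha(p'(i))=w_i$, so $\bar p'$ has the same tree type over $A_\alpha$ as $\bar p$ and hence $p'\force\psi$; moreover $p'(i)$ is $<_T$-comparable with $q(i)$ for each $i\in\dom(p)$, so $p'$ and $q$ admit a common extension $r$ (take $r(i)$ to be the $<_T$-larger of $p'(i)$ and $q(i)$ for $i\in\dom(p)$, and $r(i)=q(i)$ for $i\in\dom(q)\setminus\dom(p)$). Since $r\le p'$ we get $r\force\psi$, contradicting $r\le q\force\neg\psi$.

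The step requiring care — and the main obstacle — is the verification that the perturbed tuple $\bar p'$ really has the same tree type over $A_\alpha$ as $\bar p$, so that the tree indiscernibility hypothesis genuinely applies. This is exactly the point at which the distinctness of the $w_i$ is used: no node lies $<_T$-above two distinct elements of $A_\alpha$, so the branches through the $p(i)$ are pairwise $<_T$-disjoint above level $\alpha$; hence for any finite $\bar u\subseteq A_\alpha$ the entire tree type of $\bar p^\frown\bar u$ — all meet-levels, comparabilities and level data — is determined by the tree type of $\langle w_i\rangle^\frown\bar u$ in $T_{<\theta}$ (all of which lives at or below level $\alpha$) together with the levels $\ell_i$ and the fact that, above level $\alpha$, the branch through $p(i)$ meets $A_\alpha$ only in $w_i$. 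All of this data is shared between $\bar p$ and $\bar p'$ by construction, so their tree types over $A_\alpha$ coincide. Had two of the $w_i$ coincided, the corresponding branches could not be moved independently of one another, which is precisely why the distinctness of the projections is built into the hypothesis.
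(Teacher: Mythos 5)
Your proof is correct and follows essentially the same route as the paper's: express ``$p\force\psi$'' as a formula of $M_{<\theta}$ with the tuple $p$ as a parameter, use the inductive tree indiscernibility hypothesis to transfer it to any condition with the same tree type over $A_\alpha$, and conclude by a density argument below $p\restriction\alpha$. The only differences are cosmetic — the paper extends $p$ to a single level $\beta$ and quotes predensity of level-$\beta$ extensions of $p\restriction\alpha$, whereas you build a compatible copy $p'$ of $p$ above an arbitrary $q\le p\restriction\alpha$; your explicit absorption of the level-$\le\alpha$ coordinates into the parameter is a welcome extra bit of care.
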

\begin{proof}
We may extend $p$ and assume that $p(0),\dots,p(k-1)$ are in the same level $A_\beta$.
We show that if $q$ is an extension of $p\restriction\alpha$ such that $q(0),\dots,q(k-1)$ are in $A_\beta$, then $q \force_{\mathbb{B}\restriction k} \psi(A_{<\theta},T_{<\theta},v,\dot{b}_0,\dots,\dot{b}_{k-1})$. This suffices as such conditions $q$ are predense in $\B\restriction k$ below $p\restriction\alpha$.

For any such $q$, $p$ and $q$ have the same tree type over $A_\alpha$.
Note that the statement ``$p\force_{\B\restriction k}\psi(A_{<\theta},T_{<\theta},v,\dot{b}_0,\dots,\dot{b}_{k-1})$'' can be expressed, in $V(A_{<\theta},T_{<\theta})$, as $\phi(p,A_{<\theta},T_{<\theta},v)$ for some formula $\phi$.
By the inductive hypothesis, we conclude that
\begin{equation*}
    \phi(p,A_{<\theta},T_{<\theta},v)\iff \phi(q,A_{<\theta},T_{<\theta},v),
\end{equation*}
and therefore $q\force_{\B\restriction k}\psi(A_{<\theta},T_{<\theta},v,\dot{b}_0,\dots,\dot{b}_{k-1})$.
\end{proof}

\begin{proof}[Proof of Lemma~\ref{lemma: indiscernibility propagation} part (3) {[Tree step]}]

Let $\seqq{b_i}{i<\omega}$ be a $\B$-generic such that $A_\theta=\set{b_i}{i\in\omega}$. Recall that $T_\theta$ is defined by declaring, for $a\in A_{<\theta}$, $a<_T b_i$ if $a\in b_i$, so that $T_\theta$ is definable from $T_{<\theta},A_{<\theta},A_\theta$.

The forcing $\B$ is isomorphic to the finite support product of $\omega$ many copies of $\B\restriction 1$, where $\B\restriction 1$ is the poset for adding a single branch through $T_{<\theta}$.
In particular, for any tuple $\bar{b}=\langle b_{k_0},\dots,b_{k_{n-1}}\rangle$ of distinct elements from $A_\theta$, $\bar{b}$ is $\B\restriction k$-generic over $V(A_{<\theta},T_{<\theta})$.
Note that the poset $\B$ is definable from $A_{<\theta}$ and $T_{<\theta}$.
Applying Lemma~\ref{lemma: basic permutation}, for any formula $\phi$ and parameter $v\in V(A_{<\theta},T_{<\theta})$ there is a formula $\psi(A_{<\theta},T_{<\theta},\bar{b},v)$ such that
\begin{equation*}
    V(A_\theta,T_\theta)\models\phi(A_\theta,T_\theta,\bar{b},v)\iff V(A_{<\theta},T_{<\theta})[\bar{b}]\models \psi(A_{<\theta},T_{<\theta},\bar{b},v).
\end{equation*}

We now establish the tree indiscernibility hypothesis in $V(A_\theta, T_\theta)$. Fix $v\in M_\alpha$, $\alpha<\theta$, a formula $\phi$, $\bar{a}_0,\bar{b}_0$ from $\bigcup_{\theta>\beta>\alpha}A_\beta$ and $\bar{a},\bar{b}$ from $A_\theta$ such that $\bar{a}_0,\bar{a}$ and $\bar{b}_0,\bar{b}$ have the same tree type over $A_\alpha$. Assume further that $\phi(A_\theta,T_\theta,\bar{a}_0,\bar{a},v)$ holds, and show that $\phi(A_\theta,T_\theta,\bar{b}_0,\bar{b},v)$ holds as well.

Fix $\psi(A_{<\theta},T_{<\theta},\bar{a}_0,\bar{a},v)$ as above.
Fix an ordinal $\beta < \theta$ such that the elements of $\bar{a}_0$ are below level $\beta$, and such that the projections of $\bar{a}$ to level $\beta$ are all distinct.
Let $p=\seq{p(0),\dots,p(k-1)}$ and $q=\seq{q(0),\dots,q(k-1)}$ be the projections of $\bar{a}$ and $\bar{b}$ to level $\beta$, respectively.
Since $\bar{a}_0,\bar{a}$ and $\bar{b}_0,\bar{b}$ have the same tree type, it follows that $\bar{b}_0$ are below level $\beta$, that the projections of $\bar{b}$ to level $\beta$, $q(0),\dots,q(k-1)$, are distinct, and that $\bar{a}_0,p$ and $\bar{b}_0,q$ have the same tree type over $A_\alpha$.

By assumption, $\phi(A_\theta,T_\theta,\bar{a}_0,\bar{a},v)$ holds in $V(A_\theta,T_\theta)$. Therefore $V(A_{<\theta},T_{<\theta})[\bar{a}]$ satisfies $\psi(A_{<\theta},T_{<\theta},\bar{a}_0,\bar{a},v)$. There is some condition $\tilde{p}\in \mathbb{B}\restriction k$ so that $\tilde{p}\force_{\mathbb{B}\restriction k}\psi(A_{<\theta},T_{<\theta},\bar{a}_0,\dot{b}_0,\dots,\dot{b}_{k-1},v)$ and $\tilde{p}(i)$ is below $a_i$, that is, $\tilde{p}$ is in the $\mathbb{B}\restriction k$-generic corresponding to $\bar{a}$. We may assume that $\tilde{p}(i)$ is above level $\alpha$, for each $i$, and so $p$ is the restriction of $\tilde{p}$ to level $\alpha$. By the claim above, we conclude in $V(A_{<\theta},T_{<\theta})$ that
\begin{equation*}
    p\force_{\B\restriction k}\psi(A_{<\theta},T_{<\theta},\bar{a}_0,\dot{b}_0,\dots,\dot{b}_{k-1},v).
\end{equation*}
This displayed statement can be written in $V(A_{<\theta},T_{<\theta})$ as $\chi(p,A_{<\theta},T_{<\theta},\bar{a}_0,v)$.
Finally, as $\bar{a}_0,p$ and $\bar{b}_0,q$ have the same tree type over $A_\alpha$, using the inductive tree indiscernibility hypothesis, we conclude that $\chi(q,A_{<\theta},T_{<\theta},\bar{b}_0,v)$ holds in $V(A_{<\theta},T_{<\theta})$, which in turn implies that $q\force_{\B\restriction k}\psi(A_{<\theta},T_{<\theta},\bar{b}_0,\dot{b}_0,\dots,\dot{b}_{k-1},v)$. Therefore $\psi(A_\theta,T_\theta,\bar{b}_0,\bar{b},v)$ holds in $V(A_{<\theta},T_{<\theta})[\bar{b}]$, and so $\phi(A_\theta,T_\theta,\bar{b}_0,\bar{b},v)$ holds in $V(A_\theta,T_\theta)$, as required.
\end{proof}

\subsection{Successor step}\label{section: basic lemmas for successor step}

Assume that $M_\theta$ was constructed, and the tree indiscernibility hypothesis holds.
Fix a $\Q(A_\theta)\ast \mathbb{T}(\dot{A}_{\theta+1},A_\theta)$-generic over $M_\theta$, let $A_{\theta+1}\subset\mathcal{P}(A_\theta)$ and $\pi\colon A_{\theta+1}\to A_\theta$ be the corresponding generic objects, and $T_{\theta+1}$ defined as the extension of $T_{\theta}$ using $\pi$.

\begin{proof}[Proof of Lemma~\ref{lemma: indiscernibility propagation} part (1) {[Successor step]}]
Fix an ordinal $\alpha<\theta+1$, $v\in M_\alpha$, a formula $\phi$, tuples $\bar{a},\bar{b}$ from $\bigcup_{\theta+1\geq\beta>\alpha}A_\beta$ which have the same tree type over $A_\alpha$, and assume that $\phi(A_{\theta+1},T_{\theta+1},\bar{a},v)$ holds in $M_{\theta+1}$. We need to show that $\phi(A_{\theta+1},T_{\theta+1},\bar{b},v)$ holds as well.

Assume first that $\alpha=\theta$. Then $\bar{a},\bar{b}$ are sequences from $A_{\theta+1}$. Since they have the same type over $A_\theta$, it follows that the projection of $a_i$ to level $\theta$ is equal to the projection of $b_i$, for each $i$.
By Lemma~\ref{lemma: basic tree step}, we conclude that
\begin{equation*}
    \phi(A_{\theta+1},T_{\theta+1},\bar{a},v)\iff\phi(A_{\theta+1},T_{\theta+1},\bar{b},v).
\end{equation*}
Assume now $\alpha<\theta$. Let $\bar{a}=\bar{a}_0,\bar{a}_1$ and $\bar{b}=\bar{b}_0,\bar{b}_1$, where $\bar{a}_0,\bar{b}_0$ are from $\bigcup_{\theta\geq\beta>\alpha}$.
Define $\bar{w},\bar{u}$ to be the projections of $\bar{a}_1,\bar{b}_1$, respectively, to level $\theta$. Since $\bar{a}_0,\bar{a}_1$ and $\bar{b}_0,\bar{b}_1$ have the same tree type over $\alpha$, it follows that $\bar{a}_0,\bar{w}$ and $\bar{b}_0,\bar{u}$ have the same tree type over $\alpha$.
Furthermore, by Lemma~\ref{lemma: basic tree step}, there is a formula $\psi$ such that
\begin{equation*}
    M_\theta\models\psi(A_\theta,T_\theta,\bar{a}_0,\bar{w},v)\iff M_{\theta+1}\models\phi(A_{\theta+1},T_{\theta+1},\bar{a}_0,\bar{a}_1,v),
\end{equation*}
whenever $\bar{w}$ are the projections of $\bar{a}_1$ to $A_\theta$.
(The lemma is applied with $W=A_\theta$ and $T_\theta,\bar{a}_0,v$ as the parameters from the ground model $M_\theta$.)
Applying the inductive tree indiscernibility hypothesis, $ M_\theta\models\psi(A_\theta,T_\theta,\bar{b}_0,\bar{u},v)$ as well, and therefore $M_{\theta+1}\models\phi(A_{\theta+1},T_{\theta+1},\bar{b}_0,\bar{b}_1,v)$, as required.
\end{proof}

\subsection{The limit step}\label{subsection: limit-step}
In order to prove Lemma~\ref{lemma: indiscernibility propagation} part (2) [Limit step], we will need to construct automorphisms for quotients of the iteration $\mathbb{P}_\theta$ for some limit ordinal $\theta$. 
For this proof, the following setup is required.

Let $\P$ be a class iteration and $\P$-names $\dot{A}_\alpha,\dot{T}_\alpha$, for all ordinals $\alpha$.
Given a generic $G$, let  $A_\alpha$ and $T_\alpha$ be the interpretations of $\dot{A}_\alpha,\dot{T}_\alpha$ according to $G$, and define $M_\alpha=V(A_\alpha,T_\alpha)$.
Assume that
\begin{itemize}
    \item for each ordinal $\alpha$ there is a $\Q_\alpha\ast\R_\alpha$-generic over $M_\alpha$ (in some generic extension), as in Section~\ref{section: basic lemmas for successor step}, such that $M_{\alpha+1}=M_\alpha(A_{\alpha+1},T_{\alpha+1})$ is the resulting symmetric model as in Section~\ref{section: basic lemmas for successor step};
    \item for each limit ordinal $\theta$, let $A_{<\theta}=\bigcup_{\alpha<\theta}A_\alpha$ and $T_{<\theta}=\bigcup_{\alpha<\theta}T_\alpha$, then there is a $\B_\alpha$-generic over $V(A_{<\theta}, T_{<\theta})$ (in some generic extension), as in Section~\ref{subsection: tree step}, such that $A_{\theta}$ is the unordered set of branches added by this generic.
\end{itemize}
Furthermore, assume there are posets $\P_\alpha^\beta\in V(A_\alpha,T_\alpha)$, with names $\dot{A}_\alpha^\beta, \dot{T}_\alpha^\beta$ such that
\begin{enumerate}
    \item\label{property:constructed from generic} There is a generic $G_\alpha^\beta$ for $\P_\alpha^\beta$ over $V(A_\alpha,T_\alpha)$ for which $\dot{A}_\alpha^\beta[G_\alpha^\beta]=A_\beta$, $\dot{T}_\alpha^\beta[G_\alpha^\beta]=T_\beta$;
    \item\label{property: rich automorhpisms} For any generic $G_\alpha^\beta$ and any condition $q\in \P_\alpha^\beta$ there is a generic $\Tilde{G}_\alpha^\beta$ with
    \begin{itemize}
        \item $q\in \Tilde{G}_\alpha^\beta$;
        \item $\dot{A}_\alpha^\beta[G_\alpha^\beta]=\dot{A}_\alpha^\beta[\tilde{G}_\alpha^\beta]$ and $\dot{T}_\alpha^\beta[G_\alpha^\beta]=\dot{T}_\alpha^\beta[\tilde{G}_\alpha^\beta]$.
    \end{itemize}
\end{enumerate}
Properties (1) and (2) above say that stage $\beta$ can be forced over each $M_\alpha$ for $\alpha<\beta$, in a sufficiently homogeneous way.
The definition of the iteration was given in Definition~\ref{definition: the iteration}.
Properties (\ref{property:constructed from generic}) and (\ref{property: rich automorhpisms}) are proven below in Sections \ref{subsubsection: quotients} and \ref{subsubsection: permutations} respectively.
First, let us see how these properties allow us to push the construction through the limit stages.
\begin{proof}[Proof of Lemma~\ref{lemma: indiscernibility propagation} part (2) {[Limit step]}]
Assume the tree indiscernibility hypothesis in $M_\alpha=V(A_\alpha,T_\alpha)$ for every $\alpha<\theta$.
Fix $\alpha<\theta$, $v\in M_\alpha$, a formula $\phi$ and tuples $\bar{a},\bar{b}$ from $\bigcup_{\theta>\beta>\alpha} A_\beta$ which have the same tree type over $A_\alpha$.
Assume that $\phi(A_{<\theta},T_{<\theta},\bar{a},v)$ holds in  $V(A_{<\theta},T_{<\theta})$, and show that $\phi(A_{<\theta},T_{<\theta},\bar{b},v)$ holds as well.

Fix $\beta<\theta$ large enough such that $\bar{a}$ is below level $\beta$. It follows that $\bar{b}$ is also below level $\beta$, and $v\in M_\beta$.
By the homogeneity property (\ref{property: rich automorhpisms}) above, 
\begin{equation*}
    \phi^{V(A_{<\theta},T_{<\theta})}(A_{<\theta},T_{<\theta},\bar{a},v)\iff V(A_\beta,T_\beta)\models \P_\beta^\theta\force\phi^{V(\dot{A}_{<\theta},\dot{T}_{<\theta})}(\dot{A}_{<\theta},\dot{T}_{<\theta},\bar{a},v).
\end{equation*}
The latter statement can be expressed in $V(A_\beta,T_\beta)$ as $\psi(A_\beta,T_\beta,\bar{a},v)$ for some formula $\psi$.
By the inductive tree indiscernibility hypothesis, we conclude that $\psi(A_\beta,T_\beta,\bar{b},v)$ holds in $V(A_\beta,T_\beta)$ as well, that is,
\begin{equation*}
    V(A_\beta,T_\beta)\models \P_\beta^\theta\force\phi^{V(\dot{A}_{<\theta},\dot{T}_{<\theta})}(\dot{A}_{<\theta},\dot{T}_{<\theta},\bar{b},v),
\end{equation*}
and therefore $\phi(A_{<\theta},T_{<\theta},\bar{b},v)$ holds in $V(A_{<\theta},T_{<\theta})$.
\end{proof}

\subsubsection{Quotients}\label{subsubsection: quotients}
As in Definition~\ref{definition: the iteration}, define $\P_\alpha^\beta$ as the iteration, over $V(A_\alpha,T_\alpha)$, starting with $\Q_\alpha$ and ending with $\B_\beta$. Property (\ref{property:constructed from generic}) now follows immediately.

\subsubsection{Permutations}\label{subsubsection: permutations}
We describe below some permutations of the iteration $\P$ which witness that $\P$ is weakly homogeneous while fixing $\dot{A}_\alpha$ and $\dot{T}_\alpha$ for all ordinals $\alpha$. These are used to establish clause (\ref{property: rich automorhpisms}) above, and will also be used later in Section \ref{section: construction in Cohen extension}. The permutations are defined as compositions of finitely many permutations, each of which deals with a single coordinate in the support of the given condition. 

We begin with coordinates which are successor ordinals. For any sequence $t=\bar{a}_0,\dots,\bar{a}_m$ with $\bar{a}_i$ a finite (possible empty) subset of $A_\alpha$, define an automorphism $f^t=f^\alpha_t$ of $\Q_\alpha$ by flipping the values of $p(i,a)$ for $a\in \bar{a}_i$. That is, for $a\in \bar{a}_i$, if $(i,a)\in\dom p$ then
\begin{equation*}
    f_t(p)(i,a)=1-p(i,a).
\end{equation*}
Note that $f_t$ fixes $\dot{A}_{\alpha+1}(n)$ for each $n$, as these are defined as the equivalence classes of all finite changes. In particular, $\dot{A}_{\alpha+1}$ is fixed, as well as the tree structure introduced by $\mathbb{R}_\alpha$.

Fix a permutation $\sigma$ of $\omega$ with finite support.
Define an automorphism $a_\sigma$ of $\R_\alpha$ by 
\begin{equation*}
    a_\sigma(p)=p\circ\sigma.
\end{equation*}
Define an automorphism $e_\sigma$ of $\dot{\Q}_\alpha$ by
\begin{equation*}
    e_\sigma(p)(\sigma(m),-)=p(m,-).
\end{equation*}
Applying $a_\sigma$ changes the tree structure between $A_{\alpha+1}$ and $A_\alpha$, but an application of $e_\sigma$ corrects that.
Then applying $e_\sigma\circ a_\sigma$ to $\Q_\alpha\ast\R_\alpha$ fixes $\dot{A}_\gamma$ and the tree structure for all $\gamma$. Let 
\begin{equation*}
    g^\alpha_\sigma=e_\sigma\circ a_\sigma.
\end{equation*}

Given any generic $G$ for $\Q_\alpha\ast\R_\alpha$ and any condition $q$ in $\Q_\alpha\ast\R_\alpha$, there is some $t$ and $\sigma$ as above such that $f^\alpha_t\circ g^\alpha_\sigma(q)\in G$.
We first choose $\sigma$ to make the $\R_\alpha$ coordinate of $q$ agree with $G$, and then choose $t$ to change the values of $A_{\alpha+1}(n)$ defined by $g^\alpha_\sigma(q)$ to agree with $G$. This is possible as $q$ has finite support.

Let us deal now with the tree step. In this case, we need to construct an automorphism of $\mathbb{B}_{\alpha}$. Let $\sigma$ be a finite support permutation of $\omega$. Define $b^\alpha_\sigma$ by
\begin{equation*}
    b^\alpha_\sigma(p) = p\circ \sigma
\end{equation*}
for $p \in \mathbb{B}_{\alpha}$. Given a condition $q \in \B_\alpha$, a density argument shows that there is $\sigma$ such that $b^\alpha_{\sigma}(q)$ is compatible with $G$. Note that the set of all $p\in \B_\alpha$ so that $p$ is compatible with $b^\alpha_\sigma(q)$ for \emph{some} $\sigma$, is dense in $\B_\alpha$. This is true because we can take $\sigma$ so that the domains of $b^\alpha_\sigma(q)$ and $p$ are disjoint, and therefore their union is a well-defined extension of $p$.
Note that $b^\alpha_\sigma$ fixes $\dot{A}_\alpha$, as the elements there are determined up to a finite modification.  

When considering $f^\alpha_t$, $g^\alpha_\sigma$ or $b^\alpha_\sigma$ above as permutations of the iterations $\P_\gamma$ or $\P_\gamma^\beta$, for $\gamma<\alpha<\beta$, they fix everything outside $\Q_\alpha, \R_\alpha, \B_\alpha$.
Property (\ref{property: rich automorhpisms}) above now follows: given a generic $G$ for the iteration and a condition $p$, by applying the above corrections, finitely many times along the support of $p$, we can move $p$ into $G$.

\begin{defn}\label{definition:group-of-automorphisms-for-P_alpha}
Let $\mathcal{G}_\alpha$ be the group of automorphisms of $\mathbb{P}_\alpha$ generated by the automorhpisms $f^\beta_t, g^\beta_\sigma, b^\beta_\sigma$ for $\beta < \alpha$, $t$ a finite sequence of finite subsets of $A_\beta$, and $\sigma$ a finite support permutation of $\omega$.
\end{defn}


\section{Full types and indiscernibility}\label{subsection: full indiscernibility}
The members of $A=\bigcup_{\alpha}A_\alpha$ and the tree structure $T$ were central to our work above. An arbitrary set in our model $V(A, T)$ is definable using $A$, $T$, members of $A_\alpha$ for some $\alpha$ (the vertices of our tree), but also from some sets which are members of the elements in $A_\alpha$.
Recall that each element $a\in A_{\alpha+1}$ is an equivalence class, of all finite changes, of some set $x\subset A_\alpha$. 
In this case, $a$ is definable from $x$ and $A_\alpha$, however, $x$ itself is not definable from $a$ and $A_\alpha$.
Next, we extend the indiscernibility to also consider sequences involving such sets.

The results of this section are not strictly necessary for the proof of our main result, Theorem~\ref{thm: intermediate extension}. Nevertheless, these results will be useful for future applications of our construction, so it is important to record them below. Furthermore, these results will be used to prove more precise statements about how fragments of choice fail in set-forcing extensions of $V(A, T)$, Proposition~\ref{prop: A-theta-D-finite-in-extensions} and Corollary~\ref{cor: failure of DC in set forcing}.

Fix some $a\in A_{\alpha+1}$. We do not have indiscernibility for pairs of members of $a$. Specifically, for any $x,y\in a$, the symmetric difference $x\Delta y$ is a finite subset of $A_\alpha$. The formula ``$|x\Delta y|=1$'' will hold for many pairs $x,y\in a$ and fail for others.
Instead, we will have indiscernibility as long as our parameters include at most one member out of each such $a$. In this case, other than the information in the tree type from Definition~\ref{defn: tree type}, we need to also include the relationship $x\in a$ in the types.

Say that a sequence $\bar{x}=\seq{x_1,\dots,x_n}$ is a sequence of \textbf{representatives} if there is a sequence $a_1,\dots,a_n$ of distinct sets from the tree $A$, such that $x_i\in a_i$. In other words, each $x_i$ is a member of a member of some $A_\alpha$, and $x_i,x_j$ are not equal mod finite, for any $i\neq j$.
\begin{obs}
For any set $S\in V(A,T)$  there is a formula $\varphi$, a parameter $v\in V$, a sequence $\bar{a}$ from $A$ and a sequence of representatives $\bar{x}$ such that in $V(A,T)$, $S$ is defined as the unique solution to $\varphi(S,A,T,\bar{a},\bar{x},v)$.
Equivalently, there is a formula $\psi$ such that in $V(A,T)$, $s\in S\iff \psi(s,A,T,\bar{a},\bar{x},v)$.
\end{obs}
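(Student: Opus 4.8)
The plan is to reduce to a single stage $M_\alpha$ of the construction, apply Fact~\ref{fact: V(A) model} there, and then unwind the transitive closure of $(A_\alpha,T_\alpha)$. First I would use that $V(A,T)=\bigcup_{\alpha\in\Ord}M_\alpha$ is an increasing union of the models $M_\alpha=V(A_\alpha,T_\alpha)$, each of which is of the form $V(B)$ for a set $B$ (namely $B=(A_\alpha,T_\alpha)$). Hence the given $S$ lies in some $M_\alpha$, and by Fact~\ref{fact: V(A) model} there are a formula $\phi_0$, a parameter $v_0\in V$, and finitely many parameters $c_1,\dots,c_m\in\trcl(\{(A_\alpha,T_\alpha)\})$ such that $S$ is the unique solution to $\phi_0(S,A_\alpha,T_\alpha,c_1,\dots,c_m,v_0)$ in $M_\alpha$.

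Next I would identify the possible $c_j$'s. By a straightforward induction along Definition~\ref{definition: the iteration}, $\trcl(\{(A_\alpha,T_\alpha)\})$ is contained in the union of (i)~the ordinals below $\alpha$ (the levels of $T_\alpha$); (ii)~the tree vertices $\bigcup_{\beta<\alpha}A_\beta$; (iii)~the \emph{representatives}, i.e.\ members of members of the $A_\beta$'s; and (iv)~sets in $V$. Indeed, a vertex in $A_{\beta+1}$ is a mod-finite equivalence class $[x]$ whose members are representatives $x'\subseteq A_\beta$; the members of such an $x'$ are again vertices (or, at the bottom level, natural numbers, hence in $V$); a vertex in $A_\beta$ for limit $\beta$ is a branch whose members are lower vertices; and the $\in$-rank strictly decreases at each step, so the recursion terminates. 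In particular every $c_j$ falls into one of (i)--(iv).

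Now I would reorganize the parameters. Those of type (i) or (iv) are elements of $V$, so, using a pairing function of $V$, they can be absorbed together with $v_0$ and the ordinal $\alpha$ into a single $v\in V$ (here using that $V$ and $V(A,T)$ have the same ordinals, since each $M_\alpha$ sits inside a set-generic extension of $V$). Those of type (ii) are collected into a tuple $\bar a$ of vertices. The parameters of type (iii) must be turned into a bona fide \emph{sequence of representatives}, one in which no two entries are equal modulo finite: whenever two of them, say $x$ and $x'$, lie in the same mod-finite class $a\in A$, keep $x$ and note that $x'=(x\setminus F_0)\cup F_1$ for finite sets $F_0,F_1$ of vertices; adjoin all vertices occurring in these $F_0,F_1$ to $\bar a$, and record inside $v$ the finite combinatorial data specifying which of the adjoined vertices are deleted from, resp.\ added to, which surviving $x$ (this is a hereditarily finite object, hence in $V$). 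Finally adjoin to $\bar a$ each surviving mod-finite class $a_i=[x_i]$. After these modifications $\bar x=\langle x_1,\dots\rangle$ is a sequence of representatives, and each original parameter $c_j$, together with $A_\alpha$ and $T_\alpha$, is definable from $A$, $T$, $\bar a$, $\bar x$ and $v$.

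It remains to transfer the statement from $M_\alpha$ to $V(A,T)$: since $M_\alpha=V(A_\alpha,T_\alpha)$ is uniformly definable as an inner model of $V(A,T)$ from the set $(A_\alpha,T_\alpha)$, which in turn is definable from the classes $A,T$ and the ordinal $\alpha\in v$, the statement ``$S$ is the unique solution of $\phi_0(\cdot,A_\alpha,T_\alpha,c_1,\dots,c_m,v_0)$ in $M_\alpha$'' rewrites as a formula $\varphi(S,A,T,\bar a,\bar x,v)$ holding in $V(A,T)$ with $S$ as its unique solution; the equivalent form is obtained by letting $\psi(s,A,T,\bar a,\bar x,v)$ be ``$s$ belongs to the unique solution of $\varphi(\cdot,A,T,\bar a,\bar x,v)$''. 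I expect the main obstacle to be the bookkeeping of this reorganization — turning the raw parameters coming out of Fact~\ref{fact: V(A) model} into a legitimate sequence of representatives while pushing the residual finite data into $V$ and into $\bar a$ — together with the (standard, but worth spelling out) point that each $M_\alpha$ is a definable inner model of $V(A,T)$; the transitive-closure analysis itself is routine once one is careful about the bottom level and the branch levels.
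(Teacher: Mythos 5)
Your proposal is correct, and it is exactly the argument the paper intends: the Observation is stated without proof, but the paragraph preceding it describes precisely your decomposition (apply Fact~\ref{fact: V(A) model} in some $M_\alpha=V(A_\alpha,T_\alpha)$, note that $\trcl(\{(A_\alpha,T_\alpha)\})$ consists of $V$-elements, tree vertices, and members of vertices, then repackage). Your extra care in merging mod-finite-equal parameters into a single representative plus finite data in $\bar a$ and $v$ --- so that the resulting $\bar x$ meets the ``distinct classes'' requirement in the definition of a sequence of representatives --- is the one nontrivial bookkeeping point, and you handle it correctly.
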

Recall that the type of $\bar{a}$ was determined by its tree type. For pairs $\bar{a},\bar{x}$, we need to also consider the membership relation between the elements in $\bar{x}$ and the elements in the tree.
\begin{defn}\label{defn: full type}
Given $\Bar{a}=a_1,\dots,a_n$ and $\bar{x}=x_1,\dots,x_m$, the \textbf{full type} of $\bar{a},\Bar{x}$ is the structure $\left((n\times\mathrm{Ord})\sqcup \{s_0,\dots,s_{m-1}\},\approx,T,\seqq{L_\alpha}{\alpha\in\mathrm{Ord}},E\right)$ defined by
\begin{itemize}
    \item $(i,\alpha)\approx (j,\beta)\iff \mathrm{proj}_\alpha(a_i)=\mathrm{proj}_{\beta}(a_j)$; 
    \item $s_j\approx s_k\iff x_j=x_k$
    \item $(i,\alpha)\mathrel{T}(j,\beta)\iff \mathrm{proj}_\alpha(a_i)<_T \mathrm{proj}_\beta(a_j)$;
    \item $s_i\mathrel{E}(j,\beta)\iff x_i\in \mathrm{proj}_\beta(a_j)$, for $i<m$, $j<n$;
    \item $(i,\alpha)\mathrel{E}s_j\iff \mathrm{proj}_\alpha(a_i)\in x_j$, for $j<m$, $i<n$.
\end{itemize}
\end{defn}


For a sequence $\bar{u}$, containing both members of the tree and representatives, we will talk about the type of $\bar{u}$ in a natural way. Say that $\bar{a},\bar{x}$ and $\bar{b},\bar{y}$ \textbf{have the same type over $\bar u$} if $\bar{a},\bar{x}^\frown\bar{u}$ and $\bar{b},\bar{y}^\frown\bar{u}$ have the same full type.
Say that $\bar{a},\bar{x}$ and $\bar{b},\bar{y}$ have the same type over $A_\alpha$ if they have the same type over $\bar{u}$, for any finite $\bar{u}\subset A_\alpha$.

\begin{thm}\label{thm: full indiscernibility}
For any ordinal $\alpha$, parameter $v\in M_\alpha$, and formula $\phi$, suppose that $\bar{a},\bar{x}$ and $\bar{b},\bar{y}$ have the same type over $A_\alpha$, where $\bar{x}$ and $\bar{y}$ are sequences of representatives. Then
\begin{equation*}
    V(A,T)\models \phi(A,T,\bar{a},\bar{x},v)\iff \phi(A,T,\bar{b},\bar{y},v).
\end{equation*}
\end{thm}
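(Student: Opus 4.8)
The plan is to prove the theorem by induction along the construction, in exact parallel with Lemma~\ref{lemma: indiscernibility propagation}. At each stage one formulates the natural \emph{full indiscernibility hypothesis} --- the statement of the theorem with $A,T$ replaced by $A_\theta,T_\theta$ (resp.\ $A_{<\theta},T_{<\theta}$) and the ambient model $M_\theta$ (resp.\ $M_{<\theta}$) in place of $V(A,T)$ --- and shows that it propagates through the successor, limit, and tree steps. As for tree types, full indiscernibility at $M_\theta$ implies it at $M_{\theta'}$ for $\theta'<\theta$ by relativizing formulas, and the passage to $V(A,T)=\bigcup_\alpha M_\alpha$ is obtained as the ``limit step at $\mathrm{Ord}$'': one runs the argument of Lemma~\ref{lemma: indiscernibility propagation}(2) with the tail of the class iteration $\P$ beyond a level $\beta$ in place of $\P_\beta^\theta$, using that the automorphisms of Section~\ref{subsubsection: permutations} act with finite support and hence witness the required homogeneity of the class forcing.

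The one genuinely new ingredient lives at the level of the basic lemmas. I would first prove strengthenings of Lemma~\ref{lemma: basic step} and Lemma~\ref{lemma: basic tree step} in which, besides the tree nodes $\bar a$ and a ground parameter, one is allowed a finite sequence $\bar x$ of representatives, at most one from each equivalence class. In the setting of Lemma~\ref{lemma: basic step}, with $\Q=\Q(W)$ adding $\seqq{x_n}{n<\omega}$ and $A=\set{[x_n]}{n\in\omega}$, a representative of $[x_n]$ is a set $x'_n\subseteq W$ with $x'_n\Delta x_n$ finite; the only information it carries over $V$ beyond the tree data of $[x_n]$ is which of the finitely many relevant elements $w\in W$ satisfy $w\in x'_n$ (there is no $\in$-relation between two representatives of distinct classes --- this is forced by rank --- and membership of $x'_n$ into the tree amounts to identifying its home class among the given tree nodes). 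This is precisely what the relation $E$ of Definition~\ref{defn: full type} encodes. The key point is that the automorphisms $f^\alpha_t$ of $\Q(A_\alpha)$ from Section~\ref{subsubsection: permutations}, which flip finitely many values $p(i,a)$, fix every class $[x_n]$ --- hence fix $\dot A_{\alpha+1}$, $\dot T_{\alpha+1}$ and all higher $\dot A_\gamma,\dot T_\gamma$ --- while carrying each representative $x'_n$ to an arbitrary representative of the same class realizing any prescribed membership pattern on the relevant elements of $A_\alpha$. Composing these with the coordinate permutations $g^\alpha_\sigma$ to match the $\approx$-pattern, exactly as in the proof of Lemma~\ref{lemma: basic step}, produces an automorphism fixing $\dot A,\dot T$ and the ground parameter that carries $(\bar a,\bar x)$ to $(\bar b,\bar y)$ whenever they have the same full type; the ``moreover'' clauses (ground-model definability of the full type) follow as before, and the tree version is obtained on top of this via the $t[\bar a,\bar w]$-condition argument of Lemma~\ref{lemma: basic tree step}.

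Granting the strengthened basic lemmas, the three propagation steps go through just as for tree indiscernibility. For the \textbf{successor step} $M_\theta\rightsquigarrow M_{\theta+1}$: given $(\bar a,\bar x)$ and $(\bar b,\bar y)$ of the same full type over $A_\alpha$, one isolates the part at level $\theta+1$ together with the representatives of level-$(\theta+1)$ classes --- these are subsets of $A_\theta$ --- and absorbs the remainder $(\bar a_0,\bar x_0,v)$ into the ground model $M_\theta$; the strengthened Lemma~\ref{lemma: basic tree step} with $W=A_\theta$ reduces the truth of $\phi$ to a statement $\psi(A_\theta,T_\theta,\bar a_0,\bar x_0,\bar w,v)$ about the lower data and the level-$\theta$ projections $\bar w$, and the inductive full indiscernibility in $M_\theta$ concludes; the case $\alpha=\theta$ is just the strengthened Lemma~\ref{lemma: basic tree step}. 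For the \textbf{limit step}, choose $\beta<\theta$ with $\bar a,\bar x$ below level $\beta$ (so that they, and $v$, lie in $M_\beta$); by the homogeneity property~(\ref{property: rich automorhpisms}), $\phi^{M_{<\theta}}(A_{<\theta},T_{<\theta},\bar a,\bar x,v)$ is equivalent, over $M_\beta$, to $\P_\beta^\theta\force\phi^{V(\dot A_{<\theta},\dot T_{<\theta})}(\dot A_{<\theta},\dot T_{<\theta},\bar a,\bar x,v)$, a statement $\psi(A_\beta,T_\beta,\bar a,\bar x,v)$, and the inductive hypothesis in $M_\beta$ transfers it to $(\bar b,\bar y)$. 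For the \textbf{tree step}, a representative of a branch $b_i\in A_\theta$ is an element of $b_i$, i.e.\ a tree node $\proj_\gamma(b_i)\in A_{<\theta}$ lying below it, so such representatives are already in the tree and are simply folded into $\bar a$; the representatives of lower-rank classes are handled exactly as in the proof of Lemma~\ref{lemma: indiscernibility propagation}(3) --- writing $\B_\theta$ as a finite-support product of copies of $\B\restriction 1$, invoking Lemma~\ref{lemma: basic permutation}, and reducing to the inductive full indiscernibility in $M_{<\theta}$ via the ``$p\restriction\alpha$ forces'' claim of Section~\ref{subsection: tree step}, now carrying the representatives along in the forced formula.

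The main obstacle is the strengthened basic step, and specifically the bookkeeping around the relation $E$: one has to check that $E$, together with the data already present in the tree type, records \emph{exactly} the part of a representative's type that is not automorphism-invariant --- in particular that distinct representatives are $\in$-incomparable and otherwise unrelated to one another, that no further coincidence is forced by the generic structure of $\Q(A_\alpha)$, and that the level of each representative's home class is pinned down (an automorphism of the iteration acts on each stage separately, so it can never move a representative across levels) --- and that the family $\{f^\alpha_t\}$ together with the coordinate permutations acts transitively on sequences of a fixed full type. A secondary point requiring care is that each reduction in the successor, limit, and tree steps preserves the standing hypothesis that the representative-tuples have at most one entry per equivalence class; for the tree step this rests on the observation above that representatives of branches degenerate to tree nodes.
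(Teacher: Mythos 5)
Your proposal is correct and follows essentially the same route as the paper: induction along the construction exactly parallel to Lemma~\ref{lemma: indiscernibility propagation}, with the limit and tree steps unchanged and the new content concentrated in a strengthened successor-step lemma showing that the only additional information a representative carries is the finite membership pattern recorded by the relation $E$ of Definition~\ref{defn: full type} (this is the paper's Lemma~\ref{lemma: basic step full indiscernibility}, built from Lemmas~\ref{lemma: stronger basic tree step} and~\ref{lemma: basic Monro lemma}). The only cosmetic difference is that you package the key homogeneity argument via the automorphisms $f^\alpha_t$, $g^\alpha_\sigma$, whereas the paper absorbs lower-level representatives into the ground model by re-basing and then restricts forcing conditions to the relevant coordinates via the replacement $p\mapsto p[\bar{c}]$ --- two standard presentations of the same argument.
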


We begin by expanding the results in Section~\ref{section: basic lemmas for successor step} to this context.
As in the notation of Section~\ref{section: basic lemmas for successor step}, let $V$ be some model of $\ZF$ and $W$ a set in $V$. The following is a seemly stronger bus equivalent form of Lemma~\ref{lemma: basic tree step}. In this setting, we consider $A$ to be the set of infinitely many equivalence classes of generic subsets of $W$, modulo finite modifications, and $\pi$ is a generic for $\mathbb{T}(A, W)$ - a generic surjection from $A$ to $W$.

In the notation of the lemma, suppose $\bar{a}$ is a finite sequence from $A$ and $\bar{y}$ such that $y_i\in a_i$ (so $a_i=[y_i]$).
\begin{lemma}\label{lemma: stronger basic tree step}
Fix a formula $\phi$ and parameter $v\in V$.
Suppose $\bar{b},\bar{c}$ are sequences from $A$ of length $n$ such that
\begin{itemize}
    \item $\bar{b},\bar{c}$ are disjoint from $\bar{a}$;
    \item $\pi(a_i)=\pi(b_i)$ for all $i<n$, and
    \item $a_i=a_j\iff b_i=b_j$ for any $i,j<n$, then
\end{itemize}
\begin{equation*}
    \phi(A,T,\Bar{y},\bar{b},v)\iff\phi(A,T,\Bar{y},\bar{c},v).
\end{equation*}
Moreover, for any formula $\phi(A,\Bar{y},\bar{b},v)$ there is a formula $\psi(W,\bar{y},\bar{w},v)$ such that for any $\bar{b}$, if $w_i=\pi(b_i)$ then
\begin{equation*}
    V[\bar{y}]\models\psi(W,\bar{y},\bar{w},v)\iff V(A,T)\models\phi(A,T,\bar{y},\Bar{b},v).
\end{equation*}
\end{lemma}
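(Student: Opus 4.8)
\textbf{Proof plan for Lemma~\ref{lemma: stronger basic tree step}.}

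The plan is to mimic the proof of Lemma~\ref{lemma: basic tree step}, but with the base model $V$ replaced throughout by $V[\bar y]$, and carefully tracking the fact that the parameters $\bar y$ are themselves generic subsets of $W$ rather than members of $V$. First I would observe that, fixing representatives $y_i \in a_i$, the sets $x_{k_0},\dots,x_{k_{n-1}}$ chosen to represent $a_1,\dots,a_n$ can be taken to be $y_1,\dots,y_n$ themselves, so $V[\bar y] = V[x_{k_0},\dots,x_{k_{n-1}}]$ is a $\Q(W)^{(n)}$-generic extension of $V$, and (as in the Remark following Lemma~\ref{lemma: basic tree step}) the remaining coordinates give a $\Q(W)$-generic $\vec{z}$ over $V[\bar y]$ with $A = \{[y_1],\dots,[y_n]\} \cup \{[z_m] : m<\omega\}$, and $T\restriction(A\setminus\bar a)$ is $\T(A\setminus\bar a, W)$-generic over $V[\bar y](A\setminus \bar a)$. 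Thus $V(A,T) = V[\bar y](B)(T\restriction B)$ where $B = A \setminus \bar a$.

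The key point is that, once the parameters $\bar y$ are absorbed into the base model, the sequences $\bar b,\bar c$ are finite tuples of distinct elements of $B$ (they are disjoint from $\bar a$), and $A = \{a_1,\dots,a_n\}\cup B$ where $a_1,\dots,a_n$ are now \emph{constants} of the base model $V[\bar y]$. So I can apply Lemma~\ref{lemma: basic tree step} verbatim, but in the ground model $V[\bar y]$ and with the ``Monro set'' $B$ in place of $A$: the formula $\phi(A,T,\bar y,\bar b,v)$ is, in $V[\bar y]$, a formula about $B$, $T\restriction B$, the parameter $v$, the constants $a_1,\dots,a_n,\bar y$, and the tuple $\bar b$ of distinct elements of $B$. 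Lemma~\ref{lemma: basic tree step} then gives that this truth value depends only on the tree type of $\bar b$, i.e.\ only on the tuple $\bar w = \langle \pi(b_i)\rangle_{i<n}$ (together with the equalities $b_i = b_j$, which are recorded by $w_i = w_j$ in the relevant case, or handled by first reducing to distinct tuples as in the proof of Lemma~\ref{lemma: basic tree step}). Since $\bar c$ has, by hypothesis, $\pi(c_i) = \pi(b_i)$ and the same equality pattern, we get $\phi(A,T,\bar y,\bar b,v)\iff \phi(A,T,\bar y,\bar c,v)$. For the ``moreover'' clause, Lemma~\ref{lemma: basic tree step} (applied over $V[\bar y]$) directly produces a formula $\psi(W,\bar y,\bar w, v)$ — absorbing the constants $a_i = [y_i]$, which are definable from $\bar y$ and $W$ — such that $V[\bar y]\models \psi(W,\bar y,\bar w,v)$ iff $V(A,T)\models\phi(A,T,\bar y,\bar b,v)$ whenever $w_i = \pi(b_i)$.

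The main obstacle, and the only place requiring care, is the bookkeeping around the representatives: I must check that passing to the base model $V[\bar y]$ is legitimate, i.e.\ that $\bar y$ can genuinely be used as the generic reals representing $\bar a$ (so that the ``disjointness of $\bar b,\bar c$ from $\bar a$'' hypothesis really does put $\bar b, \bar c$ inside the independent part $B$), and that the constants $a_1,\dots,a_n$ and $y_1,\dots,y_n$ are absorbed into the parameter $v$ of the new ground model without disturbing the definability of the poset $\T(B,W)$ from $W$. None of this is deep — it is exactly the content of the Remark after Lemma~\ref{lemma: basic tree step} — but stating the equivalence ``seemingly stronger but equivalent form'' cleanly requires spelling out that the full type of $\bar a,\bar y$ (which, for a single representative per class, reduces to the tree type of $\bar a$ plus the data of which $\pi(a_i)$ equal which, plus trivial membership facts) is exactly what is encoded by $W$, $\bar y$, and $\bar w$. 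Once this identification is made, the proof is a direct citation of Lemma~\ref{lemma: basic tree step}.
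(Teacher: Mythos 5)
Your proposal is correct and follows essentially the same route as the paper: the paper's proof also absorbs the representatives $\bar{y}$ into the ground model, observes that the remaining coordinates form a $\Q(W)$-generic over $V[\bar{y}]$ giving $\tilde{A}=A\setminus\bar{a}$ with the restricted $\pi$ generic for $\T(\tilde{A},W)$, and then cites Lemma~\ref{lemma: basic tree step} over the new base model. The bookkeeping points you flag (disjointness of $\bar{b},\bar{c}$ from $\bar{a}$ placing them in the independent part, and interdefinability of $A,T$ with $\tilde{A},\tilde{\pi}$ over $V[\bar{y}]$) are exactly the ones the paper's proof relies on.
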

The proof above can be repeated to give this stronger statement. Instead, we deduce the lemma from the former, as follows.
\begin{proof}
Without loss of generality, assume that $\bar{y}=x_0,\dots,x_{m}$. Now $\seqq{x_i}{i> m}$ is $\Q(W)$-generic over $V[x_0,\dots,x_m]$, so the construction of $V(A)$ above can be instead presented with  $V[x_0,\dots,x_m]$ as the ground model, and $V(A)=V[x_0,\dots,x_m](\tilde{A})$, where $\tilde{A}=A\setminus\{x_0,\dots,x_m\}$. Note that $\tilde{A}$ and $A$ are definable from one another using $x_0,\dots,x_m$, which are now considered parameters in the ground model.
Let $\tilde{\pi}\colon \tilde{A}\to W$ be the restriction of $\pi$ to $\tilde{A}$. Then $\tilde{\pi}$ is generic for the poset $\tilde{\T}$, defined as $\T$ above using $\tilde{A}$. Again, $\tilde{\pi}$ and $\pi$ are definable from one another using $x_0,\dots,x_m$.
The conclusion now follows from Lemma~\ref{lemma: basic tree step}, as $\bar{x}$ are in the ground model.
\end{proof}

Following the notation above, let $V$ be a model of ZF, $W\in V$ an infinite set, and consider the model $V(A, T)$ constructed by the two steps above.
Consider the poset $\Q_1=\Q_1(A)\in V(A,T)$ be the poset of all finite partial functions from $A$ to $\{0,1\}$.
The $\Q_1$ generic object is identified with a subset of $A$. The poset $\Q$ above is isomorphic to the finite support product of $\omega$-many copies of $\Q_1$.
For a finite $\bar{a}\subset A$ and a condition $p\in\Q_1$, let $p\restriction \bar{a}$ be the restriction of $p$ to the domain $\bar{a}$.

Fix a natural number $d$ and let $\Q_d=\Q_1^d$, adding $d$ mutually generic subsets of $A$.
For $p=\seqq{p(i)}{i<d}$ and $\bar{a}\subset A$, let $p\restriction\bar{a}$ be the condition $\seqq{p(i)\restriction\bar{a}}{i<d}$.
\begin{lemma}\label{lemma: basic Monro lemma}
Let $\bar{a}\subset A$ be finite, and $\bar{x}$ a sequence such that $x_i\in a_i$.
Let $\psi$ be a formula, $v\in V$, and $p\in\Q_d$ such that
\begin{equation*}
    V(A,T)\models p\force \psi(A,T,\bar{x},v).
\end{equation*}
Then
\begin{equation*}
        V(A,T)\models p\restriction\bar{a}\force \psi(A,T,\bar{x},v).
\end{equation*}
\end{lemma}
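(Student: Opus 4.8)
The plan is to mimic the argument of Lemma~\ref{lemma: basic tree step}: we want to show that a condition $p \in \Q_d$ forcing $\psi(A,T,\bar{x},v)$ cannot be strengthened by coordinates disjoint from $\bar{a}$, because any such strengthening can be "shuffled away" by a permutation of $A$ that fixes $\bar{x}$, $\bar{a}$, and the tree structure. Concretely, I would argue by contradiction: suppose $p \restriction \bar{a}$ does not force $\psi(A,T,\bar{x},v)$, so there is $q \leq p\restriction \bar{a}$ forcing $\neg\psi(A,T,\bar{x},v)$. By extending, we may assume $q \in \Q_d$ has domain $\bar{a} \cup \bar{b}$ for some finite $\bar{b} \subseteq A$ disjoint from $\bar{a}$, with $\bar{b}$ disjoint also from the domain-supports of $p$ outside $\bar{a}$; after a harmless further extension we may also assume $p$ itself has domain $\bar{a}\cup\bar{c}$ with $\bar{c}$ disjoint from $\bar{b}$.

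The key step is to produce a permutation $\rho$ of $A$ fixing each $a \in \bar{a}$ (hence fixing each $x_i \in a_i$, since $[x_i] = a_i$ is invariant under any permutation fixing $a_i$ — indeed we only need $\rho$ to fix the finitely many elements $\bar{a}$ pointwise), and fixing the tree structure $T$, such that $\rho$ moves the support $\bar{b}$ of $q$ off the support $\bar{c}$ of $p$, making $p$ and $\rho(q)$ compatible. Here one must be a little careful: a permutation of $A$ does not straightforwardly act on $\Q_d$ and $T$ simultaneously. The clean way, following the Remark after Lemma~\ref{lemma: basic tree step} and the permutation analysis of Section~\ref{subsubsection: permutations}, is to pass to the presentation where $V(\bar{a})$ is the new ground model: writing $a_i = [x_{k_i}]$, we have $V(\bar{a}) = V[x_{k_0},\dots,x_{k_{n-1}}]$, and $A \setminus \bar{a}$ together with $T\restriction(A\setminus\bar a)$ is generic over $V(\bar{a})$ for a poset of the same form ($\Q(W)$ followed by $\T$). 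Now the relevant permutation $\rho$ of $A\setminus\bar a$ — given by a finite permutation $\sigma$ of the index set $\omega\setminus\{k_i\}$, corrected as in the definition of $g^\alpha_\sigma$ to preserve the tree order — is an automorphism of this poset fixing $\dot A$, $\dot T$, and the parameters $\bar{a}, v$. Since $p$ and $q$ both restrict to conditions in this quotient with supports $\bar c$ and $\bar b$ respectively, choosing $\sigma$ to send $\bar b$ off $\bar c$ gives that $p$ and $\rho(q)$ are compatible, while $\rho(q) \leq \rho(p\restriction\bar a) = p \restriction \bar a$ still forces $\neg\psi(A,T,\bar{x},v)$ (as $\rho$ fixes $A,T,\bar x, v$). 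Then a common extension $r \leq p, \rho(q)$ forces both $\psi$ and $\neg\psi$, a contradiction.

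The main obstacle I expect is the bookkeeping around the simultaneous action on $\Q_d$ and on $T$: a naive coordinate permutation of $A$ disturbs the tree, and one has to package the correction (the $e_\sigma$-type move) correctly so that the resulting automorphism genuinely fixes $\dot T$ and the already-chosen parameters $\bar a, \bar x$. Once the permutation is set up, the density fact — that the set of $p$ compatible with $\rho(q)$ for some $\rho$ is dense, by taking disjoint supports — is exactly as in Section~\ref{subsubsection: permutations} and the contradiction is immediate. One should also note the reduction to the case $\bar{x}$ a sequence of representatives with distinct $a_i$'s is harmless, since if $a_i = a_j$ then $x_i \Delta x_j$ is finite and the statement is unaffected by merging; and the hypothesis that $p\restriction\bar a$ is a condition of $\Q_d$ (not just a "partial" object) is automatic since restrictions of finite partial functions are finite partial functions.
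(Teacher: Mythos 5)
Your proposal is correct and is essentially the paper's argument in contrapositive form: both proofs hinge on replacing the part of a condition's domain lying outside $\bar{a}$ by a disjoint tuple of the same tree type (in particular with the same $\pi$-projections to $W$) without changing the forced statement, and then conclude by compatibility/predensity below $p\restriction\bar{a}$. The only real difference is that the paper obtains the substitution step by directly citing the already-established indiscernibility of Lemma~\ref{lemma: stronger basic tree step} (which absorbs the representatives $\bar{x}$ into the ground model exactly as in your $V(\bar{a})$ reduction), rather than re-running the $g_\sigma$-type automorphism construction; if you take your route, the one point to make explicit is that the permutation must preserve the projections $\pi(b_j)$, since tuples with different projections genuinely have different types over $\bar a$.
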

\begin{proof}
We may assume that for each $i<d$ the domain of $p(i)$ is of the form $\bar{a}\cup\bar{b}$ where $\bar{b}\cap\bar{a}=\emptyset$.
Write $\bar{b}=\seq{b_1,\dots,b_n}$, distinct elements.
Given a sequence $\bar{c}=\seq{c_1,\dots,c_n}\subset A$ of distinct elements, disjoint from $\bar{a}$, define $p[\bar{c}]$ to be ``$p$ with $\bar{c}$ replaced for $\bar{b}$''. That is,
\begin{itemize}
    \item the domain of each $p(i)$ is $\bar{a}\cup\bar{c}$;
    \item for $i<d$ and $j=1,\dots,n$, $p[\bar{c}](i)(c_j)=1\iff p(i)(b_j)=1$;
    \item $p[\bar{c}]\restriction \bar{a}=p\restriction\bar{a}$.
\end{itemize}
Note that the statement ``$p[\bar{c}]\force\psi(A,T,\bar{x},v)$'' can be written as $\chi(A,T,\bar{x},\bar{c},v)$ for some formula $\chi$.
Applying Lemma~\ref{lemma: stronger basic tree step} we conclude that for any sequence $\bar{c}=\seq{c_1,\dots,c_n}$ of distinct members of $A$, disjoint from $\bar{a}$, if $\pi(c_j)=\pi(b_j)$ for $j=1,\dots,n$, then $p[\bar{c}]\force\psi(A,T,\bar{x},v)$.

Note that for any condition $q$ below $p\restriction\bar{a}$, if $\bar{c}$ is as above and disjoint from the domain of $q$, then $p[\bar{c}]$ is compatible with $q$.
The conclusion now follows as the conditions forcing $\psi(A, T,\bar{x},v)$ are pre-dense below $p\restriction\bar{a}$.
\end{proof}
Let $\seqq{x_n}{n<\omega}$ be $\Q(A)$ generic over $V(A)$.
Let $B=\set{x_n}{n\in\omega}$.
\begin{lemma}
Let $d,n$ be natural numbers and $r\colon d\times n\to\{0,1\}$ a function.
Then for any formula $\phi$ there is a formula $\psi$ such that for any finite sequence $\bar{y}=\seq{y_0,\dots,y_{d-1}}$ of distinct elements from $B$, any sequence $\bar{a}=\seq{a_0,\dots,a_{n-1}}$ of distinct members of $A$ and $\bar{x}=\seq{x_0,\dots,x_{n-1}}$ such that $x_i\in a_i$, if for all $i<n$ and $j<d$
\begin{equation*}
    a_i\in y_j\iff r(i,j)=1,
\end{equation*}
then
\begin{equation*}
    V(A,T)(B)\models\phi(A,T,B,\bar{x},\bar{y})\iff V(A,T)\models\psi(A,T,\bar{x})
\end{equation*}
\end{lemma}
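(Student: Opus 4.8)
The plan is a three-level descent --- from $V(A,T)(B)$ down to $V(A,T)[\bar y]$, then to a forcing statement over $V(A,T)$, and finally to a statement about $A,T,\bar x$ alone --- with the combinatorial datum $r$ entering only at the last step. Since $\Q(A)$ is isomorphic to the finite support product of $\omega$ copies of $\Q_1(A)$, with $B$ the unordered set of its generic columns, I would first apply Lemma~\ref{lemma: basic permutation}, with $V(A,T)$ as ground model, $\Q_1(A)$ in the role of $\Q$, $\omega$ as the index set, $\bar y$ in the role of the chosen finite tuple of generics, and $(A,T,\bar x)$ in the role of the ground-model parameter --- legitimate since $x_i\in a_i\in V(A,T)$ and $V(A,T)$ is transitive. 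This yields a formula $\psi_1$, depending only on $\phi$ and $d$, with $V(A,T)(B)\models\phi(A,T,B,\bar x,\bar y)$ iff $V(A,T)[\bar y]\models\psi_1(A,T,\bar x,\bar y)$. Unwinding the construction of $\psi_1$ and using the homogeneity of $\Q(A)$ under coordinate permutations (as in the proof of Lemma~\ref{lemma: basic permutation}) to discard all columns but the $d$ relevant ones, one can rewrite this, over $V(A,T)[\bar y]$, as the assertion that the $\Q_d$-generic filter $G_{\bar y}$ determined by $\bar y$ (where $\Q_d=\Q_1(A)^d$) meets the downward-closed set
\begin{equation*}
D=\set{s\in\Q_d}{V(A,T)\models s\force_{\Q(A)}\phi^{V(\dot B)}(\dot B,\dot x_0,\dots,\dot x_{d-1},A,T,\bar x)}\in V(A,T).
\end{equation*}

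Next, since $\bar y$ is $\Q_d$-generic over $V(A,T)$, the right-hand side holds iff $D$ is dense below some $q\in G_{\bar y}$. Now the hypothesis $a_i\in y_j\iff r(i,j)=1$ says exactly that the condition $q_r\in\Q_d$ with domain $\bar a$ and $q_r(j)(a_i)=r(i,j)$ lies in $G_{\bar y}$, and for any $q\in G_{\bar y}$ the restriction $q\restriction\bar a$ agrees with $r$ on its domain, so $q_r$ extends $q\restriction\bar a$. Thus, provided we know that $D$ dense below $q$ implies $D$ dense below $q\restriction\bar a$, the existence of such a $q$ is equivalent to $D$ being dense below $q_r$. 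Since $q_r$ is definable in $V(A,T)$ from $\bar x$ and the fixed $r$, ``$D$ is dense below $q_r$'' is a statement of the form $\psi(A,T,\bar x)$ for a formula $\psi$ depending only on $\phi$, $d$, $n$ and $r$, which is the required equivalence.

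The main obstacle is exactly the restriction step just invoked: that $\Q_d$-forcing is insensitive to the part of a condition living outside $\bar a$, i.e. ``$D$ is dense below $q$'' implies ``$D$ is dense below $q\restriction\bar a$''. This is an instance of Lemma~\ref{lemma: basic Monro lemma}: ``$D$ is dense below $q$'' expresses that $q$ forces, over $\Q_d$, that the generic filter meets $D$, and the latter is a statement whose parameters, beyond the canonical name for the generic, are just $A$, $T$ and $\bar x$, so the lemma applies with empty $V$-parameter. The care required is in the bookkeeping that precedes this: carrying out the column-discarding reduction of $\psi_1$ to the form involving $D$ --- itself a Monro-style coordinate-permutation argument in the spirit of the proof of Lemma~\ref{lemma: basic permutation} --- so that the statement whose forcing we restrict genuinely has the syntactic shape Lemma~\ref{lemma: basic Monro lemma} requires.
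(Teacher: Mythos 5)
Your proposal is correct and follows essentially the same route as the paper: reduce to $V(A,T)[\bar y]$ via Lemma~\ref{lemma: basic permutation}, then use the forcing theorem together with Lemma~\ref{lemma: basic Monro lemma} to replace an arbitrary condition of the $\Q_d$-generic by its restriction to $\bar{a}$, which the hypothesis on $r$ identifies with a condition definable from $\bar{x}$ and $r$ alone. The intermediate reformulation via the dense set $D$ and the column-discarding step are extra packaging that the paper avoids by treating the formula produced by Lemma~\ref{lemma: basic permutation} as a black box about $V(A,T)[\bar y]$ and applying the two lemmas directly, but your version is sound as set up.
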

\begin{proof}
By Lemma~\ref{lemma: basic permutation} there is a formula $\zeta$ such that 
\begin{equation*}
    V(A,T)(B)\models\phi(A,T,B,\bar{x},\bar{y})\iff V(A,T)[\bar{y}]\models\zeta(A,T,\bar{x},\bar{y}).
\end{equation*}
Note that $\bar{y}$ is $\Q_d$-generic over $V(A,T)$.
Let $p\in\Q_d$ be $\bar{y}\restriction\bar{a}$. That is, the domain of $p(i)$ is $\bar{a}$ for $i<d$, and $p(i)(a_j)=1\iff y_i(a_j)=1$.
By Lemma~\ref{lemma: basic Monro lemma},
\begin{equation*}
    V(A,T)[\bar{y}]\models\zeta(A,T,\bar{x},\bar{y})\iff V(A,T)\models p\force \zeta(A,T,\bar{x},\dot{\bar{y}}).
\end{equation*}
The lemma now follows with $\psi(A,T,\bar{x})$ defined as ``$q\force_{\Q_d} \zeta(A,T,\bar{x},\dot{\bar{y}})$ for the condition $q$ defined as $q(i)(a_j)=1\iff r(i,j)=1$''. (Note that $a_j$ is definable from $x_j$.)
\end{proof}

Let $\hat{a}_n=[x_n]=\set{y\subset A}{y\Delta x_n\textrm{ is finite}}$, and $\hat{A}=\set{\hat{a}_n}{n\in\omega}$.
$\hat{A}$ is to $V(A,T)$ as $A$ is to $V$, according to Lemma~\ref{lemma: basic step} above.
\begin{cor}
The lemma above is also true if $V(A,T)(B)$ is replaced with $V(A,T)(\hat{A})$ and $\phi(A,T,B,\bar{x},\bar{y})$ replaced with $\phi(A,T,\hat{A},\bar{x},\bar{y})$.
\end{cor}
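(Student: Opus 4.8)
The plan is to deduce the corollary directly from the preceding lemma by observing that $V(A,T)(\hat{A})$ is definable, in a canonical way, from $V(A,T)(B)$ together with finitely many parameters, so that the indiscernibility analysis reduces to the one already carried out. Recall that $\hat{a}_n = [x_n]$ is the equivalence class mod finite of $x_n$, and $\hat A = \{\hat a_n : n\in\omega\}$, while $B = \{x_n : n\in\omega\}$ is the set of the underlying generic subsets of $A$. The key point is that the map $x_n\mapsto \hat a_n$ gives a surjection $B\to\hat A$, and each $\hat a_n$ is definable from $x_n$ and $A$, so $\hat A\in V(A,T)(B)$ and hence $V(A,T)(\hat A)\subseteq V(A,T)(B)$.

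First I would record that, by Lemma~\ref{lemma: basic step} applied with $V(A,T)$ as the ground model and $W = A$, the members $\hat a_n$ of $\hat A$ are indiscernible over $V(A,T)$, and moreover the type of a finite tuple from $\hat A$ over $V(A,T)$ is uniformly definable. In particular, for any formula $\phi$ there is a formula $\phi'$ such that $V(A,T)(\hat A)\models\phi(A,T,\hat A,\bar x,\bar{\hat a})$ is equivalent to a statement $\phi'(A,T,B,\bar x,\bar y)$ interpreted in $V(A,T)(B)$, where $\bar y$ is any tuple of distinct elements of $B$ lying over $\bar{\hat a}$ (i.e.\ $\hat a_{i} = [y_i]$); here one uses that passing from $\bar y$ to $\bar{\hat a}$ only forgets information, and that whether $a_i\in y_j$ holds is exactly the data $r(i,j)$ that the lemma already tracks, since $a_i\in y_j \iff a_i\in\hat a_j$ (membership in $y_j$ and membership in its mod-finite class $\hat a_j$ agree, because $a_i$ ranges over a single fixed set and $y_j\Delta x_j$ is finite — more precisely one must be slightly careful and instead observe $a_i\in \hat a_j$ is what appears, and membership of the finitely many specified $a_i$ in $y_j$ is governed by $r$). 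Then I would apply the preceding lemma to $\phi'$ to obtain a formula $\psi$ with $V(A,T)(B)\models\phi'(A,T,B,\bar x,\bar y)\iff V(A,T)\models\psi(A,T,\bar x)$, for all $\bar y$ of distinct elements of $B$ satisfying the constraint $a_i\in y_j\iff r(i,j)=1$. Composing the two equivalences gives exactly the desired conclusion.

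The one genuinely delicate point, and the step I expect to require the most care, is the bookkeeping that translates the membership data ``$\hat a_n\in y_j$'' (which one would want for $\hat A$) into the membership data ``$a_i\in y_j$'' that the lemma is phrased with, and checking that these really coincide for the relevant finitely many indices; since each $y_j$ is a genuine subset of $A$ (not of $\hat A$), the statement $\phi$ in the corollary must be understood as referring to $\hat A$ as a set of equivalence classes, and one has to confirm that the reduction to $V(A,T)(B)$ faithfully records everything. This is routine but must be stated precisely. Once that translation is in hand, the corollary is immediate, and I would present it in two or three lines: apply Lemma~\ref{lemma: basic step} to eliminate $\hat A$ in favor of $B$ and extra parameters, then quote the previous lemma verbatim.
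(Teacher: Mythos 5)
Your proposal is correct and matches the paper's proof, which is exactly the two-line observation that $\hat{A}$ is definable from $B$ (and $A$), so the statement about $\hat{A}$ in $V(A,T)(\hat{A})$ rewrites as a statement $\chi$ about $B$ in $V(A,T)(B)$, to which the previous lemma applies verbatim. The extra bookkeeping you worry about (and the slightly garbled parenthetical ``$a_i\in\hat{a}_j$'') is unnecessary: the corollary keeps $\bar{y}$ as the parameter tuple, so the membership data $a_i\in y_j$ tracked by $r$ is literally unchanged under the rewriting.
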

\begin{proof}
Since $\hat{A}$ is definable from $B$, a statement of the form $\phi^{V(A,T)(\hat{A})}(A,T,\hat{A},\bar{x},\bar{y})$ can be written as $\chi^{V(A,T)(B)}(A,T,B,\bar{x},\bar{y})$ for some formula $\chi$. By the lemma above there is a formula $\psi$ corresponding to $\chi$, which is as required.
\end{proof}
Note that $A$ is definable from $\hat{A}$ as $A=\bigcup\hat{A}$.
Working in $V(A,T)(\hat{A})$, let $\hat{\T}$ be the poset for adding a function $\hat{\pi}\colon\hat{A}\to A$ by finite conditions. Fix a generic $\hat{\pi}$ and let $\hat{T}$ be the tree structure between $A$ and $\hat{A}$ defined from $\hat{\pi}$.
Then the model $V(A,T)(\hat{A},\hat{T})$ is to $V(A,T)$ as $V(A,T)$ was to $V$, according to Lemma~\ref{lemma: basic tree step}.

\begin{lemma}\label{lemma: basic step full indiscernibility}
Let $d,n$ be natural numbers and $r\colon d\times n\to\{0,1\}$ a function. Then for any formula $\phi$ there is a formula $\psi$ such that for any finite sequence $\hat{\bar{a}}=\seq{\hat{a}_0,\dots,\hat{a}_{d-1}}$ of distinct members of $\hat{A}$, for any $\hat{\bar{y}}=\seq{\hat{y}_0,\dots,\hat{y}_{d-1}}$ with $\hat{y}_i\in \hat{a}_i$, any sequences $\bar{a}=\seq{a_0,\dots,a_{n-1}}$ and $\bar{b}=\seq{b_0,\dots,b_{d-1}}$ from $A$, and $\bar{x}=\seq{x_0,\dots,x_{n-1}}$ such that $x_i\in a_i$, if
\begin{itemize}
    \item for all $i<n$ and $j<d$, $a_i\in y_j\iff r(i,j)=1$ and
    \item $\hat{\pi}(\hat{a}_j)=b_j$ for $j<d$, then
\end{itemize}
\begin{equation*}
    V(A,T)(\hat{A},\hat{T})\models\phi(A,T,\hat{A},\hat{T},\bar{x},\bar{y})\iff V(A,T)\models\psi(A,T,\bar{x},\bar{b})
\end{equation*}
\end{lemma}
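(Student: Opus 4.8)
The plan is to derive the lemma from results already in this section, by peeling off the two generic objects $\hat T$ and $\hat A$ one at a time; it is the ``full type'' analogue, one level higher, of the combination of Lemma~\ref{lemma: stronger basic tree step} with Lemma~\ref{lemma: basic Monro lemma}. Recall the tower
\[
V(A,T)\subset V(A,T)(\hat A)\subset V(A,T)(\hat A,\hat T),
\]
where $V(A,T)(\hat A)$ sits over $V(A,T)$ by the ``$\hat A$'' construction (as $A$ sits over $V$) and $V(A,T)(\hat A,\hat T)$ is obtained by adjoining the $\hat\T(\hat A,A)$-generic surjection $\hat\pi\colon\hat A\to A$ with its tree $\hat T$. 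After replacing the generic sequence underlying the $\hat A$ construction by the finite modification whose relevant columns are $\hat y_0,\dots,\hat y_{d-1}$, we may assume that $\hat{\bar y}$ consists of distinct columns of that sequence (this changes neither $\hat A$ nor the classes $\hat a_i=[\hat y_i]$), so that $\hat{\bar a}$ is definable from $\hat{\bar y}$ and the matrix $r$ records exactly which $a_i\in A$ lie in $\hat y_j$.

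\emph{Step 1 (eliminate $\hat T$).} Run the tree-step argument from the proof of Lemma~\ref{lemma: basic tree step}, in the representative-enabled form of Lemma~\ref{lemma: stronger basic tree step}, for the construction of $V(A,T)(\hat A,\hat T)$ over $V(A,T)(\hat A)$ --- with $A$ in the role of the ground set, the distinct tuple $\hat{\bar a}\subset\hat A$, and $\hat\pi(\hat a_i)=b_i$ --- now also tracking the values $\hat\pi(\hat{\bar a})=\bar b$. This produces a formula $\phi_1$ with
\[
V(A,T)(\hat A,\hat T)\models\phi(A,T,\hat A,\hat T,\bar x,\hat{\bar y})\iff V(A,T)(\hat A)\models\phi_1(A,T,\hat A,\bar x,\hat{\bar y},\bar b),
\]
namely $\phi_1$ expresses that the finite condition $\hat a_i\mapsto b_i$ of $\hat\T$ forces $\phi$; that condition is definable from $\hat{\bar y}$ and $\bar b$. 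The representatives $\hat{\bar y}$ and the parameter $\bar x\in V(A,T)$ pass through, and $\bar b\subset A$ enters as $\hat\pi(\hat{\bar a})$.

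\emph{Step 2 (eliminate $\hat A$).} Now $V(A,T)(\hat A)$ sits over $V(A,T)$ exactly as in the Corollary above --- the $\hat A$-analogue of the reduction coming from Lemma~\ref{lemma: basic permutation} and Lemma~\ref{lemma: basic Monro lemma}. Apply that reduction to $\phi_1$, treating $\hat{\bar y}$ as the tuple of generic columns and $\bar x,\bar b\in V(A,T)$ as ground-model parameters: the data $a_i\in\hat y_j\iff r(i,j)=1$ is exactly what fixes the relevant condition of $\Q_d$ as in Lemma~\ref{lemma: basic Monro lemma}. This yields $\psi$ with
\[
V(A,T)(\hat A)\models\phi_1(A,T,\hat A,\bar x,\hat{\bar y},\bar b)\iff V(A,T)\models\psi(A,T,\bar x,\bar b),
\]
and composing Steps 1 and 2 gives the lemma, with $\psi$ depending only on $\phi,r,d,n$.

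\emph{The main obstacle} is bookkeeping, not a new idea. The delicate point is in Step 1: the tree-tracked tuple $\hat{\bar a}$ is also the host of the representatives $\hat{\bar y}$, whereas Lemma~\ref{lemma: stronger basic tree step} is phrased with those tuples disjoint and its moreover clause does not explicitly list $\pi(\bar a)$ among the parameters of the reduced formula. I would handle this by re-running the argument directly in the present configuration: absorb $\hat{\bar y}$ (equivalently the $d$ relevant generic columns) into the ground model $V(A,T)$ so that $\hat{\bar a}=[\hat{\bar y}]$ becomes a ground-model tuple, observe that the only un-absorbed datum about it is its tree image $\hat\pi(\hat{\bar a})=\bar b$ (a tuple from $A$, whose fragment of $\hat\pi$ is added by a trivial forcing), and then apply the plain tree step (Lemma~\ref{lemma: basic tree step}) and the $r$-tracking of Lemma~\ref{lemma: basic Monro lemma} over this enlarged ground model. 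A secondary routine point is that both earlier reductions tolerate the extra parameter $\bar b\subset A$; since $\bar b\in V(A,T)$ is a parameter of the ground model over which $\hat A$ is built, this is exactly the ``$v$'' slot already present in Lemma~\ref{lemma: basic permutation}, Lemma~\ref{lemma: basic Monro lemma} and Lemma~\ref{lemma: basic tree step}.
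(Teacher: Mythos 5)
Your proposal is correct and follows exactly the route the paper intends: the paper's own proof of this lemma is the single remark that it is ``analogous to the proof of Lemma~\ref{lemma: basic tree step}'', and your two steps --- first showing that the condition $\hat a_j\mapsto b_j$ of $\hat{\T}$ already decides $\phi$ (using the corollary's indiscernibility in place of Lemma~\ref{lemma: basic step}), then pushing the resulting forcing statement from $V(A,T)(\hat A)$ down to $V(A,T)$ via the corollary with the matrix $r$ --- are precisely that analogy carried out one level up. The delicate point you flag (the representatives $\hat{\bar y}$ living on the very tuple $\hat{\bar a}$ being tracked by the tree) and your fix for it are exactly the bookkeeping the paper leaves implicit.
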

The proof is analogous to the proof of Lemma~\ref{lemma: basic tree step}.

\begin{proof}[Proof of Theorem~\ref{thm: full indiscernibility}]
The proof proceeds by induction on the ordinals. The limit cases follow as in subsection \ref{subsection: tree step} and \ref{subsection: limit-step}. The successor step follows from Lemma~\ref{lemma: basic step full indiscernibility}, as in Section~\ref{section: basic lemmas for successor step}.
\end{proof}

\begin{prop}\label{prop: A-theta-D-finite-in-extensions}
Let $\theta$ be a limit ordinal, $\P$ a poset in $V(A,T)$, $\P\subset V(A,T)_\theta$. Then $A_\theta$ is Dedekind-finite in the extension $V(A, T)[\P]$.
\end{prop}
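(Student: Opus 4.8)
The plan is to run the classical symmetry argument showing that the added set of Cohen reals is Dedekind‑finite in the basic Cohen model, but carried out over the right base model and pushed through the forcing $\P$.

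Suppose toward a contradiction that $V(A,T)[G]\models$``$f\colon\omega\to A_\theta$ is an injection'', where $G$ is $\P$-generic over $V(A,T)$. Since $\P\subset V(A,T)_\theta$, the stabilization of lower ranks (Proposition~\ref{prop: stabilization of initial ranks}) lets us treat $\P$, and $\mathcal P(\P)$, as lying in $M_{<\theta}=V(A_{<\theta},T_{<\theta})$; recall that $A_\theta$ is the unordered set of branches added over $M_{<\theta}$ by the finite support product $\B_\theta$, and that $M_\theta=V(A_\theta,T_\theta)$. I would first argue that in fact $f\in M_\theta[G]$ with $G$ now viewed as $\P$-generic over $M_\theta$: taking a nice $\P$-name $\dot f$ coded by a function $F\colon\omega\times A_\theta\to\mathcal P(\P)$ (antichains deciding the values of $\dot f$), $F$ is a subset of the set $\omega\times A_\theta\times\mathcal P(\P)\in M_\theta$, so by the fact that no stage of the construction beyond $\theta$ adds a new subset to a set of $M_\theta$ -- the ``lower ranks are stabilized'' phenomenon, extracted from Lemma~\ref{lemma: basic step}, Lemma~\ref{lemma: tree stepup no subset of V}, the limit and tree cases of Lemma~\ref{lemma: indiscernibility propagation}, and Proposition~\ref{prop: stabilization of initial ranks} -- one gets $F\in M_\theta$, hence $\dot f\in M_\theta$ and $f=\dot f[G]\in M_\theta[G]$.

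Working in $M_\theta=\hod^{M_\theta}_{V,\,\trcl(\{\langle A_\theta,T_\theta\rangle\})}$, Fact~\ref{fact: V(A) model} supplies a formula $\psi$, a parameter $v\in V$, and a finite tuple $\bar u$ from $\trcl(\{\langle A_\theta,T_\theta\rangle\})$ defining $\dot f$. Here I would invoke the key structural observation that $\trcl(\{\langle A_\theta,T_\theta\rangle\})$ consists only of $A_\theta$, $T_\theta$, the branches, the tree vertices of levels $<\theta$, and the ``sub-vertices'' (members of members of the $A_\alpha$, $\alpha<\theta$) -- in particular it contains no subset of $A_\theta$. Thus $\bar u$ breaks into a finite tuple $\bar\beta$ of branches together with a tuple $\bar w$ of objects of level $<\theta$, so $\bar w\in M_{<\theta}$. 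Since $\P\in M_{<\theta}$ sits below stage $\theta$, $G$ is mutually generic with the $\B_\theta$-branches over $M_{<\theta}$; hence $A_\theta$ is still the unordered set of branches of a $\B_\theta$-generic over $M_{<\theta}[G]$, and the permutation argument behind Lemma~\ref{lemma: basic step} -- using the automorphisms $b^\theta_\sigma$ of $\B_\theta$ from Section~\ref{subsubsection: permutations}, which fix $M_{<\theta}$ pointwise and hence fix $\P$, $G$, $v$ and $\bar w$ -- shows that any two distinct branches $b,b'\in A_\theta$ not appearing in $\bar\beta$ are indiscernible in $M_\theta[G]$ over the parameters $A_\theta,T_\theta,\bar\beta,\bar w,v,G$. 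Now, as $f$ is injective, its range is infinite, so we may choose $m_1\ne m_2$ with $b:=f(m_1)$ and $b':=f(m_2)$ distinct and outside $\bar\beta$; the property ``$f(m_1)=z$'', which has only the listed parameters and the free variable $z$, holds of $b$, hence of $b'$, so $f(m_1)=b'$ -- contradicting $b\ne b'$ and $f$ being a function.

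The main obstacle is the claim that no stage of the construction above $\theta$ adds a new subset to a set of $M_\theta$ (so that a name for $f$ can be found in $M_\theta$): this is precisely the stabilization of initial ranks that the generic tree structure was introduced to secure, and assembling it uniformly through the successor, limit, and tree stages is the technical heart of the argument. Once it is in place, the remaining symmetry argument is essentially the one for the basic Cohen model, simplified by the fact that the only level-$\theta$ parameters occurring in the definition of $\dot f$ are branches, which a single branch-permutation automorphism can swap while fixing everything else.
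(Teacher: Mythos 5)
There is a genuine gap in your first step, the reduction of the name to $M_\theta$. You assert that ``no stage of the construction beyond $\theta$ adds a new subset to a set of $M_\theta$,'' and conclude that the coded name $F\subseteq\omega\times A_\theta\times\mathcal P(\P)$ lies in $M_\theta$. This is false as stated: stage $\theta+1$ forces with $\Q(A_\theta)$, which by design adds the generic sets $x_\theta(n)\subseteq A_\theta$, new subsets of a set of $M_\theta$. What Proposition~\ref{prop: stabilization of initial ranks} actually gives for a set $X\in V(A,T)$ with $X\subseteq M_\theta$ is $X\in M_{\theta+1}$, not $X\in M_\theta$. So the most you can conclude is $\dot f\in M_{\theta+1}$, and then the parameters defining $\dot f$ may include elements of $A_{\theta+1}$ and, worse, \emph{representatives} at that level --- genuine subsets of $A_\theta$. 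This breaks the second half of your argument, which leans on the claim that $\trcl(\{\langle A_\theta,T_\theta\rangle\})$ contains no subset of $A_\theta$ and that the branch-swapping automorphisms $b^\theta_\sigma$ fix all parameters other than the finitely many branches $\bar\beta$: a representative $x\subseteq A_\theta$ is not fixed by such a swap, and whether $b\in x$ is part of the type of $b$.

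The paper's proof avoids this entirely by working in $V(A,T)$ and quoting Theorem~\ref{thm: full indiscernibility}: the name $\tau$ is taken to be definable from $A$, $T$, a finite tuple $\bar a$ from $A$ (of arbitrary levels) and a finite tuple $\bar x$ of representatives, and one shows that if $b,c\in A_\theta$ have the same full type over $p,\bar a,\bar x$ then $p$ forces the same statements about them; since $p\in V(A,T)_\theta$ and the finitely many representatives partition $A_\theta$ into cells that are each infinite (they are finite modifications of members of an independent family), while the tree relations to $\bar a$ pin down only finitely many elements of $A_\theta$, a suitable $c\neq b$ always exists. Your argument becomes correct if you replace the reduction to $M_\theta$ and the ad hoc branch-permutation step with this appeal to the full indiscernibility theorem; the concluding contradiction ($p\Vdash\tau(n)=b$ and $p\Vdash\tau(n)=c$ for $b\neq c$) is then the same as yours. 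As a secondary point, your invocation of mutual genericity of $G$ with the $\B_\theta$-generic over $M_{<\theta}$ is also not justified ($G$ is generic over $V(A,T)$, not over $M_{<\theta}[\B_\theta]$, which contains the enumeration of the branches); the paper sidesteps this by phrasing everything in terms of the forcing relation inside $V(A,T)$, never mentioning $G$.
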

In particular, $A_\theta$ is Dedekind-finite in any $\mathrm{Col}(\omega,A_{<\theta})$-generic extension of $V(A,T)$.
\begin{proof}
Let $p\in\P$ be a condition and $\tau\in V(A,T)$ a $\P$-name. Fix $\bar{a},\bar{x}$ such that $\tau$ is definable from $A, T,\bar{a}$ and a parameter in $V$, where $\bar{a}$ is a finite sequence of elements from $A$ and $\bar{x}$ is a finite sequence of representatives.
Suppose $b,c\in A_\theta$ have the same type over $p,\bar{a},\bar{x}$. It follows from the Indiscernibility Theorem~\ref{thm: full indiscernibility} that for any formula $\phi$,
\begin{equation*}
    p\force \phi(A,T,\tau,b)\iff p\force \phi(A,T,\tau,c).
\end{equation*}
Suppose for contradiction that $\tau$ is a $\P$-name for an $\omega$-sequence of distinct elements in $A_\theta$. Then there must be some $b\in A_\theta$, not in $\bar{a}$, and a condition $p\in\P$ forcing that $\tau(n)=b$, for some $n\in\omega$.
Find now $c\in A_\theta$, $b\neq c$, such that $b$ and $c$ have the same type over $p,\bar{a},\bar{x}$. This is possible since $p$ is in $V(A,T)_{\theta}$.
Now $p\force \tau(n)=c$ as well, a contradiction.
\end{proof}

\begin{cor}\label{cor: failure of DC in set forcing}
\begin{enumerate}
    \item $\mathrm{DC}[\R]$ fails in the extension $V(A,T)[\mathrm{Col}(\omega,A_{<\theta})]$, for any limit ordinal $\theta$. 
    \item DC fails in any set-forcing extension of $V(A,T)$.
\end{enumerate}
\end{cor}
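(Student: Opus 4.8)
The plan is to obtain both clauses from Proposition~\ref{prop: A-theta-D-finite-in-extensions} (together with the remark immediately after it), by combining it with the classical observation that an infinite Dedekind-finite set witnesses the failure of $\DC$, while an infinite Dedekind-finite \emph{set of reals} witnesses the failure of $\DC[\R]$. For completeness I would first recall the latter: given an infinite $X$, the relation ``$s$ is a finite injective sequence from $X$ and $t=s^\frown\langle x\rangle$ for some $x\in X$ off $s$'' is total, so an application of $\DC$ with parameter $X$ yields a sequence along it and hence a copy of $\omega$ inside $X$; when $X\subseteq\R$ one codes finite sequences of reals by reals, so this relation lives on $\R$ (with the real parameter $X$) and $\DC[\R]$ already suffices. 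I would also note at the outset that $A_\theta$ is infinite in $V(A,T)$ --- the generic branches $b_n$ are pairwise distinct, so $A_\theta$ is countably infinite in $V[G]\supseteq V(A,T)$ and hence not finite in $V(A,T)$ --- and that ``infinite'' is preserved into any forcing extension of $V(A,T)$, since a bijection from $A_\theta$ onto a natural number would be a finite set and would therefore already belong to $V(A,T)$.

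For clause (2), given a set forcing $\P\in V(A,T)$ and a $\P$-generic $G$, I would pick a limit ordinal $\theta$ large enough that $\P\subseteq V(A,T)_\theta$ (possible since $\P$ is a set), apply Proposition~\ref{prop: A-theta-D-finite-in-extensions} to conclude that $A_\theta$ is Dedekind-finite in $V(A,T)[G]$, and then invoke the previous paragraph: $A_\theta$ is still infinite there, so $V(A,T)[G]$ contains an infinite Dedekind-finite set and $\DC$ fails in it.

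For clause (1), I would work in $W=V(A,T)[\mathrm{Col}(\omega,A_{<\theta})]$. Here $A_{<\theta}$ is countable, so fix a bijection $e\colon A_{<\theta}\to\omega$ in $W$. Each $b\in A_\theta$ is a cofinal branch through $T_{<\theta}$, and since $a<_T b\iff a\in b$ for $b\in A_\theta$ we have $b\subseteq A_{<\theta}$; hence $b\mapsto e[b]$ is an injection of $A_\theta$ into $\mathcal{P}(\omega)=\R$ lying in $W$. By the remark following Proposition~\ref{prop: A-theta-D-finite-in-extensions}, $A_\theta$ is Dedekind-finite in $W$, and it is infinite in $W$, so its image is an infinite Dedekind-finite subset of $\R$ in $W$; by the first paragraph this refutes $\DC[\R]$.

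I do not expect a real obstacle: the substance is entirely in Proposition~\ref{prop: A-theta-D-finite-in-extensions}. The only place that calls for any thought is clause (1), where one must check that after collapsing $A_{<\theta}$ the set $A_\theta$ really does become a (copy of a) set of reals --- this reduces to the structural fact $A_\theta\subseteq\mathcal{P}(A_{<\theta})$, which is immediate from how branches were defined --- and where one must use the correct (but routine) implications ``$\DC\Rightarrow$ no infinite Dedekind-finite set'' and ``$\DC[\R]\Rightarrow$ no infinite Dedekind-finite set of reals''.
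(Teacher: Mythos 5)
Your proposal is correct and follows essentially the same route as the paper: both clauses are reduced to Proposition~\ref{prop: A-theta-D-finite-in-extensions} (and the remark following it), with $\theta$ chosen so that $\P\subseteq V(A,T)_\theta$ for clause (2), and with $A_\theta$ identified with a set of reals after collapsing $A_{<\theta}$ for clause (1). The extra details you supply --- that $A_\theta$ remains infinite in the extension, and the standard derivation of $\neg\DC$ (resp.\ $\neg\DC[\R]$) from an infinite Dedekind-finite set (resp.\ set of reals) --- are left implicit in the paper but are exactly the right justifications.
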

\begin{proof}
Let $\theta$ be a limit ordinal. In the $\mathrm{Col}(\omega,A_{<\theta})$ extension, $A_{<\theta}$ is countable, and therefore $A_\theta$ can be identified as a set of reals. 
By Proposition~\ref{prop: A-theta-D-finite-in-extensions}, $A_\theta$ is in fact a Dedekind-finite set of reals in the extension, so $\mathrm{DC}[\R]$ fails.

For any poset $\P$, it follows from Proposition~\ref{prop: A-theta-D-finite-in-extensions} that for large enough $\theta$, $A_\theta$ will be Dedekind-finite in any $\P$-generic extension (e.g.\ take $\theta$ such that $\mathbb{P}\subseteq V(A,T)_\theta$). In particular, $\DC$ fails in any such extension.
\end{proof}

\section{Intermediate models are symmetric extensions}
By a Theorem of Usuba, \cite{Usuba2021geology}, $V(A_\alpha,T_\alpha)$ is a symmetric extension of $V$, possibly using a forcing which is not $\mathbb{P}_\alpha$. Using the permutations from $\mathcal{G}_\alpha$ (see Definition~\ref{definition:group-of-automorphisms-for-P_alpha}) we can present the $\alpha$-th stage of the construction, $V(A_\alpha, T_\alpha)$, as a symmetric extension using the forcing $\P_\alpha$, by taking the filter of groups to consist of the stabilizers of the members of the transitive closure of $A_\alpha$, $\mathcal{F}_\alpha$, or a minor variant of. We will need that for the last step of the construction in $L[c]$, in which it will be important that each step in the construction, even though constructed in a way very far from being a symmetric extension, is actually a symmetric extension using some generic filter that exists in a further extension.

Let $\mathbb{T}$ be the following forcing defined in $V(A_\alpha, T_\alpha)$. A condition $q\in \mathbb{T}$ is a finite subset of $\{((f^\beta_t)_*\dot{x}_{\beta,n}, y) \mid \exists a \in A_\beta, y \in a,\, n < \omega,\, t\in [\omega]^{<\omega}, \beta \leq \alpha \text{ successor}\} \cup \{\dot x_{\beta,n} \mid n < \omega,\, \beta \leq \alpha \text{ limit }\}$\footnote{We will use automorphisms of the form $f^\beta_{\bar{t}}$ where $\bar{t}$ is a sequence of $n$ empty sets and possibles arbitrary finite set at step $n$. To simplify notations, we temporarily write the value $\bar{t}_n$ instead of the full sequence $\bar{t}$ when this automorphism is applied to $\dot{x}_{\beta,n}$, as the other values do not change the name.} such that
\begin{itemize}
\item $q$ an injective function. 
\item If $\beta$ is successor ordinal and $f_t^\beta \dot x_{\beta,n}, f_s^\beta \dot x_{\beta,n} \in \dom q$ then 
\begin{itemize}
\item $q((f_t^\beta)_* \dot x_{\beta, n}) \triangle q((f_s^\beta)_* \dot x_{\beta, n}) = c$ is finite.
\item for every $m \in t \triangle s$, there is $u_m$ such that $(f^{\beta - 1}_{u_m})_* \dot x_{\beta - 1, m} \in \dom q$, and $\{[\dot x_{\beta - 1 , m})] \mid m \in t \triangle s\} = c$.
\end{itemize}
\item If $y, z \in \range q$ and $y \triangle z$ is finite, then $q^{-1}(y) = (f^\beta_t)_* \dot x_{\beta,n}$ and $q^{-1}(z) = (f^\beta_s)_* \dot x_{\beta, n}$ for some $t,s \in [\omega]^{<\omega}$, $n < \omega$ and $\beta \leq \alpha$.
\end{itemize}
\begin{lemma}\label{lem;T-is-homogeneous}
$\mathbb{T}$ is weakly homogeneous.
\end{lemma}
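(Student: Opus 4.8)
The plan is to show that the automorphisms of $\mathbb{P}_\alpha$ in the group $\mathcal{G}_\alpha$ (Definition~\ref{definition:group-of-automorphisms-for-P_alpha}) induce automorphisms of $\mathbb{T}$, and that this induced family is rich enough to witness weak homogeneity. Recall that a condition $q\in\mathbb{T}$ is a finite injective function whose domain consists of names of the form $(f^\beta_t)_*\dot{x}_{\beta,n}$ (successor $\beta$) or $\dot{x}_{\beta,n}$ (limit $\beta$), and whose range consists of actual representatives $y\in a$ for $a\in A_\beta$. An automorphism $\rho\in\mathcal{G}_\alpha$ acts on such a name $\dot{x}$ by $\dot{x}\mapsto\rho_*\dot{x}$, and we let it act on a condition coordinatewise on the domain while fixing the range: $\rho(q) = \{(\rho_*\tau, y) : (\tau,y)\in q\}$. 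The first step is to check this is well-defined, i.e.\ that $\rho(q)$ is again a condition in $\mathbb{T}$. Since $\mathcal{G}_\alpha$ is generated by the $f^\beta_t$, $g^\beta_\sigma$, $b^\beta_\sigma$, it suffices to check this for generators. For $f^\beta_t$: composing $(f^\beta_{t'})_*\dot{x}_{\beta,n}$ with $(f^\beta_t)_*$ gives $(f^\beta_{t\triangle t'})_*\dot{x}_{\beta,n}$, so the three defining clauses are preserved (the symmetric-difference bookkeeping in clause two is symmetric in exactly the right way, and the ``$[\dot{x}_{\beta-1,m}]$'' equivalence classes are fixed by $f^{\beta-1}_{u}$). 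For $g^\beta_\sigma = e_\sigma\circ a_\sigma$ and $b^\beta_\sigma$: these permute the index $n$ by $\sigma$, sending $\dot{x}_{\beta,n}$ to $\dot{x}_{\beta,\sigma(n)}$ (and leaving coordinates outside stage $\beta$ untouched), which again visibly preserves all clauses. So each generator, hence each $\rho\in\mathcal{G}_\alpha$, induces an automorphism of $\mathbb{T}$.

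The second and main step is to show that given two conditions $q_0, q_1\in\mathbb{T}$, there is $\rho\in\mathcal{G}_\alpha$ with $\rho(q_0)$ compatible with $q_1$. The idea is to choose $\rho$ so that the \emph{indices and automorphism-tags} appearing in $\dom\rho(q_0)$ are disjoint from those in $\dom q_1$, level by level. Concretely, work stage by stage on the finitely many $\beta\le\alpha$ touched by $q_0$ or $q_1$. At a successor stage $\beta$, first apply a permutation $g^\beta_\sigma$ (resp.\ $b^\beta_\sigma$ at a limit stage) with $\sigma$ a finite-support permutation of $\omega$ moving the $n$-indices used by $q_0$ at level $\beta$ off of those used by $q_1$; note $g^\beta_\sigma$ corrects itself so as to fix $\dot{A}_\gamma$ and the tree. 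This makes $\dom\rho(q_0)$ and $\dom q_1$ use disjoint families of names $\dot{x}_{\beta,n}$, so the union $\rho(q_0)\cup q_1$ is an injective function; and because the names are on disjoint indices, the ``$y\triangle z$ finite'' clause and the symmetric-difference clause are only ever triggered \emph{within} $\rho(q_0)$ or \emph{within} $q_1$, where they already hold. The remaining care is to descend through the lower levels ($\beta-1$, etc.), since clause two of $\mathbb{T}$ references names at level $\beta-1$: but the ``$t\triangle s$'' data for a single name $\dot{x}_{\beta,n}$ only involves flips $f^\beta_t$, not indices at level $\beta-1$, and choosing the $\sigma$'s at lower levels disjointly as well handles this uniformly. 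Since $\rho$ is built from finitely many generators along the finite support of $q_0\cup q_1$, it lies in $\mathcal{G}_\alpha$, and we conclude $\rho(q_0)\cup q_1 \in\mathbb{T}$ extends both.

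The point requiring the most care is clause two — the requirement that for successor $\beta$ and $(f^\beta_t)_*\dot{x}_{\beta,n},(f^\beta_s)_*\dot{x}_{\beta,n}$ both in $\dom q$, the symmetric difference of their $q$-values equals exactly $\{[\dot{x}_{\beta-1,m}] : m\in t\triangle s\}$ with the corresponding lower-level names also present in $\dom q$. One must verify (a) that the permutation generators don't break this by renaming lower-level indices incompatibly, and (b) that after making the top-level indices disjoint there are no \emph{new} instances of this clause straddling $\rho(q_0)$ and $q_1$ — which is automatic since such an instance would require the \emph{same} name $\dot{x}_{\beta,n}$ to occur (with different tags) in both, contradicting index-disjointness. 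I expect (a) to be the real bookkeeping burden, and it is handled by noting that $f^{\beta}_t$ acts trivially on level $\beta-1$ names while $g^\beta_\sigma$, $b^{\beta-1}_\sigma$ permute the level $\beta-1$ indices consistently with how they permute the witnesses demanded by the clause. With these checks, weak homogeneity of $\mathbb{T}$ follows in the standard way: for any $\mathbb{T}$-name $\dot{\sigma}$ and condition $q$, one finds $\rho\in\mathcal{G}_\alpha$ as above with $\rho(q)$ compatible with a given $q_1$, and since these $\rho$ fix the relevant names (in particular the canonical names being compared), the usual argument gives homogeneity of the forcing relation.
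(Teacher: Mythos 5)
Your proposal has a genuine gap: the strategy of choosing $\rho$ so that the indices used by $\dom\rho(q_0)$ are \emph{disjoint} from those used by $\dom q_1$ cannot work in general, because the automorphisms act only on the domain (the names), not on the range (the actual sets $y\in a$). If $y\in\range q_0$ and $z\in\range q_1$ satisfy $y\triangle z$ finite --- in particular if $q_0$ and $q_1$ assign values in the same equivalence class $a\in A_\beta$, or literally the same value --- then the third clause in the definition of $\mathbb{T}$ forces any common extension to have $q^{-1}(y)=(f^\beta_t)_*\dot x_{\beta,n}$ and $q^{-1}(z)=(f^\beta_s)_*\dot x_{\beta,n}$ with the \emph{same} $n$ (and, when $y=z$, injectivity forces the very same name). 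So after disjointifying the indices, $\rho(q_0)\cup q_1$ simply fails to be a condition; your claim that the ``$y\triangle z$ finite'' clause is ``only ever triggered within $\rho(q_0)$ or within $q_1$'' is false, since that clause is about range elements, which $\rho$ does not move.

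The correct argument (and the one the paper gives) is a mix of separation and \emph{alignment}, carried out level by level: at each successor level $\beta$, for every pair $y\in\range q_0$, $z\in\range q_1$ with $y\triangle z$ finite, one first applies $g^\beta_\sigma$ to match the index $n_y$ of $q_0^{-1}(y)$ with the index $m_z$ of $q_1^{-1}(z)$; then one extends both conditions so that $\dot x_{\beta,n}$ itself lies in both domains and so that all lower-level names witnessing the elements of $c=q_0(\dot x_{\beta,n})\triangle q_1(\dot x_{\beta,n})$ are present; and finally one applies a flip $f^\beta_{t_0\cup t_1}$ whose translation set is chosen according to the indices of those lower-level witnesses, so that clause two of the definition of $\mathbb{T}$ is satisfied by the union. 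Only in the remaining case --- limit $\beta$, or no mod-finite-equivalent pairs across the two ranges --- does a plain index permutation suffice, which is the only case your argument handles. Your first step (that each $\rho\in\mathcal{G}_\alpha$ induces an automorphism $L_\rho$ of $\mathbb{T}$ acting on domains and fixing ranges) is correct and matches the paper, but the heart of the lemma is the alignment-and-repair recursion that your proof omits.
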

\begin{proof}
Let $q_0, q_1\in \mathbb{T}$. Note that every automorphism $\pi$ of $\mathbb{P}_\alpha$ defines an automorphism $L_\pi$ of $\mathbb{T}$ by $L_\pi(\{(\dot\tau_i, a_i) \mid i < n\}) = \{\pi_*\dot\tau_i, a_i) \mid i < n\}$. During the proof of this lemma, we will not distinguish between $\pi$ and $L_\pi$, but this distinction will play a role later.

We want to extend $q_0$ and $q_1$ and find an automorphism moving the extensions to compatible conditions. For every ordinal $\beta$, let $q \restriction \beta$ be the restriction of $q$ to names for the form $(f^\gamma_t)_* \dot{x}_{\gamma, n}$ for $\gamma < \beta$.

Let us assume that $q_0 \restriction \beta$ is compatible with $q_1 \restriction \beta$. We will find an extension of the condition and an automorphism sending the extensions to members of the forcing $\mathbb{T}$ such that their restriction to $\beta + 1$ is compatible. 

If $q_0 \restriction \beta + 1 = q_0 \restriction \beta$ and 
$q_1 \restriction \beta + 1 = q_1 \restriction \beta$ we do not modify the conditions. 

Let us assume that this is not the case. There are two cases that we need to consider.

Let us assume that either $\beta$ is a limit ordinal or there is no $y \in \bigcup A_\beta \cap \range q_0$ and $z\in \range q_1$ such that $y \triangle z$ is finite and non-empty. In this case, by applying a permutation on the indexing of $\beta$, $g_\sigma$ we obtain compatibility. 

Otherwise, for each such $y,z$ let $n_y$ be the natural number such that $q_0^{-1}(y) = (f^\beta_t)_* \dot{x}_{\beta,n}$ for some $t$ and $m_z$ be the corresponding number for $q_1^{-1}(z)$. Note that the map sending $n_y$ to $m_z$ depends only on the equivalence class of $y$ and $z$ and thus it is a well-defined injection. Let $\sigma$ be a finite permutation extending $n_y \mapsto m_z$ for each such pair $y,z$. By applying $g_\sigma^\beta$ on $q_0$, we may assume that $n_y = m_z$ for each such $y,z$. 

Let us look at the collection of all translations $t$ such that $(f^\beta_t)_* \dot{x}_{\beta,n} \in \dom q_0$ and similarly all translations $s$ such that $(f^\beta_s)_* \dot{x}_{\beta.n} \in \dom q_1$. Without loss of generality (by applying an automorphism of the form $f^\beta_t$ and extending the conditions) we may assume that $\dot \dot{x}_{\beta,n} \in \dom q_0 \cap \dom q_1$ and $q_0(\dot x_{\beta,n}) \triangle  q_1(\dot x_{\beta,n})$ is finite. 
Moreover, we may assume that for any translation $t$ such that $(f^\beta_t)_* \dot x_{\beta,n} \in \dom q_0$, and $k\in t$, also $(f^\beta_{\{k\}})_* \dot x_{\beta,n} \in \dom q_0$ and thus some translation of $\dot{x}_{\beta - 1,k}$ is in $\dom q_0$, and the same for $q_1$. 
We will assume also that if some translation of $\dot{x}_{\beta - 1,k}$ is in $\dom q_0$ then $(f_{\{k\}}^\beta)_* \dot{x}_{\beta, n} \in \dom q_0$ and the same holds for $q_1$. 

By the induction hypothesis, we may assume that $q_0 \restriction \beta$ is compatible with $q_1 \restriction \beta$.

Now, let us look at $c=q_0(\dot x_{\beta, n}) \triangle  q_1(\dot x_{\beta,n})$. Let $c_0$ be the subset of $c$ consisting of elements that are already equivalence classes of members of the range of $q_0$ and let $c_1 = c \setminus c_0$. Let $t_0$ be the corresponding translation by indexes of elements of $c_0$ (which are already determined) and let $t_1$ be a translation disjoint to all previous translations, $|t_1| = |c_1|$. Finally, apply $f^\beta_{t_0 \cup t_1}$ on $q_0$.   

Continue this way, after finitely many non-trivial steps, we obtain compatible conditions.
\end{proof}

Let $G$ be a generic filter generating $\dot{A}_\alpha, \dot{T}_\alpha$. Let $H$ be a filter consisting of finite subsets of $\{(f_t\dot{x}_n^\beta, (f_t\dot{x}_n^\beta)^G) \mid n < \omega, \beta \leq \alpha, t\in [\omega]^{<\omega}\}$. 
\begin{lemma}
$H$ is $V(A_\alpha,T_\alpha)$ generic for $\mathbb{Q}$. 
\end{lemma}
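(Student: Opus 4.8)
The plan is a standard genericity-from-homogeneity argument, built on three ingredients: the weak homogeneity of $\mathbb{T}$ (Lemma~\ref{lem;T-is-homogeneous}), the automorphism group $\mathcal{G}_\alpha$ of $\mathbb{P}_\alpha$, and the full indiscernibility of $N:=V(A_\alpha,T_\alpha)$ (Theorem~\ref{thm: full indiscernibility}). First I would dispose of the easy half: $H$ is a filter. Every finite subset of the designated set $\{(f_t\dot x_n^\beta,(f_t\dot x_n^\beta)^G)\mid n<\omega,\ \beta\le\alpha,\ t\in[\omega]^{<\omega}\}$ that meets the three bullet clauses defining $\mathbb{T}$ is by fiat a condition, and those clauses hold automatically for tuples coming from the true generic $G$: $[x_n^\beta]$ is a single element of $A_{\beta+1}$, two values differing by a flip at a coordinate $k$ differ mod finite exactly by the element of $A_{\beta-1}$ indexed by $k$, and distinct generic branches are eventually distinct. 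Hence $H$ is a set of conditions, directed and closed under weakenings; the content of the lemma is genericity.

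For genericity, fix $D\in N$ dense, which we may take dense open, and fix $q_0\in H$; I must find $q\in D\cap H$ with $q\le q_0$. By the Observation in Section~\ref{subsection: full indiscernibility}, applied inside $N$ (where $A=A_\alpha$ and $T=T_\alpha$), $D$ is definable in $N$ as $\{q\in\mathbb{T}\mid\varphi^N(q,A_\alpha,T_\alpha,\bar a,\bar x,v)\}$ for some formula $\varphi$, a finite tuple $\bar a$ of elements of the tree, a finite tuple $\bar x$ of representatives, and $v\in V$. Working in $N$, density gives some $q_1\le q_0$ with $q_1\in D$.

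The heart of the matter is that a condition of $\mathbb{T}$ is, as an object of $N$, coded by $A_\alpha,T_\alpha$ together with finitely many tree elements (those appearing in its names), a sequence of representatives (one per $\triangle$-finite class met by its range), and a finite object of $V_\omega$ recording both the ``shapes'' of the names $(f^\beta_t)_*\dot x^\beta_n$ involved and the finite modifications relating the values inside each class. Thus ``$q_1\in D$'' is equivalent in $N$ to $\Phi^N(A_\alpha,T_\alpha,\bar a,\bar x,\bar c,\bar z,v')$, where $\bar c$ lists the additional tree elements in $q_1$, $\bar z$ the representatives coming from the values of $q_1$, and $v'\in V$ packages $v$ with the $V_\omega$-data. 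I now build $q\le q_0$ with $q\in H$ realising the same data: put $q=q_0\cup\{((f^{\beta_j}_{s_j})_*\dot x^{\beta_j}_{m_j},\,((f^{\beta_j}_{s_j})_*\dot x^{\beta_j}_{m_j})^G)\}_j$, where each new name has the same shape as the corresponding one in $q_1\setminus q_0$, and where the indices $m_j,s_j$ are chosen---using the relabelling automorphisms $g^\beta_\sigma,b^\beta_\sigma$ and the genuine $\mathbb{P}_\alpha$-genericity of $G$---so that the $G$-values, together with $\bar a,\bar x$ and the values of $q_0$, realise the same tree type and the same $E,\approx$ relations as the values of $q_1$. That such a $q$ exists and really is a condition of $\mathbb{T}$ is precisely the weak homogeneity of $\mathbb{T}$ from Lemma~\ref{lem;T-is-homogeneous}: given finitely many parameters, a true generic supplies, at each relevant stage $\beta$, enough elements of $A_{\beta+1}$ of any prescribed type over $A_\beta$ and those parameters. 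Finally, $q$ and $q_1$ have the same full type over $\bar a,\bar x$ and share the parameter $v'$, so $\Phi^N(A_\alpha,T_\alpha,\bar a,\bar x,\bar c,\bar z,v')\iff\Phi^N(A_\alpha,T_\alpha,\bar a,\bar x,\bar c',\bar z',v')$ by Theorem~\ref{thm: full indiscernibility} applied in $N$; hence $q\in D$, and since $q\le q_0$ and $q\in H$ we are done.

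I expect the main obstacle to be the bookkeeping of the previous paragraph: verifying precisely that membership of a $\mathbb{T}$-condition in $D$ depends only on its full type over $\bar a,\bar x$ together with a parameter from $V$. This forces one to peel the genuinely finitary information---the flip-patterns of the names, and the finite modifications relating two equal-mod-finite values in the range---out of the condition and into the $V$-parameter, and to check that the membership and equality relations a $\mathbb{T}$-condition records among its values and the vertices $a\in A_\beta$ are exactly the relations $E$ and $\approx$ of Definition~\ref{defn: full type}. Once this dictionary between $\mathbb{T}$-conditions and full types is in place, Theorem~\ref{thm: full indiscernibility} does the real work and Lemma~\ref{lem;T-is-homogeneous} guarantees that the required type can be realised by $G$-evaluations below $q_0$ without leaving $\mathbb{T}$.
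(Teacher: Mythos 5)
Your overall strategy---reduce membership in a dense $D\in V(A_\alpha,T_\alpha)$ to a computation of full types and then realize the required type inside $H$---is in the right spirit, but there is a genuine gap in how you handle the \emph{domains} of the $\mathbb{T}$-conditions, i.e.\ the names $(f^\beta_t)_*\dot x_{\beta,n}$. These names are elements of $V$, so a dense open $D\in V(A_\alpha,T_\alpha)$ may legitimately refer to specific ones: for instance $D=\{q\in\mathbb{T}\mid \dot x_{\beta,5}\in\dom q\}$ is dense and lies in $V(A_\alpha,T_\alpha)$. This puts your reduction of ``$q_1\in D$'' to $\Phi^N(\dots,\bar c,\bar z,v')$ in a bind. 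If $v'$ records only the \emph{shapes} of the names, the reduction is simply false for such $D$, and your $q$, whose indices $m_j,s_j$ you re-choose freely, need not land in $D$. If instead $v'$ records the names themselves, then indiscernibility forces you to keep the domain of $q$ equal to that of $q_1$; but membership in $H$ then leaves no freedom in the values---they must be the actual $G$-evaluations $\tau_i^G$ of those very names---and there is no reason these should realize the same full type over $\bar a,\bar x$ and $q_0$ as the values $b_i$ of the arbitrary $q_1$ produced by density. Neither horn closes the argument.

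The paper resolves exactly this point by a different mechanism. It works with $\mathbb{P}_\alpha$-names for the $D$-condition, keeps its $\mathbb{T}$-domain fixed, and acts by an automorphism $\pi\in\mathcal{G}_\alpha$ of the underlying iteration through the ``right action'' $R_\pi(\{\langle\check\tau_i,\dot b_i\rangle\})=\{\langle\check\tau_i,\pi_*\dot b_i\rangle\}$, which moves only the value-names. This action is not definable in $V(A_\alpha,T_\alpha)$ (unlike $L_\pi$), but $\pi_*\dot D=\dot D$ because $\pi$ fixes the parameters defining $D$, and rerunning the matching argument of Lemma~\ref{lem;T-is-homogeneous} for $R_\pi$ carries the $D$-condition onto the $H$-condition with the \emph{same} domain while fixing the relevant condition of $\mathbb{P}_\alpha$. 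So the real work is an automorphism argument one level down, on $\mathbb{P}_\alpha$, rather than a type-realization argument inside $V(A_\alpha,T_\alpha)$; to salvage your approach you would need to build this external action into it rather than appeal to Theorem~\ref{thm: full indiscernibility} alone.
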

\begin{proof}
Let $D \in V(A_\alpha,T_\alpha)$ dense open.  

Let $p\in \mathbb{P}_\alpha$. We need to show that there is $p' \leq p$ that forces $H \cap D \neq \emptyset$. Since $D \in V(A_\alpha, T_\alpha)$, it is definable using finitely many parameters, $\bar{a}$. 

Fix a name for a condition $\bar{r}\in \dot{H}$ with a range covering $\bar{a}$ and all the elements of $\trcl(A_\alpha)$ mentioned by $p$. Let $r$ be a name for a stronger condition in $D$, as forced some by $p'$. 

Let $\pi$ be an automorphism of $\mathbb{P}_\alpha$ in $\mathcal G_\alpha$. By extending $p'$, if necessary, we may assume that $r = \{ \langle \check\tau_i, \dot{b}_i\rangle \mid i < n\}$. So, $\pi_*(r) = \{ \langle \check\tau_i, \pi_*\dot{b}_i\rangle \mid i < n\}$, which is different than $L_\pi(r)$. Let us denote this condition by $R_\pi(r)$. Nevertheless, the proof of Lemma \ref{lem;T-is-homogeneous} works without significant modifications and show that the automorphism $R_\pi$ witnesses the homogeneity of the forcing as well. Unlike $L_\pi$, the correspondence between $\pi$ and $R_\pi$ does not exist in $V(A_\alpha,T_\alpha)$. 

Let $\tilde{r}$ be a condition in $H$ with $\dom \tilde r = \dom r$. The proof of Lemma \ref{lem;T-is-homogeneous} illustrates that the obtained automorphism $\pi$ fixes $p$ and $\bar{a}$ and that when extending the $\tilde{r}$ side we may keep the condition in $H$.\footnote{Whenever we extend a condition during the proof of Lemma \ref{lem;T-is-homogeneous}, we only require it to contain additional elements in its domain, which can be obtained using a condition in $H$.} We obtain that after the extensions the automorphism outright sends one extension to the other. 

By applying the obtained automorphism $\pi$ and strengthening $\tilde{r}, r$ we obtain we have $\pi(p')$ forces $\pi_*(r) = \tilde{r}$ to be in $D$, and thus $p' \Vdash H \cap D \neq \emptyset$.  
\end{proof}

In \cite{Grigorieff-1975}, Grigorieff proved that for every symmetric extension $M \subseteq V[G]$ there is a homogeneous forcing notion $\mathbb T \in M$ and an $M$-generic filter $T \subseteq \mathbb T$ such that $M[T] = V[G]$. It is not difficult to see that the existence of such a homogeneous forcing implies that $M$ is a symmetric extension, using another theorem of Grigorieff. 

\begin{thm}\label{thm: M_alpha is symmetric extension}
    $V(A_\alpha, T_\alpha)$ is a symmetric extension by $\langle \mathbb{P}_\alpha, \tilde{\mathcal{G}}_\alpha, \tilde{\mathcal{F}}_\alpha\rangle$, where $\tilde{G}_\alpha \supseteq \mathcal{G}_\alpha$ and for every subgroup $X\in \tilde{\mathcal{F}}_\alpha$, $X \cap \mathcal G \in \mathcal{F}_\alpha$.
\end{thm}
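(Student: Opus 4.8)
The plan is to assemble the theorem from the three lemmas just established, together with the representation machinery from Section~\ref{subsection: full indiscernibility} (simplified names and realizations). By Lemma~\ref{lem;T-is-homogeneous} the poset $\mathbb{T}$ is weakly homogeneous in $V(A_\alpha,T_\alpha)$, and by the second lemma the filter $H$ is $V(A_\alpha,T_\alpha)$-generic for $\mathbb{T}$. First I would verify that $V(A_\alpha,T_\alpha)[H] = V[G]$: the elements $x_n^\beta$ (equivalently, the generic objects added by $\Q_\beta$) and the branches $\dot{x}_n^\beta$ for limit $\beta$ are all recoverable from $H$ together with $A_\alpha, T_\alpha$, since $H$ literally lists the realizations $(f_t\dot{x}_n^\beta)^G$; conversely $H$ is definable from $G$. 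So $\mathbb{P}_\alpha$-generic data and the pair $(H, A_\alpha, T_\alpha)$ generate the same extension of $V$, giving $V(A_\alpha,T_\alpha)[H]=V[G]$.

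Next I would invoke Grigorieff's theorem~\cite{Grigorieff-1975} in the converse direction: if $M\subseteq V[G]$ and there is a weakly homogeneous forcing $\mathbb{T}\in M$ together with an $M$-generic $T$ with $M[T]=V[G]$, then $M$ is a symmetric submodel of $V[G]$, i.e.\ $M=\hod^{V[G]}_{V,\trcl(z)}$ for a suitable $z$. This already shows $V(A_\alpha,T_\alpha)$ is \emph{a} symmetric extension. The remaining content of the theorem is the more precise claim that it is the symmetric extension obtained specifically from the triple $\langle \mathbb{P}_\alpha, \tilde{\mathcal G}_\alpha, \tilde{\mathcal F}_\alpha\rangle$, where $\tilde{\mathcal G}_\alpha \supseteq \mathcal G_\alpha$ and every $X\in\tilde{\mathcal F}_\alpha$ satisfies $X\cap\mathcal G_\alpha\in\mathcal F_\alpha$. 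Here I would take $\tilde{\mathcal G}_\alpha$ to be the group of \emph{all} automorphisms of $\mathbb{P}_\alpha$ fixing the names $\dot A_\beta,\dot T_\beta$ for all $\beta\le\alpha$ (this contains $\mathcal G_\alpha$ by the computations in Section~\ref{subsubsection: permutations}), and $\tilde{\mathcal F}_\alpha$ the filter of subgroups generated by the pointwise stabilizers $\sym(\dot z)$, $\dot z\in\trcl(\widehat{A_\alpha})$ (the canonical names for members of the transitive closure of $A_\alpha$, $T_\alpha$), intersected appropriately so that $X\cap\mathcal G_\alpha$ lands in $\mathcal F_\alpha$ — this is why the statement is phrased with ``a minor variant of $\mathcal F_\alpha$''.

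The two inclusions then go as follows. For $V(A_\alpha,T_\alpha)\subseteq \mathrm{HS}^{V[G]}_{\tilde{\mathcal F}_\alpha}$: every element of $\trcl(\{A_\alpha,T_\alpha\})$ has a hereditarily $\tilde{\mathcal F}_\alpha$-symmetric name (the canonical names are stabilized by the generating subgroups by design, exactly as in the blue/commented discussion and the definition of $\mathcal F_\alpha$), and since $V(A_\alpha,T_\alpha)$ is generated over $V$ by these, minimality of $V(A_\alpha,T_\alpha)$ gives the inclusion. For the reverse inclusion $\mathrm{HS}^{V[G]}_{\tilde{\mathcal F}_\alpha}\subseteq V(A_\alpha,T_\alpha)$: this is the symmetry lemma — any hereditarily symmetric name is invariant under a large subgroup, hence its realization is definable in $V[G]$ from $V$, $A_\alpha$, $T_\alpha$ and finitely many representatives, and by the Indiscernibility Theorem~\ref{thm: full indiscernibility} (and Lemma~\ref{lem;eval-global}-style arguments) such realizations already lie in $V(A_\alpha,T_\alpha)$. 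The condition $X\cap\mathcal G_\alpha\in\mathcal F_\alpha$ guarantees that a $\tilde{\mathcal F}_\alpha$-symmetric name is in particular "symmetric enough" to run the indiscernibility argument with the concrete generators $f^\beta_t, g^\beta_\sigma, b^\beta_\sigma$.

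The main obstacle I anticipate is not any single step but the bookkeeping needed to match the \emph{abstract} symmetric submodel produced by Grigorieff's theorem (via $\mathbb{T}$, which is a cleverly chosen forcing that "sees" the realizations) with the \emph{concrete} symmetric extension $\langle\mathbb{P}_\alpha,\tilde{\mathcal G}_\alpha,\tilde{\mathcal F}_\alpha\rangle$ — i.e.\ checking that the distinction between the left-action $L_\pi$ and the "realization-twisted" action $R_\pi$ (flagged in the proof of Lemma~\ref{lem;T-is-homogeneous}) does not cause the two notions of symmetry to diverge, and that the generic $H$ interacts with $\mathbb{P}_\alpha$-automorphisms in the way required to push the density argument of the genericity lemma through. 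Once that correspondence is pinned down, the inclusions above are routine applications of Lemma~\ref{lemma: basic permutation}, Theorem~\ref{thm: full indiscernibility}, and the minimality characterization of $V(A_\alpha,T_\alpha)$.
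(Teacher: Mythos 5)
Your proposal is correct and follows essentially the same route as the paper: establish weak homogeneity of $\mathbb{T}$ and genericity of the derived filter $H$, observe that $V(A_\alpha,T_\alpha)[H]=V[G]$, conclude $V(A_\alpha,T_\alpha)=\hod^{V[G]}_{V,\trcl(\{A_\alpha,T_\alpha\})}$, and then cite Grigorieff's theorem to obtain the symmetric system over $\mathbb{P}_\alpha$. The additional explicit construction of $\tilde{\mathcal{G}}_\alpha,\tilde{\mathcal{F}}_\alpha$ and the verification of the two inclusions goes beyond what the paper writes out (the paper delegates this entirely to Grigorieff's theorem, which directly yields a filter generated by stabilizers of members of $\trcl(\{A_\alpha,T_\alpha\})$), but it is consistent with the intended argument.
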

\begin{proof}
Let us look at $\mathbb{T}$ and let $H$ be a generic filter for $\mathbb{T}$. As it is a weakly homogeneous forcing notion, 

\[V(A_\alpha,T_\alpha) = \hod^{V(A_\alpha, T_\alpha)}_{V, \trcl(\{A_\alpha, T_\alpha\})} \supseteq \hod^{V(A_\alpha, T_\alpha)[H]}_{V, \trcl(\{A_\alpha, T_\alpha\})}\]
Let $H$ be the generic filter derived from $G$. Clearly, $V(A_\alpha,T_\alpha)[H] = V[G]$, so we obtain 
\[\hod^{V(A_\alpha, T_\alpha)[H]}_{V, \trcl(\{A_\alpha, T_\alpha\})} = \hod{V[G]}_{V, \trcl(\{A_\alpha, T_\alpha\})}\]
By a theorem of Grigorieff, this last model is a symmetric extension using the forcing notion $\mathbb{P}_\alpha$, with group of automorphism $\tilde{\mathcal{G}} \leq \mathrm{Aut}(\mathcal{B}(\P_\alpha))$ and a filter of groups stabilizing each member of the transitive closure of $A_\alpha$.
\end{proof}

\section{Shuffling}\label{section:shuffling}
The following section is deeply related to the work of Grigorieff and Karagila on symmetric extensions and iterations, \cite{Grigorieff-1975, Karagila-iterating-2019}. Indeed, the relationship between symmetric systems and their corresponding extensions, symmetrically generic filters, and how to transform them into proper generic filters is the main motivation for the results in this section. 

The following shuffling lemma will be used repeatedly in Section~\ref{section: construction in Cohen extension}, when ``sufficiently generic'' filters will be constructed inside a Cohen-real extension of $V$. The basic idea is that, given some iteration, it will be relatively simple to construct a filter which is generic for each bounded part of the iteration. To get a generic for the full iteration, we will shuffle it using a generic sequence of automorphisms. Furthermore, we will restrict ourselves to a subgroup of automorphisms, those preserving the key objects of our construction, as constructed in Section \ref{subsubsection: permutations}.

A simple example is as follows. Let $\mathbb{C}$ be Cohen forcing for adding a single subset of $\omega$, and $\P$ the finite support product of countably many copies of $\mathbb{C}$. Suppose $z=\seqq{z_n}{n<\omega}$ is a sequence so that for each $n$, $\seq{z_0,\dots,z_{n-1}}$ is generic for $\mathbb{C}^n$. It is not necessarily true that $z$ is $\P$-generic. 
However, by making finite changes to each coordinate, we can get a sequence $\seqq{z'_n}{n<\omega}$ which is $\P$-generic.
Specifically, force by finite approximations a sequence $\seqq{g_n}{n<\omega}$, where each $g_n$ is an automorphism of $\mathbb{C}$ flipping finitely many coordinates, and let $z'_n=g_n\cdot z_n$. Then $\seqq{z'_n}{n<\omega}$ is $\P$-generic.
Note that in this case, the sequence of mod-finite equivalence classes is the same, $\seqq{[z_n]}{n<\omega}=\seqq{[z'_n]}{n<\omega}$.

More generally, we may force a generic automorphism of a poset as follows.
\begin{defn}\label{defn: shuffling poset}
Let $\P$ be a poset, $S\leq\mathrm{Aut}(\P)$ a subgroup of automorphisms of $\P$, and $\mathcal{F}$ a filter over $S$. Assume that for each $p\in\P$, the stablizer of $p$ in $S$ is large, $S_p=\set{g\in S}{g(p)=p}\in\mathcal{F}$.\footnote{In this case, the condition is \emph{tenacious}. This assumption is harmless, see \cite[Appendix A]{Karagila-iterating-2019}}
Define
\begin{equation*}
\mathbb{S}(\P,S,\mathcal{F})=\set{(g,X)}{g\in S,\,X\in\mathcal{F}}.    
\end{equation*}
For two pairs $(h,Y)$ and $(g,X)$ in $\mathbb{S}(\P,S,\mathcal{F})$, say that $(h,Y)$ extends $(g,X)$ if 
\begin{itemize}
    \item $Y\subset X$, and
    \item $h=r\cdot g$ for some $r\in X$.
\end{itemize}
A generic filter for $\mathbb{S}(\P, S,\mathcal{F})$ naturally defines a permutation of $\P$ as follows. Suppose $H\subset \mathbb{S}(\P,S,\mathcal{F})$ is a generic filter. Define $\pi\colon \P\to\P$ by
\begin{equation*}
    \pi(p)=q\iff (\exists g\in S)\, (g,S_q)\in H\wedge g(p)=q.
\end{equation*}
When $\P,S,\mathcal{F}$ are clear from context, we will denote by $\dot{\pi}$ the canonical $\mathbb{S}(\P,S,\mathcal{F})$-name for the permutation $\pi$. When a generic filter $H\subset\mathbb{S}(\P,S,\mathcal{F})$ is fixed we will refer to $\dot{\pi}[H]$ as $\pi$.
\end{defn}

\begin{claim}
Following the notation of Definition~\ref{defn: shuffling poset}, for any generic filter $H\subset \mathbb{S}(\P,S,\mathcal{F})$ over $V$, $\pi$ is a well defined automorphism of $\P$, in $V[H]$.
\end{claim}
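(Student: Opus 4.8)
The claim is that the filter $H$ yields a genuine automorphism of $\P$. I would verify this in three stages: (i) $\pi = \dot\pi[H]$ is a well-defined total function on $\P$; (ii) $\pi$ is order-preserving and a bijection; (iii) $\pi$ is in $V[H]$. Stage (iii) is immediate since $\pi$ is definable from $H$ by the displayed formula, so the real content is (i) and (ii), both of which come down to density arguments in $\mathbb{S}(\P,S,\mathcal{F})$.

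For (i), fix $p \in \P$. I would first show $\pi(p)$ is defined, i.e.\ there is some $(g, S_q) \in H$ with $g(p) = q$. For any condition $(g_0, X_0) \in \mathbb{S}(\P,S,\mathcal{F})$, the pair $(g_0, X_0 \cap S_{g_0(p)})$ extends it (using that $X_0 \cap S_{g_0(p)} \in \mathcal{F}$ by the tenacity assumption) and witnesses that $\pi(p) = g_0(p)$; hence the relevant set of conditions is dense, so $\pi(p)$ is defined for every $p$. Next I would show $\pi(p)$ is \emph{well-defined}: suppose $(g, S_q), (h, S_r) \in H$ with $g(p) = q$ and $h(p) = r$; take a common extension $(k, Z) \in H$, so $k = s \cdot g = t \cdot h$ for some $s \in S_q$, $t \in S_r$, and then $q = g(p)$, $r = h(p)$ give $k(p) = s(g(p)) = s(q) = q$ and likewise $k(p) = r$, so $q = r$.

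For (ii), injectivity: if $\pi(p) = \pi(p') = q$ then by the well-definedness just proved there is a single $(g, S_q) \in H$ with $g(p) = q = g(p')$ (take a common extension realizing both values), and since $g$ is an automorphism, $p = p'$. Surjectivity: given $q \in \P$, apply the ``$\pi(p)$ is defined'' argument to $p := g^{-1}(q)$ for a suitable $g$ — more directly, by density every $(g_0, X_0)$ is extended by some $(g, Z) \in H$, and then $\pi(g^{-1}(q)) = q$, so $q \in \range \pi$; one should check $g^{-1}(q)$ is well-chosen via the same density-of-stabilizers move. For order-preservation: if $p \leq p'$, pick $(g, Z) \in H$ with $Z \subseteq S_{g(p)} \cap S_{g(p')}$ (intersection still in $\mathcal{F}$), so that $(g, Z)$ simultaneously witnesses $\pi(p) = g(p)$ and $\pi(p') = g(p')$; since $g$ is an automorphism, $g(p) \leq g(p')$, i.e.\ $\pi(p) \leq \pi(p')$. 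The same argument applied to $\pi^{-1}$ (which is computed from $H$ symmetrically) gives that $\pi^{-1}$ is order-preserving, so $\pi$ is an order-isomorphism, hence an automorphism.

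\textbf{Main obstacle.} None of the steps is deep; the only thing requiring care is making sure that whenever I want a single condition in $H$ to witness several facts at once (two values of $\pi$, or $\pi$ on both $p$ and $p'$), I can intersect the finitely many relevant stabilizer subgroups and stay inside $\mathcal{F}$ — this is exactly what the tenacity hypothesis $S_p \in \mathcal{F}$ together with closure of $\mathcal{F}$ under finite intersections buys us. Keeping the bookkeeping between $g$, $r \cdot g$, and the stabilizers straight in the extension relation is the one place an error could creep in, so I would state the tenacity/filter closure facts explicitly before running the density arguments.
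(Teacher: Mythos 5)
The paper states this claim without proof, treating it as a routine verification, and your argument is exactly the intended one: density of the sets of conditions $(g,X)$ with $X$ refined into the relevant stabilizers (using tenacity $S_p\in\mathcal{F}$ plus closure of $\mathcal{F}$ under finite intersections), upward closure of $H$ to get $(g,S_{g(p)})\in H$, and compatibility in $H$ for well-definedness. All steps check out (including the convention $(r\cdot g)(p)=r(g(p))$ matching the paper's extension relation), so the proposal is correct and takes the same approach the paper implicitly relies on.
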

For $\mathcal{S} = \langle \mathbb{P}, \mathcal G, \mathcal F\rangle$  symmetric system, so $\mathcal{G} \leq \mathrm{Aut}(\mathbb P)$ and $\mathcal F$ is a subgroups filter, we denote by $\mathbb{S}(\mathcal S)$ the forcing $\mathbb{S}(\mathbb{P}, \mathcal{G}, \mathcal{F})$.

\begin{ex}\label{example: generic automorphism for product}
Suppose $\P$ is the finite support product of $\Q_n$, $n<\omega$, and $S_n\leq \mathrm{Aut}(\Q_n)$ is a subgroup of automorphisms of $\Q_n$. 
The product group $\prod_n S_n$ naturally acts on $\P$, coordinate-wise, so we identify it as a subgroup of $\mathrm{Aut}(\P)$. 
Let $S$ be the subgroup of $\prod_n S_n$ of all finite support sequences.

In this case, a natural way to add a generic automorphism is by finite approximations. Let $\mathbb{S}$ be the poset $S$, ordered by end-extensions. A generic for $\mathbb{S}$ is identified with a generic sequence $\vec{s}\in\prod_n S_n$.

Let $\mathcal{F}$ be the filter over $S$ generated by sets of the form \[\set{g\in S}{g\textrm{ fixes the first }n\textrm{ coordinates}},\] for some $n$.
Then $\mathbb{S}$ and $\mathbb{S}(\P,S,\mathcal{F})$ are forcing equivalent. Moreover, this equivalence identifies the generic permutation $\pi$ with the generic permutation $\vec{s}$.
\end{ex}

\begin{defn}\label{definition:symmetrically-generic-filter}
Fix $\P,S,\mathcal{F}$ as in Definition~\ref{defn: shuffling poset}. Let $G \subseteq \P$ be a filter. We say that $G$ is \emph{symmetrically generic} relative to $\langle \mathbb{P}, S, \mathcal{F}\rangle$ if for any dense open $D\subset\P$ in $V$ and any $X\in\mathcal{F}$, $(X\cdot D)\cap G\neq \emptyset$, where $X\cdot D=\set{g(d)}{g\in X,\,d\in D}$.
\end{defn}
\begin{lemma}[The Shuffling Lemma]\label{lem;the shuffling lemma}
Let $V \models \ZF$. Fix $\P, S,\mathcal{F}$ as in Definition~\ref{defn: shuffling poset} and let us assume that $G\subset \P$ is a symmetrically generic filter. 

Let $H\subset \mathbb{S}(\P,S,\mathcal{F})$ be a generic filter over $V$ and let $\pi$ be the corresponding generic automorphism of $\P$ as in Definition~\ref{defn: shuffling poset}.
Then $\pi^{-1}G=\set{\pi^{-1}(p)}{p\in G}$ is a $\P$-generic filter over $V$.
\end{lemma}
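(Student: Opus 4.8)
The goal is to show that if $G$ is symmetrically generic for $\langle \P, S, \mathcal F\rangle$ and $\pi$ is the generic automorphism of $\P$ added by $\mathbb{S}(\P,S,\mathcal F)$, then $\pi^{-1}G$ is genuinely $\P$-generic over $V$. Since $\pi^{-1}G$ is a filter (being the image of a filter under the order-automorphism $\pi^{-1}\in V[H]$), the only thing to verify is that it meets every dense open $D\subset\P$ lying in $V$. So I would fix such a $D$, work in $V[H]$, and show $\pi^{-1}G\cap D\neq\es$, equivalently $G\cap \pi\image D\neq\es$.

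\emph{First step: reduce to a density argument in $\mathbb{S}(\P,S,\mathcal F)$.} Fix a condition $(g,X)\in\mathbb{S}(\P,S,\mathcal F)$; I want to find an extension $(h,Y)$ forcing $G\cap\dot\pi\image D\neq\es$. Here is where symmetric genericity of $G$ enters: the set $X\cdot D=\set{r(d)}{r\in X,\ d\in D}$ meets $G$ by hypothesis, so pick $p\in G\cap(X\cdot D)$, say $p=r(d)$ with $r\in X$, $d\in D$. Now set $h=r\cdot g$ — wait, I need to orient this correctly. I want $\dot\pi$ to send some element of $D$ into $G$, i.e.\ I want $h(d')\in G$ for some $d'\in D$. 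Since $p=r(d)\in G$ and $r\in X$, the pair $(r\cdot g, S_p)$ (shrinking the second coordinate if needed to lie below $X$, which is fine as $\mathcal F$ is a filter) extends $(g,X)$ provided $r\cdot g = r'\cdot g$ for some $r'\in X$ — indeed $r\in X$ itself witnesses this. With this extension in the generic $H$, by the definition of $\pi$ in Definition~\ref{defn: shuffling poset} we get $\pi(d) = (r\cdot g)(d)$; but I actually want this to equal $p$, so I should be more careful about whether $\pi$ is built from $g$ or $g^{-1}$, and I'll need to track that the condition $(r\cdot g, S_{(r\cdot g)(d)})$ forces ``$\pi(d)=(r\cdot g)(d)$'' and arrange $(r\cdot g)(d)\in G$. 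The cleanest route: choose the extension $(h,Y)$ with $h$ chosen so that $h(d)=p\in G$; such an $h$ of the form $r'\cdot g$ with $r'\in X$ exists precisely because $p\in X\cdot\{g(d)\}\subseteq X\cdot D$ after noting $g(d)\in D$ iff\dots hmm — this is the point that needs the most care: massaging so that the element witnessing $p\in X\cdot D$ is $g(d)$ rather than $d$. One fixes this by replacing $D$ with the dense open set $g\image D$ (still in $V$? no, $g\in S\subseteq V$, so yes $g\image D\in V$) at the outset, or by absorbing $g$ into the choice of $r$. Then $(h, S_p\cap X)\in H$ forces $\pi(d)=p$, hence $d\in\pi^{-1}G\cap D$, as desired.

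\emph{Why it works / what to be careful about.} The essential content is the interplay between the two notions of genericity: $\mathbb{S}(\P,S,\mathcal F)$-genericity of $H$ supplies the density argument, while \emph{symmetric} genericity of $G$ — rather than full genericity — is exactly what is needed and available, because the generic automorphism only has ``$\mathcal F$-much'' freedom at each condition. The tenacity hypothesis (each $S_p\in\mathcal F$) is what guarantees the pairs $(h,S_p)$ are legitimate conditions, so that $\pi$ is total and well-defined on all of $\P$ (this is the content of the Claim preceding the lemma, which I may cite). I should also double-check that $\pi^{-1}G$ is upward closed and directed: directedness follows since $\pi^{-1}$ is an order-automorphism of $\P$ in $V[H]$ and $G$ is a filter; upward closure likewise.

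\emph{Main obstacle.} The genuinely fiddly part is bookkeeping the direction of composition and the placement of $g$: making sure that when I extract $p\in G\cap(X\cdot D)$ and write $p=r(d)$, the resulting $h$ genuinely extends the given $(g,X)$ in the order of Definition~\ref{defn: shuffling poset} (which requires $h=r''\cdot g$ with $r''\in X$), \emph{and simultaneously} that $h$ sends an element of $D$ into $G$. These two demands are reconciled by applying symmetric genericity to the translated dense set $g\image D$ (a dense open subset of $\P$ in $V$, since $g\in V$), getting $p\in G\cap(X\cdot g\image D)$, so $p=r(g(d))=(r\cdot g)(d)$ with $r\in X$ and $d\in D$; then $h=r\cdot g$ does both jobs at once, and $(h,S_p)\le(g,X)$. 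Everything else is routine.
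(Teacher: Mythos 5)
Your proposal is correct and follows essentially the same route as the paper's proof: fix a dense open $D\in V$ and a condition $(g,X)$, apply symmetric genericity to the translated dense open set $g\cdot D$ to obtain $r\in X$ and $d\in D$ with $(r\cdot g)(d)\in G$, and then note that $(r\cdot g,\, X\cap S_{(r\cdot g)(d)})$ extends $(g,X)$ and forces $\dot\pi(d)\in G$. The "fiddly" orientation issue you flag is resolved exactly as in the paper, by translating $D$ by $g$ before invoking symmetric genericity.
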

\begin{proof}
Fix a dense open set $D\subset\P$ in $V$. Let $(g,X)$ be a condition in $\mathbb{S}(\P,S,\mathcal{F})$. We need to find an extension of $(g,X)$ forcing that $\dot{\pi}^{-1} G\cap D\neq\emptyset$.
Since $D\subset\P$ is open dense, and $g$ is an automorphism of $\P$, then $g\cdot D=\set{g(d)}{d\in D}\subset\P$ is open dense as well. By assumption, there is some $r\in X$ and $d'\in g\cdot D$ so that $r(d')\in G$. Fix $d\in D$ for which $d'=g(d)$, so $r\cdot g(d)\in G$.
Let $Y=X\cap S_{r\cdot g(d)}$. Then $(r\cdot g,Y)$ is an extension of $(g,X)$ forcing that $\dot{\pi}(d)\in G$, and so that $\dot{\pi}^{-1}G\cap D\neq\emptyset$.
\end{proof}
The following claim is a combination of Example \ref{example: generic automorphism for product} with Karagila's analysis of iteration of automorphisms from \cite[Section 3]{Karagila-iterating-2019}. The main obstacle that we must overcome when moving from products to iterations is that even a two-step iteration of weakly homogeneous forcing notions does not have to be weakly homogeneous. The standard solution to this problem is to assume that enough of the structure of the iteration as well as the homogeneity is preserved by the automorphisms, \cite{Dobrinen-Friedman-2008}. 

Unlike the Shuffling Lemma, which works in $\ZF$, the following claim requires the axiom of choice, which is used in the definition of the application of an automorphism that exists in a generic extension to a name.\footnote{While the claim in its full generality requires the axiom of choice, we will apply it to cases in which the forcing notion, as well as the subgroup of automorphisms, are well ordered.}  

\begin{defn}\label{definition:suitable-for-omega-iteration}
Let $\P = \P_\omega$ be a finite support iteration of $\langle \P_n, \dot{\Q}_n \mid n < \omega\rangle$. Let $\dot{S}_n$ be a $\P_n$-name, forced by the trivial condition to be a subgroup of the automorphism group of $\dot{\Q}_n$, witnessing its weak homogeneity, namely it is forced by the trivial condition that for every $\dot{q}_0, \dot{q}_1 \in \dot{\Q}_n$ there is $\dot\sigma \in \dot{S}_n$ such that $\dot{\sigma}(\dot{q}_0)$ is compatible with $\dot{q}_1$. 

We say that $\P, \langle \dot{S}_n \mid n < \omega\rangle$ is \emph{suitable for iteration} if the following condition holds.

Let us work, temporarily, in the generic extension by $\mathbb{P}_n$. By induction on $m > n$, any automorphism $\sigma\in S_n$ induces an automorphism on the iteration $\mathbb{Q}_n \ast \mathbb{Q}_{n+1}\ast \cdots \ast \mathbb{Q}_m$ that we will denote (temporarily) by $\sigma_{n,m}$. Indeed, $\sigma_{n,n} = \sigma$ and $\sigma_{n,m+1}(\langle q_n,\dot{q}_{n+1},\dots, q_{m+1}\rangle = \sigma_{n,m}(\langle q_n,\dot{q}_{n+1},\dots, \dot{q}_{m}\rangle) ^\smallfrown \langle (\sigma_{n,m})_*(\dot{q}_{m+1})\rangle$. 
\footnote{Recall that given an automorhpism $\sigma$ of a forcing notion $\mathbb R$ it can be extended to an operation $\sigma_\ast$ on $\mathbb{R}$-names, where $\sigma_\ast$ is defined recursively on the names by $\sigma_* (\dot{x}) = \{(\sigma(r), \sigma_*(\dot{y})) \mid (r,\dot y)\in \dot x\}$ for an $\mathbb{R}$-name $\dot{x}$.}

Let us assume further that for every automorphism $\sigma$ in $\dot{S}_n$ it is forced that for all $m > n$, $\sigma_{n,{m-1}}(\dot{Q}_m) = \dot{Q}_m,\, \sigma_{n,{m-1}}(\leq_{\dot{Q}_m})=\leq_{\dot{Q}_m}$ and $\sigma_{n,m-1}(\dot{S}_m)= \dot{S}_m$. 
\end{defn}
\begin{claim}\label{claim:shuffling-omega-iteration}
Work in $\ZFC$\footnote{When working in $\ZF$, we will apply Claim~\ref{claim:shuffling-omega-iteration} by moving to a generic extension in which a sufficiently large initial segment of the universe is well ordered. See Section~\ref{subsection: V[c] limit stage}}. Let $\P = \P_{\omega}$ be a finite support iteration and let $\langle \dot{S}_n \mid n < \omega\rangle$ be a sequence of witnesses for the homogeneity of each step in the iteration, which is suitable for iteration as in Definition \ref{definition:suitable-for-omega-iteration}.

Let $G \subseteq \P$ be a filter such that $G \restriction n \subseteq \P_n$ is $V$-generic for all $n$. Then, in a forcing extension of $V[G]$ there is a generic automorphism $\pi$ such that $\pi(G)$ is $V$-generic.   
\end{claim}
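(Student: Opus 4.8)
The plan is to lift the proof of the Shuffling Lemma (Lemma~\ref{lem;the shuffling lemma}) from finite-support products to finite-support iterations. The core difficulty is that, unlike a product, a finite-support iteration of weakly homogeneous posets need not be weakly homogeneous; the suitability hypothesis of Definition~\ref{definition:suitable-for-omega-iteration} is exactly what is needed to get around this, following Dobrinen--Friedman~\cite{Dobrinen-Friedman-2008} and Karagila's analysis of iterated automorphism systems~\cite[Section~3]{Karagila-iterating-2019}.

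First, I would assemble the data $\langle\P,\langle\dot S_n\mid n<\omega\rangle\rangle$ into a symmetric system $\mathcal S=\langle\P,\mathcal G,\mathcal F\rangle$. For each $n$ and each (name for an) element $\sigma$ of $\dot S_n$, the suitability hypothesis --- that $\sigma$ fixes $\dot\Q_m$, $\leq_{\dot\Q_m}$ and $\dot S_m$ for all $m>n$ --- allows one to extend $\sigma$ to a genuine automorphism $\hat\sigma$ of the full iteration $\P=\P_\omega$: the identity on the first $n$ coordinates, and $\sigma$, transported along the previously applied automorphisms, on the tail $\dot\Q_n\ast\dot\Q_{n+1}\ast\cdots$. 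The same hypothesis guarantees that such automorphisms compose, so the group $\mathcal G$ they generate is a genuine subgroup of $\mathrm{Aut}(\P)$. Let $\mathcal F$ be the filter of subgroups of $\mathcal G$ generated by the stabilizers $\{\hat g\in\mathcal G:\hat g\restriction\P_n=\mathrm{id}\}$, $n<\omega$; tenacity is automatic since conditions of $\P$ have finite support. As in Example~\ref{example: generic automorphism for product}, now in its iterated form, the shuffling forcing $\mathbb S(\mathcal S)$ is equivalent to the finite-support iteration that at stage $n$ adds, by finite conditions over $V[G\restriction n]$ (using that $G\restriction n$ is $V$-generic to interpret $\dot S_n$), a generic element $\sigma_n\in S_n$; a generic filter for it yields the generic automorphism $\pi$ of $\P$ as in Definition~\ref{defn: shuffling poset}. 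This is where $\AC$ is used: to make sense of applying $\pi$, which lives in a generic extension of $V[G]$, to $\P$-names.

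Second, I would verify that $G$ is symmetrically generic relative to $\mathcal S$ in the sense of Definition~\ref{definition:symmetrically-generic-filter}: for every dense open $D\subseteq\P$ in $V$ and every basic $X=X_n=\{\hat g:\hat g\restriction\P_n=\mathrm{id}\}$, one has $(X_n\cdot D)\cap G\neq\emptyset$. Since $\bigcup_m\P_m$ is open dense in $\P$, the set $\{q\restriction n:q\in D\cap\bigcup_m\P_m\}$ is dense in $\P_n$, so by genericity of $G\restriction n$ there is $q\in D$ with $q\in\P_m$ for some $m\geq n$ and $q\restriction n\in G$. It remains to find $\hat g\in X_n$ with $\hat g(q)\in G$; as $\hat g$ fixes the first $n$ coordinates we already have $\hat g(q)\restriction n=q\restriction n\in G$, and one builds $\sigma_n\in S_n,\sigma_{n+1}\in\dot S_{n+1},\dots,\sigma_{m-1}\in\dot S_{m-1}$ by a finite recursion: at stage $j$, the weak homogeneity of $\dot\Q_j$ (the $S_j$-orbit of any condition is predense, hence meets the $V[G\restriction j]$-generic filter $G(j)$) supplies $\sigma_j$ sending the $j$-th coordinate of the partially corrected $q$ into $G(j)$, while suitability ensures the earlier coordinates are undisturbed and the remaining tail is transported coherently. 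The composite $\hat g$ then lies in $X_n$, has support $\subseteq m$, and satisfies $\hat g(q)\restriction m\in G\restriction m$, so $\hat g(q)\in G$.

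With symmetric genericity in hand, the argument of the Shuffling Lemma applies essentially verbatim: given a dense open $D\subseteq\P$ in $V$ and a condition $(g,X)$ of $\mathbb S(\mathcal S)$, the set $g\cdot D$ is dense open, so symmetric genericity yields $r\in X$ and $d\in D$ with $r\cdot g(d)\in G$, whence $(r\cdot g,\,X\cap\{\hat h:\hat h(r\cdot g(d))=r\cdot g(d)\})$ forces $\dot\pi(d)\in G$, hence $\dot\pi^{-1}G\cap D\neq\emptyset$. Thus in $V[G][H]$, with $H$ generic for $\mathbb S(\mathcal S)$, the filter $\pi^{-1}G$ is $\P$-generic over $V$, and $\pi^{-1}$ is the desired generic automorphism. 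I expect the first step to be the main obstacle --- defining $\mathcal G$ on the full iteration rather than merely on truncations, and checking the composition law --- since this is precisely where the bookkeeping of names-for-automorphisms acting on names-for-tails must be handled carefully, exactly as in \cite[Section~3]{Karagila-iterating-2019}; the symmetric genericity step, by contrast, is a routine density argument using only the piecewise genericity of $G$ and the homogeneity witnesses $\dot S_n$, one coordinate at a time.
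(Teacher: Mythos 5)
Your proposal is correct and follows essentially the same route as the paper: lift each $\dot S_n$ to a group of support-preserving automorphisms of the full iteration $\P$ (the paper does this via the map $\dot\sigma\mapsto\iota^{-1}(\mathrm{id}_{\P_n}\ast\dot\sigma)\iota$ and the mixing lemma), take the filter generated by the ``fix the first $n$ coordinates'' subgroups, establish weak homogeneity of the tails by a finite recursion along supports, and invoke the Shuffling Lemma. Your explicit verification that $G$ is symmetrically generic is the same coordinate-wise density argument the paper uses to prove homogeneity of $\P^n$, just packaged slightly differently.
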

The automorphism $\pi$ belongs to the inverse limit (in some sense) of the $S_n$. One should keep in mind that the structure of the group which is composed of the $S_n$ is not a direct product, but rather a semi-direct one, but we will not pursue this computation here. 
\begin{proof}
For $q\in\mathbb{Q}$, recall that $\supp q$ is the set of coordinates $n$ such that $q(n)$ is not forced by $1_{\mathbb{P}_n}$ to be $1_{\dot{\mathbb{Q}}_n}$.
Let $\dot{\P}^n$ be the $\P_n$ name for the finite support iteration of  $\langle \dot{\Q}_m \mid n \leq m < \omega\rangle$. There is a canonical isomorphism of forcing notions $\iota\colon \P \cong \P_n\ast \dot{\P}^n$. Note that there is a dense subset of $\P_n \ast \dot{\P}^n$ of conditions $(p, \dot q)$ for which there is a finite set $a$ such that $p \Vdash \supp (\dot{q}) = \check{a}$.

Every automorphism in $\dot{S}_n$ induces an automorphism of $\dot{\P}^n$, by applying 
\[\sigma_{n,\omega}(\langle \dot{q}_m \mid m \geq n\rangle) = \langle \sigma_{n,m}(\dot{q}_m) \mid m \geq n\rangle.\]
By a slight abuse of notation, we identify $\dot{\sigma}$ with $\sigma_{n,\omega}$. Under this identification, every element of $\dot{S}_n$ is in fact a $\P_n$-name for a  \emph{support preserving} automorphism of $\dot{\P}^n$ (namely, $\Vdash_{\P_n} \supp \dot q = \supp \dot\sigma(\dot q)$ for every $\dot\sigma \in \dot{S}_n$ and $\dot{q} \in \dot{\P}^n$). 

\begin{lemma}
There is a subgroup $T_n \leq \mathrm{Aut}(\P)$, such that 
$\Vdash_{\mathbb{P}_n}$ ``there is a surjective homomorphism $T_n \rightarrow \dot{S}_n$''.  
\end{lemma}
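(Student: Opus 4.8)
The plan is to realize each tail-automorphism in $\dot{S}_n$ as an honest automorphism of the whole iteration acting trivially below stage $n$. Recall the canonical isomorphism $\iota\colon\P\cong\P_n\ast\dot{\P}^n$, where $\dot{\P}^n$ is the $\P_n$-name for the finite support iteration of $\seqq{\dot{\Q}_m}{n\le m<\omega}$, and recall that by the suitability hypothesis we identify each element of $\dot{S}_n$ with a $\P_n$-name for a \emph{support-preserving} order-automorphism of $\dot{\P}^n$. Working in $\ZFC$, I would first fix, using the maximum principle and the fact that $\dot{S}_n$ is a $\P_n$-name for a set, a set $\Sigma_n$ of $\P_n$-names such that $1_{\P_n}\force\dot{\sigma}\in\dot{S}_n$ for each $\dot{\sigma}\in\Sigma_n$, closed under the name-level operations of composition and inverse, containing a name for the identity, and such that $1_{\P_n}$ forces $\set{\dot{\sigma}[\dot{G}_n]}{\dot{\sigma}\in\Sigma_n}=\dot{S}_n$. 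This mixing step is the only use of $\AC$.

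For $\dot{\sigma}\in\Sigma_n$, define $\hat{\sigma}$ on $\P_n\ast\dot{\P}^n$ by $\hat{\sigma}(p,\dot{q})=(p,\dot{\sigma}(\dot{q}))$, where $\dot{\sigma}(\dot{q})$ is the canonical $\P_n$-name for applying the automorphism named by $\dot{\sigma}$ to the element named by $\dot{q}$. On the dense set of conditions $(p,\dot{q})$ for which some finite $a$ has $p\force\supp\dot{q}=\check{a}$, support-preservation guarantees that $\hat{\sigma}(p,\dot{q})$ is again such a condition; and since $1_{\P_n}$ forces $\dot{\sigma}$ to be an order- and compatibility-preserving bijection of $\dot{\P}^n$, the map $\hat{\sigma}$ is an order- and compatibility-preserving bijection of this dense subposet. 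Hence $\hat{\sigma}$ extends uniquely to an automorphism of $\mathcal{B}(\P)$ (which we take as the ambient $\mathrm{Aut}(\P)$; alternatively one restricts to a fixed dense subposet closed under the operations below to obtain literal poset automorphisms). Note that $\hat{\sigma}$ fixes the $\P_n$-coordinate pointwise.

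Let $T_n=\set{\hat{\sigma}}{\dot{\sigma}\in\Sigma_n}$. This is a subgroup: if $\dot{\sigma},\dot{\tau}\in\Sigma_n$ and $\dot{\rho}\in\Sigma_n$ is the chosen name with $1_{\P_n}\force\dot{\rho}=\dot{\sigma}\circ\dot{\tau}$, then $1_{\P_n}$ forces $\dot{\sigma}(\dot{\tau}(\dot{q}))=\dot{\rho}(\dot{q})$ for every $\P_n$-name $\dot{q}$, so $\hat{\sigma}\circ\hat{\tau}=\hat{\rho}\in T_n$; the chosen inverse and identity names give closure under inverses and the identity. Since each $\hat{\sigma}\in T_n$ fixes $\P_n$ pointwise, it descends, in $V[G_n]$, to an automorphism of the quotient $\P/G_n$, which $\iota$ identifies with $\dot{\P}^n[G_n]$, and this induced automorphism is exactly the realization $\dot{\sigma}[G_n]$, an element of $S_n=\dot{S}_n[G_n]$ (under the identification of $\dot{S}_n$ with tail-automorphisms). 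Define $\Phi\colon T_n\to S_n$ in $V[G_n]$ by letting $\Phi(\hat{\sigma})$ be this induced automorphism. It is well defined, since $\hat{\sigma}=\hat{\sigma}'$ forces, on comparing values at all conditions, $1_{\P_n}\force\dot{\sigma}=\dot{\sigma}'$, hence $\dot{\sigma}[G_n]=\dot{\sigma}'[G_n]$; it is a homomorphism by $\Phi(\hat{\sigma}\circ\hat{\tau})=\Phi(\hat{\rho})=\dot{\rho}[G_n]=\dot{\sigma}[G_n]\circ\dot{\tau}[G_n]$; and it is surjective because, by the choice of $\Sigma_n$, every $\sigma\in S_n$ equals $\dot{\sigma}[G_n]$ for some $\dot{\sigma}\in\Sigma_n$. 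Finally, $\Phi$ is uniformly definable over $V[G_n]$ from $G_n$ and the fixed object $T_n\in V$, so there is a $\P_n$-name $\dot{\Phi}$ with $1_{\P_n}\force$ ``$\dot{\Phi}$ is a surjective homomorphism from $\check{T}_n$ onto $\dot{S}_n$'', which is the assertion of the lemma.

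The only delicate part is the name-level bookkeeping in defining the $\hat{\sigma}$ and verifying that $T_n$ is closed under composition; this is precisely the iteration-of-automorphisms calculus of Karagila~\cite[Section~3]{Karagila-iterating-2019}, the suitability hypothesis (support-preservation together with the preservation of the tail posets, their orders, and the $\dot{S}_m$'s) being exactly what is needed for it to go through. Everything else is a routine unwinding of the isomorphism $\iota$ and of the quotient $\P/G_n$.
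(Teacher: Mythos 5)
Your proposal is correct and follows essentially the same route as the paper: the paper's map $t(\dot{\sigma})=\iota^{-1}(\mathrm{id}_{\P_n}\ast\dot{\sigma})\iota$, defined on the dense set of conditions with decided (finite) supports using support-preservation, is exactly your $\hat{\sigma}$, and the surjection is likewise evaluation $t(\dot\sigma)\mapsto\dot{\sigma}[G_n]$. Your extra care in fixing a set $\Sigma_n$ of names closed under name-level composition and inverse is a harmless refinement of the paper's observation that $t(\dot\sigma)=t(\dot\sigma')$ iff $\Vdash_{\P_n}\dot\sigma=\dot\sigma'$, which is how the paper sees that $T_n$ is a set and a group.
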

\begin{proof}
We work in the dense subset of $(p, \dot q)\in \P_n \ast \P^n$ for which there is a finite set $a$ such that $p \Vdash \supp (\dot{q}) = \check{a}$. For every name $\dot{\sigma}$ for an element of $\dot{S}_n$ (considered as an automorphism of $\dot{\mathbb{P}}^n$), we define $\dot{\sigma}(\dot{q})$ to be the condition forced to be the image of $\sigma$ on $\dot{q}$. Let $t(\dot{\sigma}) = \iota^{-1}(id_{\mathbb{P}_n} \ast \dot{\sigma}) \iota$ and let $T_n$ be the image of $t$. 

Note that
\begin{equation*}
\Vdash_{\mathbb{P}_n} \dot{\sigma} = \dot{\sigma}'   \textrm{ if and only if } t(\dot{\sigma}) = t(\dot{\sigma}'),
\end{equation*}
so $T_n$ is a set. Similarly, $T_n$ is a subgroup of the automorphisms of $\P$. Finally, given a generic filter $H \subseteq \P_n$, we can define a map $t(\dot{\sigma}) \mapsto \dot{\sigma}^{H}$, which is a group homomorphism and is surjective by the definition.  
\end{proof}
Let us remark that for the definition of $t$ to make sense (without moving to the Boolean completion), we must use the assumption that the automorphisms are support-preserving. Namely, the support of the condition $\dot\sigma(\dot{q})$ equals the support of $\dot{q}$ and therefore a concrete finite set.

One can apply the lemma in the generic extension by $\P_n$, and obtain a group $(T_m)^{V[\P_n]}$, for $m\geq n$, which projects to $\dot{S}_m^{V[\P_m]}$ in any further $\dot{\P}^n$ generic extension.  
Working in $V[\P_n]$, let $S^n$ be the group generated by $\langle (T_m)^{V[\P_n]} \mid n \leq m < \omega\rangle$. 
\begin{lemma}
Working in $V[\P_n]$, $S^n$ witnesses the weak homogeneity of $\P^n$, that is, for any two conditions $p,q\in \P^n$ there is some $s\in S^n$ such that $s(p)$ is compatible with $q$.
\end{lemma}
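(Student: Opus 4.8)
The plan is to prove the lemma by reducing to finite iterations and inducting on the length, following the standard argument for homogeneous finite support iterations (cf.\ \cite[Section~3]{Karagila-iterating-2019}). Since every condition in $\P^n$ has finite support, given $p,q\in\P^n$ we fix $N>n$ with $\supp p\cup\supp q\subseteq[n,N)$; then $p$ and $q$ may be regarded as conditions in the finite support iteration $\P^n\restriction N$ on the coordinates $[n,N)$, and they are compatible in $\P^n$ if and only if they are compatible there. Moreover, each $T_m$ with $m<N$ restricts to a support-preserving automorphism of $\P^n\restriction N$, and a product of such restrictions lifts back to a product of the full $T_m$'s, which fixes the finite supports of $p$ and $q$. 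So it suffices to prove, by induction on $N\ge n$, the statement $P(N)$: for all $p,q\in\P^n$ with support contained in $[n,N)$ there is $s$ in the group generated by $\{T_m : n\le m<N\}$ such that $s(p)$ is compatible with $q$.

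The base case $P(n)$ is trivial, with $s=\mathrm{id}$. For the inductive step, assume $P(N)$ and take $p,q$ with support contained in $[n,N]$. Using $\P^n\restriction(N+1)\cong(\P^n\restriction N)\ast\dot\Q_N$, write $p=(p^-,\dot p_N)$ and $q=(q^-,\dot q_N)$ with $p^-,q^-\in\P^n\restriction N$ and $\dot p_N,\dot q_N$ names for conditions in $\dot\Q_N$. By $P(N)$ there is $s^-$ in the group generated by $\{T_m:m<N\}$ with $s^-(p^-)$ compatible with $q^-$; fix a common extension $r^-\le s^-(p^-),q^-$ in $\P^n\restriction N$. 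Applying $s^-$ to $p$ also acts on coordinate $N$, so $s^-(p)=(s^-(p^-),\dot p_N')$ for a name $\dot p_N'$. Below $r^-$ the forcing $\dot\Q_N$ is weakly homogeneous as witnessed by $\dot S_N$, so by the maximal principle (this is where $\ZFC$ is used) there is a $(\P^n\restriction N)$-name $\dot\tau$ for an element of $\dot S_N$ with $r^-\force$ ``$\dot\tau(\dot p_N')$ is compatible with $\dot q_N$''. Let $s_N\in T_N$ be the automorphism corresponding to $\dot\tau$ under the construction of $T_N$ above, so $s_N$ acts trivially on the coordinates below $N$ and via $\dot\tau$ from coordinate $N$ on. Then $s:=s_N\circ s^-$ has $\P^n\restriction N$-part $s^-(p^-)$ and $N$-th coordinate $\dot\tau(\dot p_N')$, so the condition whose $\P^n\restriction N$-part is $r^-$ and whose $N$-th coordinate is a common extension of $\dot\tau(\dot p_N')$ and $\dot q_N$ (which exists below $r^-$) lies below both $s(p)$ and $q$. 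Since $s$ is a product of $T_m$'s with $m\le N$, this proves $P(N+1)$.

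The main point requiring care is the interaction between the automorphism introduced at a given coordinate and the remaining coordinates of the conditions, and this is exactly what the ``suitable for iteration'' hypothesis (Definition~\ref{definition:suitable-for-omega-iteration}) is for: each element of $\dot S_m$ extends to a \emph{support-preserving} automorphism of the tail, so that (i) $\supp(s(p))=\supp(p)$ throughout, keeping us inside the finite iteration, and (ii) the last automorphism $s_N\in T_N$, being trivial below $N$, does not disturb the lower coordinates already arranged to be compatible with $q$. The use of the name groups $\dot S_m$ and their lifts $T_m$ in place of automorphisms of $\dot\Q_m$ internal to the intermediate models, and the uniform choice of $\dot\tau$ via the maximal principle, are the remaining technical points; all of these are handled exactly as in Karagila's treatment of iterated symmetric extensions.
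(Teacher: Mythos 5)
Your proposal is correct and follows essentially the same route as the paper: an induction along the (finite) support of $p$ and $q$, at each coordinate using the forced weak homogeneity of $\dot\Q_{k_i}$ together with the mixing lemma to pick an element of $T_{k_i}$, with the support-preservation clause of Definition~\ref{definition:suitable-for-omega-iteration} guaranteeing that no new coordinates appear and that the later automorphisms do not disturb the compatibility already arranged below. The only cosmetic difference is that you induct on the cut-off $N$ rather than enumerating the support directly, which changes nothing.
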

\begin{proof}
Let $p, q$ be conditions in $(\dot{\P}^n)^{V[\P_n]}$, and let $\supp p \cup \supp q = \{k_0, \dots, k_{r-1}\}$. Define by induction a sequence of elements of $S^n$, $\sigma_0, \dots, \sigma_r$ such that $\sigma_i \circ \sigma_{i-1} \circ \cdots \circ \sigma_0(p \restriction k_{i + 1})$ is compatible with $q \restriction k_{i + 1}$. This is done by applying the assumption that $\dot{S}_{k_i}$ is forced to witness the weak homogeneity of $\dot{\Q}_{k_i}$. 

Let $\tilde{p}_i = \sigma_{i-1} \circ \cdots \circ \sigma_0(p)$. Then, $\Vdash \exists \sigma \in \dot{S}_{k_i}, \sigma(\tilde{p}_i(k_i)) \parallel q(k_i)$. By the mixing lemma\footnote{The mixing lemma states that if$\mathbb{R}$ is a forcing notion, $p\in \mathbb{R}$ and $p \Vdash_{\mathbb{R}} \exists x \varphi(x, \dot{y})$ for some $\dot{y}$ and some formula $\varphi$ then there is an $\mathbb{R}$-name $\dot{\tau}$ such that $p \Vdash_{\mathbb{R}} \varphi(\dot\tau, \dot y)$.}, there is an element $\sigma_{i} \in T_{k_i}$ such that $\sigma_{i}(\tilde{p}_i) \restriction k_i = \tilde{p} \restriction k_i$ and $\sigma_{i}(\tilde{p}_i)(k_i)$ is forced to be compatible with $q(k_i)$. Since this automorphism preserves the supports, there are no new coordinates in $\tilde{p}_{i+1}$ below $k_{i+1}$, and the same for $q$ and therefore both restrictions are compatible. 
\end{proof}
Let us remark that the assumption that each automorphism in $S_n$ preserves the names of $S_m$ for all $m \geq n$ entails that $g^{-1} T_m g = T_m$ for all $g \in T_n$, and in particular every element in $S^n$ can be represented as the one which is constructed above. 

Let $T^n$ be the group generated by $\langle T_m \mid m \geq n\rangle$. By the previous claim, $T^0$ witnesses the homogeneity of $\mathbb{P}$. 

Let $\mathcal{F}$ be the filter generated by $\{T^n \mid n <\omega\}$, and note that $T_n \supseteq T_{n+1}$ for all $n$.


By Definition \ref{defn: shuffling poset} and Lemma \ref{lem;the shuffling lemma} (the shuffling lemma), the generic object for $\mathbb{S}(\P, T^0,\mathcal{F})$ will produce the desired generic automorphism. 

This concludes the proof of Claim~\ref{claim:shuffling-omega-iteration}.
\end{proof}

\subsection{Two step iterations}
Here we collect some useful information about the two-step iteration of symmetric extension. See \cite[Section 3.1]{Karagila-iterating-2019} for more details. 
As usual, all our symmetric systems are tenacious, which means that for every condition $p$, the set of automorphisms that fix $p$ is in the filter. 
\begin{defn}\label{def;two-step-symmetric-extension}
Let $\mathcal{S}_0 = \langle \mathbb{P}, \mathcal{F}, \mathcal{G}\rangle$ be a symmetric system and let $\dot{\mathcal{S}}_1$ be a symmetric name for a symmetric system, $\mathcal{S}_1=\langle \mathbb{Q}, \mathcal{K}, \mathcal{H}\rangle$ in the symmetric extension by $\mathcal{S}_0$, denoted by $V^{\mathcal{S}_0}$. 

Let $\mathcal{G} \ast \mathcal{H}$ be the generic semi-direct product of $\mathcal{G} \ast \mathcal{H}$. Namely, the elements of $\mathcal{G}\ast\mathcal{H}$ are names of the form $\langle \pi,\dot{\sigma}\rangle$, where $\pi \in \mathcal{G}$ and $\Vdash \dot\sigma \in \mathcal H$,\footnote{We identify two elements in $\mathcal{H}$ if they are forced by the trivial condition to be the same.
} and the group operation is defined by 
\[\langle \pi_0,\dot{\sigma}_0\rangle * \langle \pi_1,\dot{\sigma}_1\rangle = \langle \pi_0 \pi_1, \dot\sigma_0 \cdot (\pi_0)_*(\dot\sigma_1)\rangle.\] 
Let $\mathcal{F} \ast \mathcal{K}$ be the filter of subgroups generated by the groups of the form 
\[A_{F,\dot{K}}=\{\langle \pi, \dot \sigma\rangle \in \mathcal{G} \ast \mathcal{H} \mid \pi \in F, \Vdash \dot{\sigma} \in \dot{K} \},\] 
For $F \in \mathcal{F}$ and $\Vdash \dot{K} \in \mathcal{K}$ such that $\sym (\dot K) \supseteq F$.

We denote by $\mathcal{S}_0 \ast \mathcal{S}_1$ the symmetric system $\langle \mathbb{P}\ast\mathbb{Q}, \mathcal{G}\ast\mathcal{H}, \mathcal{F}\ast\mathcal{K}\rangle$.
\end{defn}

\begin{lemma}\label{lem;two-step-iteration-of-symmetric-extension}
Let $\mathcal{S}_0, \mathcal{S}_1$ be as in Definition \ref{def;two-step-symmetric-extension}. Let $H$ be a generic filter for $\mathbb{P}$. 
Then $E \in V^{\mathcal{S}_0}$ is a $\mathcal{K}$-symmetric dense open subset of $\mathbb{Q}$ if and only if there is a $\mathcal{F}\ast \mathcal{K}$-symmetric open set $D \subseteq \mathbb{P} \ast \mathbb{Q}$ such that 
\[E = \{\dot{q}^H \mid \langle p, \dot{q}\rangle \in D,\, p \in H\}.\]
\end{lemma}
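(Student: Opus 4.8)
The plan is to prove both implications by an explicit (and essentially bookkeeping) translation between a subset $D\subseteq\mathbb{P}\ast\mathbb{Q}$ and a $\mathbb{P}$-name $\dot E$ for a subset of $\dot{\mathbb{Q}}$, following Karagila's analysis of two-step iterations of symmetric systems in \cite[Section 3.1]{Karagila-iterating-2019}. The key observation is that a condition of $\mathbb{P}\ast\mathbb{Q}$ is by definition a pair $\langle p,\dot q\rangle$ with $\dot q$ a hereditarily symmetric $\mathbb{P}$-name forced to name an element of $\dot{\mathbb{Q}}$; hence a set $D$ of such conditions is itself a $\mathbb{P}$-name, and its evaluation by $H$ is exactly $\{\dot q^{H}:\langle p,\dot q\rangle\in D,\ p\in H\}$. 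So the correspondence in the statement is simply ``$\dot E=D$, read as a $\mathbb{P}$-name'', and what must be checked is that openness, density, and symmetry match on the two sides. I will carry out the argument with ``open'' read throughout as ``dense open'' and note where density is used.

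For $(\Rightarrow)$, fix an $\mathrm{HS}$-name $\dot E$ with $\dot E^{H}=E$, and $F\in\mathcal{F}$ together with a name $\dot K$ such that $\Vdash\dot K\in\dot{\mathcal{K}}$, $F\subseteq\sym_{\mathcal{G}}(\dot E)\cap\sym(\dot K)$, and $\Vdash$``$\dot K\subseteq\sym_{\dot{\mathcal{H}}}(\dot E)$ and $\dot E$ is a dense open subset of $\dot{\mathbb{Q}}$'' (all arrangeable by the symmetric-system axioms and by shrinking $F$). Set $D=\{\langle p,\dot q\rangle\in\mathbb{P}\ast\mathbb{Q}:p\Vdash\dot q\in\dot E\}$. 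Then $D$ is open since $\dot E$ is forced open; $D$ is dense by the mixing lemma applied to the forced statement ``$\exists\dot q\le\dot q_{0}$ in $\dot E$''; and $D$ is fixed setwise by $A_{F,\dot K}$, because for $\langle\pi,\dot\sigma\rangle\in A_{F,\dot K}$ the natural action of $\mathcal{G}\ast\mathcal{H}$ sends $\langle p,\dot q\rangle$ to $\langle\pi p,\pi_{*}\dot\sigma(\pi_{*}\dot q)\rangle$, and since $\pi\in F$ fixes $\dot E$ and $\dot K$ we get $\pi p\Vdash\pi_{*}\dot q\in\dot E$ and $\pi p\Vdash\pi_{*}\dot\sigma\in\dot K\subseteq\sym_{\dot{\mathcal{H}}}(\dot E)$, hence $\pi p\Vdash\pi_{*}\dot\sigma(\pi_{*}\dot q)\in\dot E$; applying this to inverses gives equality. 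Finally $E=\{\dot q^{H}:\langle p,\dot q\rangle\in D,\ p\in H\}$ by the truth lemma.

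For $(\Leftarrow)$, regard the given $\mathcal{F}\ast\mathcal{K}$-symmetric dense open $D$, with $A_{F,\dot K}\subseteq\sym(D)$, as a $\mathbb{P}$-name $\dot E$, so $\dot E^{H}=E$. The subgroup $\{\langle\pi,\mathrm{id}\rangle:\pi\in F\}\subseteq A_{F,\dot K}$ acts on $D$ as the $\mathbb{P}$-name operation $\pi_{*}$, so $F\subseteq\sym_{\mathcal{G}}(\dot E)$; since the entries of $D$ are $\mathrm{HS}$-names by the definition of $\mathbb{P}\ast\mathbb{Q}$, $\dot E$ is hereditarily symmetric and $E\in V^{\mathcal{S}_{0}}$. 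Working in $V^{\mathcal{S}_{0}}$: $E$ is open as the fiber over $H$ of the open $D$ (same closure-under-extension argument, now tracking $p\in H$); $E$ is dense by a density argument below a condition of $H$ (given $q_{0}\in\mathbb{Q}$, pick $\langle p_{0},\dot q_{0}\rangle$ with $p_{0}\in H$ and $\dot q_{0}^{H}=q_{0}$, and observe that the set of $p\le p_{0}$ which together with some $\dot q\le\dot q_{0}$ lie in $D$ is dense below $p_{0}$); and $E$ is $\mathcal{K}$-symmetric, since for $\sigma=\dot\sigma^{H}$ with $\Vdash\dot\sigma\in\dot K$ the element $\langle\mathrm{id},\dot\sigma\rangle\in A_{F,\dot K}$ fixes $D$, so $\langle p,\dot q\rangle\in D$ and $p\in H$ imply $\langle p,\dot\sigma(\dot q)\rangle\in D$, whence $\sigma(\dot q^{H})=(\dot\sigma(\dot q))^{H}\in E$; thus $\dot K^{H}\subseteq\sym_{\mathcal{H}}(E)$, and as $\dot K^{H}\in\mathcal{K}$ and $\mathcal{K}$ is a filter of subgroups, $\sym_{\mathcal{H}}(E)\in\mathcal{K}$.

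I expect the main obstacle to be purely notational: carefully unwinding the natural action of the generic semidirect product $\mathcal{G}\ast\mathcal{H}$ on $\mathbb{P}\ast\mathbb{Q}$ and on $\mathbb{P}\ast\mathbb{Q}$-names, and verifying it commutes with evaluation by $H$ (so that $\pi_{*}\dot\sigma(\pi_{*}\dot q)$ evaluates to $\sigma$ applied to $\dot q^{H}$), together with the discipline of phrasing every statement about $\mathbb{Q},\mathcal{H},\mathcal{K}$ through the forcing relation over $\mathcal{S}_{0}$, since these are only names, and keeping all manipulations of the $\dot q$'s inside $\mathrm{HS}$. No genuinely new idea is needed beyond the framework of \cite[Section 3.1]{Karagila-iterating-2019}; the computations above are a direct adaptation of it.
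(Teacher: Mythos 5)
Your proof is correct and follows essentially the same route as the paper: both directions translate between the name $\dot E$ and the set $D$ of pairs $\langle p,\dot q\rangle$ with $p\force \dot q\in\dot E$, with the symmetry check via the action of $A_{F,\dot K}$. The only place the paper is more explicit is your parenthetical ``all arrangeable by the symmetric-system axioms and by shrinking $F$'': the paper carries this out by fixing $p_0$ forcing the relevant properties, modifying $\dot\tau$ to force $E=\mathbb{Q}$ off $p_0$, and building the clause $p\leq p_0\vee p\perp p_0$ into $D$ --- the same maneuver you compress into one line.
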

\begin{proof}
Let $E$ be a dense open set in the symmetric extension. By the definition, it means that there is an $\mathcal{S}_0$-symmetric name $\dot\tau$, $\sym \dot\tau = F \in \mathcal{F}$, such that $\dot{\tau}^H = E$ where $E$ is forced to be a symmetric dense open subset of $\mathbb{Q}$, with respect to the group $\mathcal{H}$. To avoid confusion, when we describe the stabilizer of an element, we will use $\sym_{\mathcal{G}}$ or $\sym_{\mathcal{H}}$ stressing the acting group.

Let us assume also that $p_0 \Vdash_{\P} \sym_{\mathcal H} E = \dot{K}$ for some $\dot{K} \in \dot{\mathcal{K}}$ and let us assume that $\sym_{\mathcal{G}} \dot{K} = F$ as well, by shrinking $F$ and replacing $\dot{K}$ with an equivalent symmetric name, if necessary. By modifying $\tau$ to a name $\tau'$, if necessary, we may assume that every condition $p'$ which is incompatible with $p_0$ forces $E = \mathbb{Q}$ and in particular, $\Vdash_{\P} \sym_{\mathcal{H}} \dot\tau' \supseteq \dot{K}$. We assume also that all members of $F$ fix $p_0$, and thus $\sym_{\mathcal{G}} \dot\tau' \supseteq F$. Without loss of generality, $\dot\tau = \dot\tau'$. 

Let us define: 
\[D = \{\langle p, \dot{q}\rangle \mid p \leq p_0 \vee p \perp p_0,\, p \Vdash_{\P} \dot{q} \in \dot{\tau}\}\]
Following the definition, one can verify that every automorphism of the form $\langle \pi, \dot\sigma\rangle$ such that $\pi \in F$ and $\Vdash_{\P} \dot\sigma \in \dot{K}$ stabilizes $D$ and $D$ is dense open.

The other direction is similar.
\end{proof}
\begin{cor}\label{cor; iterating the shuffling lemma}
Let $\mathcal{S}_0, \mathcal{S}_1$ be as above. Let $G$ be a symmetrically generic filter for $\mathcal{S}_0$, and let $H$ be a symmetrically generic filter for $\mathcal{S}_1$ as in definition \ref{definition:symmetrically-generic-filter}. 
Then, there is an automorphism of $\P\ast\Q$ in a generic extension, that moves $G \ast H$ to a generic filter over $V$ for the iteration $\mathbb{P}\ast\mathbb{Q}$. 

Moreover, for any generic filter $G'\subseteq \P$ which is obtained by an application of a generic automorphism on $G$, there is an automorphism in a further generic extension $\sigma$ on $\Q$ such that for the obtained filter, $H'$, $G' \ast H'$ is generic.  
\end{cor}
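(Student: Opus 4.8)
The plan is to deduce the statement from the Shuffling Lemma (Lemma~\ref{lem;the shuffling lemma}) applied to the shuffling poset $\mathbb{S}(\mathcal{S}_0\ast\mathcal{S}_1)=\mathbb{S}(\P\ast\Q,\,\mathcal{G}\ast\mathcal{H},\,\mathcal{F}\ast\mathcal{K})$ of the two-step system from Definition~\ref{def;two-step-symmetric-extension}. Since all our symmetric systems are tenacious and tenacity is preserved under $\ast$, the hypotheses of Definition~\ref{defn: shuffling poset} are met, so it will be enough to check that $G\ast H$ is \emph{symmetrically generic} relative to $\langle\P\ast\Q,\mathcal{G}\ast\mathcal{H},\mathcal{F}\ast\mathcal{K}\rangle$. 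The Shuffling Lemma will then produce a generic automorphism $\dot\pi$ such that $\dot\pi^{-1}(G\ast H)$ is $\P\ast\Q$-generic over $V$, and $\dot\pi^{-1}$ is the automorphism we want.

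To verify symmetric genericity, I would fix a dense open $D\subset\P\ast\Q$ in $V$ and a subgroup $A_{F,\dot K}\in\mathcal{F}\ast\mathcal{K}$, and set $D'=A_{F,\dot K}\cdot D$. Being a union of automorphic images of a dense open set, $D'$ is again dense open and lies in $V$; being invariant under the group $A_{F,\dot K}$, it is $\mathcal{F}\ast\mathcal{K}$-symmetric. Lemma~\ref{lem;two-step-iteration-of-symmetric-extension} then applies to $D'$: its reduct $E'=\set{\dot q^{G}}{\langle p,\dot q\rangle\in D',\ p\in G}$ is a $\mathcal{K}$-symmetric dense open subset of $\Q$ in $V^{\mathcal{S}_0}$ — here one uses that $G$, being symmetrically generic for $\mathcal{S}_0$, correctly computes $V^{\mathcal{S}_0}$ and meets every $\mathcal{F}$-symmetric dense set. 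Since $H$ is symmetrically generic for $\mathcal{S}_1$, it meets $E'$, and unwinding the definition of $E'$ yields $\langle p,\dot q\rangle\in D'$ with $p\in G$ and $\dot q^{G}\in H$, i.e.\ $\langle p,\dot q\rangle\in(A_{F,\dot K}\cdot D)\cap(G\ast H)$. As $D$ and $A_{F,\dot K}$ were arbitrary, $G\ast H$ is symmetrically generic relative to $\mathcal{S}_0\ast\mathcal{S}_1$, completing the first assertion.

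For the ``moreover'' clause I would use the factorization of shuffling posets along an iteration, following Karagila's analysis in \cite[Section~3]{Karagila-iterating-2019}: $\mathbb{S}(\mathcal{S}_0\ast\mathcal{S}_1)$ is forcing equivalent to a two-step iteration $\mathbb{S}(\mathcal{S}_0)\ast\dot{\mathbb{S}}(\dot{\mathcal{S}}_1)$, where a generic for the first factor yields a generic automorphism of $\P$ and a generic for the second factor a name for a generic automorphism of $\Q$. Given $G'=\rho^{-1}(G)$ with $\rho$ a generic automorphism of $\P$ — so $G'$ is $\P$-generic over $V$ by the Shuffling Lemma for $\mathcal{S}_0$, while $V[G']=V[G]$ and $V^{\mathcal{S}_0}$ is unchanged because $\rho\in\mathcal{G}$ merely permutes the symmetric names — I would pass to a further generic extension and force with $\dot{\mathbb{S}}(\dot{\mathcal{S}}_1)$ to obtain a generic automorphism $\sigma$ of $\Q$. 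The pair $\langle\rho,\dot\sigma\rangle$ is then a generic automorphism of $\mathbb{S}(\mathcal{S}_0\ast\mathcal{S}_1)$ with prescribed first coordinate $\rho$, so the argument of the first part shows it moves $G\ast H$ to a $\P\ast\Q$-generic filter over $V$; this filter has first coordinate $G'$ and second coordinate $H'$ equal to the image of $H$ under $\sigma$, so $G'\ast H'$ is generic, as required.

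I expect the ``moreover'' clause to be the real work: one must make the equivalence $\mathbb{S}(\mathcal{S}_0\ast\mathcal{S}_1)\cong\mathbb{S}(\mathcal{S}_0)\ast\dot{\mathbb{S}}(\dot{\mathcal{S}}_1)$ and the associated twisted action of $\langle\rho,\dot\sigma\rangle$ on $\P\ast\Q$ precise, and verify that after shuffling only the first coordinate to $G'$ the filter $H$ remains symmetrically generic for $\mathcal{S}_1$ over the (unchanged) model $V^{\mathcal{S}_0}$, so that the second-factor forcing genuinely completes $G'$ to a $V$-generic filter. The first assertion, by contrast, is a routine combination of Lemma~\ref{lem;the shuffling lemma} with Lemma~\ref{lem;two-step-iteration-of-symmetric-extension}, the only points needing care being that $A_{F,\dot K}\cdot D$ is $\mathcal{F}\ast\mathcal{K}$-symmetric and dense open in $V$.
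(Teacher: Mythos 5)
Your first part is correct and matches the paper's argument exactly: the paper's proof of that clause is literally one sentence (``by Lemma~\ref{lem;two-step-iteration-of-symmetric-extension}, $G\ast H$ satisfies the requirements of Lemma~\ref{lem;the shuffling lemma}''), and your verification that $A_{F,\dot K}\cdot D$ is symmetric dense open and that its reduct along $G$ is met by $H$ is precisely the intended filling-in of that sentence.

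For the ``moreover'' clause you have the right strategy but you defer the one step that constitutes the actual content of the paper's proof, and the statement you propose to prove there is stronger than what the paper establishes or needs. You assert a forcing equivalence $\mathbb{S}(\mathcal{S}_0\ast\mathcal{S}_1)\cong\mathbb{S}(\mathcal{S}_0)\ast\dot{\mathbb{S}}(\dot{\mathcal{S}}_1)$; the paper does not prove an isomorphism but only constructs an explicit \emph{projection} from a dense subset of $\mathbb{S}(\mathcal{S}_0\ast\mathcal{S}_1)$ onto $\mathbb{S}(\mathcal{S}_0)\times\mathbb{S}(\mathcal{S}_1^G)$ (noting the second factor makes no reference to the generic of the first, so the iteration degenerates to a product), namely
\[
\rho\bigl(\langle (\pi,\dot\sigma), F\ast\dot K\rangle\bigr)=\bigl\langle (\pi,F),\ ((\pi^{-1}_*\dot\sigma)^G,(\pi^{-1}_*(\dot K))^G)\bigr\rangle ,
\]
restricted to conditions with $\sym(\pi^{-1}_*\dot\sigma),\sym(\dot K)\supseteq F$, and then verifies order-preservation by the twisted-product computation in $\mathcal{G}\ast\mathcal{H}$. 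A projection is exactly what the ``moreover'' clause requires: given the generic automorphism $\rho_0$ of $\P$ producing $G'$, one forces with the quotient of $\mathbb{S}(\mathcal{S}_0\ast\mathcal{S}_1)$ over the condition determined by $\rho_0$ to complete it to a generic automorphism of $\P\ast\Q$ with the prescribed first coordinate, and the first part of the corollary then applies. So your outline is sound, but to count as a proof you would need to either produce this projection (or an isomorphism) and check order-preservation; as written, the hard part is flagged rather than done.
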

\begin{proof}
First, by Lemma \ref{lem;two-step-iteration-of-symmetric-extension}, $G\ast H$ satisfies the requirements of Lemma \ref{lem;the shuffling lemma}, and thus there is a generic automorphism of the two-step iteration sending it to a generic filter for the two-step iteration.

In order to show the moreover part, it suffices to show that there is a projection from $\mathbb{S}(\mathcal{S}_0\ast\mathcal{S}_1)$ to $\mathbb{S}(\mathcal{S}_0)\ast\mathbb{S}(\mathcal{S}^G_1)$ as forcing notions. Note that $\mathbb{S}(\mathcal{S}^G_1)$ is defined in with no reference to the generic of $\mathbb{S}(\mathcal{S}_0)$ and thus
$\mathbb{S}(\mathcal{S}_0)\ast\mathbb{S}(\mathcal{S}^G_1) \cong \mathbb{S}(\mathcal{S}_0)\times\mathbb{S}(\mathcal{S}^G_1)$. \footnote{We will still denote the automorphisms of $\mathbb{Q}$ by $\dot{\sigma}^G$ and similarly for large subgroups of automorphisms, but the dot represents that it is a name with respect to $\mathbb{P}$ and not $\mathbb{S}(\mathcal{S}_0)$.}

Let us define a projection $\rho$ from $\mathbb{S}(\mathcal{S}_0\ast\mathcal{S}_1)$ to $\mathbb{S}(\mathcal{S}_0)\times \mathbb{S}(\mathcal{S}_1^G)$ as follows. The domain of $\rho$ is the dense set of elements of the form $\langle (\pi,\dot\sigma), F\ast \dot{K}\rangle$ such that $\sym \dot\pi_*\sigma, \sym \dot{K} \subseteq F$.

Let \[\rho(\langle (\pi,\dot\sigma), F \ast \dot{K}\rangle) = \langle (\pi, F), ((\pi^{-1}_*\sigma)^G, (\pi_*^{-1}(\dot{K}))^G)\rangle.\] 

Let us show that $\rho$ is order-preserving. Let $(\pi', \sigma')\in F \ast \dot{K}$ and let $F' \ast \dot{K}'\subseteq F \ast \dot{K}$. Let 
\[(\tilde{\pi}, \tilde{\sigma}) = (\pi', \sigma') \cdot (\pi, \sigma) = (\pi'\pi, \sigma' \cdot \pi_*(\sigma))\]
So
\[\rho(\langle (\tilde\pi,\tilde\sigma), (F' \ast \dot{K}')\rangle) = \langle (\pi' \pi, F'), (((\pi' \pi)_*^{-1}(\sigma' \cdot \pi_*(\sigma)))^G, ((\pi' \pi)_*^{-1}(\dot{K}'))^G)\rangle \]

Let us compute
\[\begin{matrix} (((\pi' \pi)_*^{-1}(\dot\sigma' \cdot \pi_*(\dot\sigma))) & = & (\pi' \pi)_*^{-1}(\dot\sigma') \cdot \pi_*^{-1}(\pi')_*^{-1} \pi_*(\dot\sigma) \\ 
& = & (\pi' \pi)_*^{-1}(\dot\sigma') \cdot \dot\sigma
\end{matrix}\]

The last equation is true since $(\pi', \sigma') \in F \ast \dot{K}$, and thus $\pi' \in F \subseteq \sym \dot\pi_*\sigma$. Similarly, $\Vdash \dot\sigma' \in K$ and thus 
\[\Vdash (\pi' \pi)_*^{-1}(\dot\sigma') \in (\pi' \pi)_*^{-1}(\dot{K}) = \pi_*^{-1} (\pi')_*^{-1} (\dot{K}) = \pi_*^{-1} (\dot{K}),\]
so we conclude that $\rho$ is order preserving. Clearly, for every condition $s$, $\{\rho(t) \mid t \leq s\}$ is dense below $\rho(s)$.
\end{proof}

\begin{lemma}
Let $\mathbb{P}$ be a weakly homogeneous forcing notion, as witnessed by $\mathcal G$. Let us assume that for every condition $p\ in \mathbb{P}$, the subgroup $stab(p) = \{\sigma \in \mathcal{G} \mid \sigma(p) = p\}$ witnesses the homogeneity of the cone $\{q \in \mathbb{P} \mid q \leq p\}$. Let $\mathcal{F}$ be a normal filter of subgroups such that $\{stab(p) \mid p \in \mathbb{P}\} \subseteq \mathcal F$ densely. Moreover, let us assume that if $F \in \mathcal F$ and $F \subseteq stab(p)$ then there is $p' \leq p$ such that $stab(p') \subseteq F$.

Let $G \subseteq\mathbb{P}$ be $V$-generic and let $\pi_G$ be a generic autormorphism generated by $\mathbb{S}(\mathbb{P}, \langle \mathcal{G}, \mathcal{F}\rangle)$. Then $\pi^{-1}_G(G)$ and $G$ are mutually generic.
\end{lemma}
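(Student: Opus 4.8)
The plan is to show that $\pi_G^{-1}(G)$ is $\P$-generic over $V[G]$; this is equivalent to the asserted mutual genericity over $V$, since any filter on $\P\times\P$ is the product of its projections, and a product filter is $\P\times\P$-generic over $V$ exactly when its first coordinate is $\P$-generic over $V$ (which $G$ is) and its second coordinate is $\P$-generic over that first extension. So I fix $H$ generic for $\mathbb{S}=\mathbb{S}(\P,\mathcal{G},\mathcal{F})$ over $V[G]$ with $\pi_G=\dot\pi[H]$, and run a density argument inside $\mathbb{S}$, working over $V[G]$. The only ``mechanical'' fact about $\mathbb{S}$ I will need, which follows directly from the definition of $\dot\pi$ in Definition~\ref{defn: shuffling poset}, is that for any $p\in\P$, $g\in\mathcal{G}$, and any $X\in\mathcal{F}$ with $X\subseteq S_{g(p)}$, the condition $(g,X)$ forces $\dot\pi(\check p)=\check{g(p)}$.

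Now let $D\in V[G]$ be dense in $\P$ and let $(g_0,X_0)\in\mathbb{S}$; I want an extension forcing $\dot\pi^{-1}\check G\cap\check D\neq\emptyset$. In $V[G]$ the set $E:=g_0\cdot D=\{g_0(d):d\in D\}$ is dense in $\P$ (as $g_0$ is an automorphism of $\P$ lying in $V$), and note every element of $E$ lies in $\P\subseteq V$. The key claim is that $G\cap(X_0\cdot E)\neq\emptyset$, where $X_0\cdot E=\{x(e):x\in X_0,\ e\in E\}$. Granting this, pick $x_0\in X_0$ and $d\in D$ with $x_0(g_0(d))\in G$, and set $q:=d$, $g:=x_0g_0$, and $X:=X_0\cap S_{g(q)}$, which lies in $\mathcal{F}$ because $S_{g(q)}=S_{x_0(g_0(d))}$ is a stabilizer of a condition, hence in $\mathcal{F}$. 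Then $(g,X)\leq(g_0,X_0)$, and since $X\subseteq S_{g(q)}$ the fact above gives $(g,X)\force\dot\pi(\check q)=\check{g(q)}$; as $g(q)=x_0(g_0(d))\in G$ this forces $\check q\in\dot\pi^{-1}\check G$, and $q=d\in D$, so $(g,X)$ is the desired extension. By genericity of $H$ over $V[G]$, $\pi_G^{-1}(G)$ then meets $D$, and since $D$ was arbitrary, $\pi_G^{-1}(G)$ is $\P$-generic over $V[G]$.

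It remains to prove the key claim, and this is where all the hypotheses enter. First, the set $\{p\in\P:S_p\subseteq X_0\}$ is dense in $\P$ and lies in $V$: given $p_0$, the subgroup $S_{p_0}\cap X_0$ is in $\mathcal{F}$ and contained in $S_{p_0}$, so by the hypothesis on $\mathcal{F}$ there is $p'\leq p_0$ with $S_{p'}\subseteq S_{p_0}\cap X_0\subseteq X_0$. By $V$-genericity of $G$, fix $p^*\in G$ with $S_{p^*}\subseteq X_0$. By density of $E$, fix $e\in E$ with $e\leq p^*$; recall $e\in\P\subseteq V$. Now $A:=\{x(e):x\in S_{p^*}\}$ lies in $V$, and it is predense below $p^*$: given $q'\leq p^*$, weak homogeneity of the cone below $p^*$ — witnessed by $S_{p^*}$ by hypothesis — yields $x\in S_{p^*}$ with $x(e)$ compatible with $q'$. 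Since $p^*\in G$ and $A\in V$ is predense below $p^*$, $G$ meets $A$, so there is $x\in S_{p^*}\subseteq X_0$ with $x(e)\in G$, giving $x(e)\in X_0\cdot E\cap G$.

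The main obstacle is exactly this key claim: it forces the merely $V$-generic filter $G$ to meet the set $X_0\cdot E$, which in general is not in $V$. The resolution is the two-step reflection above — descend to a condition $p^*\in G$ whose stabilizer is small enough to sit inside $X_0$, then use that $S_{p^*}$ acts ``transitively enough'' on the cone below $p^*$ to convert an $S_{p^*}$-orbit of a ground-model condition into a predense set below $p^*$, which $G$ must meet. This is precisely where the two structural hypotheses — that $\mathrm{stab}(p)$ witnesses the homogeneity of the cone below $p$, and that the stabilizers are dense in $\mathcal{F}$ — are both indispensable; dropping either (or taking $\pi_G$ generic only over $V$ rather than over $V[G]$) already breaks the conclusion, as one sees in the single-Cohen-real model.
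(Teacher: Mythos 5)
Your proof is correct, and it takes a genuinely different route from the paper's. The paper runs a single density argument for the product forcing $\mathbb{P}\times\mathbb{S}(\mathbb{P},\mathcal{G},\mathcal{F})$ over $V$: given a dense open $D\subseteq\mathbb{P}\times\mathbb{P}$ in $V$ and a condition $(p,(\pi,F))$ with (WLOG) $F=\mathrm{stab}(\pi(p))$, it descends into $D$ below $(p,\pi(p))$ and then uses the cone homogeneity of $\mathrm{stab}(\pi(p))$ to realign the second coordinate, producing a stronger condition that forces the pair of filters into $D$. You instead factor through the product lemma, fix $H$ generic for $\mathbb{S}$ over $V[G]$, and show $\pi_G^{-1}(G)$ meets every dense $D\subseteq\mathbb{P}$ lying in $V[G]$; the obstruction that such $D$ need not be in $V$ is handled by your orbit trick: shrink to $p^*\in G$ with $S_{p^*}\subseteq X_0$ (using the density of stabilizers in $\mathcal{F}$), pick a single condition $e\in g_0\cdot D$ below $p^*$ (mere density, no genericity needed, and $e$ itself is an element of $V$), and observe that the $S_{p^*}$-orbit of $e$ is a $V$-set predense below $p^*$ by cone homogeneity, so $G$ must meet it. Both arguments use the two structural hypotheses in the same roles, but your decomposition buys a proof that addresses the statement exactly as written (the paper's displayed argument actually establishes genericity of $G\times\pi_G(G)$ rather than $G\times\pi_G^{-1}(G)$; the two are interchangeable by symmetry, but yours matches the statement literally), at the cost of invoking the product lemma and the slightly delicate point that individual conditions of a $V[G]$-dense set still live in $V$. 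Your closing claim that the hypotheses and the genericity of $H$ over $V[G]$ are indispensable is a plausible aside but is not substantiated; it is not needed for the proof.
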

\begin{proof}
    Let $D \subseteq \mathbb{P} \times \mathbb{P}$ be dense open, and let $(p, (\pi, F)) \in \mathbb{P} \times \mathbb{S}(\P, \mathcal G, \mathcal F)$. Without loss of generality, $stab(\pi(p)) = F$. Let $(p', q')\leq (p, \pi(p))$ in $D$. So, $\pi(p') \leq \pi(p)$ and thus there is $\sigma \in F$ such that $\sigma(\pi(p'))$ is compatible with $q'$. Let $p'' \leq p'$ be a condition such that $\sigma(\pi(p'')) \leq q'$. 
    
    So $(p'', \sigma\pi(p'')) \in D$. In particular, $(p'', (\sigma\pi, stab(\sigma\pi(p'')) \cap F) )$ forces that $G \times \pi_G(G)$ will meet $D$.
\end{proof}
\begin{cor}
    Let $\mathbb{P}$ be a weakly homogeneous forcing as witnessed by $\mathcal{G}$ and let $\mathcal{F}$ be a normal filter of subgroups of $\mathcal{G}$. Let $G$ be a symmetric filter. Then, for every generic filter $H$ (possibly in an outer model), there is a generic extension in which there is an automorphism $\sigma_H$ such that $\sigma_H(G) = H$.
\end{cor}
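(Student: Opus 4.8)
The plan is to build $\sigma_H$ by forcing with a version of the shuffling poset of Definition~\ref{defn: shuffling poset} adapted to the specific pair $(G,H)$. Work inside an outer model $W$ containing both $G$ and $H$ (if $H$ lies in an outer model of $V[G]$, take $W$ to contain $V[G]$; otherwise first pass to a common outer model). In $W$, with $\mathbb{S}=\mathbb{S}(\P,\mathcal{G},\mathcal{F})$, set
\[
\mathbb{S}^{G,H}=\bigl\{(g,X)\in\mathbb{S}\ :\ \forall q\in\P\,\bigl(X\subseteq\mathrm{stab}(q)\ \Rightarrow\ (q\in G\iff g(q)\in H)\bigr)\bigr\},
\]
ordered by restricting the order of $\mathbb{S}$. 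The idea is that a condition $(g,X)$ commits the eventual automorphism to agree with $g$ on every $q$ fixed by all of $X$, so membership in $\mathbb{S}^{G,H}$ is exactly the requirement that these commitments be consistent with $\sigma(G)=H$. Since $\P$ is weakly homogeneous as witnessed by $\mathcal{G}$, its only $\mathcal{G}$-invariant condition is $1_\P$, which lies in both filters; hence $(\mathrm{id},\mathcal{G})\in\mathbb{S}^{G,H}$ and $\mathbb{S}^{G,H}\neq\varnothing$, and one checks using normality of $\mathcal{F}$ that the inherited order behaves as in $\mathbb{S}$.

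Let $K\subseteq\mathbb{S}^{G,H}$ be generic over $W$ and let $\sigma=\sigma_H$ be the induced permutation of $\P$, defined just as $\pi$ is from an $\mathbb{S}$-generic in Definition~\ref{defn: shuffling poset}; as in the claim following that definition, $\sigma$ is a well-defined automorphism of $\P$. Whenever $(g,X)\in K$ with $X\subseteq\mathrm{stab}(q)$ we get $\sigma(q)=g(q)$, and then membership in $\mathbb{S}^{G,H}$ yields $q\in G\iff\sigma(q)\in H$. So it suffices to prove that for every $q\in\P$ the set $D_q=\{(g,X)\in\mathbb{S}^{G,H}\ :\ X\subseteq\mathrm{stab}(q)\}$ is dense in $\mathbb{S}^{G,H}$: granting this, $q\in G\iff\sigma(q)\in H$ holds for all $q$, and since $\sigma$ is a bijection of $\P$ this gives $\sigma(G)=H$, as required.

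The density of $D_q$ is the heart of the argument, and is carried out much as in the Shuffling Lemma~\ref{lem;the shuffling lemma} and in the preceding lemma, whose hypotheses on $\mathcal{G}$ and $\mathcal{F}$ we retain. Given $(g,X)\in\mathbb{S}^{G,H}$ with $X\not\subseteq\mathrm{stab}(q)$, we look for $r\in X$ and $X'\in\mathcal{F}$ with $X'\subseteq X\cap\mathrm{stab}(q)$ such that $(rg,X')\in\mathbb{S}^{G,H}$. If $q\in G$ we want $rg(q)\in H$: since $X$ is a large subgroup, the orbit $\{r\cdot g(q):r\in X\}$ is predense below a suitable condition (by weak homogeneity of $\P$ through $\mathcal{G}$ and the cone stabilizers), so by genericity of $H$ some $r\in X$ has $r\cdot g(q)\in H$; we then take $X'=X\cap\mathrm{stab}(q)\cap\mathrm{stab}(r\cdot g(q))$, which lies in $\mathcal{F}$ by normality and the density of stabilizers, passing below $q$ if necessary as permitted by the last hypothesis on $\mathcal{F}$. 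The case $q\notin G$ is dual, using separativity of $\P$ and genericity of $H$ to arrange $rg(q)\notin H$. \textbf{The main obstacle} is the remaining verification that $(rg,X')$ actually lies in $\mathbb{S}^{G,H}$, i.e.\ that the equivalence $q'\in G\iff rg(q')\in H$ survives for \emph{all} conditions $q'$ newly controlled by $X'$, not just for the targeted $q$. This is precisely where one exploits that $G$ is only symmetric, rather than fully generic, so that $G$'s behaviour along any $\mathcal{F}$-orbit is forced to be ``homogeneous'', together with the genericity of $H$; the bookkeeping mirrors, and slightly strengthens, the density computation in the proof of the preceding lemma.
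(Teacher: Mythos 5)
Your strategy is genuinely different from the paper's: you try to build $\sigma_H$ directly, by forcing with a subposet $\mathbb{S}^{G,H}$ of the shuffling poset consisting of conditions already ``consistent with mapping $G$ to $H$''. The paper instead argues by contradiction through the forcing theorem: if some $h\in H$ forced the failure, one uses weak homogeneity to find $h'\leq h$ and $\pi\in\mathcal{G}$ with $\pi(h')\in G$, then forces with $\mathbb{S}(\P,\mathcal{G},\mathcal{F})$ below $(\pi,F_{h'})$ and applies the Shuffling Lemma (together with the preceding lemma) to produce a generic filter containing $h$ that \emph{is} the image of $G$ under a generic automorphism --- contradicting what $h$ forces. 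That route never has to match $G$ against the particular $H$ condition by condition.

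The gap in your argument is exactly the step you flag as ``the main obstacle'', and it is not bookkeeping that merely ``mirrors'' the earlier density computation --- it is the entire content of the statement. When you pass from $(g,X)$ to $(rg,X')$ with $X'\subseteq\mathrm{stab}(q)$, membership in $\mathbb{S}^{G,H}$ demands that the single automorphism $rg$ satisfy $q'\in G\iff rg(q')\in H$ for \emph{every} $q'$ with $X'\subseteq\mathrm{stab}(q')$. This is a simultaneous matching of $G$ and $H$ along a whole block of conditions (in the Cohen-product prototype, all conditions controlled by the newly fixed coordinates), whereas the element $r\in X$ you produce is chosen only to handle the one condition $q$; compatible conditions below $q$ that lie outside $G$, and unrelated conditions newly stabilized by $X'$, impose further constraints on the same $r$. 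Whether a single $r$ can meet all of them depends on the combinatorics of the specific symmetric system (which conditions each group in $\mathcal{F}$ stabilizes, whether the newly controlled conditions are finitely generated, how symmetric genericity of $G$ constrains its trace on them), and none of this is argued. Without that verification you have not shown $D_q$ is dense --- or even nonempty below a given condition --- so the generic $\sigma$ is not known to be total, let alone to satisfy $\sigma(G)=H$. A secondary point: the corollary's stated hypotheses do not by themselves license your appeals to cone-homogeneity of stabilizers and to density of stabilizers in $\mathcal{F}$; those are the extra hypotheses of the preceding lemma, which should be assumed explicitly.
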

\begin{proof}
    Otherwise, there is a condition $h \in H$ forcing that there is no further generic extension introducing an automorphism $\sigma_H$ such that $\sigma_H(G)$. By the weak homogeneity of the forcing $\mathbb{P}$,\footnote{Recall that in our context, a forcing notion $\mathbb{P}$ is weakly homogeneous if for every pair of condition $p,q\in\mathbb{P}$ there are $p' \leq p,\, q' \leq q$ and an automorphism of forcing sending $p'$ to $q'$.} the set $D$ of all conditions $r \in \mathbb{P}$ such that there is $p' \leq h$ and an automorphism $\pi$ in $\mathcal{G}$ such that $\pi(p') \geq r$ is dense open and symmetric (any automorphism that fixes $h$ preserves this set).

    Take $r\in G \cap D$, $h' \leq h$ and $\pi \in \mathcal G$ such that $\pi(h') \geq r$ (and in particular, in $G$). Force an $\mathbb{S}(\mathbb{P},\mathcal{G},\mathcal{F})$-generic filter below the condition $(\pi, F_{h'})$, where $F_{h'}$ is a subgroup in $\mathcal{F}$ of automorphisms that fix $h'$. Let $\pi_G$ be the generic filter. So, $\pi_G^{-1}(G)$ is a $\mathbb{P}$-generic filter containing $h$, contradicting the assumption that $h$ forces that there is no such generic automorphism.
\end{proof}
\section{Construction in $V[c]$ (proof of Theorem~\ref{thm: intermediate extension})}\label{section: construction in Cohen extension}
In \cite{Karagila-Bristol-model-2018}, Karagila constructed an inner model of $L[c]$, where $c$ is a Cohen real, satisfying the failure of the principle $KW_\alpha$ for all $\alpha$. The construction of this so-called \emph{Bristol model} depends on the structure of $L$, in a way that restricts the amount of large cardinals that can exist in the ground model which is suitable for this construction. In this section, we show how to obtain a model with similar properties, as an inner model of $V[c]$, where $V$ is an arbitrary model of $\ZF$ and $c$ is a Cohen-generic real over $V$.

In Section~\ref{subsec: construction in V[c] definitions} we construct a sequence of sets $A_\alpha, T_\alpha$ in $V[c]$ so that the models $M_\alpha=V(A_\alpha, T_\alpha)$ and $M_\infty=V(A, T)$ have the same properties as before. In particular, by corollaries \ref{cor: AC fails in set forcing} and \ref{cor: unbdd KW for V(A, T)}, no set forcing over $M_\infty$ can recover the axiom of choice, and the Kinna-Wagner degree of $M_\infty$ is $\infty$. This will complete the proof of Theorem~\ref{thm: intermediate extension}.

The set $A_1$ will be constructed directly from the Cohen real. For the rest of the construction, we show that for $\alpha\geq 1$, the poset which we used to construct the next stage does not have many dense open subsets inside $M_\alpha$.
(This poset is either $\Q(A_\alpha)\ast\T(\dot{A}_{\alpha+1},A_\alpha)$, or $\B(T_{<\theta})$ for some limit $\theta$.) 

In particular, working in $V[c]$, which is a model of choice extending $M_\alpha$, we will find filters that are sufficiently generic over $M_\alpha$. We then use these generics to continue along the construction.  

We also need to verify that the construction survives through limit stages, as we did in Section~\ref{section:construction}, even though the generics do not come directly from the class iteration $\P$ described before.
Instead, we will show that by ``generically permuting'' the filters, as done in Section \ref{section:shuffling}, we do end up with true generics for the iterations described in Section~\ref{section:construction}, which yields the same models $M_\alpha$, without changing $A_\alpha$ and $T_\alpha$.

The following simple observation will be used repeatedly to confirm that our models satisfy the properties we want, as the models in Section~\ref{section:construction}. Let $N$ be some $\ZF$ extension of $V$, and let $A_\alpha,T_\alpha$ be sets in $N$. We \emph{do not} assume that there is any generic $G$ in $N$ so that $A_\alpha=\dot{A}_0^\alpha[G]$ and $T_\alpha=\dot{T}_0^\alpha[G]$, yet we still want to conclude that the model $V(A_\alpha,T_\alpha)$ satisfies the properties as in Section~\ref{section:construction}.
It suffices to find \emph{some} filter $G$, in \emph{some other} extension of $V$, so that $G$ is generic over $V$ for $\P_0^\alpha$ and the interpretations of $\dot{A}_0^\alpha$ and $\dot{T}_0^\alpha$ according to $G$ are precisely the sets $A_\alpha,T_\alpha$. In this case, the model $V(A_\alpha, T_\alpha)$ as constructed in $N$ is the same as constructed in $V[G]$.
To conclude Theorem~\ref{thm: intermediate extension} we need to prove that such generics $G$ exist, which is done in Section~\ref{subsection: generics in V[c]}.

\subsection{Construction of $A_\alpha, T_\alpha$}\label{subsec: construction in V[c] definitions}
In this subsection, we will describe the construction of the sets $A_\alpha, T_\alpha$ in the model $V[c]$, without proving their properties. Later, in Subsection \ref{subsection: generics in V[c]}, we will verify that the models $V(A_\alpha, T_\alpha)$ satisfy the desired properties.

In order to motivate our indexing scheme, we make the following observation:
\begin{obs}
    Let us assume that $V(A, T)$ is contained in a model of $\AC$, $W$. Then in $W$:
    \begin{enumerate}
        \item If $\delta$ is a limit step, then $|A_\delta| \geq \delta$.
        \item If $\alpha \leq \beta$ then $|A_\alpha|\leq |A_\beta|$.
        \item If $a\in A_\alpha$ and $\langle x_i \mid i \in A_{\alpha + 2}\rangle$ is a sequence of representatives for the equivalence classes of $A_{\alpha+2}$ (namely, $[x_i]=i$), then the sets $\{y\in x_i \mid a \leq y\}$ are all distinct and, in fact, independent.
    \end{enumerate}
\end{obs}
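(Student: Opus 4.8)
The plan is to verify the three items inside $W$, each from a basic genericity feature of the construction, converting surjections into injections using $\AC$ in $W$; note that all of $A_\beta, T_\beta, \pi_\alpha, \proj_\alpha$ lie in $V(A,T)\subseteq W$.

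For items (1) and (2) the engine is that $\proj_\alpha\restriction A_\beta\colon A_\beta\to A_\alpha$ is surjective whenever $\alpha<\beta$. This I would prove by induction on $\beta$: if $\beta=\gamma+1$ then $\proj_\gamma\restriction A_{\gamma+1}=\pi_\gamma$ is onto $A_\gamma$ by the definition of $\mathbb{T}(A_{\gamma+1},A_\gamma)$ as a poset adding a surjection, and for $\alpha<\gamma$ one factors $\proj_\alpha\restriction A_{\gamma+1}=(\proj_\alpha\restriction A_\gamma)\circ\pi_\gamma$, a composition of surjections by the inductive hypothesis; if $\beta$ is a limit, then given a node $t\in A_\alpha$ and a condition $p\in\mathbb{B}_\beta$, adjoining a fresh coordinate $i\notin\dom p$ with value $t$ forces $t<_T b_i$, hence $\proj_\alpha(b_i)=t$, so every node of $A_\alpha$ lies on a cofinal branch in $A_\beta$. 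Because $W\models\AC$, surjectivity yields $|A_\alpha|\le|A_\beta|$, which is (2). For (1) I would then argue, by induction on $\beta$, that $|A_\beta|\ge|\beta|$ in $W$ (the limit case being (1) itself): for $1\le\beta\le\omega$ the set $A_\beta$ is infinite --- its members (the classes $A_\beta(n)$, or the branches $b_n$) are pairwise distinct --- so $|A_\beta|\ge\aleph_0\ge|\beta|$; for a successor $\beta>\omega$, (2) gives $|A_\beta|\ge|A_{\beta-1}|\ge|\beta-1|=|\beta|$; and for a limit $\beta>\omega$, (2) gives $|A_\beta|\ge|A_\gamma|\ge|\gamma|$ for all $\gamma<\beta$, hence $|A_\beta|\ge\sup_{\gamma<\beta}|\gamma|=|\beta|$.

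For (3), fix $a\in A_\alpha$ and put $F_a=\{y\in A_{\alpha+1}\mid a<_T y\}=\pi_\alpha^{-1}(a)$, an infinite subset of $A_{\alpha+1}$ lying in $V(A,T)$. Recall that $A_{\alpha+2}$ is built from the generic $x_{\alpha+1}\colon\omega\times A_{\alpha+1}\to 2$ for $\mathbb{Q}(A_{\alpha+1})$ with $A_{\alpha+2}(n)=[x_{\alpha+1}(n)]$; since each class $A_{\alpha+2}(n)$ is an element of the transitive model $M_{\alpha+2}$, so is the representative $x_{\alpha+1}(n)$. The density fact to record is that for every $k$, every finite disjoint $I,J\subseteq\omega$, and every sign pattern on $I\cup J$, the conditions in $\mathbb{Q}(A_{\alpha+1})$ forcing ``at least $k$ elements of $F_a$ realize that pattern on the columns indexed by $I\cup J$'' are dense, since one extends a given condition on $k$ fresh pairs $(n,b)$ with $b\in F_a$, using that $F_a$ is infinite. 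Hence in $M_{\alpha+2}$ the sets $x_{\alpha+1}(n)\cap F_a$ form an independent family of subsets of $F_a$ (every finite Boolean combination is infinite), in particular pairwise distinct. As each $x_i$ in the given sequence differs from the corresponding $x_{\alpha+1}(n)$ by a finite set, the family $\{x_i\cap F_a\mid i\in A_{\alpha+2}\}$ is obtained from an independent family by finitely many finite modifications per coordinate, hence is still independent with pairwise distinct members; being an upward-absolute assertion about sets in $V(A,T)$, it holds in $W$. Since $\{y\in x_i\mid a\le_T y\}=x_i\cap F_a$, this is the claim.

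I expect the only real subtlety to be the bookkeeping separating what is available in $W$ from what is available in the symmetric model $M_{\alpha+2}$, and the clean way around it is the remark above that each mod-finite class $A_{\alpha+2}(n)$, being a member of the transitive $M_{\alpha+2}$, already carries a representative there; everything beyond that --- independence, distinctness, and the passage to the representatives $x_i$ of the given sequence --- is mod-finite invariant and upward absolute, so no appeal to a generic enumeration inside $W$ is needed. (The upshot, motivating the indexing scheme, is that in $W$ one then has $|A_{\alpha+2}|$-many independent subsets of the infinite set $F_a\subseteq A_{\alpha+1}$, bounding $|A_{\alpha+2}|$, while (1) forces the $|A_\delta|$ to grow through the limit stages.)
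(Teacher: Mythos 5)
The paper states this observation without proof (it appears only as motivation for the indexing scheme $I(\delta)$), so there is no official argument to compare against; judging your proposal on its own merits, items (2) and (3) are correct: the surjectivity of $\proj_\alpha\restriction A_\beta$ (generic surjections with infinite fibers at successors, density of branches through every node at limits) gives (2) under $\AC$ in $W$, and the density computation in $\Q(A_{\alpha+1})$ restricted to the infinite fiber $F_a=\pi_\alpha^{-1}(a)$, together with mod-finite invariance and upward absoluteness of ``infinite,'' gives (3).

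Your proof of (1), however, has a genuine gap. The inductive step for limit $\beta$ rests on the identity $\sup_{\gamma<\beta}|\gamma|=|\beta|$, which fails exactly when $\beta$ is a successor cardinal: for $\beta=\omega_1$ you get $\sup_{\gamma<\omega_1}|\gamma|=\aleph_0$, so your induction only yields $|A_{\omega_1}|\ge\aleph_0$, not $\ge\aleph_1$. (This is not a vacuous case --- it is precisely the case that makes the observation nontrivial, since in the Section~\ref{section:construction} presentation each $A_\delta$ is enumerated by $\omega$ in the ambient generic extension.) The missing ingredient is a lower bound $|A_\delta|\ge\cf^W(\delta)$ for limit $\delta$, which one gets from a splitting argument: any two distinct cofinal branches $b\neq b'$ in $A_\delta$ first differ at some level $\mathrm{split}(b,b')<\delta$, and these splitting levels are cofinal in $\delta$ --- given $\gamma<\delta$, pick a node $t$ at level $\gamma$, two distinct immediate successors $s\neq s'$ at level $\gamma+1$ (the tree is infinitely splitting by genericity), and branches of $A_\delta$ through $s$ and through $s'$ (these exist by the same density fact you already use); they split above $\gamma$. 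Hence the map $\{b,b'\}\mapsto\mathrm{split}(b,b')$ has cofinal range, so $|A_\delta|\ge\cf^W(\delta)$ in $W$. Combining this with your sup bound covers every limit ordinal: if $\delta$ is a successor cardinal in $W$ it is regular, so $\cf^W(\delta)=|\delta|^W$; in all other cases $\sup_{\gamma<\delta}|\gamma|^W=|\delta|^W$ and your original argument applies.
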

We define an index set $I(\delta)$ for $A_\delta$, from the perspective of $V[c]$. 
\begin{defn}
    We define $I(\delta)$ recursively on the ordinals. 
    \begin{itemize}
        \item $I(0)=\{\langle\rangle\}$. 
        \item $I(\delta + 1) = I(\delta) \times (\omega + \delta)$.
        \item For limit ordinal $\epsilon$, 
        \[I(\epsilon) = \bigcup_{\delta < \epsilon} I(\delta) \times \{\langle 0 \mid i \in \epsilon - \delta\rangle\}\]
    \end{itemize}
\end{defn}
Clearly, $L$ can compute the sequence $I(\delta)$. Note that $|I(\delta)| = |\delta| + \aleph_0$ in $L$ and any extension of it.

For every successor ordinal $\delta$, fix a set $\mathcal{F}_\delta = \langle X_i \mid i \in I(\delta + 1)\rangle\in L$ such that:
\begin{itemize}
    \item For every $\eta \in I(\delta + 1)$, $X_\eta \subseteq \omega + \delta - 1$. 
    \item For every $\eta_0,\dots, \eta_{k-1}, \eta_0',\dots, \eta_{\ell - 1}' \in I(\delta + 1)$ all distinct 
    \[\bigcap_{j < k} X_{\eta_j} \setminus \bigcup_{j < \ell} X_{\eta'_j}\]
    is infinite co-infinite.
\end{itemize}

For example, $\mathcal{F}_1$ is a countable family of subsets of $\omega$ so that any finite boolean combination of sets in $\mathcal{F}_1$ is infinite. At stage $\delta$, we want such a family of size $|I(\delta+1)|=|\delta|$, so the members of $\mathcal{F}_\delta$ need to be subsets of a set of size at least $|\delta|$. For this reason, we take the members of $\mathcal{F}_\delta$ to be subsets of $\omega+\delta$. The $\omega$ is only there to take care of the cases where $\delta$ is finite. 

We would like the construction of $A_\alpha, T_\alpha$ from $c$ to be as absolute as possible. That is, it will depend only on $c$ and the ordinals and not on the particular universe in which the construction is carried (for example, $L[c]$, $V[c]$, or a generic extension of $V[c]$). For this reason, we picked all the above elements in $L$.

We will construct recuresivelt sets $x^\alpha_i$ for $i\in I(\alpha)$ such that for $\alpha$ successor $A_\alpha = \{[x^\alpha_i] \mid i \in I(\alpha)\}$ and for $\alpha$ limit $A_\alpha = \{x^\alpha_i \mid i \in I(\alpha)\}$.

First, we define $A_1$. We may write $c$ as $\seqq{c_n}{n<\omega}\in (2^\omega)^\omega$, a $\Q(\omega)$-generic over $M_0=V$.
Define $x^1_{\langle n\rangle}=c_n$, $A_1(n)=[c_n]=\set{y\subset \omega}{c_n\Delta y\textrm{ is finite}}$, and $A_1=\set{A_1(n)}{n\in\omega}$. 

Next, assume the inductive hypothesis at $\beta$ and prove it for $\alpha=\beta + 1$. 

If $\beta$ is a successor ordinal, for $\eta\in I(\beta+1)$ define
\[x^\alpha_{\eta} = \{[x^\beta_{{\eta'} ^\smallfrown \langle \gamma\rangle}] \mid \gamma \in X_{\eta},\, \eta'\in I(\beta - 1)\}\]

If $\beta$ is a limit ordinal, for $\eta \in I(\beta+1)$, write $\eta = \bar{\eta}\smallfrown\langle\rho\rangle$ where $\bar{\eta}\in I(\beta)$ and $\rho < \omega+\beta$. Define
\[x^\alpha_{\eta} = \{x^\beta_{{\eta'} ^\smallfrown \langle \gamma\rangle ^\smallfrown \langle 0 \rangle^{\beta - \delta}} \mid \delta < \beta \text{ successor},\,\gamma \in X_{\bar{\eta} \restriction \delta ^\smallfrown \langle \rho\rangle} \setminus \{0\},\, \rho < \omega + \delta,\, \eta' \in I(\delta)\}\]

Finally, for $\eta\in I(\beta+1)$, $\eta = \bar{\eta}\smallfrown \langle \rho \rangle$ where $\bar{\eta}\in I(\beta)$, define
\begin{equation*}
    \pi_\beta([x_\eta^\alpha])=[x_{\bar{\eta}}^\beta].
\end{equation*}
That is, we put each $[x_{\bar{\eta}\smallfrown\langle\rho\rangle}^\beta]$ above $[x_{\bar{\eta}}^\beta]$ in the tree.

Next consider a limit stage $\alpha$, assuming that we have constructed all the above objects at all levels below $\alpha$. For each $\eta \in I(\alpha)$, define
\begin{equation*}
x^\alpha_\eta = \{[x_{\eta \restriction \gamma}] \mid \gamma < \alpha \text{ successor}\} \cup \{x_{\eta \restriction \gamma} \mid \gamma < \alpha\text{ limit}\},
\end{equation*}
a branch in the tree $T_{<\alpha}$. The tree structure is determined: each branch extends all of its members.

\begin{lemma}\label{lem; x n alpha are independent}
For every $\beta$, $\{x_{\eta}^{\beta+1} \mid \eta \in I(\beta+1)\}$ is an independent family. Moreover, for every $b \in A_{<\beta}$, $\{x_\eta^{\beta+1} \cap \{y \in A_{\beta} \mid b \leq y\} \mid \eta \in I(\beta+1)\}$ is an independent family.  \end{lemma}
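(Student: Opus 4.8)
The plan is to prove the two assertions simultaneously by induction on $\beta$, carrying along as part of the inductive hypothesis that distinct indices name distinct elements of $A_\beta$: for $\beta$ a successor, that $\zeta\mapsto[x^\beta_\zeta]$ is injective on $I(\beta)$ modulo finite, and for $\beta$ a limit, that the branches $x^\beta_\zeta$, $\zeta\in I(\beta)$, are pairwise distinct. This injectivity is essentially free — at a successor level it follows from the independence statement one level down (an independent family of infinite, co-infinite sets automatically has mod-finite distinct members), and at a limit level it follows from injectivity at lower successor levels, since two distinct branches already disagree at some level. The organizing observation is that in every case below, membership of $x^\beta_\zeta$ in $x^{\beta+1}_\eta$ is controlled by a \emph{single coordinate} of the index $\zeta$ together with $\eta$, so a finite Boolean combination $\bigcap_{j<k}x^{\beta+1}_{\eta_j}\setminus\bigcup_{j<\ell}x^{\beta+1}_{\eta'_j}$ reduces to a single Boolean combination of sets from the family $\mathcal{F}_\beta$, which is infinite (and co-infinite) by the defining property of $\mathcal{F}_\beta$; the ``moreover'' clauses are obtained by observing that iterated tree projection is computed by restricting indices, and then repeating the count inside the relevant fibre.

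In the base case $\beta=0$ we have $x^1_{\langle n\rangle}=c_n$, and $\{c_n : n<\omega\}$ is an independent family of subsets of $\omega$ because $\langle c_n : n<\omega\rangle$ is $\Q(\omega)$-generic over $V$; the ``moreover'' clause is vacuous, since $A_{<0}=\emptyset$. For $\beta$ a successor, each $\zeta\in I(\beta)=I(\beta-1)\times(\omega+\beta-1)$ is uniquely $\eta'{}^\smallfrown\langle\gamma\rangle$ with $\gamma=\zeta(\beta-1)$, and by construction $[x^\beta_\zeta]\in x^{\beta+1}_\eta$ iff $\gamma\in X_\eta$. Hence, using the inductive injectivity, $[x^\beta_\zeta]$ lies in the Boolean combination iff $\zeta(\beta-1)\in S:=\bigcap_{j<k}X_{\eta_j}\setminus\bigcup_{j<\ell}X_{\eta'_j}$; since $S$ is infinite and $I(\beta-1)\neq\emptyset$, there are infinitely many such $\zeta$, hence infinitely many distinct members of $A_\beta$ in the combination. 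For the ``moreover'' clause, iterated projection sends $[x^\beta_\zeta]$ to the element of $A_\gamma$ indexed by $\zeta\restriction\gamma$, so $\{y\in A_\beta : b\leq_T y\}=\{[x^\beta_\zeta] : \zeta\restriction\gamma=\mu_b\}$ when $b\in A_\gamma$ has index $\mu_b$; intersecting with the Boolean combination still only demands $\zeta(\beta-1)\in S$ and leaves the coordinates strictly between $\gamma$ and $\beta-1$ free (nonempty unless $\gamma=\beta-1$, in which case $\zeta(\beta-1)\in S$ alone already gives infinitely many $\zeta$).

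For $\beta$ a limit ordinal the index bookkeeping is heaviest. Now $A_\beta$ is a set of branches; each $\zeta\in I(\beta)$ has a tail of zeros, and unwinding the defining formula one checks that, for $\zeta$ whose last nonzero coordinate sits at a \emph{successor} level $\delta<\beta$ with value $\gamma=\zeta(\delta)$, writing $\eta=\bar\eta{}^\smallfrown\langle\rho\rangle$, one has $x^\beta_\zeta\in x^{\beta+1}_\eta$ iff $\rho<\omega+\delta$ and $\gamma\in X_{\bar\eta\restriction\delta{}^\smallfrown\langle\rho\rangle}$ — so membership depends only on $(\delta,\gamma)$ and $\eta$. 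Given distinct indices $\eta_0,\dots,\eta_{k-1},\eta'_0,\dots,\eta'_{\ell-1}$, I would choose a successor $\delta<\beta$ large enough that all the $\rho$-coordinates are below $\omega+\delta$ and that the restricted indices $\bar\eta_j\restriction\delta{}^\smallfrown\langle\rho_j\rangle$ and $\bar\eta'_j\restriction\delta{}^\smallfrown\langle\rho'_j\rangle$ are pairwise distinct in $I(\delta+1)$ — possible since distinct elements of $I(\beta+1)$ have distinct restrictions once $\delta$ is large enough. Then $S:=\bigcap_j X_{\bar\eta_j\restriction\delta{}^\smallfrown\langle\rho_j\rangle}\setminus\bigcup_j X_{\bar\eta'_j\restriction\delta{}^\smallfrown\langle\rho'_j\rangle}$ is infinite by the property of $\mathcal{F}_\delta$, and for every $\gamma\in S\setminus\{0\}$ and every $\eta'\in I(\delta)$ the branch $x^\beta_{\eta'{}^\smallfrown\langle\gamma\rangle{}^\smallfrown\langle0\rangle^{\beta-\delta}}$ lies in the Boolean combination; distinctness of branches then produces infinitely many elements. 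For the ``moreover'' clause with $b\in A_{\gamma_0}$, take $\delta$ in addition $>\gamma_0$ and restrict $\eta'$ to range over the elements of $I(\delta)$ extending the index $\mu_b$ of $b$ — an infinite fibre of the restriction map $I(\delta)\to I(\gamma_0)$ — so that each resulting branch automatically passes through $b$.

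I expect the main obstacle to be the purely combinatorial dictionary between the recursively defined index sets $I(\delta)$ and the tree: that any $\zeta\in I(\beta)$ restricts to a legitimate index in $I(\gamma)$ for every $\gamma\leq\beta$, that iterated tree projection is computed by restriction of indices, that the restriction maps $I(\delta)\to I(\gamma)$ have infinite fibres, and that the unique-representation facts needed to read off membership in $x^{\beta+1}_\eta$ from a single coordinate of $\zeta$ really hold (especially at limit stages, where only last-nonzero coordinates at \emph{successor} levels are relevant). Once that dictionary is in place, independence is an immediate consequence of the independence built into the $\mathcal{F}_\delta$'s (and of the mutual genericity of the $c_n$'s at the base), and the only genuinely recursive input is the distinctness of the sets and branches, which we fold into the induction.
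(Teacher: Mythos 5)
Your proof is correct and follows essentially the same route as the paper's: reduce each finite Boolean combination of the $x^{\beta+1}_\eta$ to a Boolean combination of the sets $X_\eta$ from $\mathcal{F}_\delta$, using the observation that membership is governed by a single (last nonzero, successor-level) coordinate of the index, and at limit stages choosing a successor $\delta<\beta$ large enough that the restricted indices become distinct and the $\rho$-coordinates fit. You are in fact somewhat more explicit than the paper on the base case, the mod-finite injectivity of the indexing, and the ``moreover'' clause about cones, all of which the paper's proof leaves implicit.
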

\begin{proof}
For $\beta$ successor, it is clear from the construction.

For limit ordinal $\beta$, let $\eta_0,\dots, \eta_{\ell - 1}\in I(\beta+1)$. Each $\eta_i$ is of the form $\bar{\eta}_i^\smallfrown \langle \rho_i\rangle$. Taking a sufficiently large $\delta < \beta$ such that $\rho_i < \omega + \delta$ for all $i$ and if $\eta_i \restriction \beta \neq \eta_j \restriction \beta$, then this inequality already occurs below $\delta$. So, $x^{\beta+1}_{\eta_i}$ contains the set 

\[E_i := E_i(\eta',\delta) = \{x^\beta_{{\eta'}^\smallfrown \langle \gamma\rangle ^\smallfrown \langle 0 \rangle^{\beta - \delta}} \mid \gamma \in X_{\eta_i \restriction \delta ^\smallfrown \langle \rho_i\rangle} \setminus \{0\}\}\]
for any fixed $\eta' \in I(\delta)$. The collection $\{E_i \mid i < \ell\}$ is an independent family. Moreover, $x_{\eta_i}^{\beta+1} \cap (\bigcup_{j < \ell} E_j) = E_i$. One can verify that by noticing that an element of the form $x:=x^\beta_\zeta \in x^{\beta+1}_{\eta_i}$ must satisfy that the last ordinal $\delta'$ for which $\zeta(\delta')\neq 0$ is the unique ordinal such that $x \in E_i(\zeta\restriction \delta', \delta')$. \end{proof}
\begin{lemma}\label{lemma: const in V[c] branches are dense}
    For every $t\in A_{<\alpha}$ there is a branch in $A_\alpha$ containing it.
\end{lemma}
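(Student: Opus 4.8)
The plan is a direct bookkeeping argument using the recursive definition of the index sets $I(\gamma)$ and of the branches $x^\alpha_\eta$; no genericity or indiscernibility is needed here. Throughout, $\alpha$ is the limit ordinal under consideration and $A_{<\alpha}=\bigcup_{\beta<\alpha}A_\beta$.

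First I would locate $t$: since $t\in A_{<\alpha}$ there is $\beta<\alpha$ with $t\in A_\beta$. By the construction, $A_\beta=\{[x^\beta_\zeta]:\zeta\in I(\beta)\}$ when $\beta$ is a successor ordinal, and $A_\beta=\{x^\beta_\zeta:\zeta\in I(\beta)\}$ when $\beta$ is a limit; so fix $\zeta\in I(\beta)$ with $t=[x^\beta_\zeta]$ in the first case and $t=x^\beta_\zeta$ in the second.

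Next I would produce an index $\eta\in I(\alpha)$ with $\eta\restriction\beta=\zeta$, namely the zero-padding $\eta=\zeta\smallfrown\langle 0\mid i\in\alpha-\beta\rangle$. By the clause defining $I(\alpha)$ at limit stages (with witness $\delta=\beta$), $\eta\in I(\alpha)$, and since $0\in\omega+\gamma$ for every $\gamma$ one checks, by an easy induction running up from $\beta$, that $\eta\restriction\gamma\in I(\gamma)$ for all $\gamma\le\alpha$, with $\eta\restriction\beta=\zeta$. Then the branch $b:=x^\alpha_\eta\in A_\alpha$ is as required: by definition $x^\alpha_\eta=\{[x_{\eta\restriction\gamma}]:\gamma<\alpha\text{ successor}\}\cup\{x_{\eta\restriction\gamma}:\gamma<\alpha\text{ limit}\}$, and evaluating at $\gamma=\beta$ gives $[x^\beta_{\eta\restriction\beta}]=[x^\beta_\zeta]=t$ if $\beta$ is a successor, and $x^\beta_{\eta\restriction\beta}=x^\beta_\zeta=t$ if $\beta$ is a limit. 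In either case $t\in b$, i.e.\ $t<_T b$ in $T_\alpha$.

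There is no real obstacle here; the only thing demanding a little care is checking that the zero-padded $\eta$ genuinely lands in $I(\alpha)$ and that restricting $\eta$ to level $\beta$ recovers $\zeta$ exactly, so that the level-$\beta$ node of $x^\alpha_\eta$ is literally $t$ (rather than merely mod-finite equivalent, though even that would suffice). If one wants, one can also remark in passing that $x^\alpha_\eta$ is indeed a cofinal chain of $T_{<\alpha}$: it has a node at every level $\gamma<\alpha$ since $\eta\restriction\gamma$ is defined there, and consecutive nodes are linked because $\pi_\gamma([x^{\gamma+1}_{\eta\restriction(\gamma+1)}])=[x^\gamma_{\eta\restriction\gamma}]$ by the successor-stage definition.
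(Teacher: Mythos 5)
Your proof is correct and is essentially the paper's own argument: the paper likewise writes $t=[x_\eta]$ for $\eta\in I(\gamma)$, pads $\eta$ with zeros to land in $I(\alpha)$ via the limit clause of the definition of $I(\alpha)$, and observes that the corresponding branch $x^\alpha$ contains $t$ by construction. Your version just spells out the restriction bookkeeping and the successor/limit case split in more detail.
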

\begin{proof}
    Let $t = [x_{\eta}]$ for $\eta \in I(\gamma)$, $\gamma < \alpha$. By the definition of $I(\alpha)$, $\eta = {\eta'} ^\smallfrown \langle 0 \rangle^{\alpha - \gamma} \in I(\alpha)$, and thus $x_{\eta} \in A_\alpha$ is a branch containing $t$.
\end{proof}

\subsection{Genericity}\label{subsection: generics in V[c]}
In this subsection we will prove that for every $\alpha$ there is a generic filter $G\subseteq \mathbb{P}_\alpha$, external to $V[c]$, such that the sets $A_\alpha, T_\alpha$ which we defined in the previous subsection are $\dot{A}_\alpha[G], \dot{T}_\alpha[G]$. 

During our induction, we will always assume that $\alpha$ is  \emph{countable}. This does not restrict the construction, as there is a forcing extension of $V[c]$ in which $\alpha$ is countable and thus we may, without loss of generality, work in this extension. We will implicitly re-enumerate the members of $A_\alpha$ in a sequence of order-type $\omega$ using the countablility hypothesis, and write $\{x^\alpha_m \mid m <\omega\}$ instead of $\{x^\alpha_\eta \mid \eta\in I(\alpha)\}$ in order to define a filter for $\mathbb{P}_\alpha$. 

We prove by induction on $\theta$ the following slightly stronger statement.  

There is a collection $\mathcal{T}$ of $V$-generic filters such that
\begin{enumerate}
    \item Every $G \in \mathcal{T}$ is a generic filter for $\mathbb{P}_\alpha$ for some $\alpha < \theta$.
    Moreover, $A_\alpha = \dot{A}_\alpha[G]$ and $\dot{T}_\alpha = \dot{T}_\alpha[G]$.
    \item $\{1_{\mathbb{P}_0}\} \in \mathcal{T}$, i.e.\ the root of the tree is the generic for the trivial forcing $\mathbb{P}_0$.
    \item If $G \in \mathcal{G}$ is generic for $\mathbb{P}_\alpha$ and $\beta < \alpha$, then $G \restriction \beta \in \mathcal{T}$.
    \item For every $G \in \mathcal{T}$ which is generic for $\mathbb{P}_\alpha$ and for every $\alpha < \beta < \theta$ there is a generic filter for $\mathbb{P}_\beta$, $H \in \mathcal{T}$ such that $H \restriction \alpha = G$.
    \item $\mathcal{T}$ is closed under automorphisms of $\mathbb{P}_\alpha$ from $V$ that fix $\dot{A}_\alpha, \dot{T}_\alpha$ for all $\alpha$. 
\end{enumerate}

Equivalently, we are going to construct a normal tree of generic filters for the iterations. Using the normality of the tree, we will be able to find at limit steps of countable cofinality always a filter which is suitable for Claim \ref{claim:shuffling-omega-iteration}. 

At successor steps of the construction, we will only add elements to the top level of the tree, without modifying the previous level. This is done by analyzing the filter $G$ which we use in order to construct $A_{\alpha+1}$ and $T_{\alpha+1}$. $G$ can be viewed as a filter for $\mathbb{P}_{\alpha + 1}$, which we decompose as a two-step iteration $\mathbb{P}_\alpha \ast \mathbb{W}_\alpha$ (where $\mathbb{W}_\alpha$ is either $\mathbb{Q}_\alpha \ast \mathbb{R}_\alpha$, if $\alpha$ is a successor ordinal or $\mathbb{B}_\alpha$ if $\alpha$ is limit). 

We look at $M_\alpha$: this model is obtained using $G \restriction \alpha$. By the inductive hypothesis, there is a generic automorphism that does not move the interpretation of $A_\alpha$ and $T_\alpha$ but moves $G$ to a generic filter over $V$. So, $M_\alpha$ satisfies indiscernibility, as proved in Section \ref{section:construction}. We will analyze the dense open sets from $M_\alpha$ and show that the conditions for Lemma \ref{lem;two-step-iteration-of-symmetric-extension} hold and thus we may find another generic automorphism $\pi$ such that $\pi^{-1}(G)$ is a $\mathbb{P}_{\alpha+1}$-generic for $V$. By the "moreover" part of Lemma \ref{lem;two-step-iteration-of-symmetric-extension}, we may assume that the restriction $\pi^{-1}(G) \restriction \alpha$ is the original generic that we started from, and that it is already in the tree $\mathcal{T}$.

At successor steps, we are going to find generic filters for $M_\alpha$ and then apply Corollary \ref{cor; iterating the shuffling lemma} to get to the next step in the tree.

In order to apply Lemma \ref{lem;the shuffling lemma}, we must specify a filter of subgroups of $\mathcal{G}_\alpha$. Let $\mathcal{F}_\alpha$ be the filter generated by \[\{\sym \dot x \mid \dot x \text{ is the canonical name for an element in} \trcl A_\alpha\}\]

\subsubsection{Adding $A_{\alpha+1}$}
Let $\mathbb{Q}_n(W)$ be the restriction of $\mathbb{Q}(W)$ to conditions whose domain is a subset of $n\times W$. 
Recall that $\dot{x}_n$ is the $\mathbb{Q}(W)$-name for the set $\set{w\in W}{\dot{G}(n,w)=1}$. Below we identify $\dot{x}_n$ with a $\mathbb{Q}_{k}(W)$-name as well, for $k>n$, in the natural way. 

Suppose we proved the inductive hypothesis for all ordinals below $\alpha$, and let us assume that $\alpha$ is a successor ordinal. In order to prove the claim for $\alpha$ we show first that the filter defined by $\langle x_n^\alpha \mid n < \omega\rangle$ meets every dense open set in $M_\alpha$. As any such set is definable using finitely many elements in $\trcl(A_\alpha)$, any automorphism of $\mathbb{Q}_\alpha \ast \mathbb{R}_\alpha$ from $\mathcal{G}_\alpha$ that fixes those elements will fix $D$. We conclude that $D = X \cdot D$ for some $X \in \mathcal{F}_{\alpha+1}$.  

Then, we will use Lemma \ref{lem;the shuffling lemma} in order to get for every $G_\alpha \subseteq \mathbb{P}_\alpha$ generic that induces $A_{<\alpha}$, $T_{<\alpha}$ a $V[G_\alpha]$-generic $G = G^\alpha$ inducing $A_\alpha$, $T_\alpha$.  

\begin{lemma}
Suppose $G\subset\mathbb{Q}(A_\alpha)$ satisfies that for each $n<\omega$, $G\cap \mathbb{Q}_n(A_\alpha)$ is generic over $V(A_\alpha,T_\alpha)$.
Then there is a filter $G'\subset\mathbb{Q}(A_\alpha)$, in some generic extension, which is generic over $M_\alpha$ and such that the symmetric difference of $\dot{x}_n[G']$ and $\dot{x}_n[G\cap\mathbb{Q}_n(A_\alpha)]$ is finite.
\end{lemma}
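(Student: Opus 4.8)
The plan is to apply the Shuffling Lemma (Lemma~\ref{lem;the shuffling lemma}) with the group of flip automorphisms of $\mathbb{Q}(A_\alpha)$, and then check that the resulting filter differs from $G$ only by a finite modification in each coordinate. Recall that $\mathbb{Q}(A_\alpha)$ is isomorphic to the finite support product $\prod_{n<\omega}\mathbb{Q}_1(A_\alpha)$ of copies of the poset $\mathbb{Q}_1(A_\alpha)$ of finite partial functions $A_\alpha\to\{0,1\}$. I would let $S\leq\mathrm{Aut}(\mathbb{Q}(A_\alpha))$ be the group of finite support sequences of flips, i.e.\ the group generated by the automorphisms $f^\alpha_t$ of Section~\ref{subsubsection: permutations}, and let $\mathcal F$ be the filter of subgroups of $S$ generated by $X_k=\{g\in S : g\text{ fixes the first }k\text{ coordinates}\}=\bigcap_{n<k}\sym(\dot x_n)$, for $k<\omega$. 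Every condition in $\mathbb{Q}(A_\alpha)$ has finite domain, hence its stabiliser contains some $X_k$, so the hypotheses of Definition~\ref{defn: shuffling poset} hold over $M_\alpha=V(A_\alpha,T_\alpha)$.

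First I would prove that $G$ is \emph{symmetrically generic} relative to $\langle\mathbb{Q}(A_\alpha),S,\mathcal F\rangle$ over $M_\alpha$ (Definition~\ref{definition:symmetrically-generic-filter}); this is the crux. Fixing a dense open $D\subseteq\mathbb{Q}(A_\alpha)$ in $M_\alpha$ and $X=X_k\in\mathcal F$, the aim is $(X\cdot D)\cap G\neq\emptyset$. Writing $\mathbb{Q}(A_\alpha)\cong\mathbb{Q}_k(A_\alpha)\times\mathbb{Q}^{\geq k}$, with $\mathbb{Q}^{\geq k}$ the part using coordinates $\geq k$, the automorphisms in $X$ act only on the second factor, and two observations drive the argument: (i) $X\cdot D$ is dense open, lies in $M_\alpha$, and is $X$-invariant; (ii) $X$ acts transitively on conditions of $\mathbb{Q}^{\geq k}$ with a fixed domain — given $r,r'$ with $\dom r=\dom r'$, the flip reversing exactly the coordinates on which $r,r'$ disagree sends $r$ to $r'$. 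Together these show that membership of $(q,r)$ in $X\cdot D$ depends only on $q$ and on $\dom r$. Hence the set of $q\in\mathbb{Q}_k(A_\alpha)$ below which $X\cdot D$ is realised by some finite domain of coordinates $\geq k$ is dense in $\mathbb{Q}_k(A_\alpha)$ and belongs to $M_\alpha$; by genericity of $G\cap\mathbb{Q}_k(A_\alpha)$ over $M_\alpha$ I would get such a $q\in G$ together with a finite witnessing domain $a\subseteq[k,\omega)\times A_\alpha$, and $a\in M_\alpha$. Finally, for $m$ large the set of $p\in\mathbb{Q}_m(A_\alpha)$ with $p\restriction k\leq q$ and $\dom p\supseteq a$ is dense below $q$ in $\mathbb{Q}_m(A_\alpha)$ and lies in $M_\alpha$, so by genericity of $G\cap\mathbb{Q}_m(A_\alpha)$ some such $p$ is in $G$; since membership in the open set $X\cdot D$ is controlled by $p\restriction k$ and $\dom p$, this $p$ witnesses $(X\cdot D)\cap G\neq\emptyset$.

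With symmetric genericity established, the Shuffling Lemma yields, in a generic extension, a generic filter $H$ for $\mathbb{S}(\mathbb{Q}(A_\alpha),S,\mathcal F)$ with associated generic automorphism $\pi$, for which $G':=\pi^{-1}G$ is $\mathbb{Q}(A_\alpha)$-generic over $M_\alpha$. To conclude I would observe that for each $n$ the set of conditions $(g,X_m)\in\mathbb{S}(\mathbb{Q}(A_\alpha),S,\mathcal F)$ with $m>n$ is dense, so $\pi$ restricted to coordinate $n$ is forced to equal a single flip $f^\alpha_t$, i.e.\ a modification of $\dot x_n$ by a finite set $c_n\subseteq A_\alpha$; and using that $G\cap\mathbb{Q}_{n+1}(A_\alpha)$ is generic (so that $\{b\in A_\alpha:\exists p\in G,\ p(n,b)=0\}$ is the complement of $\dot x_n[G]$) one computes $\dot x_n[G']=\dot x_n[G]\,\Delta\,c_n$, while $\dot x_n[G]=\dot x_n[G\cap\mathbb{Q}_n(A_\alpha)]$. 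This gives exactly the conclusion of the lemma.

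I expect the symmetric genericity step to be the main obstacle: a dense open $D$ may involve unboundedly many coordinates, so the genericity of each finite restriction $G\cap\mathbb{Q}_m(A_\alpha)$ is on its face much weaker than genericity of $G$ for the whole product. What makes it go through is the richness of the flip group — observations (i) and (ii) — which forces $X\cdot D$ to factor through the first $k$ coordinates together with a finite domain of further coordinates, so that the available partial genericity of $G$ already meets every such $X\cdot D$.
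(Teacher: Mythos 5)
Your proposal is correct and follows essentially the same route as the paper: view $\Q(A_\alpha)$ as the finite-support product of copies of $\Q_1(A_\alpha)$ and apply the Shuffling Lemma with a generic sequence of coordinatewise automorphisms preserving each class $[\dot{x}_n]$ (the paper shuffles by finite-support permutations of the domain $A_\alpha$ rather than by your value-flips $f^\alpha_t$, which is immaterial since both change each $\dot{x}_n$ only finitely). Your explicit verification that $G$ is symmetrically generic --- reducing an $X_k$-invariant dense open set to a condition on the first $k$ coordinates together with a finite domain of later coordinates, which the partial generics $G\cap\Q_m(A_\alpha)$ already meet --- is precisely the hypothesis of the Shuffling Lemma that the paper's three-line proof leaves implicit, and you carry it out correctly.
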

\begin{proof}
Let us view $\Q(A_\alpha)$ as the finite support product of $\omega$ many copies of $\Q_1(A_\alpha)$.
Force, by finite conditions, a generic sequence of finite support permutations of $A_\alpha$, $\vec{s}=\seqq{s_n}{n<\omega}$, and define $G'(n, a)=1\iff G(n,s_n(a))=1$.
By Lemma~\ref{lem;the shuffling lemma}, $G'$ is generic over $V(A_\alpha,T_\alpha)$.
\end{proof}
\begin{remark}\label{remark: countable successor stages shuffling is Cohen}
If $N=V[c]$ and $\alpha$ is a countable ordinal, then the forcing to add $\vec{s}$ is isomorphic to adding a single Cohen real, and we can find such generic Cohen reals over $V(A_\alpha, T_\alpha)$ in $V[c]$. It follows that $G'$ can be found in $V[c]$, since in our definition of the derivation of $A_\beta, T_\beta$ there is always some Cohen real in $V[c]$ which is generic with respect to the whole construction. 
\end{remark}

\begin{lemma}
Let $\alpha > 1$ and let $n < \omega$. Let $G_n$ be the filter generated by $\{x_m^\alpha \mid m < n\}$. Then $G$ is $M_\alpha$-generic for $\mathbb{Q}_n(A_\alpha)$.
\end{lemma}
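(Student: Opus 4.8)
Since $G_n$ is a filter that in fact decides every coordinate (for each $a$ and $m<n$ the one‑point condition with value $1$ iff $a\in x^\alpha_m$ belongs to $G_n$), to get $M_\alpha$‑genericity it is enough to show that $G_n$ meets every dense open $D\in M_\alpha$. The strategy is the usual one for such symmetric constructions. By the analysis of the sets of $M_\alpha=V(A_\alpha,T_\alpha)$ in Section~\ref{subsection: full indiscernibility}, a dense open $D\in M_\alpha$ is definable in $M_\alpha$ from $A_\alpha$, $T_\alpha$, a parameter $v\in V$, a finite tuple $\bar a$ from $A$ and a finite sequence $\bar x$ of representatives; hence, by the full indiscernibility of $M_\alpha$ (Theorem~\ref{thm: full indiscernibility}, available here since $M_\alpha$ has, by the inductive hypothesis, been identified with $V(A_\alpha,T_\alpha)$ for a genuine generic), whether a condition $p$ lies in $D$ depends only on the full type, in the sense of Definition~\ref{defn: full type}, of the vertices of $A_\alpha$ occurring in $p$ over $\bar a,\bar x$. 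So, given an arbitrary $q\in G_n$, I pick by density some $p_0\le q$ in $D$ and then ``rename'' the vertices of $p_0$ not occurring in $q$, preserving the full type over $\bar a,\bar x$ and over the vertices occurring in $q$, so that the renamed condition falls into $G_n$; indiscernibility keeps it in $D$, and since it still extends $q$ we obtain an element of $D\cap G_n$ below $q$.

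For the renaming, let $S_q$ be the set of vertices of $A_\alpha$ occurring in $q$, let $S_1$ be those occurring in $p_0$ but not in $q$, and fix an ordinal $\gamma_0<\alpha$ large enough to exceed the level of every element of $\bar a$ of level below $\alpha$, the levels of the sets below the representatives in $\bar x$, and the meet‑level with $S_1$ of every element of $\bar a$ of level $\alpha$. I map each $a\in S_1$ to some $\phi(a)\in A_\alpha$ with $\proj_{\gamma_0}(\phi(a))=\proj_{\gamma_0}(a)$, which already forces $\phi$ to preserve every tree relation and every membership at levels $\le\gamma_0$; requiring in addition that $\phi$ be injective and that its range avoid $S_q$, the elements of $\bar a$, and the finitely many vertices of $A_\alpha$ pinned by a representative (those mod‑finite equal to some $x_i\in\bar x$), one checks that $\phi$ preserves the full type of the vertices over $\bar a,\bar x,S_q$. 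The last requirement on $\phi$ is that $\phi(a)\in x^\alpha_m\iff p_0(m,a)=1$ for every $m<n$ with $(m,a)\in\dom p_0$. Granting such a $\phi$, the condition $p$ with domain $n\times(S_q\cup\phi[S_1])$ agreeing with $q$ on $S_q$ and with $p(m,\phi(a))=p_0(m,a)$ is as wanted. The existence of $\phi$ is where the independent families built into the construction enter: grouping the elements of $S_1$ by the value of $\proj_{\gamma_0}$, for each node $b=\proj_{\gamma_0}(a)$ one must realize a prescribed finite membership pattern over $x^\alpha_0,\dots,x^\alpha_{n-1}$ by successors of $b$ in $A_\alpha$, for each of the finitely many vertices of $S_1$ above $b$, while avoiding finitely many forbidden vertices; by Lemma~\ref{lem; x n alpha are independent} the relevant family of sets is independent above $b$, so each pattern is realized by infinitely many successors of $b$, and the choices are made one at a time.

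The main obstacle is precisely the construction of $\phi$ together with the bookkeeping that makes indiscernibility applicable: one must be sure that moving a vertex within the cone above a fixed level‑$\gamma_0$ node genuinely fixes the full type over the parameters defining $D$ — the subtle points being that those parameters may include vertices of the same level as the ones being moved (which are steered around) and representatives (finitely many of which pin a vertex of $A_\alpha$), and that $\gamma_0$ must be chosen below $\alpha$ yet above all the ``relevant'' levels — and that Lemma~\ref{lem; x n alpha are independent} really does leave enough room above each node to realize all the required patterns simultaneously after the finitely many deletions. Everything else is routine forcing bookkeeping.
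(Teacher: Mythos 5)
Your proposal is, at bottom, the same argument as the paper's. The paper packages it differently: it first proves a claim that the dense open subsets of $\Q_n(A_\alpha)$ in $M_\alpha$ are generated by the sets $D^0_{m,a}$ (``$(m,a)$ is decided'') and $D^1_{t,b,k}$ (``there are $k$ distinct vertices above $t$ realizing the bit-pattern $b$''), and then observes that $G_n$ meets all generators because, by Lemma~\ref{lem; x n alpha are independent}, the family $\{x^\alpha_m\}_{m<n}$ is independent on every cone. But the proof of that claim is exactly your renaming argument (replace the new vertices of a condition in $D$ by any vertices of the same type over the defining parameters, using full indiscernibility), so the two ingredients — indiscernibility plus independence on cones — are identical. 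Your direct handling of the representatives $\bar x$ (avoiding the finitely many vertices of $A_\alpha$ they pin, and pushing $\gamma_0$ above the levels they live at) replaces the paper's trick of eliminating them as parameters via an existential quantifier over the equivalence class; both work.

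One step as you wrote it does not quite go through: your $\gamma_0$ is required to dominate only the meet-levels of $S_1$ with the level-$\alpha$ elements of $\bar a$, and you then explicitly allow several elements of $S_1$ (and of $S_q$) above the same level-$\gamma_0$ node. A map $\phi$ that merely preserves $\proj_{\gamma_0}$ and is injective need not preserve the pairwise meet-levels of those elements above $\gamma_0$, and the full type of Definition~\ref{defn: full type} records all relations $\proj_\gamma(a_i)=\proj_{\gamma'}(a_j)$, hence all meet-levels; so Theorem~\ref{thm: full indiscernibility} does not apply to $p_0$ and $p_0^\phi$ as constructed. The repair is cheap: for successor $\alpha$ take $\gamma_0=\alpha-1$ (then any two distinct vertices with the same $\gamma_0$-projection automatically meet at level $\gamma_0$, so any injective projection-preserving $\phi$ preserves the type), and for limit $\alpha$ take $\gamma_0$ above \emph{all} pairwise meet-levels of $S_1\cup S_q\cup(\bar a\cap A_\alpha)$, so that the relevant $\gamma_0$-projections are distinct and each cone needs only one new vertex, which the independence lemma supplies. (The paper's own write-up of the claim has the same wrinkle — it asserts that the type of $\bar a_q$ over $\bar a\smallfrown\bar t$ is determined by cone counts at the fixed level of $\bar t$ — and needs the same adjustment, with the generators $D^1_{t,b,k}$ taken for $t$ at the meet-capturing levels.)
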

\begin{proof}
In order to prove this statement, let us analyze the dense open subsets of the forcing $\mathbb{Q}_n(A_\alpha)$ in $V(A_\alpha, T_\alpha)$.
\begin{claim}\label{claim: Q(A) genericity}
The filter of dense open subsets of $\mathbb{Q}_n(A_\alpha)$ is generated in $V(A_\alpha,T_\alpha)$ by:
\begin{itemize}
    \item For $m < n, a \in A_\alpha$, $D^0_{m,a} = \{p \mid (m, a) \in \dom p\}$.
    \item For $t \in A_{<\alpha}$, $b \colon n \to 2$, $k < \omega$, let 
    $D^1_{t,b,k}$ be the set of all $p$ such that there are $a_0, \dots, a_{k-1} \in A_\alpha$, distinct and above $t$, and $\forall m < n, p(m, a_i) =b(m)$.
    \end{itemize}
\end{claim}
\begin{proof}
Let $D \in V(A_\alpha, T_\alpha)$ be dense open in $\mathbb{Q}_n(A_\alpha)$. By Lemma \ref{thm: full indiscernibility}, there is a finite set of parameters that define $D$, $\bar{p} \in \trcl(A_\alpha)$. Let $\varphi$ be a formula such that:
\[V(A,T) \models D = \{x \mid \varphi(x, \bar{p}, A_\alpha, T_\alpha, v)\},\]
where $v\in V$. Without loss of generality, $\varphi$ determines the full type of $\bar{p}$. Let us show that we can assume that $\bar{p}$ is of the form $\bar{a} ^\smallfrown \bar{t}$ where $\bar{a} \in A_\alpha^{<\omega}$ and $\bar{t} \in A_{<\alpha}^{<\omega}$. Indeed, if $\bar{p} = \bar{p}^{\prime\smallfrown} u$ where $u \in d \in A_{\beta}$, $\beta \leq \alpha$, then let us define
\[D' = \{q \in \mathbb{Q}_n(A_\alpha) \mid \exists u' \in d, \varphi(q, \bar{p}^{\prime\smallfrown} u', A_\alpha, T_\alpha, v)\}.\]
Clearly, $D'$ is definable using $\bar{p} ^{\prime\smallfrown} d$ so it is enough to show that $D' = D$. Once we show that, then by repeating this process finitely many times we can replace all such parameters from $\bar{p}$ with their equivalence class (which is in $\bigcup_{\beta < \alpha} A_\beta$). 

Note that $D \subseteq D'$. Let $q \in D'$ and let $u'$ witness that. The condition $q$ is definable (explicitly) using finitely many parameters from $A_\alpha$, $\bar{a}_q$. By indiscernibility, the membership of $q$ to $D$ depends only on the type of $\bar{a}_q \cup \bar{p}$. Since changing $u$ to $u'$ does not change this type, using our assumption on $\varphi$, and since $\bar{a}_q \subseteq A_\alpha$ this truth value remains the same.  

As before, we can modify $\bar{t}$ so it is nonempty and all its elements have the same height. 

Our strategy to prove the claim is to fix a set of parameters $\bar{a}, \bar{t}$ defining $D$ as above, and for each possible $p \colon n \times \bar{a} \to 2$, we find a dense open set which is an intersection of length $\ell^p$ of sets of the form $D^1_{t^p_i, b^p_i, k^p_i}$ such that if $q \in \bigcap_{i<\ell^p} D^1_{t^p_i,b^p_i,k^p_i}$ and $q \leq p$ then $q \in D$. Then,
\[D\supseteq \bigcap_{k < n, a \in \bar{a}}  D^0_{k, a} \cap \bigcap_{p \colon n \times \bar{a} \to 2}\bigcap_{i<\ell^p} D^1_{t^p_i,b^p_i,k^p_i}\]

Fix $p$ to be a condition with domain $n \times \bar{a}$ and let $q \in D$, $q \leq p$. We may assume that $\dom q = n \times \bar{a}_q$. Let $\bar{t}'$ be an extension of $\bar{t}$ such that for every $a$ such that $(m, a)\in \dom q$ for some $m < n$, there is some $r \in \bar{t}'$, $r\leq_T a$, and the level of $r$ is the same as the level of the elements of $\bar{t}$. Let $\bar{a}_q \in A^{<\omega}$ be an enumeration of $\{b \mid \exists m < n (m,b) \in \dom q\}$ and let $\psi$ be a formula that defines $q$ from $\bar{a}_q$ in a natural way and also
describes the full type of $\bar{a}_q \cup \bar{t}' \cup \bar{a}$. One can verify that the type of $\bar{a}_q$ over $\bar{a} ^\smallfrown \bar{t}$ can be computed from the number of elements in $\bar{a}_q \setminus \bar{a}$ in each cone. Thus, there is a sequence $\{\langle t_i, b_i, k_i\rangle \mid i < \ell^p\}$ such that $q \in \bigcap D^1_{t_i, b_i, k_i}$ and for every other condition $q' \in \bigcap D^1_{t_i, b_i, k_i}$ such that $q' \leq p$, and $q'$ is stronger than a condition $q''$ such that $\psi(q'', \bar{a}_{q''})$ holds and the type of $\bar{a}_{q''}$ over $\bar{a} \cup \bar{t}'$ is the same as the type of $\bar{a}_q$.
By indiscernibility, $q'' \in D$, and so $q'\in D$, as wanted.


\end{proof}
Thus, we conclude that a collection of $n$ subsets of $A_\alpha$ is $V(A_\alpha, T_\alpha)$-generic for $\mathbb{Q}_n(A_\alpha)$ if and only if their intersection with each cone above $t \in A_{<\alpha}$ is $n$ independent subsets.

Since this happens at $\langle x_m^\alpha \mid m < n\rangle$ by construction, we conclude that $G_n$ is $M_\alpha$-generic.


\end{proof}

\subsubsection{Adding $T_{\alpha+1}$}
As a corollary of the indiscernibility result, it is quite easy to find $\T(A_{\alpha+1}, A_\alpha)$-generics over $V(A_{\alpha+1}, T_\alpha)$:
\begin{lemma}
Let $f\colon A_{\alpha+1}\to A_\alpha$ be onto and with every fiber infinite. Define $F = \{p \in \T(A_{\alpha+1},A_\alpha) \mid p \subseteq f\}$. Then $F$ is $\T(A_{\alpha+1},A_\alpha)$-generic over $V(A_{\alpha+1},T_\alpha)$.
\end{lemma}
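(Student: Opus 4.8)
The plan is to deduce genericity from the indiscernibility of the members of $A_{\alpha+1}$ over $M_\alpha=V(A_\alpha,T_\alpha)$. First I would record the trivial half: $F$ is a filter (it is directed, since the union of two finite subfunctions of $f$ is again a finite subfunction of $f$, and it is clearly upward closed), so it only remains to show that $F$ meets every dense open $D\in V(A_{\alpha+1},T_\alpha)$ of $\T(A_{\alpha+1},A_\alpha)$.

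Fix such a $D$. The key reduction is that $D$ may be taken to be defined from a parameter $v\in M_\alpha$ together with a tuple $\bar c=(c_0,\dots,c_{m-1})$ of distinct members of $A_{\alpha+1}$: by the Observation preceding Definition~\ref{defn: full type}, $D$ is definable in $V(A_{\alpha+1},T_\alpha)$ from $A_{\alpha+1}$, $T_\alpha$, finitely many tree elements, finitely many representatives, and a ground parameter; all tree elements of level $\le\alpha$, the tree $T_\alpha$, and all representatives of members of $A_\beta$ for $\beta\le\alpha$ lie in $M_\alpha$ and may be folded into $v$, while a representative $x$ of some $c\in A_{\alpha+1}$ is removed exactly as in the proof of Claim~\ref{claim: Q(A) genericity}, replacing ``$\varphi(\,\cdot\,,x,\dots)$'' by ``$\exists x'\in c\ \varphi(\,\cdot\,,x',\dots)$'' and using Theorem~\ref{thm: full indiscernibility} to check the two define the same set. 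Having done this, I would invoke the basic indiscernibility of $A_{\alpha+1}$ over $M_\alpha$ (Lemma~\ref{lemma: basic step}, with $M_\alpha$ in the role of the ground model and $A_\alpha$ in the role of $W$) in its form relativized to the parameters $\bar c$: for a condition $q$ whose domain outside of $\bar c$ consists of points that are not among $\bar c$, the truth value of ``$q\in D$'' depends only on $v$, on $q\restriction(\dom q\cap\bar c)$, and on the multiset of values that $q$ assigns to the points of $\dom q$ that do not lie among $\bar c$ --- not on which such points are used, since any two tuples of distinct members of $A_{\alpha+1}$ that are disjoint from $\bar c$ are interchangeable over $M_\alpha$ (at the stage $V(A_{\alpha+1},T_\alpha)$ the members of $A_{\alpha+1}$ carry no tree relations among themselves, and none to $A_\alpha$, since the tree between the two levels is precisely what the present generic is adding, so ``type over $\bar c$'' really is just the equality pattern).

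Now, given $p\in F$, I would first enlarge it to $p_1=p\cup\set{(c_i,f(c_i))}{c_i\in\bar c\setminus\dom p}$, which is again a subfunction of $f$, hence $p_1\in F$ with $p_1\le p$. By density of $D$ choose $q^\ast\le p_1$ with $q^\ast\in D$. Since $q^\ast\supseteq p_1$ and $p_1\subseteq f$ and $\bar c\subseteq\dom p_1$, the condition $q^\ast$ already agrees with $f$ on all of $\dom p_1$, so the points $b_1,\dots,b_s$ of $\dom q^\ast$ where $q^\ast$ disagrees with $f$ all lie in $\dom q^\ast\setminus\dom p_1$, in particular outside $\bar c$; write $u_t=q^\ast(b_t)\in A_\alpha$. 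As $f$ is onto with every fiber infinite, each $f^{-1}(u_t)$ is infinite, so I can pick distinct $b_1',\dots,b_s'$ with $b_t'\in f^{-1}(u_t)\setminus\dom q^\ast$, and set $q=\bigl(q^\ast\setminus\set{(b_t,u_t)}{1\le t\le s}\bigr)\cup\set{(b_t',u_t)}{1\le t\le s}$. Then $q\subseteq f$ (it equals $p_1\subseteq f$ on $\dom p_1$, equals $q^\ast=f$ on the remaining old points, and $q(b_t')=u_t=f(b_t')$), so $q\in F$; moreover $q\supseteq p_1\supseteq p$, so $q\le p$; and $q$ has the same restriction to $\bar c$ as $q^\ast$ (namely $p_1\restriction\bar c$) and the same multiset of off-$\bar c$ values as $q^\ast$, so by the previous paragraph $q\in D$. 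Hence $F\cap D\ne\emptyset$, and $F$ is generic.

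I expect the only delicate point to be the parameter reduction together with the precise statement of the relativized indiscernibility it licenses --- once that is in place, the combinatorial surgery turning $q^\ast$ into a subfunction of $f$ inside $D$ is routine, using only that $\dom q^\ast$ is finite and each fiber of $f$ is infinite; indeed these two hypotheses on $f$ are exactly what is needed, being precisely what makes $F$ meet the ``trivially dense'' sets $\set{p}{b\in\dom p}$ ($b\in A_{\alpha+1}$) and $\set{p}{\text{at least }k\text{ distinct points of }\dom p\text{ map to }w}$ ($w\in A_\alpha$, $k<\omega$).
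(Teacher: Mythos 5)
Your proof is correct and follows essentially the same route as the paper's: reduce $D$ to finitely many parameters via the indiscernibility results, observe that membership of a condition in $D$ depends only on its restriction to those parameters together with the assignment pattern on the remaining (interchangeable) points of $A_{\alpha+1}$, and then use surjectivity and infinite fibers of $f$ to replace the off-$f$ points of a witness $q^\ast\in D$ by points in the correct fibers. The paper phrases the invariant as ``same range and same fiber counts $n_a$'' rather than ``same multiset of off-$\bar c$ values,'' but this is the same argument.
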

\begin{proof}
Let $D$ be a dense open set in $V(A_{\alpha+1}, T_\alpha)$. As in the proof of Claim~\ref{claim: Q(A) genericity}, using the indiscernibility we may assume that $D$ is definable using a formula $\varphi$ and only parameters $\bar{a}, \bar{b}, \bar{x}$, where $\bar{a} \in A_{\leq\alpha}^{<\omega}$, $\bar b \in A_{\alpha+1}^{<\omega}$, and $\bar x \subseteq \trcl A_{\alpha+1}$, and a parameter from $V$. We may also assume that $[x_i]$ appears in $\bar{b}$ for every $x_i \in \bar{x}$.

Pick $p \in F$ such that $\dom p \supseteq \bar{b}$ and $\range p \supseteq \bar{a}$. Let $q \leq p$ be a condition in $D$ and let $\bar{b}' \supseteq \bar{b}$ be the domain of $q$ and $\bar{a}' \supseteq \bar{a}$ be the range of $q$. Let us look at the type of $\bar{b}' \cup \bar{a}'$ over $\bar{a},\bar{b},\bar{x}$.

For each $a \in \bar{a}'$, let $n_a$ be the number of elements $b\in \bar{b}' \setminus \bar{b}$ such that $q(b) = a$. 
\begin{claim}
Let $q' \leq p$ be a condition such that $\range q' = \bar{a}'$ and for every $a\in \bar{a}'$, $|\{b \in \dom q' \setminus \bar{b} \mid q'(b) = a\}| = n_a$. Then $q' \in D$. 
\end{claim}
\begin{proof}
The condition $q$ is definable using an explicit formula $\psi$ and the parameters $\bar{b}', \bar{a}'$, where the formula $\psi$ simply states which element is being sent to which element. By our assumption on $q'$, there is an enumeration of $\bar{b}'' = \dom q'$ such that $q'$ is definable using the same $\psi$ and parameters $\bar{b}''$ and $\bar{a}'$. Since the type of $\bar{b}' \cup \bar{a}'$ over $\bar{a},\bar{b},\bar{x}$. is the same as the type of $\bar{b}'' \cup \bar{a}'$ over $\bar{a},\bar{b},\bar{x}$., we apply indiscernibility and conclude that $q' \in D$.  
\end{proof}
Finally, since $f$ is surjective with infinite fibers, there is $q' \leq p$ as in the claim in $F$.
\end{proof}
By construction, $\pi_{\alpha}$ is a surjective function from $A_{\alpha+1}$ to $A_\alpha$, with the property that every fiber is infinite, so the filter $F = \{p \in \T(A_{\alpha+1}, A_\alpha) \mid p \subseteq \pi_\alpha\}$ is generic over $V(A_{\alpha+1}, T_{\alpha})$.

Assume now that $G\subset \mathbb{P}_\alpha\ast\Q(\dot{A}_\alpha)$ is generic over $V$ which computes the correct $A_\alpha,T_\alpha$ and $A_{\alpha+1}$. We want to find a filter $G'\ast F'$ which is generic over $V$ for $\mathbb{P}_\alpha\ast\Q(\dot{A}_\alpha)\ast\mathbb{T}(\dot{A}_{\alpha+1},\dot{A_\alpha})$.

This is obtained by Corollary \ref{cor; iterating the shuffling lemma}. Indeed, we may assume (by genericity) that $G$ was obtained by applying a generic permutation on the symmetrically generic filter for $\mathbb{P}_\alpha \ast \Q(\dot{A}_\alpha)$ and thus we can force with the quotient forcing for adding the generic permutation for $\mathbb{T}(\dot{A_{\alpha+1}}, \dot{A_\alpha})$ and obtain a generic iteration of the two-step iteration that moves the symmetrically generic filter to an actual $V$-generic filter.  

\subsection{Limit step}\label{subsection: V[c] limit stage}

\begin{lemma}
Let $\theta$ be a countable ordinal and let us assume that the inductive hypothesis holds below $\theta$. Assume further that in $V[c]$ there is a well-ordering of a large enough initial segment, to apply Claim~\ref{claim:shuffling-omega-iteration}. Then the inductive hypothesis holds for $\theta$.
\end{lemma}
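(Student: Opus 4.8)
The plan is to verify the five properties of the inductive statement at $\theta$. Properties (1), (2), (5) and the part of (4) involving only ordinals $<\theta$ are inherited from the inductive hypothesis below $\theta$; the substantive content is to show that for every $G_{\alpha_0}\in\mathcal{T}$ that is $V$-generic for $\P_{\alpha_0}$, with $\alpha_0<\theta$, there is a $V$-generic filter $G$ for $\P_\theta$ with $G\restriction\alpha_0=G_{\alpha_0}$ and $\dot{A}_\alpha[G]=A_\alpha$, $\dot{T}_\alpha[G]=T_\alpha$ for all $\alpha<\theta$, after which one closes $\mathcal{T}$ off under restrictions and under automorphisms of the iterations fixing all the names $\dot{A}_\gamma,\dot{T}_\gamma$. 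Since $\theta$ is a countable limit ordinal, $\cf(\theta)=\omega$; fix an increasing sequence $\seqq{\theta_n}{n<\omega}$ cofinal in $\theta$ with $\theta_0=\alpha_0$. Using property (4) of the inductive hypothesis — at ordinals $\theta'$ with $\theta_{n+1}<\theta'<\theta$ — recursively build a coherent chain $\seqq{G_{\theta_n}}{n<\omega}$ of members of $\mathcal{T}$ with $G_{\theta_0}=G_{\alpha_0}$ and $G_{\theta_{n+1}}\restriction\theta_n=G_{\theta_n}$, each $G_{\theta_n}$ being $V$-generic for $\P_{\theta_n}$. Set $G^\ast=\bigcup_n G_{\theta_n}$. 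As every condition of $\P_\theta$ has support bounded below $\theta$, $G^\ast$ is a filter on $\P_\theta$, and, by properties (1) and (3) of the inductive hypothesis applied to the $G_{\theta_n}$, the restriction $G^\ast\restriction\alpha$ is $V$-generic for $\P_\alpha$ for every $\alpha<\theta$. However $G^\ast$ itself need not be $\P_\theta$-generic — this is exactly the phenomenon, at limits of countable cofinality, that a coherent union of generics for the factors of a finite support iteration can fail to be generic for the iteration, which is what the Shuffling Lemma~\ref{lem;the shuffling lemma} and Claim~\ref{claim:shuffling-omega-iteration} are designed to repair.

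Working above $\theta_0$, identify $\P_\theta$ with $\P_{\theta_0}\ast\dot\P_{\theta_0}^{\theta}$, where $\dot\P_{\theta_0}^{\theta}$ is the name (in $V[G_{\theta_0}]$) for the finite support iteration of the quotient forcings $\dot\P_{\theta_{n-1}}^{\theta_n}$ ($n\geq1$) from Section~\ref{subsubsection: quotients}, each $\dot\P_{\theta_{n-1}}^{\theta_n}$ being the iteration of $\Q_\beta,\R_\beta,\B_\beta$ for $\theta_{n-1}\leq\beta<\theta_n$. For each $n$ let $\dot{S}_n$ name the group of automorphisms of $\dot\P_{\theta_{n-1}}^{\theta_n}$ generated by the permutations $f^\beta_t,g^\beta_\sigma,b^\beta_\sigma$ with $\theta_{n-1}\leq\beta<\theta_n$. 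The point is that $\dot\P_{\theta_0}^{\theta}$ together with $\seqq{\dot{S}_n}{n\geq1}$ is \emph{suitable for iteration} in the sense of Definition~\ref{definition:suitable-for-omega-iteration}: by the homogeneity property~(\ref{property: rich automorhpisms}) of Section~\ref{subsection: limit-step}, restricted to the coordinates in $[\theta_{n-1},\theta_n)$, $\dot{S}_n$ witnesses the weak homogeneity of $\dot\P_{\theta_{n-1}}^{\theta_n}$; and, by Section~\ref{subsubsection: permutations}, each $f^\beta_t,g^\beta_\sigma,b^\beta_\sigma$ acts trivially outside $\Q_\beta,\R_\beta,\B_\beta$, so it fixes the names of the later quotient forcings $\dot\P_{\theta_{m-1}}^{\theta_m}$, their orderings, and the later groups $\dot{S}_m$ ($m>n$), as required — and it fixes $\dot{A}_\gamma$ and $\dot{T}_\gamma$ for every $\gamma$.

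Now, the filter $H^\ast$ that $G^\ast$ induces on $\dot\P_{\theta_0}^{\theta}[G_{\theta_0}]$ (the ``part of $G^\ast$ above $\theta_0$'') restricts to a $V[G_{\theta_0}]$-generic filter on each $\dot\P_{\theta_0}^{\theta_n}[G_{\theta_0}]$, since $G_{\theta_n}$ is $V$-generic for $\P_{\theta_n}\cong\P_{\theta_0}\ast\dot\P_{\theta_0}^{\theta_n}$ with $G_{\theta_n}\restriction\theta_0=G_{\theta_0}$. Passing, if necessary, to a generic extension in which a sufficiently large initial segment of the universe is well ordered (this is where the hypothesis on $V[c]$ is used), we may invoke Claim~\ref{claim:shuffling-omega-iteration} in its $\ZFC$ form: there is, in a further generic extension, a generic automorphism $\pi$ of $\dot\P_{\theta_0}^{\theta}[G_{\theta_0}]$ lying in the shuffling group generated from the $S_n$, such that $H:=\pi(H^\ast)$ is $V[G_{\theta_0}]$-generic for $\dot\P_{\theta_0}^{\theta}[G_{\theta_0}]$. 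Then $G:=G_{\theta_0}\ast H$ is $V$-generic for $\P_\theta$, with $G\restriction\theta_0=G_{\theta_0}$, hence $G\restriction\alpha_0=G_{\alpha_0}$. Since $\pi$ is a composition of automorphisms each of which fixes every $\dot{A}_\gamma$ and $\dot{T}_\gamma$, we get $\dot{A}_\alpha[G]=\dot{A}_\alpha[G^\ast]=A_\alpha$ and $\dot{T}_\alpha[G]=T_\alpha$ for all $\alpha<\theta$. Adjoining all such filters $G$ — over all $\alpha_0<\theta$, all $G_{\alpha_0}\in\mathcal{T}$, and all cofinal sequences — together with all their restrictions to $\P_\alpha$ ($\alpha\leq\theta$) and all their images under automorphisms of the iterations fixing the $\dot{A}_\gamma,\dot{T}_\gamma$, produces a collection satisfying (1)--(5) at $\theta$.

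I expect the main obstacle to be the verification that the quotient data is suitable for iteration — in particular that the automorphism groups witnessing weak homogeneity of the factors propagate correctly through the finite support iteration (fixing all later forcing and automorphism names) while simultaneously fixing every $\dot{A}_\gamma,\dot{T}_\gamma$ — together with the bookkeeping needed to keep the shuffled generic coherent with the prescribed base $G_{\alpha_0}$, and the care required to remain in $\ZF$ until the well-ordering of a large initial segment is invoked so that Claim~\ref{claim:shuffling-omega-iteration} can be applied.
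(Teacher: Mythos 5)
Your proposal is correct and follows essentially the same route as the paper: fix an $\omega$-cofinal sequence, realize $\P_\theta$ as an $\omega$-length finite support iteration of the quotients $\P_{\theta_n}^{\theta_{n+1}}$, check that the automorphism groups from Section~\ref{subsubsection: permutations} make this suitable for iteration while fixing all $\dot A_\gamma,\dot T_\gamma$, and apply Claim~\ref{claim:shuffling-omega-iteration} (after passing to an extension with enough choice) to shuffle the coherent union of generics supplied by the inductive hypothesis into a true $V$-generic. Your write-up is in fact more explicit than the paper's very terse proof about where the coherent filter to be shuffled comes from and about preserving the prescribed base $G_{\alpha_0}$ for clause (4), which are exactly the points the paper leaves implicit.
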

\begin{proof}
By assumption $\theta$ is countable. Let $\langle \alpha_n \mid n < \omega\rangle$ be an increasing cofinal sequence at $\theta$. We may think of the poset $\P_0^\alpha$ as an $\omega$-length finite support iteration, with the $n$-th step being $\mathbb{P}_{\alpha_n}^{\alpha_{n+1}}$. 
Recall from Section~\ref{subsubsection: permutations} that for each $\mathbb{P}_{\alpha_n}^{\alpha_{n+1}}$ there is a subgroup $S_n$ of automorphisms of the poset such that
\begin{itemize}
    \item for any $p,q$ there is $s\in S_n$ sending $p$ to be compatible with $q$;
    \item any $s\in S_n$ fixes $\dot{A}_\alpha,\dot{T}_\alpha$ for all $\alpha$.
\end{itemize}
These properties verify the hypothesis of Claim~\ref{claim:shuffling-omega-iteration}. We conclude from the claim that there is a ``shuffled'' filter $\tilde{G}$ (in some further generic extension) so $A_\alpha=\dot{A}_0^\alpha[\tilde{G}]$ and $T_\alpha=\dot{T}_0^\alpha[\tilde{G}]$ for any $\alpha<\theta$.
\end{proof}
\begin{lemma}
Let $\theta$ be arbitrary. If the inductive hypothesis holds below $\theta$ then it holds at $\theta$. 
\end{lemma}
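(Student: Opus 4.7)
The plan is to reduce to the previous lemma by collapsing $\theta$ to be countable. First I would pass to a forcing extension $V[c][H]$ of $V[c]$ by $\Col(\omega, V_\theta)^{V[c]}$, in which $\theta$ is countable and a well-ordering of a sufficiently large initial segment of the universe is available. In this extension, the hypotheses of the previous lemma are satisfied at $\theta$.

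The key observation is that the construction of $A_\alpha$ and $T_\alpha$ for $\alpha \le \theta$ given in Section~\ref{subsec: construction in V[c] definitions} is absolute between $V[c]$ and $V[c][H]$: it depends only on $c$, on the ordinals, and on the auxiliary parameters $I(\alpha)$, $X_\eta$, $\mathcal{F}_\delta$, all of which are defined in $L$. Therefore the sets $A_\alpha, T_\alpha$ computed in $V[c][H]$ coincide with those computed in $V[c]$, for every $\alpha \le \theta$.

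Applying the previous (countable) lemma inside $V[c][H]$, I would obtain, in some further generic extension of $V[c][H]$, a tree $\mathcal{T}$ of $V$-generic filters for the iterations $\mathbb{P}_\alpha$, $\alpha \le \theta$, satisfying clauses (1)--(5) of the inductive hypothesis. For each $G \in \mathcal{T}$ that is $V$-generic for $\mathbb{P}_\alpha$, the identifications $\dot{A}_\alpha[G] = A_\alpha$ and $\dot{T}_\alpha[G] = T_\alpha$ hold for the $A_\alpha, T_\alpha$ computed in $V[c][H]$, which by the absoluteness above agree with those computed in $V[c]$. This tree therefore witnesses the inductive hypothesis at $\theta$ in a generic extension of $V[c]$.

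The main subtlety to check is that the property ``$G$ is $V$-generic for $\mathbb{P}_\alpha$'' is absolute between $V[c][H]$ and the outer model in which we interpret $\mathcal{T}$; but this is immediate, since $V$-genericity of a filter against a set-sized forcing depends only on the $V$-dense open subsets of $\mathbb{P}_\alpha$, which are the same objects regardless of the surrounding universe. Likewise, closure of $\mathcal{T}$ under the automorphisms of $\mathbb{P}_\alpha$ from $V$ that fix $\dot{A}_\beta, \dot{T}_\beta$ (clause (5)) transfers directly from the conclusion of the previous lemma applied in $V[c][H]$, as this closure condition is a purely combinatorial property of the tree of filters and the group of automorphisms, which lives in $V$.
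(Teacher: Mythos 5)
Your proposal is correct and follows essentially the same route as the paper: collapse $\theta$ to be countable over $V[c]$ so that enough choice is available, note that the recipe for $A_\alpha, T_\alpha$ depends only on $c$ and parameters fixed in $L$ and is therefore absolute to the collapse extension, and then invoke the countable-case lemma. The additional remarks you make about the absoluteness of $V$-genericity and of clause (5) are correct and simply spell out details the paper leaves implicit.
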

\begin{proof}
Forcing with a large enough collapse $G$ over $V[c]$ so that in $V[c][G]$, $\theta$ is countable and we have enough choice to apply Claim~\ref{claim:shuffling-omega-iteration}. Note that the definition of $A_\alpha, T_\alpha$ is the same in $V[G]$ and $V[G][c]$, since the recipe for constructing them is absolute. Now we can apply the previous lemma.
\end{proof}


\subsection{Tree step}
Let $\theta$ be a limit ordinal. Suppose $M_{<\theta}=V(A_{<\theta},T_{<\theta})$ has been constructed.
Recall the definitions of $\B$ from Section~\ref{subsection: tree step}, adding by finite approximations $\omega$ many branches through $T_{<\theta}$.

To find a generic for $\mathbb{B}$ over $M_{<\theta}$, we will instead use the following slightly different $\tilde{\B}$, which produces the same symmetric extensions as $\mathbb{B}$.
\begin{defn}
    Let $\tilde{\B}$ be the poset whose conditions are partial finite functions from $\theta$ to $A_{<\theta}$, ordered by the reverse ordering of the tree $T_{<\theta}$ on each coordinate. Given a generic $\tilde{G}\subseteq \tilde{\mathbb{B}}$, which we identify with a $\theta$-sequence of generic branches, let $A_\theta^{\tilde{G}}$ be the unordered set of these branches.
\end{defn}
\begin{lemma}\label{lemma:generic-for-tree-steps}
    Let $\tilde{G} \subseteq \tilde{\B}$ be $V(A_{<\theta}, T_{<\theta})$-generic. There is, in a further generic extension, a $V(A_{<\theta}, T_{<\theta})$-generic filter $G\subseteq \B$ so that $A_\theta^{G} = A_{\theta}^{\tilde{G}}$.
\end{lemma}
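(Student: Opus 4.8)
The plan is to reduce to the case that $\theta$ is countable, reindex $\tilde G$ by an arbitrary bijection $\omega\to\theta$ to obtain a filter $G_0$ on $\B$ which has the correct set of branches but need not be generic over $M_{<\theta}=V(A_{<\theta},T_{<\theta})$, and then apply the Shuffling Lemma (Lemma~\ref{lem;the shuffling lemma}) to straighten $G_0$ into a genuine $\B$-generic without changing the unordered set of branches.

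First I would pass, if necessary, to a generic extension in which $\theta$ is countable, as in Section~\ref{subsection: V[c] limit stage}; the model $M_{<\theta}$ is computed from the same sets $A_{<\theta},T_{<\theta}$, and $\tilde G$ remains $\tilde{\B}$-generic over it since no new subsets of $M_{<\theta}$ are added. Now $A_{<\theta}$ is countable and $\tilde{\B}$, $\B$ are (non-canonically) isomorphic, but the point to watch is that a bijection between the index sets $\theta$ and $\omega$ lives in the ambient universe and need not lie in $M_{<\theta}$ — which need not see $\theta$ as countable — so we cannot simply transport $\tilde G$ across such an isomorphism. Write $\tilde G$ as the sequence of branches $\seqq{b_\xi}{\xi<\theta}$; the map $\xi\mapsto b_\xi$ is injective, as distinct coordinates of a finite support product generic are mutually generic, hence distinct. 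Fix a bijection $e\colon\omega\to\theta$ in the ambient universe and set
\[G_0=\set{p\in\B}{p(n)\in b_{e(n)}\text{ for all }n\in\dom p},\]
the filter on $\B$ below the reindexed branches. Then $A_\theta^{G_0}=\set{b_{e(n)}}{n<\omega}=\set{b_\xi}{\xi<\theta}=A_\theta^{\tilde G}$.

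Next I would verify that $G_0$ is symmetrically generic relative to $\seq{\B,S,\mathcal F}$, where, recalling from Section~\ref{subsection: tree step} that $\B$ is the finite support product of $\omega$ copies of $\B\restriction 1$, $S$ is the group of automorphisms $b_\sigma\colon p\mapsto p\circ\sigma$ for $\sigma$ a finite support permutation of $\omega$ as in Section~\ref{subsubsection: permutations}, and $\mathcal F$ is the filter of subgroups generated by the groups $X_m=\set{b_\sigma\in S}{\sigma\restriction m=\mathrm{id}}$; every condition of $\B$ is fixed by some $X_m$, so the system is tenacious, and each $b_\sigma$ merely permutes the generic branches. Fix a dense open $D\in M_{<\theta}$ and $m<\omega$; we must produce $p\in(X_m\cdot D)\cap G_0$. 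Two facts enter. First, since $\tilde{\B}$ is a finite support product, the restriction of $\tilde G$ to any finite set of coordinates is generic over $M_{<\theta}$; in particular $\seqq{b_{e(i)}}{i<m}$ is $\B\restriction m$-generic over $M_{<\theta}$, i.e.\ $G_0\restriction m$ is $\B\restriction m$-generic. Second, for each node $v\in A_{<\theta}$ the set $\set{\xi<\theta}{v\in b_\xi}$ is infinite: for each $N$ the set of conditions in $\tilde{\B}$ forcing at least $N$ coordinates to lie above $v$ is dense and belongs to $M_{<\theta}$, so $\tilde G$ meets it. Now $D\restriction m=\set{q\restriction m}{q\in D}$ is a dense open subset of $\B\restriction m$ lying in $M_{<\theta}$, so by the first fact there is $q\in D$ with $q\restriction m\in G_0\restriction m$; extending $q$ we may assume $m\subseteq\dom q$, and write $\dom q\cap[m,\omega)=\set{j_0,\dots,j_{r-1}}$. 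By the second fact and the injectivity of $e$ we may greedily choose distinct $k_0,\dots,k_{r-1}\in[m,\omega)$ with $q(j_i)\in b_{e(k_i)}$, and take a finite support permutation $\sigma$ of $\omega$ fixing each element of $m$ with $\sigma(k_i)=j_i$. Then $p:=b_\sigma(q)=q\circ\sigma$ lies in $G_0$ — its coordinates $<m$ carry the values of $q$, which lie on $b_{e(0)},\dots,b_{e(m-1)}$, and its coordinate $k_i$ carries $q(j_i)\in b_{e(k_i)}$ — while $p=b_\sigma(q)\in X_m\cdot D$; hence $p\in(X_m\cdot D)\cap G_0$.

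Finally I would invoke the Shuffling Lemma: in a further generic extension force a generic automorphism $\pi$ of $\B$ for $\mathbb{S}(\B,S,\mathcal F)$ (Definition~\ref{defn: shuffling poset}); then $G:=\pi^{-1}G_0$ is $\B$-generic over $M_{<\theta}$. Since every element of $S$ is a coordinate permutation, $\pi$ is itself a permutation of $\omega$ acting on $\B$ coordinatewise, and it merely permutes the generic branches, so $A_\theta^G=A_\theta^{G_0}=A_\theta^{\tilde G}$, as required. I expect the main obstacle to be the verification that $G_0$ is symmetrically generic — producing $p\in(X_m\cdot D)\cap G_0$ — where one must combine the genericity of the finite subproducts of $\tilde{\B}$ with the fact that each node of $T_{<\theta}$ lies on infinitely many generic branches in order to rearrange a witness in $D$ back into $G_0$; everything else is packaging around the Shuffling Lemma.
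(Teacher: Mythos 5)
Your proof is correct, but it takes a genuinely different and substantially longer route than the paper's. The paper's entire proof is: collapse $\theta$ to be countable, after which $\B$ and $\tilde{\B}$ are forcing isomorphic. The implicit point making this work is \emph{mutual genericity}: one takes the collapse generic $H$ to be generic over the model containing $\tilde{G}$, so that $\tilde{G}$ remains $\tilde{\B}$-generic over $M_{<\theta}[H]$; the reindexing isomorphism then lives in $M_{<\theta}[H]$, and transporting $\tilde{G}$ across it yields a filter generic over $M_{<\theta}[H]$, hence over $M_{<\theta}$, with the same unordered set of branches. The obstacle you flag --- that a bijection $e\colon\omega\to\theta$ in the ambient universe need not lie in $M_{<\theta}$, so one ``cannot simply transport'' --- is a real obstacle for an \emph{arbitrary} external bijection, but it evaporates once the bijection is chosen in a mutually generic collapse extension. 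Your alternative fix is to reindex by an arbitrary $e$, accept that the result $G_0$ may fail to be generic, verify that it is nonetheless symmetrically generic for the coordinate-permutation system (using genericity of the finite subproducts of $\tilde{\B}$ together with the fact that every node of $T_{<\theta}$ lies on infinitely many of the branches), and then invoke the Shuffling Lemma. This is a correct and self-contained argument that stays entirely within the paper's own machinery (Definition~\ref{definition:symmetrically-generic-filter}, Lemma~\ref{lem;the shuffling lemma}), and your symmetric-genericity verification is essentially a $\B$-analogue of the analysis the paper carries out for $\tilde{\B}$ in Lemma~\ref{lemma: pseudo-generic limits}; what it costs is length, and what the paper's mutual-genericity trick buys is that no dense-set analysis of $\B$ is needed at all. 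Both arguments also agree on the one point that must be checked either way: the relevant automorphisms and isomorphisms only permute coordinates, so the unordered set of branches $A_\theta$ is unchanged.
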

\begin{proof}
Simply note that after collapsing $\theta$ to be countable $\mathbb{B}$ and $\tilde{\mathbb{B}}$ are forcing isomorphic.
\end{proof}

First we note that in $M_{<\theta}$ there are very few symmetric dense open subsets of $\tilde{\mathbb{B}}$




\begin{lemma}\label{lemma: pseudo-generic limits}
Let $\mathcal{S} = \langle \tilde{\B}, \mathcal{G}, \mathcal{F}\rangle$, where $\mathcal{G}$ is the group generated by $b_\sigma$ and $\mathcal{F}$ is the filter of subgroups generated by $\langle \{b_\sigma \mid \sigma \restriction a = id_a\} \mid a \subseteq \omega,\text{ finite}\rangle$. 

A filter $\tilde{G}$ is symmetrically generic for $\mathcal{S}$ in $M_{<\theta}$ if and only if 
\begin{enumerate}
    \item Every branch defined by $\tilde{G}$ is cofinal, namely for every $\zeta, \zeta' < \theta$, there is an element in the $\zeta$-th coordinate of $\tilde{G}$ of level $\zeta'$ in the tree.
    \item Every pair of branches is different.
    \item For every $t \in A_{<\theta}$ there is a branch in $\tilde{G}$ that contains $t$.
\end{enumerate}
\end{lemma}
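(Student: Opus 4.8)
\textbf{Proof plan for Lemma~\ref{lemma: pseudo-generic limits}.}

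The plan is to prove both directions by analyzing which dense open subsets of $\tilde{\B}$ have large stabilizer. Recall that a filter $\tilde G$ is symmetrically generic for $\mathcal{S}$ exactly when for every dense open $D\in M_{<\theta}$ and every finite $a\subseteq\omega$, the set $\{b_\sigma(d) : \sigma\restriction a = \mathrm{id},\ d\in D\}$ meets $\tilde G$. For the easy direction, I would first observe that (1), (2) and (3) are each equivalent to $\tilde G$ meeting a specific such symmetrically-dense family: for (1) the set $D_{\zeta,\zeta'}$ of conditions whose $\zeta$-th coordinate has level $\geq \zeta'$ is dense open, is fixed by every $b_\sigma$ with $\sigma(\zeta)=\zeta$, and $b_\sigma$ does not move membership-in-$\tilde G$ outside the orbit; similarly for (3), the set of conditions some coordinate of which lies above $t$ is dense open and $\mathcal{G}$-invariant (membership in $A_{<\theta}$ is fixed by every $b_\sigma$ since $b_\sigma$ permutes coordinates, not tree elements); for (2), the set $D_{\zeta,\zeta''}$ of conditions whose $\zeta$-th and $\zeta''$-th coordinates are already incomparable in the tree is dense open and $b_\sigma$-invariant for $\sigma$ fixing $\{\zeta,\zeta''\}$. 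So a symmetrically generic $\tilde G$ satisfies (1)--(3).

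The substantive direction is the converse: assuming (1)--(3), I must show $\tilde G$ meets $X\cdot D$ for \emph{every} symmetric dense open $D\in M_{<\theta}$. The key tool is the indiscernibility already established, in the form used for Claim~\ref{claim: Q(A) genericity} and the $\T$-genericity lemma in Section~\ref{subsection: generics in V[c]}: by the Indiscernibility Theorem~\ref{thm: full indiscernibility}, any $D\in M_{<\theta}$ is definable from finitely many parameters $\bar a,\bar x$ (tree elements and representatives) and a parameter in $V$, and after the usual reductions we may take the parameters to be of the form $\bar t\in A_{<\theta}^{<\omega}$ with all elements at a common level $\gamma<\theta$. One then argues, exactly as in Claim~\ref{claim: Q(A) genericity}, that the filter of symmetric dense open subsets of $\tilde\B$ is generated by sets of two kinds: the ``coordinate-filling'' sets forcing a given coordinate up to a given level past $\gamma$, and, for each finite tuple of coordinates and each prescribed tree type of their values over $\bar t$ (including their meet-levels and which of them pass through which $t_i$), the set of conditions realizing that configuration. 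Because the type of a tuple of branch-approximations over $\bar t$ is controlled by finitely many ordinals below $\theta$ together with the finite incidence data, membership of a condition $q\le p$ in $D$ depends only on that type; so for each $p$ with domain among the relevant coordinates one can find finitely many sets of the second kind whose intersection below $p$ is contained in $D$, and $D$ contains the intersection of all these as $p$ ranges over the finitely many relevant conditions. Conditions (1)--(3) are precisely what is needed to meet each generating set: (1) handles the coordinate-filling sets, (3) handles passing through a prescribed $t_i$, and (1)+(2) together let one realize any prescribed incidence/meet-level pattern among finitely many distinct cofinal branches below a given condition.

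The main obstacle will be the bookkeeping in the second direction: verifying that the ``type over $\bar t$'' of a finite tuple of branch-germs really is coded by a finite amount of data once one fixes the common level $\gamma$ of $\bar t$, and that conditions (1)--(3) suffice to steer an arbitrary condition $p\le$ (a given condition) into each generating dense set while respecting the required incidence pattern. This is where one must be careful that the branches produced are pairwise distinct (so that no spurious meet collapses two of them at a level where the type demands separation), which is exactly clause (2); and that the genericity provided by (1) is enough to push meets as high as needed, which follows because $\tilde\B$ extends along coordinates independently. I would model this argument closely on the proof of Claim~\ref{claim: Q(A) genericity} and the subsequent $\T(A_{\alpha+1},A_\alpha)$-genericity lemma, since the combinatorics are essentially the same, with ``independent family in each cone'' replaced by ``cofinal pairwise-distinct branches realizing a prescribed finite type.''
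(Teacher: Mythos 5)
Your plan is essentially the paper's argument: both directions rest on the tree indiscernibility theorem reducing membership in a symmetric dense open $D\in M_{<\theta}$ to finite combinatorial data (levels of the condition's values and their incidence with finitely many parameters from $A_{<\theta}$), after which (1)--(3) supply exactly that data inside $\tilde G$. The organizational difference is that you route through an explicit generating family of symmetric dense sets, mirroring Claim~\ref{claim: Q(A) genericity}, whereas the paper argues directly with an arbitrary $D$: pick $p\in\tilde G$ whose values lie above all the parameters (possible by (1)), extend to $p'\in D$, use indiscernibility to conclude that $p'\in D$ iff $p\cup r\in D$ where $r$ is the new-coordinate part of $p'$, and then use (3) together with the $\mathcal{F}$-invariance of $D$ to replace $r$ by values coming from branches of $\tilde G$ on fresh coordinates. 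The direct argument spares you the verification that your ``type-realizing'' sets are dense (they must be phrased existentially over coordinates, since once two coordinates' values become incomparable their meet is frozen and no further extension can change it); your classification buys a cleaner statement of exactly which filters are symmetrically generic, at the cost of that extra bookkeeping.

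One attribution in your sketch needs correcting: you credit ``(1)+(2)'' with realizing an arbitrary incidence/meet-level pattern among the new branches, but cofinality and pairwise distinctness alone do not let you prescribe where two branches of $\tilde G$ split. What does the work is (3) combined with the infinite branching of $T_{<\theta}$: to get two branches of $\tilde G$ through $t$ whose meet sits exactly at a prescribed node $u>_T t$, apply (3) to two distinct immediate successors of $u$; iterating this gives arbitrarily many branches through any node realizing any finite splitting pattern. Without this observation (which the paper also uses implicitly in the step ``there are branches in $\tilde G$ which contain the elements of $r$''), the hard direction does not close, since hypothesis (3) as literally stated only provides a single branch per node.
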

\begin{proof}
Let $D\subseteq \B$ be a symmetrically dense open set in $V(A_{<\theta}, T_{<\theta})$ (with respect to the group of finite permutations of coordinates). Let $\tilde{G}$ be a filter that meets the dense open sets as in the statement of the lemma. 
Fix $a_0, \dots, a_{m-1} \in A_{<\alpha}$ be such that $D$ is definable from $a_0, \dots, a_{m-1}, A_{<\alpha}, T_{<\alpha}$ and a parameter in $V$. 

Let $p \in \tilde{G}$ be a condition such that $\dom p$ is the range of $q$ and that all its elements are in a level higher than all the $a_i$'s. 
Let $p' \leq p$ be in $D$. 
Let $t_0, \dots, t_{n-1}$ be the elements of $p'$. Assume without loss of generality that the level of each $t_k$ is greater than the level of each $a_i$.

By the tree indescirnibility, the truth value of $p' \in D$ is a function of the type of $t_0,\dots, t_{n-1}$ over $a_0,\dots, a_{m-1}$. Since the levels of $t_i$ are (without loss of generality) much higher than the parameters in $\vec{a}$ and all are different and in the same level, the diagram is determined by the level and the values of $a_j \leq_T t_i$ for all $i, j$. In particular, $p' \in D$ if and only if $p \cup r \in D$ where $r = p' \restriction (\dom p' \setminus \dom p)$. By the assumptions of the lemma, there are branches in $\tilde{G}$ which contain the elements of $r$. Let $p'' \leq p'$ be a condition in $\tilde{G}$ that contains those elements (note that we may move the domain, as $D$ is invariant under a large group of permutations of the domain), then $p''$ meets $D$. 
\end{proof}
Let $\tilde{G}\subset \tilde{\mathbb{B}}$ be the filter generated by the set of branches $A_\theta$, enumerated by $\theta$, constructed in Section~\ref{subsec: construction in V[c] definitions}. Note that $\tilde{G}\subset\tilde{\B}$ satisfies conditions (1),(2),(3) above. 

Finally, given a generic $G_{<\theta}$ for the iteration $\mathbb{P}_{<\theta}$, we conclude from Corollary \ref{cor; iterating the shuffling lemma} that after applying a generic automorphism of $\mathbb{P_\theta} = \mathbb{P}_{<\theta}\ast \tilde{\mathbb{B}}$ to $G_{<\theta}\ast \tilde{G}$, we get a generic filter over $V$. This concludes the proof that the inductive hypothesis can be extended.


\begin{thebibliography}{Usu21b}

\bibitem[Bla79]{BlassSVC}
Andreas Blass.
\newblock Injectivity, projectivity, and the axiom of choice.
\newblock {\em Trans. Amer. Math. Soc.}, 255:31--59, 1979.

\bibitem[Bla81]{Blass-1981}
Andreas Blass.
\newblock The model of set theory generated by countably many generic reals.
\newblock {\em J. Symbolic Logic}, 46(4):732--752, 1981.

\bibitem[DF08]{Dobrinen-Friedman-2008}
Natasha Dobrinen and Sy-David Friedman.
\newblock Homogeneous iteration and measure one covering relative to {HOD}.
\newblock {\em Arch. Math. Logic}, 47(7-8):711--718, 2008.

\bibitem[Fel71]{Felgner-ZF-set-thery-1971}
Ulrich Felgner.
\newblock {\em Models of {${\rm ZF}$}-set theory}.
\newblock Lecture Notes in Mathematics, Vol. 223. Springer-Verlag, Berlin-New
  York, 1971.

\bibitem[Gol24]{Goldberg2024}
Gabriel Goldberg.
\newblock Measurable cardinals and choiceless axioms.
\newblock {\em Ann. Pure Appl. Logic}, 175(1):Paper No. 103323, 26, 2024.

\bibitem[Gri75]{Grigorieff-1975}
Serge Grigorieff.
\newblock Intermediate submodels and generic extensions in set theory.
\newblock {\em Ann. of Math. (2)}, 101:447--490, 1975.

\bibitem[Jec71]{Jech-models-without-AC-1971}
Tom\'{a}\v{s} Jech.
\newblock On models for set theory without {${\rm AC}$}.
\newblock In {\em Axiomatic {S}et {T}heory ({P}roc. {S}ympos. {P}ure {M}ath.,
  {V}ol. {XIII}, {P}art {I}, {U}niv. {C}alifornia, {L}os {A}ngeles, {C}alif.,
  1967)}, pages 135--141. Amer. Math. Soc., Providence, R.I., 1971.

\bibitem[Jec03]{Jech2003}
Thomas Jech.
\newblock {\em Set theory}.
\newblock Springer Monographs in Mathematics. Springer-Verlag, Berlin, 2003.
\newblock The third millennium edition, revised and expanded.

\bibitem[Kan08]{Kanamori-Cohen-08}
Akihiro Kanamori.
\newblock Cohen and set theory.
\newblock {\em Bull. Symbolic Logic}, 14(3):351--378, 2008.

\bibitem[Kar18]{Karagila-Bristol-model-2018}
Asaf Karagila.
\newblock The {B}ristol model: an abyss called a {C}ohen real.
\newblock {\em J. Math. Log.}, 18(2):1850008, 37, 2018.

\bibitem[Kar19]{Karagila-iterating-2019}
Asaf Karagila.
\newblock Iterating symmetric extensions.
\newblock {\em J. Symb. Log.}, 84(1):123--159, 2019.

\bibitem[Kun11]{Kunen2011}
Kenneth Kunen.
\newblock {\em Set theory}, volume~34 of {\em Studies in Logic (London)}.
\newblock College Publications, London, 2011.

\bibitem[KW55]{Kinna-Wagner-1955}
W.~Kinna and K.~Wagner.
\newblock \"{U}ber eine {A}bschw\"{a}chung des {A}uswahlpostulates.
\newblock {\em Fund. Math.}, 42:75--82, 1955.

\bibitem[Mon73]{Monro-1973}
G.~P. Monro.
\newblock Models of {ZF} with the same sets of sets of ordinals.
\newblock {\em Fund. Math.}, 80(2):105--110, 1973.

\bibitem[Sha21]{Shani-2021}
Assaf Shani.
\newblock Borel reducibility and symmetric models.
\newblock {\em Trans. Amer. Math. Soc.}, 374(1):453--485, 2021.

\bibitem[Usu21a]{Usuba2021geology}
Toshimichi Usuba.
\newblock Geology of symmetric grounds, 2021.

\bibitem[Usu21b]{Usuba-2021}
Toshimichi Usuba.
\newblock Choiceless {L}\"{o}wenheim-{S}kolem property and uniform definability
  of grounds.
\newblock In {\em Advances in mathematical logic}, volume 369 of {\em Springer
  Proc. Math. Stat.}, pages 161--179. Springer, Singapore, [2021] \copyright
  2021.

\bibitem[Woo10]{Woodin-SEM1-2010}
W.~Hugh Woodin.
\newblock Suitable extender models {I}.
\newblock {\em J. Math. Log.}, 10(1-2):101--339, 2010.

\end{thebibliography}

\end{document}